\definecolor{ao(english)}{rgb}{0.0, 0.5, 0.0}
\theoremstyle{plain}
\newtheorem{theorem}{Theorem}[section]
\newtheorem{corollary}[theorem]{Corollary}
\newtheorem{conj}[theorem]{Conjecture}
\newtheorem{problem}[theorem]{Problem}
\newtheorem{question}[theorem]{Question}
\newtheorem{prop}[theorem]{Proposition}
\newtheorem{lemma}[theorem]{Lemma}
\theoremstyle{definition}
\newtheorem{remark}{Remark}[section]
\newtheorem{definition}[theorem]{Definition}
\theoremstyle{plain}
\newcommand{\nat}{\mathbb{N}}
\newcommand{\N}{\mathbb{N}}
\newcommand{\prob}{\mathbb{P}}
\newcommand{\p}{\mathbb{P}}
\newcommand{\E}{\mathbb{E}}
\newcommand{\expt}{\mathbb{E}}
\newcommand{\indic}{\mathbf{1}}
\newcommand{\floor}[1]{{\left\lfloor #1 \right\rfloor}}
\newcommand{\ceil}[1]{{\left\lceil #1 \right\rceil}}
\newcommand{\sset}{\subset}
\newcommand{\al}{\alpha}
\newcommand{\Om}{\Omega}
\newcommand{\mathforall}{\text{ for all }}
\newcommand{\mathand}{\;\text{and}\;}
\newcommand{\mathor}{\;\text{or}\;}
\newcommand{\mathas}{\;\text{as}\;}
\newcommand{\ga}{\gamma}
\newcommand{\Ga}{\Gamma}
\newcommand{\ep}{\epsilon}
\newcommand{\om}{\omega}
\newcommand{\de}{\delta}
\newcommand{\sig}{\sigma}
\newcommand{\scrA}{\mathcal{A}}
\newcommand{\scrG}{\mathcal{G}}
\newcommand{\scrK}{\mathcal{K}}
\newcommand{\scrQ}{\mathcal{Q}}
\newcommand{\scrR}{\mathcal{R}}
\newcommand{\scrL}{\mathcal{L}}
\newcommand{\scrH}{\mathcal{H}}
\newcommand{\scrS}{\mathcal{S}}
\newcommand{\scrW}{\mathcal{W}}
\newcommand{\scrB}{\mathcal{B}}
\newcommand{\scrJ}{\mathcal{J}}
\newcommand{\card}[1]{\left\vert #1 \right\vert}
\newcommand{\close}[1]{\mkern 1.5mu\overline{\mkern-1.5mu#1\mkern-1.5mu}\mkern 1.5mu}
\newcommand{\Z}{\mathbb{Z}}
\newcommand{\Q}{\mathbb{Q}}
\newcommand{\R}{\mathbb{R}}
\newcommand{\eqd}{\stackrel{d}{=}}
\newcommand{\cvgd}{\stackrel{d}{\to}}
\newcommand{\X}{\times}
\newcommand{\cvgdown}{\downarrow}
\newcommand{\smin}{\setminus}
\newcommand{\lf}{\left}
\newcommand{\rg}{\right}
\newcommand{\LP}{\rightarrow}
\author{Duncan Dauvergne \and Janosch Ortmann \and B\'alint Vir\'ag }
\title{The directed landscape}
\begin{document}
\maketitle

\begin{abstract}
The conjectured limit of last passage percolation is a scale-invariant, independent, stationary increment process with respect to metric composition. We prove this for Brownian last passage percolation.  We construct the Airy sheet and characterize it in terms of the Airy line ensemble.  We also show that last passage geodesics converge to random functions with H\"older-$2/3^-$ continuous paths. This work completes the construction of the central object in the Kardar-Parisi-Zhang universality class, the directed landscape.
\end{abstract}

\tableofcontents

\section{Introduction}

\FloatBarrier
\begin{figure}
	\centering
\includegraphics[width=4in]{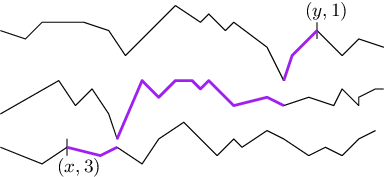}
\caption{An example of last passage across three functions. The purple path is the last passage path from $(x, 3)$ to $(y, 1)$. It can be viewed as either maximizing the sum of increments along the path, or minimizing the sum of gaps. We will always think of our sequences of functions as being labelled so that $f_{i}$ sits above $f_{i+1}$. Our notions of `right' and `left' in the paper are with respect to this picture.}
\label{fig:LP-basic}
\end{figure}
		
For a sequence of differentiable real-valued functions $f = (\dots, f_{-1}, f_0, f_1, \dots)$ with domain $\R$, and coordinates $x\le y$ and $n\le m$, define the last passage value as
\begin{equation}
  \label{E:lpintro}
 f[(x,m)\LP (y,n)]=\sup_\pi \int_x^y f'_{\pi(t)}(t) \,dt.
\end{equation}
Here the supremum is over nonincreasing functions $\pi:[x,y]\to \mathbb Z$ with $\pi(x)=m$ and $\pi(y)=n$. The integral is just a sum of increments of $f$ (see Figure \ref{fig:LP-basic}), so the same can be defined for continuous $f$, in particular for a sequence $B$ of independent two-sided Brownian motions. This model, Brownian last passage percolation, is a representative of a class of models that have been the focus of intense research in recent years.
By continuity, optimizing paths exist in \eqref{E:lpintro}; let $\pi_n$ denote one for  $B[(0,n)\LP(1,1)]$. As one of the highlights of this paper, we show that $\pi_n$ has a scaling limit.

\begin{theorem}\label{T:directed-geodesic-simple}
There exists a random continuous function $\Pi:[0, 1] \to \R$ and a new coupling of all the paths $\pi_n$ such that
$$
\sup_{s \in [0, 1]} \lf|\frac{\pi_{n}(s)-n(1-s)}{2n^{2/3}} - \Pi(s) \rg| \to 0 \qquad \text{ almost surely. }
$$
\end{theorem}
The limit $\Pi$ is the directed geodesic, a random H\"older-$2/3^-$ continuous function defined in terms of a new limiting object, the directed landscape. The directed landscape is the full four-parameter scaling limit of Brownian last passage percolation, see Definition \ref{D:directed-landscape-i}. To describe it completely, we must first discuss the parabolic Airy line ensemble.

\medskip
The parabolic Airy line ensemble is a random sequence of ordered functions $\scrA_1 > \scrA_2 > \dots$. The shifted process $\{\scrA_i(t) + t^2: i \in \N\}$, constructed by \cite{prahofer2002scale} via a determinantal formula, is stationary. The process $\scrA_1(t) + t^2$ is known as the Airy process (sometimes Airy$_2$) and describes the large-$n$ scaling limit of the function $y\mapsto B[(0,n) \LP (y,1)]$. The remaining lines have interpretations in terms of last passage percolation with multiple disjoint paths. The process $\scrA$ satisfies an important Brownian Gibbs property which allows for a probabilistic understanding of the object. This was shown by
\cite{CH}. They used this property to rigorously show that Airy lines are nonintersecting and locally absolutely continuous with respect to Brownian motion with variance $2$.
For brevity, in the remainder of the paper we often omit the word parabolic and simply refer to $\scrA$ as the Airy line ensemble.

\medskip

The Airy line ensemble doubles as the limiting eigenvalue process of Brownian motion on Hermitian matrices. Construction of the Airy sheet, the scaling limit of the two-parameter function $(x,y)\mapsto B[(x,n) \LP (y,1)]$
(conjectured in \citet*{corwin2015renormalization})
does not follow from the integrable methods that give convergence to the Airy line ensemble. This is partly because the Airy sheet seems to be fundamentally different from random matrix limits. As the first step in our construction of the directed landscape, we show that the Airy sheet law can be described in terms of last passage percolation across the Airy line ensemble.

\begin{definition}
\label{D:airy-sheet-i} The \textbf{Airy sheet} is a random continuous function $\scrS:\mathbb R^2\to \mathbb R$ so that
\begin{enumerate}[label=(\roman*)]
\item
$\scrS$ has the same law as  $\scrS(\cdot+t, \cdot+t)$ for all $t\in \mathbb R$.
\item $\scrS$ can be coupled with a parabolic Airy line ensemble so that $\scrS(0,\cdot)=\scrA_1(\cdot)$ and for all $(x, y, z) \in \Q^+ \X \Q^2$ there exists a random integer $K_{x,y,z}$ such that for all $k\ge K_{x,y,z}$ almost surely
\begin{equation}
\label{E:Airy-sheet-i}
\scrA[(-\sqrt{k/(2x)}, k) \LP (z,1) ] -  \scrA[(-\sqrt{k/(2x)}, k) \LP (y,1) ]=\scrS(x, z) - \scrS(x, y).
\end{equation}

\end{enumerate}
\end{definition}

\begin{theorem}
	\label{T:intro-sheet}
	The Airy sheet exists and is unique in law. Moreover, for every $n$, there exists a coupling so that
	$$
	B[(2x/n^{1/3},n) \LP (1+2y/n^{1/3},1)] = 2\sqrt{n} + 2(y - x)n^{1/6} + n^{-1/6} (\scrS+o_n)(x,y),
	$$
	where $o_n$ are random functions asymptotically small in the sense that on every compact set $K\subset \R^2$ there exists $a>1$ with
	$\E a^{\sup_K |o_n|^{3/2}}\to 1$.
\end{theorem}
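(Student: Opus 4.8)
The plan is to split the statement into three pieces --- existence of a limit, identification of that limit with Definition~\ref{D:airy-sheet-i}, and uniqueness of the law satisfying that definition --- and then to read off the quantitative coupling from the construction. For each $n$ write
$$
\scrS_n(x,y) := n^{1/6}\Big( B[(2x/n^{1/3},n)\LP(1+2y/n^{1/3},1)] - 2\sqrt n - 2(y-x)n^{1/6}\Big),
$$
so the claimed identity is $\scrS_n = \scrS + o_n$. By Lemma~\ref{L:Sn-tight} the family $\{\scrS_n\}$ is tight for uniform-on-compacts convergence, so along a subsequence $\scrS_n \cvgd \scrS$. Diagonal-shift stationarity (property (i)) is inherited from the prelimit: replacing $(x,y)$ by $(x+t,y+t)$ translates both endpoints of the Brownian LPP problem by $2t/n^{1/3}$, and re-rooting the underlying Brownian motions there leaves the law unchanged while absorbing the linear terms, so each $\scrS_n$, hence $\scrS$, has the law of $\scrS(\cdot+t,\cdot+t)$.

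Second I would identify the subsequential limit with Definition~\ref{D:airy-sheet-i}. The finite-$n$ line ensemble obtained from $B$ by iterated last passage converges, under the same scaling, to the parabolic Airy line ensemble $\scrA$: the top line is the already-known convergence $y\mapsto B[(0,n)\LP(y,1)]\to\scrA_1$, and the full ensemble limit follows from the Brownian Gibbs characterization of \cite{CH} together with tightness of the lower lines. Using the last-passage decomposition across levels, the increment $\scrS_n(x,z)-\scrS_n(x,y)$ can be rewritten, for rational $x$ and large $n$, as a difference of last passage values from one deep level $k$ down to level $1$ in the prelimit ensemble, valid for every $k$ above a random threshold --- the threshold exists because the parabolic curvature of the ensemble forces optimal paths from $(-\sqrt{k/(2x)},k)$ to stay within a bounded band of levels. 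Passing to the limit along the subsequence and using Skorokhod to put everything on one probability space, I couple $\scrS$ with $\scrA$ and get precisely the Busemann-type identity \eqref{E:Airy-sheet-i} together with $\scrS(0,\cdot)=\scrA_1$. Hence every subsequential limit satisfies Definition~\ref{D:airy-sheet-i}, which gives existence.

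Third, and this is the core, uniqueness. I would show that Definition~\ref{D:airy-sheet-i} forces $\scrS$ to be a fixed measurable functional of $\scrA$. For each $(x,y,z)\in\Q^+\times\Q^2$ the right side of \eqref{E:Airy-sheet-i} is an eventually-constant sequence in $k$, hence an honest function of $\scrA$; this determines all increments $\scrS(x,z)-\scrS(x,y)$ with $x\in\Q^+$. Together with $\scrS(0,\cdot)=\scrA_1$, the additive normalisation of $\scrS(x,\cdot)$ is pinned down by continuity as $x\downarrow 0$, and continuity of $\scrS$ extends the reconstruction from $\Q^+\times\Q^2$ to all of $\R^2$. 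Thus the law of $\scrS$ is the pushforward of the (unique, by \cite{CH}) law of $\scrA$ under this functional, so it is unique, and in particular the whole sequence $\scrS_n$ converges. The real work is in showing the reconstruction is well posed: that the Busemann limits exist, are finite, assemble into a continuous field, and that the normalisation forced by $x\downarrow 0$ is consistent with the $x>0$ data. This needs the metric-composition (semigroup) structure of last passage together with the local Brownian regularity and parabolic decay of the Airy line ensemble from \cite{CH}, and is where essentially all the hard estimates live. I expect this to be the main obstacle: proving Definition~\ref{D:airy-sheet-i} is non-vacuous and self-consistent is exactly the construction of the Airy sheet out of the Airy line ensemble.

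Finally the quantitative coupling. With uniqueness in hand one couples, at each finite $n$, the scaled prelimit line ensemble with an Airy line ensemble $\scrA$ so that corresponding lines are uniformly close on compacts with strong moment control (again via Brownian Gibbs estimates). Both $\scrS_n$ and $\scrS$ are the \emph{same} last-passage functional of their respective ensembles, so $o_n=\scrS_n-\scrS$ is controlled by the ensemble discrepancy. The bound $\E a^{\sup_K|o_n|^{3/2}}\to 1$ then follows by combining this with uniform-in-$n$ upper- and lower-tail bounds for $\scrS_n$ on the natural $3/2$-power scale, coming from tail bounds for Brownian last passage and for the Airy process, which supply the uniform integrability needed to upgrade almost sure smallness to the stated exponential-moment statement.
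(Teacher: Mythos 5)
Your uniqueness argument has a genuine gap at the normalisation step. Condition (ii) of Definition \ref{D:airy-sheet-i} only determines each slice $y\mapsto\scrS(x,y)$, $x\in\Q^+$, up to a random additive constant $c_x$, and the constraint you invoke --- continuity together with $\scrS(0,\cdot)=\scrA_1$ --- only pins down $\lim_{x\downarrow 0}c_x$, not $c_x$ at any fixed $x>0$: one could a priori add any continuous function $c(x)$ with $c(0)=0$ to the slices without violating (ii), continuity, or the $x=0$ identification. What rules this out in the paper (Proposition \ref{P:uniq-sheet}) is precisely the ingredient your sketch never uses at this stage: property (i) combined with ergodicity of the stationary Airy process. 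One writes $\scrS(x,y)=\lim_{m\to\infty}\frac{1}{2m}\int_{-m}^{m}\big[\scrS(x,y)-(\scrS(x,x+z)+z^2)\big]\,dz+\E\scrA_1(0)$, first for the $x=0$ slice by the Birkhoff theorem and then for general $x$ by diagonal-shift stationarity; since the integrand is a $y$-increment of the slice, it is $\scrA$-measurable by (ii), and the unknown constant disappears. Without an argument of this type the law of $\scrS$ is not determined by the definition.

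The identification of subsequential limits with condition (ii) is also asserted for the wrong reasons. The mechanism is not that "parabolic curvature forces optimal paths from $(-\sqrt{k/(2x)},k)$ to stay within a bounded band of levels" (they do not, and this would not give exact equality for finite $k$ anyway). What the paper needs is: (a) the exact RSK/melon identity of Proposition \ref{P:wm-equivalent}, that last passage from interior starting points $(\bar x,n)$ is preserved by $f\mapsto Wf$, which is what converts the two-parameter Brownian problem into a melon problem whose top corner converges to $\scrA$; (b) the quantitative localisation of melon geodesics along the parabola, Lemma \ref{L:zk-unif-bound}, which rests on the Airy-line-ensemble last passage asymptotics of Theorem \ref{T:airy-lp} --- this is a substantial estimate your outline does not account for; and (c) the coalescence argument (Lemma \ref{L:close-not-dj} with monotonicity and the tree structure, assembled in Lemma \ref{L:sheet-lem}), which is what actually produces the random threshold $K_{x,y,z}$ and the exact equality \eqref{E:Airy-sheet-i}: the increment localises because geodesics from nearby starting points on either side of $\bar x$ coalesce above a random time, not because of a level confinement. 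Finally, in your last paragraph the claim that $\scrS$ and $\scrS_n$ are "the same last-passage functional" of their respective ensembles is not available --- for $x>0$ the limit $\scrS(x,\cdot)$ is characterised only through Busemann-type increments, not as a last passage value across $\scrA$ --- but this part is dispensable: once every subsequential limit is identified and unique, the coupling with $\E a^{\sup_K|o_n|^{3/2}}\to 1$ follows from the tightness/uniform-tail statement you already cite, Lemma \ref{L:Sn-tight}.
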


\begin{remark} \ \\ \vspace{-1.2em}
\begin{enumerate}
  \item We prove in Proposition \ref{P:uniq-sheet} that Definition \ref{D:airy-sheet-i} uniquely determines a probability measure on random continuous functions. In other words, the Airy sheet, as defined here, exists and is unique in law. The Airy sheet exists because it is the limit of Brownian last passage percolation, see Theorem \ref{T:airy-sheet}. Theorem \ref{T:intro-sheet} is the combination of these two results.
 
  \item Equation \eqref{E:Airy-sheet-i} can be loosely interpreted as saying that the Airy sheet value $\scrS(x, y)$ is the renormalized limit, as $k \to \infty$, of the last passage value in $\scrA$ from $(-\sqrt{k/(2x)}, k)$ to $(y, 1)$. While we were not able to prove that such a limit exists, we believe that such a description is possible, see Conjecture \ref{P:det-correction}. Instead, we make sense of this picture by analyzing differences of last passage values. See Remark \ref{R:differences} for more discussion about why it is easier to look at differences.
  \item We show in the preprint \cite{dauvergne2021scaling} that the Airy sheet is also the limit of classical integrable models of last passage percolation: geometric, exponential, and Poisson models. We expect it to be a universal limit object in the Kardar-Parisi-Zhang (KPZ) universality class, see \cite{corwin2016kardar} for an informal description. We prove convergence of Brownian last passage percolation to the Airy sheet in this paper. However, one consequence of the work \cite{dauvergne2021scaling} is that any limiting formula established in one of these models will apply to all others as well.
  \item Definition \ref{D:airy-sheet-i} is not in terms of explicit formulas for distributions, which has traditionally been the main approach to KPZ limits. A celebrated example of such a definition in a different context is that of the Schramm-Loewner evolution. As is the case there, what may be more desirable than exact formulas is a set of tools to work directly with this limiting object. In this paper we concentrate on establishing the Airy sheet and the directed landscape as a limit; in upcoming work we use the present description of the Airy sheet and the directed landscape to understand the limiting geometry.
  \item Tightness for the Airy sheet limit for certain models is known, e.g. see \cite{pimentel2017local}; a short proof for Brownian last passage percolation is provided in Lemma \ref{L:Sn-tight}. Uniqueness of the limit is much harder, and is one of the main results of this paper.  Theorem \ref{T:intro-sheet} says that Brownian last passage percolation looks the same on all scales in a precise sense.
  \item The process $\scrS(x,y)+(x-y)^2$ is stationary in both variables, see Lemma \ref{L:airy-facts}, and $\scrS(0,0)$ has GUE Tracy-Widom distribution. The GUE Tracy-Widom limit for $\scrS(0,0)$ is well known; for Brownian last passage percolation it was independently shown by \cite{baryshnikov2001gues} and \cite{gravner2001limit}.
  \item By monotonicity, \eqref{E:Airy-sheet-i} is equivalent to the following Busemann function definition. For every triple $x, y, z \in \R^+ \X \R^2$, the left hand side of \eqref{E:Airy-sheet-i} converges to the right hand side as $k \to \infty$, see Remark \ref{R:busemann-def}. Busemann functions have been used previously in last passage percolation to study problems around infinite geodesics, e.g. see \cite{cator2012busemann}, \cite{georgiou2017stationary}.
\end{enumerate}
\end{remark}


A direct consequence of Theorem \ref{T:intro-sheet} is the celebrated 1-2-3 (or KPZ) scaling for Airy sheets. Define the \textbf{Airy sheet of scale $s$} by
$$
\scrS_s(x, y) = s \,\scrS(x/s^{2}, y/s^{2}).
$$
Let $\scrS_s,\, \scrS_t$ be independent Airy sheets of scale $s$ and $t$. Then the \textbf{metric composition} is an Airy sheet of
scale $r$:
$$
\scrS_r(x,z)= \max_{y \in \R} \scrS_s(x,y)+\scrS_t(y,z), \qquad \text{ with  } r^3=s^3+t^3.
$$
The metric composition law is a semigroup property for the max-plus algebra. The Airy sheet is an analogue of the Gaussian distribution in this semigroup, inspiring a definition of the analogue of Brownian motion there. It is natural to have a parameter space, directed $\R^4$,
$$
\mathbb R^4_\uparrow=\{(x,s;y,t)\in \R^4:s<t\}
$$
representing increments from time $s$ to time $t$. We will think of $\R^4_\uparrow$ as representing ordered pairs of points in spacetime with a one-dimensional space. The coordinates $x$ and $y$ are spatial and the coordinates $s$ and $t$ are temporal.

\begin{definition}\label{D:directed-landscape-i}The {\bf directed landscape} is a random continuous function $\scrL:\mathbb R^4_\uparrow \to \mathbb R$ satisfying the  metric composition law
\begin{equation}
\label{E:land-metric}
\scrL(x,r;y,t)=\max_{z\in \mathbb R} \scrL(x,r;z,s)+\scrL(z,s;y,t) \qquad \text{ for all }\; (x,r;y,t) \in  \mathbb R^4_\uparrow,\; s\in (r,t),
\end{equation}
and with the property  that $\scrL(\cdot,t_i;\cdot,t_i+s_i^3)$
are independent Airy sheets of scale $s_i$ for any set of disjoint time intervals $(t_i,t_i+s_i^3)$.
\end{definition}

\FloatBarrier

\begin{figure}
	\centering
		\includegraphics[width=4in]{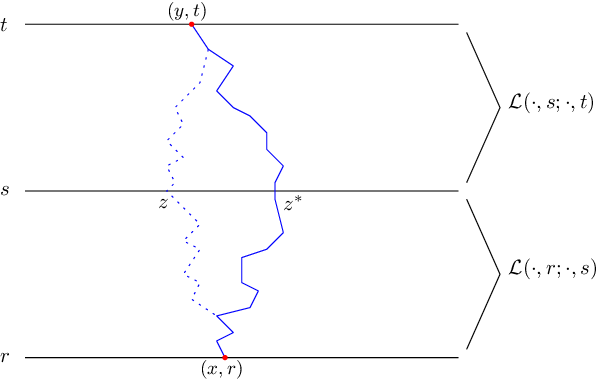}
		\caption{An illustration of the metric composition law for the directed landscape. The geodesic from $(x, r)$ to $(y, t)$ passes through a point $(z^*, s)$ for some $z^* \in \R$, giving that the right hand side of \eqref{E:land-metric} is greater than or equal the left. See \eqref{E:geodesic-definition} and surrounding discussion for a precise definition of a geodesic in $\scrL$. Also, for any $z \in \R$, a concatenation of the geodesic from $(x, r)$ to $(z, s)$ with the geodesic from $(z, s)$ to $(y, t)$ gives a candidate for a maximizing path from $(x, r)$ to $(y, t)$, yielding the opposite inequality. The original Brownian lines that give rise to $\scrL(\cdot, r; \cdot, s)$ and $\scrL(\cdot, s; \cdot, t)$ are independent. This allows us to build up $\scrL$ at any finite set of times using metric composition with independent increments, and hence construct the directed landscape from independent Airy sheets analogously to the construction of Brownian motion.}
		\label{fig:LP-metric}
\end{figure}

\begin{remark} \ \\ \vspace{-1.2em}
\begin{enumerate}
  \item We will prove in Section \ref{S:landscape} that Definition \ref{D:directed-landscape-i} uniquely determines a probability measure on random continuous functions from $\mathbb R^4_\uparrow$ to $\mathbb R$. See Figure \ref{fig:LP-metric} for an illustration of the construction.
  \item We show in the preprint \cite{dauvergne2021scaling} that the directed landscape is also the limit of classical integrable models of last passage percolation: geometric, exponential, and Poisson models. We expect it to be a universal limit object in the KPZ universality class, capturing all important limiting information.
  We prove the Brownian last passage case in this paper. One consequence of the work \cite{dauvergne2021scaling} is that any limiting formula established in one of these models will apply to all others as well.
  \item Independent increment processes on semigroups are more complicated than those on groups; for example, for Brownian motion, $B(s)-B(0)$ and $B(t)-B(0)$ clearly determine the increment $B(s)-B(t)$. In semigroups, the increments cannot be computed in this way and have to be specified for all pairs of times $s<t$. This what the directed landscape does for the metric composition semigroup.
  \item The two-time formula of \cite{johansson2017two} gives the joint distribution of any pair of directed landscape values of the form $\big(\scrL(x,r;y_1,s_1),\scrL(x,r;y_2,s_2)\big)$. \cite{johansson2019multi} and \cite{liu2019multi} independently extended this to multiple endpoints $(y_i,s_i)$.
  \item The KPZ fixed point of \citet*{matetski2016kpz} is a Markov process in $t$ which can be written in terms of the directed landscape and its initial condition $h_0$ as
  \begin{equation}
  \label{E:hty}
 h_t(y) = \sup_{x\in \mathbb R} h_0(x)+\scrL(x,0;y,t).
  \end{equation}
  \citet*{matetski2016kpz} first established the KPZ fixed point $h$ as the limit of TASEP i.e.\ essentially exponential last passage percolation. They showed that the finite dimensional distributions of $h$ can be expressed in terms of a tractable Fredholm determinant formula involving Brownian hitting probabilities. Recently, \cite{nica2020one} showed that Brownian last passage percolation also converges to the KPZ fixed point, thus rigorously showing \eqref{E:hty}.
 \item The directed landscape contains more information than the KPZ fixed point, namely the joint distribution of the coupled evolution for all initial conditions. This allows for a full description of the law of limiting geodesics as in Theorem \ref{T:directed-geodesic-simple}. As will be shown in upcoming work, this also allows for a description of the scaling limit of TASEP second class particle trajectories.
 \item Since convergence to the directed landscape is uniform on compact sets and the directed landscape is continuous, this immediately implies that last passage values along any space-like curve converge uniformly to an Airy process. Previous approaches to such results include finding explicit determinantal formulas for space-like curves, see \cite{borodin2006stochastic} and \cite{borodin2008large}, and geometric analysis of slow decorrelations, see \cite{ferrari2008slow} and \cite{corwin2012universality}.

 \item The negative of the directed landscape can be thought of as a ``signed directed metric'' on $\mathbb R^2$; it satisfies the triangle inequality for points in the right time order. Signed directed metrics occur naturally in fields such as geometry, e.g. Perelman's $\scrL$-distance, see \cite{perelman2002entropy}.
  \end{enumerate}
\end{remark}

Letting $(x,s)_n=(s+2x/n^{1/3},-\lfloor sn\rfloor )$, the translation between limiting and pre-limiting locations, we have the following theorem.
\begin{theorem}[Full scaling limit of Brownian last passage percolation]
\label{T:lp-limit}
There exists a coupling of Brownian last passage percolation and the directed landscape $\scrL$ so that
$$
B_n[(x,s)_n\LP (y,t)_n ]\;\; = \;\;2(t-s)\sqrt{n} + 2(y - x)n^{1/6} + n^{-1/6} (\scrL+o_n)(x,s;y,t).
$$
Here each $B_n$ is a sequence of independent Brownian motions. Each $o_n$ is a random function asymptotically small in the sense that on every compact set $K\subset \R^4_{\uparrow}$ there exists $a>1$ with
$\E a^{\sup_K |o_n|^{3/4}}\to 1$.
\end{theorem}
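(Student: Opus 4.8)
The plan is to build the directed landscape and the coupling together, reducing everything to the single-interval result, Theorem~\ref{T:intro-sheet}, via the metric composition law~\eqref{E:land-metric}, which holds \emph{exactly} for Brownian last passage percolation. Since a last passage path from $(x,s)_n$ to $(y,t)_n$ reaches the level $-\lfloor r n\rfloor$ associated with an intermediate time $r$ at some spatial location, for any grid $s=r_0<r_1<\dots<r_\ell=t$ one has the exact identity
\begin{equation*}
B_n[(x,s)_n\LP(y,t)_n]\;=\;\max_{z_1,\dots,z_{\ell-1}\in\R}\ \sum_{j=1}^{\ell}B_n[(z_{j-1},r_{j-1})_n\LP(z_j,r_j)_n],\qquad z_0=x,\ z_\ell=y,
\end{equation*}
the maximum being over the positions of the geodesic at the intermediate times, which already range over $\R$ in the prelimit. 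Each summand is last passage across $\approx(r_j-r_{j-1})n$ levels with spatial separation $\approx r_j-r_{j-1}$, and a Brownian rescaling of space by $(r_j-r_{j-1})^{-1}$ turns it into $\sqrt{r_j-r_{j-1}}$ times an instance of Theorem~\ref{T:intro-sheet} with parameter $m\approx(r_j-r_{j-1})n\to\infty$; undoing the rescaling, the limiting Airy sheet becomes an Airy sheet of scale $(r_j-r_{j-1})^{1/3}$, matching Definition~\ref{D:directed-landscape-i} term by term.

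I would therefore fix $\scrL$ (its existence and uniqueness being established separately) and, for each $n$, couple independently over $j$ the block of Brownian motions of $B_n$ sitting at the levels of the $j$-th slab to the increment $\scrL(\cdot,r_{j-1};\cdot,r_j)$ using the coupling of Theorem~\ref{T:intro-sheet}; independence of disjoint blocks of the $B_n$'s matches independence of disjoint-interval increments of $\scrL$. To pass to the limit in the displayed identity I need to compare the prelimit maximum with the continuum maximum defining $\scrL(x,s;y,t)$ through~\eqref{E:land-metric}, and the crucial input is an a priori localization estimate: with overwhelming probability the near-optimal intermediate positions lie in a window of random size $W$ with $\p(W>w)\le e^{-cw^{3}}$. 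This follows from the parabolic curvature of the shape function together with the upper-tail (Tracy--Widom type) estimates already underlying Theorem~\ref{T:intro-sheet}, and from the matching statement for $\scrL$ (maximizers in~\eqref{E:land-metric} exist and localize because $\scrS(x,y)+(x-y)^2$ is stationary with Tracy--Widom marginals). Combining localization with the fixed-window control of each slab error provided by Theorem~\ref{T:intro-sheet} yields Theorem~\ref{T:lp-limit} for grid times, with the total error bounded by a sum of finitely many slab errors; the exponent in the error moment degrades from $3/2$ to $3/4$ here, since the slab errors must now be controlled over the random windows where the maximizers live rather than over fixed compacts.

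Finally, the coupling must work simultaneously for all times. The cleanest route is to arrange the coupling of Theorem~\ref{T:intro-sheet} to be coherent across all sub-intervals of $[0,1]$ --- which the construction behind it provides --- so that a single unit-length partition of the time axis already pins $B_n$ to $\scrL$ on every time interval; metric composition then covers arbitrary real times $s<t$ with an error which is a sum of $O(t-s)$ slab errors. Tightness of rescaled Brownian last passage percolation as a four-parameter process, a modulus-of-continuity estimate in the spirit of Lemma~\ref{L:Sn-tight} (now including the temporal directions), together with continuity of $\scrL$, then upgrades these pointwise, slab-level estimates to the uniform-on-compacts bound on $\R^4_\uparrow$ asserted in Theorem~\ref{T:lp-limit}.

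The main obstacle, I expect, is the passage from the exact discrete metric composition to the continuum one with quantitative control: establishing the maximizer localization with the correct $e^{-cw^{3}}$ tail, showing that the fixed-window error bounds of Theorem~\ref{T:intro-sheet} can be patched into bounds over the random (but typically bounded) windows where the maximizers sit, and tracking the resulting bookkeeping --- including the loss from summing finitely many slab errors and from comparing discrete with continuum maxima --- so that it lands exactly on the stated $a^{\sup_K|o_n|^{3/4}}$ bound. Establishing the four-parameter temporal tightness is a further nontrivial input.
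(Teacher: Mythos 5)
Your architecture is genuinely different from the paper's: you fix $\scrL$ and build the coupling slab by slab from Theorem \ref{T:intro-sheet}, then compare the exact discrete metric composition with the continuum one under maximizer localization. The paper instead never constructs the coupling constructively: it interpolates $\scrL_n$ linearly in time, proves tightness of the four-parameter process, checks that every subsequential limit has Airy-sheet marginals (Theorem \ref{T:airy-sheet}), independent increments (disjoint time slabs use disjoint Brownian motions), and the metric composition law (via the prelimiting maximizer-tightness Lemma \ref{L:wm-max}), and then invokes the already-established uniqueness of the directed landscape (Theorem \ref{T:landscape-r4}); the coupling with the $\E a^{\sup_K|o_n|^{3/4}}\to 1$ bound is then extracted abstractly, exactly as in Lemma \ref{L:Sn-tight}. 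Your route could in principle be carried out, and for grid times your patching argument is sound given localization and the per-slab error control.

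The genuine gap is the ingredient you defer in one sentence at the end: the uniform-in-$n$ temporal regularity of the prelimit. This is not an auxiliary tightness check but the main technical content of the paper's proof, namely Lemma \ref{L:tail-lpp}: a tail bound of the form $\p\big(|\scrL_n(x,t;y,s)-\scrL_n(x,t+r_1;y,s+r_2)|\ge a\|(r_1,r_2)\|^{1/6}\big)\le ce^{-da^{3/4}}$, proved by combining the metric composition trick with Proposition \ref{P:cross-prob}, Proposition \ref{P:dyson-tails}, Lemma \ref{L:levy-est} and Lemma \ref{L:wm-max}. Without it you cannot pass from your fixed dyadic grid to arbitrary real times $s<t$, nor upgrade slab-level estimates to the uniform-on-compact statement of Theorem \ref{T:lp-limit}; moreover this lemma, not the ``random windows'' heuristic you give, is where the exponent $3/4$ actually originates. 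A secondary overclaim: your asserted prelimit localization tail $e^{-cw^{3}}$ for near-optimal intermediate positions is stronger than what the curvature plus Tracy--Widom-type inputs yield uniformly in $n$; the paper's Lemma \ref{L:wm-max} (via Proposition \ref{P:cross-prob} and Lemma \ref{L:calculation}) only gives $e^{-cw^{3/4}}$, the cubic tail being available only for the limiting objects (Lemma \ref{L:airy-max}). The weaker tail suffices for your scheme, but as written the claim is not supported, and the missing temporal increment bound must be supplied for the proposal to constitute a proof.
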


Theorem \ref{T:lp-limit} is restated and proven in the body as Theorem \ref{T:landscape-limit}.
We have strong control over the modulus of continuity of the directed landscape. In this next proposition and throughout the paper, by a \textbf{random constant} we simply mean an almost surely finite random variable.
\begin{prop}
\label{P:mod-land-i}
Let $\scrR(x,t;y,t+s)=\scrL(x,t;y,t+s) + (x-y)^2/s$ denote the stationary version of the directed landscape. Let $K\subset \R^4_\uparrow$ be a compact set. Then
$$
|\scrR(u) - \scrR(v)| \le C \lf(\tau^{1/3}\log^{2/3}(\tau^{-1}+1) + \xi^{1/2}\log^{1/2}(\xi^{-1}+1) \rg).
$$
for all $\xi,\tau>0$ and points $u,v\in K$ with spatial and temporal coordinates of distance at most $\xi,\tau$ respectively. Here
$C$ is a random constant depending on $K$ with $\E a^{C^{3/2}} < \infty$  for some $a > 1$.
\end{prop}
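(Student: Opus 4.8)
The plan is to reduce Proposition~\ref{P:mod-land-i} to a uniform-in-$n$ modulus of continuity estimate for the prelimiting fields and then transfer it through the coupling of Theorem~\ref{T:lp-limit}. Fix a compact $K\subset\R^4_\uparrow$ and let $\scrR_n$ denote the stationary version of the prelimiting field appearing in Theorem~\ref{T:lp-limit} (i.e.\ $B_n[\,\cdot\,]$ suitably centered and scaled, with the parabola $(x-y)^2/(t-s)$ added). Write $C_n=C_n(K)$ for the smallest constant for which the claimed two-variable inequality holds with $\scrR_n$ in place of $\scrR$, the supremum being over all $u,v\in K$ and $\xi,\tau>0$. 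The goal is to prove $\p(C_n>m)\le c_1 e^{-c_2 m^{3/2}}$ with $c_1,c_2$ depending only on $K$ and uniform in $n$. Since $\scrR$ is continuous, the analogous constant $C(K)$ for $\scrL$ is computed from a countable dense set of point pairs, on which $\scrR_n\to\scrR$ by Theorem~\ref{T:lp-limit}; the event $\{C(K)>m\}$ is then, up to a null set, a countable union of closed events, so Portmanteau gives $\p(C(K)>m)\le c_1 e^{-c_2 m^{3/2}}$ and hence $\E a^{C^{3/2}}<\infty$ for $a$ close to $1$. Everything therefore reduces to the prelimit estimate, which I would prove in two independent pieces: the spatial (H\"older-$1/2^-$) modulus and the temporal (H\"older-$1/3^-$) modulus.

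For the spatial piece, fix the two times $t<t+s$ and consider $y\mapsto\scrR_n(x,t;y,t+s)$. A difference of last passage values with overlapping starting data and nearby endpoints has Gaussian-type tails on the Brownian scale: for $x,y,y'$ in the window determined by $K$,
$$\p\big(\,|\scrR_n(x,t;y,t+s)-\scrR_n(x,t;y',t+s)|>m\,|y-y'|^{1/2}\,\big)\le c_1 e^{-c_2 m^2},$$
uniformly in $n$, by comparison with Brownian last passage percolation and the fluctuation bounds for the Airy line ensemble and its Brownian Gibbs property. Together with an analogous two-point bound jointly in $x$ and $y$, a L\'evy-type dyadic chaining over $K$ produces the term $\xi^{1/2}\log^{1/2}(\xi^{-1}+1)$ with a random constant whose \emph{square} has an exponential moment --- so this term alone contributes a constant with $\E a^{C^2}<\infty$, stronger than required.

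The temporal piece is the crux. To compare $\scrR_n(x,r;y,t)$ with $\scrR_n(x,r;y,t')$ for $r<t'<t$ and $t-t'$ small, I would use the (prelimit form of the) metric composition law \eqref{E:land-metric}, $\scrL(x,r;y,t)=\max_{z}\big[\scrL(x,r;z,t')+\scrL(z,t';y,t)\big]$. Writing $\scrL(z,t';y,t)=\scrR(z,t';y,t)-(z-y)^2/(t-t')$, the parabolic penalty forces any near-maximizer $z^\ast$ to satisfy $|z^\ast-y|\le C(t-t')^{2/3}\log^{1/3}((t-t')^{-1}+1)$: this localization comes from a lower tail bound on $\scrR(y,t';y,t)$ (to lower-bound the max at $z=y$) against an upper tail bound on $\sup_{z}\scrR(z,t';y,t)$ over the relevant window, and it is the one-point upper tail $\p(\scrR>m)\le e^{-cm^{3/2}}$ that is responsible for the exponent $3/2$ in the final constant. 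Granting the localization,
$$0\le\scrL(x,r;y,t)-\scrL(x,r;y,t')\le\big[\scrL(x,r;z^\ast,t')-\scrL(x,r;y,t')\big]+\scrR(z^\ast,t';y,t),$$
where the bracketed difference is controlled by the already-established spatial modulus over a window of width $|z^\ast-y|\asymp(t-t')^{2/3}\log^{1/3}$, giving $O\big((t-t')^{1/3}\log^{2/3}((t-t')^{-1}+1)\big)$, and $\scrR(z^\ast,t';y,t)$ is of the same order by the one-point tail and a crude supremum bound over the window. A union bound over a net of $(x,r,y,t')$ together with a further dyadic chaining in the time variables then yields the temporal term $\tau^{1/3}\log^{2/3}(\tau^{-1}+1)$, with a constant carrying an $a^{C^{3/2}}$ moment.

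The main obstacle is the mild circularity in the temporal step: bounding $\sup_z\scrR(z,t';y,t)$ over a small spatial window is itself a modulus-type statement. I would break it by running the argument entirely at the prelimit level, where all quantities are a.s.\ finite, establishing the spatial modulus first (no circularity there) and then invoking it as a black box to control the window suprema needed for the temporal bound; a crude a priori tail bound, bootstrapped along dyadic scales, supplies the remaining input. Two further routine points: the prelimit tail estimates must be uniform in $n$, which follows from the tightness input of Lemma~\ref{L:Sn-tight} and standard fluctuation bounds for Brownian last passage percolation; and the spatial and temporal chainings must be glued so that a single random constant $C$ dominates both the $\tau^{1/3}\log^{2/3}$ and $\xi^{1/2}\log^{1/2}$ terms, which is immediate once each has a constant with an exponential $3/2$-moment.
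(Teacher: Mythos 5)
Your two-point heuristics (spatial Gaussian-scale tails; temporal bound via metric composition, parabolic localization of the maximizer at scale $\tau^{2/3}$, a window supremum, and the one-point $e^{-m^{3/2}}$ tail; then L\'evy-type chaining) are exactly the ingredients of the paper's proof, but the paper executes them \emph{directly on the limit object}: Lemma \ref{L:mod-sk} proves the two-point tail bound for the stationary landscape using the Airy sheet marginals (Lemma \ref{L:airy-tails}), the exact metric composition law, Theorem \ref{T:TW-airy}, and --- for the temporal step --- Lemma \ref{L:airy-max}, which packages your ``localization plus window supremum'' into a single statement about the maximum of a sum of two independent Airy processes, with $e^{-dm^3}$ tails for the location and $e^{-dm^{3/2}}$ for the value; Lemma \ref{L:levy-est} then yields the modulus. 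Your plan to instead prove everything uniformly in $n$ at the prelimit and transfer through the coupling of Theorem \ref{T:lp-limit} has two genuine problems. First, it is circular within the paper's architecture: Lemma \ref{L:mod-sk} and Proposition \ref{P:mod-land} are what allow the landscape, built on dyadic times in Lemma \ref{L:landscape-S}, to be extended continuously and the metric composition law to be upgraded off dyadics (Lemma \ref{L:metric-strong}), and the convergence theorem you invoke needs that existence and uniqueness (Theorem \ref{T:landscape-r4}) to identify its limit. Second, and more substantively, the uniform-in-$n$ temporal estimate with exponent $\tau^{1/3}$ and constant tails $e^{-cm^{3/2}}$ is precisely the hard quantitative content and does not ``follow from tightness and standard fluctuation bounds'': the paper's own prelimit temporal bound (Lemma \ref{L:tail-lpp}) only reaches the exponent $1/6$ with tails $e^{-da^{3/4}}$, because the available prelimit localization of the maximizer (Lemma \ref{L:wm-max}) only has tails $e^{-dr^{3/4}}$; running your scheme with these inputs degrades the final constant well below an $a^{C^{3/2}}$ moment. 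A prelimit analogue of Lemma \ref{L:airy-max} that is uniform in $n$ would require new work which the paper deliberately avoids by passing to the limit first, where exact stationarity of the Airy process makes Lemma \ref{L:airy-tails} applicable.

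A smaller point: your displayed inequality $0\le\scrL(x,r;y,t)-\scrL(x,r;y,t')$ is false --- the increment can be negative, since $\scrL(y,t';y,t)$ is a rescaled Tracy--Widom variable with negative mean. The needed lower bound is the triangle inequality $\scrL(x,r;y,t)\ge\scrL(x,r;y,t')+\scrL(y,t';y,t)$ combined with the one-point tail of Theorem \ref{T:TW-airy}, which is how Lemma \ref{L:mod-sk} handles it. That slip is easily repaired; the transfer strategy and the missing sharp uniform prelimit localization are the real gaps.
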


See Proposition \ref{P:mod-land} for a version of Proposition \ref{P:mod-land-i} that keeps track of the dependence on the compact set.
The spatial fluctuations of $\scrL$ have been known to be locally Brownian since \cite{CH}. The temporal modulus of continuity has also been previously obtained in the context of Poisson last passage percolation, see \cite{hammond2018modulus}, building on related work from \cite{basu2014last} and \cite{basu2017coal}. Rather than using such results as a starting point for the proof of Proposition \ref{P:mod-land-i}, we deduce the proposition from explicit probability bounds on two-point differences (see Lemma \ref{L:levy-est} and Lemma \ref{L:mod-sk}).

\medskip
The directed landscape is a rich object containing all asymptotic information about last passage percolation in this scaling. In particular, as advertised above, we can take limits of last passage paths.
For a continuous path $h:[t, s] \to \R$, define the \textbf{length} of $h$ by
\begin{equation}
\label{E:geodesic-definition}
\int d \scrL \circ h = \inf_{k \in \N} \;\;\inf_{t = t_0 < t_1 < \dots < t_k = s} \;\;\sum_{i=1}^k \scrL(h(t_{i-1}), t_{i-1}; h(t_i), t_i).
\end{equation}
This is the analogue of defining the length of a curve in Euclidean space by piecewise linear approximation. We call $h$ a \textbf{directed geodesic} if equality holds for all subdivisions before taking any infima. We show that with probability one, directed geodesics exist between every pair of endpoint, see Lemma \ref{L:rig-exist}.
Moreover, the directed geodesic between any fixed pair of endpoints is almost surely unique and H\"older-$2/3^-$ continuous. Note that uniqueness may fail for some exceptional pairs. In particular, the directed geodesic is more regular than a Brownian path!
\begin{theorem}[Continuity of directed geodesics]
	\label{T:cty-geod-i}
Fix $u = (x, t; y, s) \in \R^4_\uparrow$. Then almost surely, there is a unique directed geodesic $\Pi_u$ from $(x, t)$ to $(y, s)$. Its distribution
only depends on $u$ through scaling: as random continuous functions from $[0, 1] \to \R$, we have
$$
\Pi_{(x, t; y, s)}(s + (t - s) r) \eqd \Pi_{(0,0; 0, 1)}(r) + x + (y - x) r.
$$
Moreover, for $u=(0,0;0,1)$ we have
$$
|\Pi_u(t+s) - \Pi_u(t)| \le Cs^{2/3}\log^{1/3} (2/s) $$
for all $s>0$ with $t,t+s\in [0,1]$. The random constant satisfies $\expt a^{C^3} < \infty$ for some $a > 1$.
\end{theorem}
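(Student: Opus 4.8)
The plan is to construct $\Pi_u$ as a subsequential limit of rescaled last passage paths, prove uniqueness by an $\argmax$ argument, reduce to $u=(0,0;0,1)$ via the symmetries of $\scrL$, and then extract the modulus of continuity from the metric composition law and the two-point estimates for the stationary landscape $\scrR$.

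\emph{Existence and uniqueness.} Using the coupling of Theorem~\ref{T:lp-limit}, let $\pi^n$ maximize $B_n$ between the pre-limiting versions of $(x,t)$ and $(y,s)$; rescaled to $\R^4_\uparrow$ coordinates these paths are tight (as for Theorem~\ref{T:directed-geodesic-simple}), and a subsequential limit $h$ is continuous. Because $B_n\to\scrL$ uniformly on compacts while each $\pi^n$ realizes its last passage value exactly along any intermediate subdivision, taking limits yields $\sum_i\scrL(h(t_{i-1}),t_{i-1};h(t_i),t_i)=\scrL(x,t;y,s)$ for every subdivision, so $h$ is a directed geodesic. For uniqueness, any directed geodesic $h$ must satisfy $h(r)\in\argmax_z\big(\scrL(x,t;z,r)+\scrL(z,r;y,s)\big)$ for all $r\in(t,s)$, by \eqref{E:land-metric} applied at time $r$. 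For fixed rational $r$ the two summands are independent (disjoint time intervals in Definition~\ref{D:directed-landscape-i}), $z\mapsto\scrL(x,t;z,r)$ is locally absolutely continuous with respect to Brownian motion, and the parabolic terms confine the argmax to a compact interval; hence the argmax is a.s.\ a single point, simultaneously over all rational $r$, and any two directed geodesics agree on the rationals, hence everywhere.

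\emph{Reduction.} Composing temporal and spatial stationarity, $1$-$2$-$3$ scaling, and shear invariance of $\scrL$ — each changes $\scrL$ by an affine reparametrization of the domain plus a deterministic additive function, and hence carries directed geodesics to directed geodesics — maps $(x,t;y,s)$ to $(0,0;0,1)$ and $\Pi_u$ to the corresponding affine image of $\Pi:=\Pi_{(0,0;0,1)}$; so it suffices to treat $\Pi$.

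\emph{Modulus.} Since $\Pi(0)=\Pi(1)=0$, the restriction of $\Pi$ to the dyadic rationals is a Faber--Schauder series $\sum_{k,I}\eta_I\Lambda_I$ over dyadic intervals $I=[a_I,b_I]$ with hat functions $\Lambda_I$ and coefficients $\eta_I=\Pi(c_I)-\tfrac12(\Pi(a_I)+\Pi(b_I))$, the deviation of $\Pi$ at the midpoint $c_I$ of $I$ from the chord. The heart is a tail bound for $\eta_I$ at scale $s=|I|=2^{-k}$. Applying \eqref{E:land-metric} at time $c_I$ once along the geodesic (an equality, as $\Pi$ passes through $(\Pi(c_I),c_I)$) and once through the chord midpoint $z_0=\tfrac12(\Pi(a_I)+\Pi(b_I))$ (an inequality), subtracting, and converting each term to $\scrR$ — the parabolic parts of the two competitors differ by exactly $4\eta_I^2/s$ — gives
$$
\frac{4\eta_I^2}{s}\ \le\ \big[\scrR(\Pi(a_I),a_I;\Pi(c_I),c_I)-\scrR(\Pi(a_I),a_I;z_0,c_I)\big]+\big[\scrR(\Pi(c_I),c_I;\Pi(b_I),b_I)-\scrR(z_0,c_I;\Pi(b_I),b_I)\big].
$$
Thus $\eta_I^2/s$ is dominated by a two-point difference of $\scrR$ of temporal distance $0$ and spatial distance $\eta_I$. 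Taking the contrapositive and union-bounding over a fine spatial net for the (random, but box-confined by the scale-$1$ transversal-fluctuation bound) positions of $\Pi(a_I),\Pi(b_I),\Pi(c_I)$, the estimates of Lemma~\ref{L:levy-est} and Lemma~\ref{L:mod-sk} give $\mathbb P(|\eta_I|>m s^{2/3})\le\exp(-cm^3)$ for $m\ge m_0$: on that event some net point lies at spatial distance $\xi\ge ms^{2/3}$ from $z_0$ with an $\scrR$-increment of order $\xi^2/s$, whose square-exponential tail contributes $\exp(-c(\xi^2/s)^2/\xi)=\exp(-c\xi^3/s^2)\le\exp(-cm^3)$ — the quadratic appearance of the transversal distance in the parabola is what turns a square-exponential increment tail into a cube-exponential one. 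Union-bounding over the $2^k$ intervals at level $k$ and then over $k$ gives, almost surely, $\max_{|I|=2^{-k}}|\eta_I|\le C\,2^{-2k/3}(k+1)^{1/3}$ for all $k$ with $\mathbb P(C>\lambda)\lesssim\exp(-c\lambda^3)$, so $\E a^{C^3}<\infty$ for $a>1$ close to $1$. Summing the Faber--Schauder series — for $s\in(0,1]$ with $2^{-k}\asymp s$, levels $\ell\le k$ contribute $\lesssim s\sum_{\ell\le k}2^{\ell/3}\ell^{1/3}\lesssim s\,2^{k/3}k^{1/3}$ and levels $\ell>k$ contribute $\lesssim\sum_{\ell>k}2^{-2\ell/3}\ell^{1/3}\lesssim 2^{-2k/3}k^{1/3}$ — yields $|\Pi(t+s)-\Pi(t)|\le C's^{2/3}\log^{1/3}(2/s)$.

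\emph{Main obstacle.} The one genuinely delicate step is the per-scale bound $\mathbb P(|\eta_I|>m s^{2/3})\le\exp(-cm^3)$. The budget inequality involves $\scrR$ at points of the random geodesic, so converting it into a probability estimate needs a uniform (over all endpoint positions in a compact box) treatment of the transversal fluctuation, a net argument in the transversal variable with the monotonicity of $\scrR$ bridging net points, and two-point increment bounds that remain square-exponential throughout the moderate-deviation range at play (so that, after the parabola conversion, the constant $C$ has the sharp cube-exponential tail). Getting the interplay of net cardinality, confinement scale, and these tails right — and checking that a single random constant $C$ serves all pairs $(t,t+s)$ and all $s\in(0,1]$ — is where the real work lies; the rest is bookkeeping over dyadic scales.
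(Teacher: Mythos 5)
Your overall architecture (uniqueness via a.s.\ uniqueness of the argmax at fixed intermediate times, modulus via dyadic-scale midpoint displacements with cube-exponential tails, summed over scales) is essentially the paper's, repackaged in Faber--Schauder form; but two steps have genuine gaps. First, existence: you build $\Pi_u$ as a subsequential limit of prelimiting last passage paths and justify tightness ``as for Theorem \ref{T:directed-geodesic-simple}.'' That citation is circular -- Theorem \ref{T:directed-geodesic-simple} is a downstream consequence of the present theorem together with the topological convergence argument (Theorem \ref{T:geod-cvg}), and the paper never proves tightness of prelimiting geodesics at all. It instead constructs $\Pi$ intrinsically inside $\scrL$: the dyadic-time argmax interpolations $\Pi_k$ are shown to be uniformly Cauchy using the same per-scale displacement bound that gives the modulus, so existence, uniqueness and continuity come from one estimate (alternatively, existence for any proper landscape is Lemma \ref{L:rig-exist}, with no prelimit). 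If you keep your route you must actually prove tightness of $\pi^n$ (transversal fluctuation control at all dyadic scales plus chaining, i.e.\ essentially redoing the modulus bound at the prelimiting level via Lemma \ref{L:wm-max}), which is not bookkeeping.

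Second, the central per-scale bound $\p(|\eta_I|>ms^{2/3})\le e^{-cm^3}$ does not follow from the two-point estimates as you invoke them. Your budget inequality $4\eta_I^2/s\le(\scrR\text{-increment})+(\scrR\text{-increment})$ forces one increment of size only $2\eta_I^2/s$ over spatial distance $\xi=|\eta_I|$ at time separation $s/2$; but Lemma \ref{L:airy-tails}, rescaled to an Airy sheet of scale $(s/2)^{1/3}$, gives the square-exponential tail only for deviations exceeding $14\xi^2/s$, and Lemma \ref{L:mod-sk} carries the hypothesis $\xi\le\de^{2/3}=(s/2)^{2/3}$, which fails precisely when $|\eta_I|\ge ms^{2/3}$ with $m\ge1$. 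So the regime you need is exactly the borderline case excluded by the cited bounds; the paper closes it in Lemma \ref{L:airy-max} by splitting the deviation between the two independent sheets and using one-point Tracy--Widom tails (Theorem \ref{T:TW-airy}) when the two scales are comparable -- which is your situation, since the two half-intervals have equal length. Likewise, your uniformity over the random endpoints $\Pi(a_I),\Pi(b_I)$ is handled in the paper by monotonicity of the maximizer (Proposition \ref{P:geod-eqn}) plus a grid union bound (Lemma \ref{L:path-loc-2}), together with a recursive confinement $\ell_k=h_1+\cdots+h_k$ rather than a separate scale-one confinement; note that ``monotonicity of $\scrR$'' is not a property $\scrR$ has, so your net-bridging step needs either the maximizer monotonicity or the modulus of $\scrR$ made quantitative with the correct tails. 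These gaps are fixable -- indeed Lemmas \ref{L:airy-max} and \ref{L:path-loc-2} are exactly the missing per-scale estimate and could simply be invoked -- but as written the key inequality is asserted rather than proved, and you yourself identify it as the main obstacle.
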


Theorem \ref{T:cty-geod-i} is proven in Section \ref{S:directed-geodesics}.
For $u = (x, s; y, t) \in \R^4_\uparrow$ and $n \in \N$ let $\pi_{n,u}$ be a path from $(x,s)_n$ to $(y,t)_n$ that maximizes \eqref{E:lpintro}. We also show that the joint limit of last passage paths is given by the joint distribution of directed geodesics. Since each last passage path $\pi_{n,u}$ has domain $[s + 2xn^{-1/3}, t + 2yn^{-1/3}]$, we need to first compose $\pi_{n, u}$ with the affine shift $h_{n,u}$ that maps the interval $[s, t]$ to $[s + 2xn^{-1/3}, t + 2yn^{-1/3}]$ to talk about convergence to $\Pi_{u}$. Note that this shift is not necessary when $x = y = 0,$ as in Theorem \ref{T:directed-geodesic-simple}.

\begin{theorem}[Convergence of last passage paths]
	\label{T:intro-cvg-paths} In the coupling of Theorem \ref{T:lp-limit} there exists an event $A$ of probability 1 such that the following holds. For $u\in \R^4_\uparrow$, let $C_u$ be the set where the directed geodesic $\Pi_u$ is unique in $\scrL$. Then for any $u \in \R^4_\uparrow$, we have that
$$
\frac{\pi_{n,u}\circ h_{n, u}+nh_{n, u}}{2n^{2/3}} \to \Pi_u \quad \text{ uniformly, on the event $A \cap C_u$}.
$$
\end{theorem}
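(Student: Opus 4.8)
\emph{Proof proposal.}
Fix $u=(x,s;y,t)\in\R^4_\uparrow$ and abbreviate $g_n:=n^{-2/3}(\pi_{n,u}\circ h_{n,u}+nh_{n,u})$, regarded, after linear interpolation, as an element of $C([s,t])$ whose graph is the rescaled last passage path (its piecewise-constant nature is harmless at our scale). The plan is the standard two-step scheme: first show that on a single almost sure event $A$ the family $\{g_n\}_{n}$ is relatively compact in the uniform topology and that every subsequential limit is a directed geodesic of $\scrL$ from $(x,s)$ to $(y,t)$; then use uniqueness of that geodesic, which holds on $C_u$, to conclude $g_n\to\Pi_u$. I would work in the coupling of Theorem~\ref{T:lp-limit} and take $A$ to be the almost sure event on which the errors $o_n$ tend to $0$ uniformly on every compact subset of $\R^4_\uparrow$; on $A$ the rescaled last passage values $n^{1/6}(B_n[(a,r)_n\LP(b,r')_n]-2(r'-r)\sqrt n-2(b-a)n^{1/6})$ then converge to $\scrL(a,r;b,r')$ uniformly on compacts. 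I would also enlarge $A$ (still almost surely) so that on it $\scrL$ and its stationary version $\scrR$ are continuous, $\sup\{|\scrR(a,r;b,r')|:|a|,|b|\le M,\ r,r'\in[s,t]\}=O(\sqrt{\log M})$ as $M\to\infty$ (from Proposition~\ref{P:mod-land-i} and spatial stationarity of $\scrR$), and the Brownian last passage concentration estimates hold in the prelimit, uniformly over endpoints in compacts --- the feature I need being the parabolic depression of the rescaled last passage value: it is bounded above by a $\sqrt{\log}$-sized fluctuation with stretched-exponential upper tails, minus a term of order $w^2/\eta$ for endpoints at spatial distance $w$ and temporal distance $\eta$.

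\emph{Step 1 (a priori control of $g_n$).} Since $\pi_{n,u}$ is a maximizing path, for every $s=r_0<\dots<r_k=t$ we have the exact additive decomposition $B_n[(x,s)_n\LP(y,t)_n]=\sum_{i=1}^k B_n[p_{i-1}\LP p_i]$, where $p_i$ is the point on the path at horizontal coordinate $h_{n,u}(r_i)$, so that $p_0=(x,s)_n$, $p_k=(y,t)_n$, and (in the normalization of the theorem) the rescaled location of $p_i$ is $(g_n(r_i),r_i)$ up to $o(1)$; here "$\ge$" comes from concatenating optimal sub-paths and "$\le$" from optimality of $\pi_{n,u}$. A net argument with Borel--Cantelli (using the upper tails above) first confines $g_n$, on $A$ and eventually in $n$, to a spatial window of width $O(\sqrt{\log n})$. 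I would then bootstrap to genuine boundedness: if $\|g_n\|_\infty\to\infty$ along a subsequence, the decomposition with $k=2$ at the time of maximal excursion makes each rescaled half-value at most its parabolic prediction, which the $O(\sqrt{\log M})$ growth of $\scrR$ drives to $-\infty$, while their sum converges to the finite number $\scrL(x,s;y,t)$ --- a contradiction; hence $\|g_n\|_\infty\le R_0$ for $n\ge N_0$, for random $R_0,N_0<\infty$ on $A$. The decomposition with $k=3$ similarly rules out a spatial jump of size $\delta$ over a temporal gap $\eta$ (the rescaled intermediate value would be at most $C-c\delta^2/\eta$, impossible once $\eta\ll\delta^2$), with the regime where the gap has order $1/n$ handled directly since there the intermediate value is a single Brownian increment. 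This gives equicontinuity of $\{g_n\}_{n\ge N_0}$, hence relative compactness of $\{g_n\}$ in $C([s,t])$ on $A$, by Arzel\`a--Ascoli.

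\emph{Step 2 (identification) and conclusion.} If $g_{n_j}\to g$ uniformly, then $g(s)=x$ and $g(t)=y$ by construction. For a fixed subdivision $s=r_0<\dots<r_k=t$ all points $(g_{n_j}(r_i),r_i)$ and their limits lie in a fixed compact subset of $\R^4_\uparrow$, so letting $j\to\infty$ in the rescaled form of the decomposition, using uniform convergence of rescaled last passage values and continuity of $\scrL$, yields $\scrL(x,s;y,t)=\sum_{i=1}^k\scrL(g(r_{i-1}),r_{i-1};g(r_i),r_i)$. Since this holds for every subdivision while the trivial one recovers the left side, $g$ is a directed geodesic from $(x,s)$ to $(y,t)$; on $C_u$ it therefore equals the unique $\Pi_u$. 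Thus every subsequential limit of $\{g_n\}$ is $\Pi_u$, which together with relative compactness forces $g_n\to\Pi_u$ uniformly on $A\cap C_u$. Finally, the estimates defining $A$ and used above can be taken uniform over $u$ in compact subsets of $\R^4_\uparrow$ (a net plus monotonicity of last passage values in the endpoints), so a single $A$ works for all $u$ at once.

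\emph{The main difficulty.} The content is concentrated in Step 1, and specifically in carrying the a priori control of the prelimit geodesics out on one almost sure event rather than merely with high probability: the off-the-shelf transversal fluctuation bound only traps $g_n$ in a $\sqrt{\log n}$-window, and upgrading this to an $n$-independent bound and to uniform equicontinuity is exactly where the deterministic additivity of last passage along the maximizer, combined with the quadratic cost of transversal displacement (the curvature of the limiting parabola, made quantitative by Brownian last passage concentration), is indispensable. Once the prelimit paths sit in a fixed compact set, identifying the limit is routine given Theorem~\ref{T:lp-limit} and uniqueness of directed geodesics.
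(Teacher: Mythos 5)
Your two-step scheme (compactness of the rescaled paths plus identification of subsequential limits as geodesics, then uniqueness on $C_u$) is a legitimate architecture, and your Step 2 is fine; but Step 1, which you correctly identify as the crux, has a genuine gap. Your event $A$ mixes two kinds of control that do not combine the way you need. Uniform-on-compact convergence $\scrL_n\to\scrL$ says nothing about $\scrL_n$ at points that leave every fixed compact subset of $\R^4_\uparrow$ — which is exactly where your argument operates: the bootstrap evaluates $\scrL_n$ at spatial locations $\|g_n\\|_\infty\to\infty$, and the equicontinuity step evaluates it at pairs with time gap $\eta\to 0$. The appeal to the $O(\sqrt{\log M})$ growth of $\scrR$ is a statement about the limit and cannot bound the prelimit there. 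The prelimit concentration estimates, on the other hand, hold for each $n$ with stretched-exponential probability, so Borel--Cantelli upgrades them to an almost sure statement for all large $n$ only at the price of an allowance growing like a power of $\log n$; with that allowance the parabolic gain $M^2$ (resp.\ $\delta^2/\eta$) wins only when $M\gtrsim(\log n)^{1/3}$. Hence your bootstrap shrinks the $O(\sqrt{\log n})$ window to a smaller polylogarithmic window but never yields an $n$-independent bound $R_0$, and the $k=3$ step does not yield $n$-independent equicontinuity; the regime of excursions between a fixed constant and $(\log n)^{1/3}$, and of time gaps tending to $0$, is left uncovered, so relative compactness of $\{g_n\}$ on a single almost sure event is not established.

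The fix is the triangle-inequality trick $f_n(x,t;z,r)\le f_n(x,t;z,r+\ep)-f_n(z,r;z,r+\ep)$ with $\ep$ fixed, which places everything back on a fixed compact set where uniform convergence does apply; this is Lemma \ref{L:limsup-princ}. Once you have that, you do not need Arzel\`a--Ascoli or any a priori path control at all: the paper's route (Theorem \ref{T:geod-cvg}) is deterministic and sidesteps tightness by choosing, via the intermediate value theorem, times $r_n$ at which $|\pi_n(r_n)-\pi(r_n)|=\ep$ exactly, so the relevant points automatically lie in a compact set; an interior limit of $r_n$ contradicts uniqueness after concatenating the geodesics supplied by Lemma \ref{L:rig-exist} (existence of geodesics in proper landscapes), and an endpoint limit contradicts metric composition via Lemma \ref{L:limsup-princ}. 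Two further points you wave away also need care: the passage from the integer-valued, possibly vertically-running prelimit path to the time-parametrized rescaled path (your ``harmless at our scale'') is precisely what the construction around \eqref{E:geo-relate} and Lemma \ref{L:det-cvg-pat} handle; and uniformity of $A$ over all $u$ cannot be obtained from ``monotonicity of last passage values in the endpoints'' (values are not monotone; paths are, Lemma \ref{L:mono-path}) — in the paper it is automatic because the only probabilistic inputs are the coupling of Theorem \ref{T:lp-limit} and the almost sure properness of $\scrL$.
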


Theorem \ref{T:intro-cvg-paths} is restated and proven in Section \ref{S:joint-limits-lp-paths} as Theorem \ref{T:blppgeod}.

\medskip

Our approach to the proofs is probabilistic. It is based on understanding the geometry of last passage percolation using a continuous version of the Robinson-Schensted-Knuth (RSK) correspondence. Our reliance on formulas is minimal -- we only
use estimates about the Airy line ensemble from \cite{DV}, which rely solely on the determinantal nature of the Airy point process and the Brownian Gibbs property of the Airy line ensemble. On the other hand, our results imply convergence of formulas. For example, the super-exponential control of the error term in Theorem \ref{T:intro-sheet} guarantees uniform convergence of moment generating functions.

\medskip

The starting point for our proofs is a combinatorial fact about the continuous RSK correspondence. This correspondence and its application to Brownian paths was developed in \cite{o2002representation} and \cite{o2003path}. The continuous RSK correspondence maps an $n$-tuple of continuous functions $f:[0,1]\times \{1,\ldots, n\}\to \R, (x, i) \mapsto f_i(x)$ to an $n$-tuple $Wf$ in the same space, which we call the melon of $f$. The functions in the melon are ordered decreasingly: $Wf_1 \ge Wf_2 \ge \dots \ge Wf_n$. The main property used in the last passage literature is that the melon satisfies
\begin{equation}
\label{E:Wf-1}
(Wf)_1(t)=f[(0,n)\LP (t,1)].
\end{equation}
We use the stronger fact that for any $s<t$, remarkably
\begin{equation}
\label{E:Wfff}
Wf[(s,n)\LP(t,1)]=f[(s,n)\LP (t,1)].
\end{equation}
The identity \eqref{E:Wf-1} is equivalent to \eqref{E:Wfff} when $s = 0$ by the ordering of $Wf$. A generalization of \eqref{E:Wfff} to $k$ disjoint paths with arbitrary start and end points $(s_i, n), (t_i, 1)$ for $i = 1, \dots, k$ is proven as Proposition \ref{P:wm-equivalent}. This proposition is the crucial deterministic input in our construction of the Airy sheet, and we use it throughout the first part of the paper.

\medskip

Long after completing the first version of the paper, we learned that Proposition \ref{P:wm-equivalent} was proven in \cite{biane2005littelmann}, Lemma 4.8, for the case of disjoint paths with all equal start and end points. \cite{noumi2002tropical} also obtained close relatives of this result in a discrete setting, see Theorem 1.7.
A version of this formula in the special case when $s_i=0$ for all $i$ was previously obtained and studied in the planar positive temperature setting by \cite{o2016multi}, see the discussion directly below equation (20) and Theorem 3.4 from that paper.

\medskip

With $B$ restricted to the first $n$ lines, $WB$ has the law of Brownian motions conditioned not to intersect; these converge to the Airy line ensemble in the right scaling. So one might hope that the properly rescaled last passage problem in $WB$ converges to a last passage problem in the Airy line ensemble. This seems incredible at first because a large part of the last passage percolation is taking place in parts of the melon that disappear in the limit. Indeed, the limit only sees the top few lines and a time window of order $n^{-1/3}$.

\medskip

The technical part of the proof is to show that the last passage path from $(2xn^{-1/3},n)$ to $(1,1)$ in $WB$ follows a parabola of the form \begin{equation}
\label{E:kmapsto}
k\mapsto (1-2\sqrt{k/(2x)}n^{-1/3},k)
\end{equation}
These asymptotics follow from a multi-step path transformation lemma (Lemma \ref{L:rev-wm-fact}), a detailed analysis of a last passage problem on the Airy line ensemble (Section \ref{S:short-lp}), and an analysis of an optimization problem (Lemma \ref{L:zk-unif-bound}). Analyzing the last passage problem across the Airy line ensemble is the most technical part of the paper. It requires delicate structural results about the Airy line ensemble from \cite{DV}. In this paper, we take these structural results as black box inputs for the proof.

\medskip

For nearby starting points $x$ and $x'$ the parabolas in \eqref{E:kmapsto} diverge as $k\to\infty$, while the last passage values should be close. This suggests that there is not much information contained in these paths away from the top corner. We turn this intuition into a proof of Theorem \ref{T:intro-sheet} in Sections \ref{S:parabola} and \ref{S:airy-sheet}. In order to facilitate the proof, one key idea is too look at differences of last passage values. It is easier to directly show that these differences only depend on the top corner, and then extract the result for the last passage values themselves by averaging, see Remark \ref{R:differences}.

\medskip

The directed landscape can be patched together from Airy sheets. For the convergence of last passage percolation to the directed landscape, there is a technical tightness issue that we handle in Section \ref{S:cvg-land}.

\medskip

The focus of this paper is to construct the limiting objects, and we do not explore their properties in detail here. However, our description makes several natural questions about the directed landscape accessible. We will analyze the geometry of this object in future work.

\medskip

Our results complete the construction of the central object in the Kardar-Parisi-Zhang universality class. Although the full construction is new, several aspects of the directed landscape have been studied previously. We only mention a few results most directly related to the present work. For a gentle introduction suitable for a newcomer to the area, see \cite{romik2015surprising}. Review articles and books focusing on more recent developments include \cite{corwin2016kardar, ferrari2010random, quastel2011introduction, takeuchi2018appetizer} and \cite{weiss2017reflected}.

\medskip

The \cite{baik1999distribution} proof for the length of the longest increasing subsequence was the first to give the single point distribution of $\scrL$ as GUE Tracy-Widom in a slightly different model, see also \cite{johansson2000shape}. \cite{baryshnikov2001gues} and  \cite*{gravner2001limit} showed this convergence for Brownian last passage percolation by showing that $B[(0, n) \to (1, 1)]$ is equal in law to the top eigenvalue of the Gaussian Unitary Ensemble. This connection was 	extended to all eigenvalues at the level of a last passage process by \cite{o2002representation}.

\medskip

\cite{prahofer2002scale} proved convergence of last passage values to $\mathcal L(0, 0; y, 1)$ jointly for different values of $y$. This the top line of the Airy line ensemble.  \cite{corwin2013continuum} extended the analysis to continuum statistics of functions of $y$. \cite{CH} showed the Brownian Gibbs property of the Airy line ensemble, making it more amenable to probabilistic analysis. \citet*{corwin2015renormalization} predicted many of the results of the present paper.

\medskip

After predictions by \cite{dotsenko2013two}, \cite{johansson2017two, johansson2018two} gave the joint distribution of $\scrL(a,b)$ and $\scrL(a,c)$ for fixed $a,b,c$. \citet*{matetski2016kpz} derived a formula for the distribution of $g\scrS$, the metric composition of a fixed function $g$ and the Airy sheet  $\scrS$. \cite{baik2017multi} found formulas for the joint distribution of $\{\scrL'(a, b_i) : i \in \{1, \dots, k\}\}$ for any fixed $a, b_1, \dots, b_k$ for a related limiting object $\scrL'$ that in our language would be the directed landscape on the cylinder. We note the conjectured limit $\scrL'$ can be described by wrapping the directed landscape $\scrL$ around the cylinder and redefining path lengths locally. Recently, \cite{johansson2019multi} and \cite{liu2019multi} proved formulas analogous to those of Baik and Liu with $\scrL$ in place of $\scrL'$.

\medskip

Probabilistic and geometric methods have been used previously to prove qualitative statements about last passage percolation. As an early example, \cite{johansson2000transversal} studied transversal fluctuations. More recently, \cite{pimentel2017local} showed tightness of the Airy sheet in a different model, and proved that the Airy sheet locally looks like a sum of independent Brownian motions. \cite{ferrari2019time}
analyzed the covariance of last passage values at two different times. For other probabilistic and geometric approaches, see \citet*{basu2014last}, \citet*{basu2017coal}, and \cite{hammond2018modulus} discussed above.

\medskip

\cite{hammond2016brownian} used a probabilistic approach to prove Radon-Nikodym derivative and other regularity bounds for the Airy line ensemble with respect to Brownian bridges. Subsequent papers (see \cite{hammond2017exponents, hammond2017modulus, hammond2017patchwork}) combined this work with geometric reasoning to understand problems about the geometry of last passage paths in Brownian last passage percolation and the roughness of limiting growth profiles in that model.

\subsection{Brief outline of the text}

The first part of the paper is deterministic. Section \ref{S:prelim} and Section \ref{S:geometry} contain preliminaries and straightforward facts. Section \ref{S:melon} proves the key identity \eqref{E:Wfff} and its generalization, Proposition \ref{P:wm-equivalent}, and Section \ref{S:properties-melon} contains an important consequence of this proposition for last passage percolation in melons. 
The probabilistic part of the paper begins in Section \ref{S:short-lp}. In Sections \ref{S:short-lp}-\ref{S:airy-sheet} we construct the Airy sheet. The remaining sections build the directed landscape and directed geodesics from the Airy sheet (Sections \ref{S:properties-Airy}, \ref{S:directed-landscape}, and \ref{S:directed-geodesics}), prove convergence to the directed landscape (Section \ref{S:cvg-land}), and prove convergence of last passage paths (Section \ref{S:joint-limits-lp-paths}).

\section{Preliminaries}
\label{S:prelim}
\subsection{Last passage across general functions}
\label{SS:lpp}
For an interval $I \sset \Z$, let $C^I$ be the space of all continuous functions $$
f:\R \X I\to \R, \qquad (x,i)\mapsto f_i(x).
$$
We will often think of $f$ as a sequence of functions $\{f_i : i \in I\}$.
When $I = \{1, \dots, n\}$, we will simply write $C^n$. We call a nonincreasing function $\pi:[x,y]\to I$ which is cadlag on $[x,y]$ and satisfies $\pi(x)\ge \ell$ and $\pi(y)=m$ a \textbf{path} from  $(x,\ell)$ to $(y,m)$. Unfortunately the left endpoint $(x,\ell)$ is not specified by the function $\pi$ and has to be given separately. We will define the left limit of $\pi$ at $x$ to be $\ell$, and the right limit at $y$ to be $m$. 
Our paths are nonincreasing instead of nondecreasing to accommodate the natural indexing of the Airy line ensemble.

\medskip

We define the \textbf{length} of $\pi$ with respect to a coordinatewise differentiable function $f \in C^I$ by
$$
\int df \circ \pi := \int_x^y f'_{\pi(t)}(t) dt.
$$
For each $\pi$, this is just a sum of increments of $f$, so this definition extends to all continuous $f$. Note that for many of the cases we are interested in, the functions $f_i$ are ordered so that $f_i \ge f_{i+1}$. Hence when visualizing nondecreasing path length with respect to a set of such functions, it is natural to draw nondecreasing paths as rising physically, see Figure \ref{fig:LP-basic}.

\medskip

For $x\le  y \in \R$ and $m \le \ell \in I$ define the \textbf{last passage value} of $f$ from $(x, \ell)$ to $(y, m)$ by
$$
f[(x, \ell) \to (y, m)] = \sup_{\pi} \int df \circ \pi,
$$
where the supremum is taken over all paths $\pi$ from $(x, \ell)$ to $(y, m)$. 
See Figure \ref{fig:LP-basic} for an illustration of this definition. We say that a point $(x, t)$ \textbf{lies along} a path $\pi:[s, r] \to \Z$ if $t \in [s, r]$ and if
$$
\lim_{q \to t^-} \pi(q) \ge x \ge \lim_{q \to t^+} \pi(q).
$$
In other words, if the graph of $\pi$ is connected at its jumps by vertical lines, then $(x, t)$ will lie on this connected version of the graph.
\medskip

For $f \in C^I$, define the \textbf{gap process} $g = g(f)$ by $g_i = f_{i} - f_{i + 1}$. We can alternately define path length in terms of the gap process. For a nonincreasing path $\pi$ from $(x, \ell)$ to $(y, m)$, we have
\begin{equation}
\label{E:lp-gaps}
\int df \circ \pi = f_m(y) - f_\ell(x) - \sum_{i=m}^{\ell-1} g_i(t_i),
\end{equation}
where
\begin{equation}
\label{E:jump}
t_i\in[x,y]\quad  \text{ is the unique time so that  }\quad \pi(s)\begin{cases}\ge i+1&\text{ for }s < t_i,\\\le i&\text{ for }s \ge t_i.
\end{cases}
\end{equation}
We call  $t_{m+1},\ldots, t_{\ell}$  the \textbf{jump times} of $\pi$.
Thus the last passage value can be thought of as a difference of endpoints minus a minimal sum of gaps. This definition will be useful when we deal with nonintersecting sets of lines $f \in C^I$ whose gap processes are nonnegative. By \eqref{E:lp-gaps} we have 
\begin{equation}
\label{E:lp-gaps-cont}
f[(x,\ell)\to (y,m)] = f_m(y) - f_\ell(x) - \inf_{x\le t_{\ell-1}\le\dots\le t_{m}\le y}\sum_{i=m}^{\ell-1} g_i(t_i),
\end{equation}
which implies that the last passage value is continuous in $(x,y)$.

\medskip

We now extend the definition of last passage to disjoint collections of paths. Let
$$
U = \{(x_i, \ell_i) : i \in \{1, \dots k \} \}\qquad \mathand \qquad V = \{(y_i, m_i) : i \in \{1, \dots k \} \}
$$
 be two sequences of ordered pairs in $\R \X I$ with $x_i \le y_i$ and $m_i \le \ell_i$ for all $i$. The points $(x_i, \ell_i)$ and $(y_i, m_i)$ will be endpoints of disjoint paths $\pi_i$. Define $\scrQ(U, V)$ to be the {\bf set of disjoint paths} $\mathbf{\pi} = (\pi_1, \dots, \pi_k)$ from $U$ to $V$. More precisely, $\pi \in \scrQ(U, V)$ if
\smallskip
\begin{enumerate}[nosep, label=(\roman*)]
\item For all $i \in \{1, \dots, k\}$, the function $\pi_i$ is a path from $(x_i, \ell_i)$ to $(y_i, m_i)$.
\item For all $i \in \{1, \dots, k - 1\}$, we have that $\pi_i(t) < \pi_{i+1}(t)$ for all $t \in (x_i, y_i) \cap (x_{i+1}, y_{i+1})$. In order to ensure existence of disjoint paths with repeated endpoints, we do not enforce a disjointness condition at the endpoints.
\end{enumerate}

\smallskip
\noindent
For a path $\pi \in \scrQ(U, V)$, we define the \textbf{length} of $\pi$ with respect to $f$ by
$$
\int df \circ \pi = \sum_{i=1}^k \; \int df \circ \pi_i.
$$
With the above definition of $\scrQ(U, V)$, we say that $(U, V)$ is an \textbf{endpoint pair} if the following conditions hold. For this definition $U = \{(x_i, \ell_i)\}_{i \in \{1, \dots, k\}}$ and $V = \{(y_i, m_i)\}_{i \in \{1, \dots, k\}}$.

 \smallskip

 \begin{enumerate}[nosep, label=(\roman*)]
 \item For all $i \in \{1, \dots, k\}$, we have that $x_i \le y_i$ and $\ell_i \ge m_i$.
 \item For all $i \in \{1, \dots k -1\}$, we have that $x_i \le x_{i + 1}$ and $y_i \le y_{i + 1}$.
 \item The set of paths $\scrQ(U, V)$ is nonempty.
 \end{enumerate}

 \smallskip

 \noindent
 For an endpoint pair $(U, V)$ and a function $f \in C^I$, we define the \textbf{last passage value} of $f$ across $(U, V)$ by
$$
f[U \LP V] = \sup_{\pi \in \scrQ(U, V)} \; \int d f \circ \pi.
$$
In the case when $k = 1$, we recover the previous definition of the last passage path.

\medskip

We also define the set of \textbf{last passage paths} between $U = \{(x_i, \ell_i)\}_{i \in \{1, \dots, k\}}$ and $V = \{(y_i, m_i)\}_{i \in \{1, \dots, k\}}$ by
\begin{equation}\label{E:lp-paths}
P_f[U, V] = \lf\{ \pi \in \scrQ(U, V) : \int d f \circ \pi = \sup_{\sig \in \scrQ(U, V)} \; \int df \circ \sig \rg\}.
\end{equation}
We will omit the subscript $f$ above when the function $f$ is clear from context or does not change throughout a proof.

\subsection{Melons}
\label{SS:melons}
Let $C^n_+$ be the space of continuous functions
$$
f:[0, \infty) \X \{1, \dots, n\} \to \R, \qquad (x, i) \mapsto f_i(x).
$$
For $f \in C^n_+$, we can define a function $Wf \in C^n_+$ by the formula
$$
\sum_{i=1}^k (Wf)_i(t) = f[(0, n)^k  \LP (t, 1)^k], \qquad \qquad \mathforall k \in \{1, \dots, n\}, t \in [0, \infty).
$$
Here $(s, i)^k$ is the sequence with $k$ copies of the point $(s, i)$. The function $Wf$ can be thought of as the recording tableau of a continuous version of the Robinson-Schensted-Knuth bijection, see Section 6 in \cite{o2003conditioned}. We call $Wf$ the \textbf{melon} of $f$. We will explore the process of constructing melons more in Section \ref{S:melon}. Paths in the melon $Wf$ are ordered so that $(Wf)_1 \ge (Wf)_2 \ge \dots \ge (Wf)_n$ (see the discussion at the beginning of Section \ref{S:melon}). This is where the term melon comes from: since paths in $Wf$ avoid each other and all start from $0$, they look like stripes on a watermelon. Note that in physics literature, the term watermelon is often used for ensembles of nonintersecting random walks or Brownian motions which fit into this context.

\subsection{Brownian melons and the Airy line ensemble}
\label{SS:brownian}

We now introduce the main object of study in this paper, Brownian last passage percolation. See \citet*{weiss2017reflected} for background on the integrable aspects of this model. Let $B \in C^\Z$ be a sequence of independent two-sided Brownian motions. Let $B^n$ be $B$ restricted to $\R^+\times\{1,\ldots \, n\}$. We are concerned with finding the scaling limit of last passage values across the sequence $B$. By a result in Section \ref{S:melon}, we will be able to relate these last passage values to last passage values across the \textbf{Brownian $n$-melon} $WB^n$.

\medskip

There are many remarkable descriptions of the Brownian $n$-melon $WB^n$. The description that will be most useful to us here is that $WB^n$ can be described as the distributional limit as $\ep \to 0$ of a sequence of $n$ independent Brownian motions $B^i_\ep:[0, 1/\ep] \to \R$ with $B^i_\ep(0) = i \ep$  conditioned so that
$$
B^1_\ep (t) > B^2_\ep(t) > \dots > B^n_\ep(t).
$$
This was first proven as Theorem 7 in  \cite{o2002representation}, see also \citet*{biane2005littelmann}. The top lines of the Brownian $n$-melon have a scaling limit known as the Airy line ensemble.
This next theorem was proven in many parts, see \cite{prahofer2002scale, johansson2003discrete, adler2005pdes, CH}. We note that the final version of this theorem in \cite{CH} proves it for nonintersecting Brownian bridges, rather than nonintersecting Brownian motions. The two convergence statements are equivalent via the scaling relationship
$$
P(s) = \frac{t-s}{\sqrt{t}} B\lf(\frac{s}{t-s}\rg)
$$
relating a system of nonintersecting Brownian bridges $P$ on $[0,t]$ to a system of nonintersecting Brownian motions $B$ on $[0, \infty]$.

\begin{theorem}
\label{T:airy-line}
Let $WB^n$  be a Brownian $n$-melon. Define the rescaled melon
$
A^n = (A^n_1, \dots, A^n_n)
$
by
$$
A^n_i(y) = n^{1/6} \lf((WB^n)_i(1 + 2yn^{-1/3}) - 2\sqrt{n} - 2yn^{1/6} \rg).
$$
Then $A^n$ converges to a random sequence of functions $\scrA = (\scrA_1, \scrA_2, \dots) \in C^\N$ in law with respect to product of uniform-on-compact topology on $C^\N$. For every $y \in \R$ and $i < j$, we have that $\scrA_i(y) > \scrA_j(y)$. The function $\scrA$ is called the \textbf{(parabolic) Airy line ensemble.}
\end{theorem}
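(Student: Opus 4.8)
The plan is to leverage the determinantal structure of the Brownian melon together with a tightness argument. By the description recalled above (O'Connell--Yor), $WB^n$ is the $\epsilon\to 0$ limit of $n$ independent Brownian motions conditioned to stay ordered; equivalently it is the Doob transform of $n$ independent Brownian motions started at the origin by the Vandermonde harmonic function $h(x)=\prod_{i<j}(x_j-x_i)$. Consequently, as a space-time process, $WB^n$ is determinantal: for any finite collection of times, the joint law of the points $\{(WB^n)_i(t)\}$ is given by the \emph{extended Hermite kernel} $K^{(n)}$ (Dyson's Brownian motion kernel), and at a single time $t$ the vector of values is a fixed multiple of the eigenvalues of a GUE matrix of variance $t$. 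I would take this determinantal description as the starting point.

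Step 1 (finite-dimensional convergence). Fix $k$ and finitely many spatial points $y_1,\dots,y_m$. Under the centering and scaling $A^n_i(y)=n^{1/6}((WB^n)_i(1+2yn^{-1/3})-2\sqrt n-2yn^{1/6})$, the extended Hermite kernel $K^{(n)}$ converges, uniformly on compact subsets of $\R^2$, to the extended Airy kernel. This is the classical Plancherel--Rotach edge asymptotics for Hermite polynomials; the equal-time case is Pr\"ahofer--Spohn and the space-time case is in Johansson and Adler--van Moerbeke. Together with a Hadamard/trace-class domination bound controlling the Fredholm expansions, convergence of kernels gives convergence of all gap probabilities, hence of the finite-dimensional distributions of $(A^n_1,\dots,A^n_k)$ at the points $y_j$, to a limiting determinantal field. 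We take $\scrA$ to be a continuous version of the process with these finite-dimensional distributions; since the extended Airy kernel is that of a non-colliding line ensemble, the limiting points at each $y$ form a simple point process with $\scrA_1(y)>\scrA_2(y)>\cdots$ almost surely.

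Step 2 (tightness). To promote finite-dimensional convergence to convergence in $C^\N$ with the product uniform-on-compacts topology, it is enough to establish, for each fixed $i$ and each compact interval, a Kolmogorov-type estimate $\E|A^n_i(y)-A^n_i(y')|^{p}\le C_p|y-y'|^{p/2}$ uniformly in large $n$, together with tightness of the one-point marginals (which comes from Step 1). I would obtain this from the non-colliding structure: away from collision times the melon lines evolve as genuine Brownian motions, and near the edge the top lines of $WB^n$ are separated on scale $n^{-1/3}$ in the original time variable, so in the $A^n$ coordinates the relevant Brownian increments have order-one variance and Gaussian tails, yielding the moment bound. A cleaner alternative is to invoke the Brownian Gibbs property of $WB^n$ --- conditioned on the values of a line at the endpoints of an interval and on the neighbouring lines, it is a Brownian bridge conditioned to avoid them --- and combine it with monotonicity to dominate increments of $A^n_i$ by increments of a Brownian bridge; this is the Corwin--Hammond argument and also gives the required tightness, in addition upgrading to H\"older-$1/2^-$ paths in the limit.

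Step 3 (conclusion and strict ordering). Tightness plus finite-dimensional convergence gives $A^n\cvgd\scrA$ in $C^\N$. The pointwise inequalities $(WB^n)_i>(WB^n)_{i+1}$ on $(0,\infty)$ pass to the limit to give $\scrA_i\ge\scrA_{i+1}$, and strictness at any fixed $y$ holds because $A^n_i(y)-A^n_{i+1}(y)$ converges in law to the gap between consecutive points of the Airy point process, which is almost surely positive by simplicity of that process. The main obstacle is the tightness step: the determinantal finite-dimensional convergence, while technically involved, is essentially off the shelf, whereas controlling the modulus of continuity of the lines \emph{uniformly in $n$} requires either a careful quantitative use of the non-colliding structure or the Gibbs-property comparison to Brownian bridges.
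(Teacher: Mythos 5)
The paper does not actually prove this statement: Theorem \ref{T:airy-line} is quoted as known input, with the proof attributed to the cited works of Pr\"ahofer--Spohn, Johansson, Adler--van Moerbeke, and Corwin--Hammond. Your outline is essentially a faithful reconstruction of how those references establish it --- finite-dimensional convergence via the extended Hermite (Dyson) kernel converging to the extended Airy kernel under edge scaling, and tightness via the Brownian Gibbs comparison with non-intersecting Brownian bridges --- so there is no conflict with anything in the present paper, which simply consumes the result. Two small cautions. First, your ``cleaner alternative'' is really the only route you should lean on for tightness: the heuristic that the melon lines ``evolve as genuine Brownian motions away from collision times'' is not literally correct (the Doob $h$-transform drift acts at all times), and making the first moment-bound sketch rigorous uniformly in $n$ is exactly the difficulty that the Corwin--Hammond Gibbs argument was designed to overcome. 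Second, the claim that almost surely $\scrA_i(y)>\scrA_j(y)$ \emph{for every} $y\in\R$ simultaneously does not follow from simplicity of the Airy point process at each fixed $y$ (that only gives strictness at a fixed, or countable dense, set of points); the simultaneous strict ordering, i.e.\ that the limiting lines almost surely never touch, is itself a consequence of the Brownian Gibbs property established by Corwin--Hammond, so you should cite or reproduce that step rather than deduce it from fixed-$y$ simplicity.
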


\begin{figure}
	\floatbox[{\capbeside\thisfloatsetup{capbesideposition={right,center},capbesidewidth=9cm}}]{figure}[\FBwidth]
	{\caption{A sketch of a `Brownian melon' from time $0$ to just after time $1$. If we zoom in around the location $(1, 2\sqrt{n})$ on a $O(n^{-1/3})\times O(n^{-1/6})$ parallelogram with slope $\sqrt{n}$, then we get the Airy line ensemble.}\label{fig:melon}}
	{\includegraphics[width=6cm]{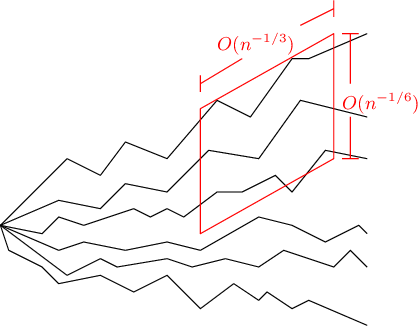}}
\end{figure}

The shifted line ensemble $\scrA(y) + y^2$ is stationary in time. We will refer to this object as the \textbf{stationary Airy line ensemble}. However, for our purposes, the parabolic Airy line ensemble is the object of interest. 
\medskip

The function $\mathcal A_1(y)+y^2$ is known as the \textbf{Airy process} (sometimes Airy$_2$). We now collect a few key facts about the Airy line ensemble and the Airy process. For this proposition and throughout the paper, we say that a Brownian motion (or bridge, or melon) has \textbf{variance $v$} if its quadratic variation in an interval $[s,t]$ is proportional to $v(t-s)$.

\begin{prop}[\cite{CH}, Proposition 4.1]

\label{P:brownian-airy}
Fix an interval $[a, a + b] \sset \R$ and $k \in \N$, and define $B_i(t) = \scrA_i(a + t) - \scrA_i(a)$ for $i \in \{1, \dots, k\}$. Then on the interval $[0, b]$ the sequence $(B_1, \dots, B_k)$ is absolutely continuous with respect to the law of $k$ independent Brownian motions with variance $2$.
\end{prop}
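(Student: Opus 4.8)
The plan is to deduce this from the Brownian Gibbs property of the Airy line ensemble (\cite{CH}), which in the present ``top $k$ lines'' situation takes its simplest form. Fix an auxiliary $\delta>0$ and set $J=[a,a+b+\delta]$. Let $\mathcal F$ be the $\sigma$-algebra generated by $\scrA_{k+1}|_J$ together with the $2k$ boundary values $\{\scrA_i(a),\scrA_i(a+b+\delta):1\le i\le k\}$. By the Brownian Gibbs property, conditionally on $\mathcal F$ the path $(\scrA_1,\dots,\scrA_k)|_J$ is distributed as $k$ independent variance-$2$ Brownian bridges on $J$ with these prescribed endpoints, conditioned on the event $E$ that they are strictly ordered and stay above $\scrA_{k+1}$ throughout $J$.

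The first step is to drop the constraint $E$. I claim $\prob(E\mid\mathcal F)>0$ almost surely; this is a standard feature of such ensembles. Indeed, by Theorem \ref{T:airy-line} the ensemble is (a.s.\ simultaneously) strictly ordered on the compact set $J$, so $\min_J(\scrA_i-\scrA_{i+1})>0$ for each $i\le k$, and a Brownian-bridge small-tube estimate around the realized curves $\scrA_1,\dots,\scrA_k$ shows that strictly ordered, dominating configurations carry positive conditional mass. Hence $\mathrm{Law}\big((\scrA_i)_{i\le k}|_J\mid\mathcal F\big)$ is absolutely continuous with respect to the law of $k$ independent variance-$2$ Brownian bridges on $J$ with the same endpoints, with density at most $1/\prob(E\mid\mathcal F)$.

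The second step restricts from $J$ to $[a,a+b]$ and passes from bridges to motions: a variance-$2$ Brownian bridge on $J$ from $u$ to $v$, restricted to $[a,a+b]$, is mutually absolutely continuous with a variance-$2$ Brownian motion on $[a,a+b]$ started at $u$, with Radon--Nikodym derivative $\phi_\delta\big(v-X_{a+b}\big)/\phi_{b+\delta}(v-u)\le (4\pi\delta)^{-1/2}/\phi_{b+\delta}(v-u)$, where $\phi_t$ denotes the centered Gaussian density of variance $2t$ and $X_{a+b}$ is the value of the path at the right end of the sub-interval. Applying this in each coordinate with $u=\scrA_i(a)$, $v=\scrA_i(a+b+\delta)$, and then translating the $i$-th coordinate by $-\scrA_i(a)$ (which turns a variance-$2$ motion from $\scrA_i(a)$ into one from $0$, and turns $t\mapsto\scrA_i(a+t)$ into $B_i$), I conclude that, conditionally on $\mathcal F$, the law of $(B_1,\dots,B_k)$ on $[0,b]$ is absolutely continuous with respect to the law $\nu$ of $k$ independent variance-$2$ Brownian motions from $0$, with a density $\rho_{\mathcal F}$ that is finite almost surely.

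Finally, average over $\mathcal F$. Since the reference measure $\nu$ does not depend on $\mathcal F$, for any nonnegative bounded $G$ on $C([0,b],\R)^k$ Tonelli gives $\expt[G(B_1,\dots,B_k)]=\expt\big[\int G\,\rho_{\mathcal F}\,d\nu\big]=\int G\cdot\expt[\rho_{\mathcal F}]\,d\nu$, so $\mathrm{Law}(B_1,\dots,B_k)=\expt[\rho_{\mathcal F}]\cdot\nu\ll\nu$; and since $\mathrm{Law}(B_1,\dots,B_k)$ is a probability measure, $\expt[\rho_{\mathcal F}]<\infty$ $\nu$-a.e.\ automatically, so no tail estimate on $\rho_{\mathcal F}$ is needed. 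The two points requiring a little care are the positivity $\prob(E\mid\mathcal F)>0$ a.s.\ and the joint measurability of $\rho_{\mathcal F}$ used to invoke Tonelli, the latter handled routinely via regular conditional distributions; I expect the positivity statement to be the only genuine obstacle, and even that is standard given \cite{CH}.
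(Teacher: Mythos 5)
Your argument is correct: the paper offers no proof of this proposition itself but imports it from \cite{CH}, and your deduction---conditioning via the Brownian Gibbs property, dominating the conditioned non-intersecting bridge law by the unconditioned bridge law using a.s.\ positivity of the conditional probability of the ordering event, passing from bridges on the enlarged interval $[a,a+b+\delta]$ to variance-$2$ motions on $[a,a+b]$ via the explicit transition-density ratio, and then averaging the conditional density over the boundary data---is precisely the standard route by which the cited work yields local absolute continuity, so this matches the approach the paper relies on. The two delicate points you flag are indeed the right ones and are fine as handled: positivity of the conditional non-intersection probability follows from the tube argument because the true realization itself supplies strictly ordered curves compatible with the boundary data, and conditioning on your smaller $\sigma$-algebra agrees with the Gibbs conditioning on the full external data since the conditional law depends only on the boundary values and on $\scrA_{k+1}$ restricted to the interval.
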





The one-point distributions of the Airy process follow a GUE Tracy-Widom distribution. This well-known result goes back to \cite*{baik1999distribution}. As this will be used throughout the paper, we state it here as a theorem. The tail bounds on GUE Tracy-Widom random variables that we use go back to \cite{tracy1994level}, see also \cite{ramirez2011beta} for short proofs.

\begin{theorem}[\cite*{tracy1994level, baik1999distribution}]
\label{T:TW-airy}
For every $t \in \R$, the random variable $\scrA_1(t) + t^2$ has GUE Tracy-Widom distribution. In particular, it satisfies the tail bounds
$$
\p(\scrA_1(t) + t^2 > m) \le ce^{-dm^{3/2}} \quad \mathand \quad \p(\scrA_1(t) + t^2 < -m) \le ce^{-dm^{3}}
$$
for universal constants $c$ and $d$.
\end{theorem}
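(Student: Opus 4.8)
The plan is to first reduce to a single-point statement and identify the law, and then separately handle the two tail bounds. Since $\scrR(y)=\scrA(y)+y^2$ is stationary in $y$, the law of $\scrR_1(t)$ does not depend on $t$, so it suffices to treat $t=0$, where $\scrR_1(0)=\scrA_1(0)$. By Theorem~\ref{T:airy-line}, $A^n_1(0)=n^{1/6}\big((WB^n)_1(1)-2\sqrt n\big)$ converges in law to $\scrA_1(0)$. Now $(WB^n)_1(1)=B^n[(0,n)\LP(1,1)]$, and by the Brownian-last-passage realization of RSK (Baryshnikov and Gravner; see also \cite{o2002representation}) this has the same law as the largest eigenvalue of an $n\times n$ GUE matrix in the standard normalization. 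The classical edge-scaling theorem of \cite{tracy1994level} states that this rescaled largest eigenvalue converges in distribution to $F_2$, the Tracy--Widom GUE law. Matching the two distributional limits (both limits are continuous laws on $\R$) yields $\scrR_1(0)\sim F_2$, hence $\scrR_1(t)\sim F_2$ for all $t$.

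For the tail bounds I would work from the determinantal description of $\scrR_1(0)$: $\p(\scrR_1(0)\le m)$ is the probability that the Airy point process has no point in $(m,\infty)$, i.e.\ the Fredholm determinant $\det(I-K_{\mathrm{Ai}})_{L^2(m,\infty)}$, where $K_{\mathrm{Ai}}$ is the Airy kernel. The upper tail is the easy one: a first-moment bound gives that $\p(\scrR_1(0)>m)$ is at most the expected number of Airy points above level $m$, namely $\int_m^\infty K_{\mathrm{Ai}}(x,x)\,dx$, and the pointwise Airy-kernel asymptotics $K_{\mathrm{Ai}}(x,x)=O\big(e^{-\frac43 x^{3/2}}\big)$ as $x\to\infty$ produce $\p(\scrR_1(0)>m)\le c\,e^{-d m^{3/2}}$. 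The lower tail is the delicate one: $\p(\scrR_1(0)<-m)$ is a hole probability for the Airy point process on $(-m,\infty)$, and its $e^{-\Theta(m^3)}$ decay cannot come from any soft one-point estimate. One route is the Painlev\'e~II representation of $F_2$ together with the Hastings--McLeod asymptotics of the Painlev\'e~II transcendent, which give $F_2(-m)=\exp\!\big(-\tfrac1{12}m^3+o(m^3)\big)$; a more self-contained route is the stochastic-Airy-operator variational bound of \cite{ramirez2011beta}, which directly yields the exponential-cubic lower-tail estimate (and re-derives the $3/2$-power upper tail).

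The main obstacle is the lower-tail bound $\p(\scrR_1(t)<-m)\le c\,e^{-d m^3}$. Unlike the upper tail, it is not accessible by bounding a one-point intensity and genuinely requires either the full Painlev\'e/Riemann--Hilbert asymptotics of the Tracy--Widom distribution or the variational control of the bottom of the spectrum of the stochastic Airy operator. Everything else --- the reduction via stationarity, the identification with the GUE edge limit, and the $3/2$-power upper tail --- is routine bookkeeping on top of results already quoted above.
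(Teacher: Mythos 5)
Your outline is correct, and it follows the same route the paper itself relies on: the paper does not prove this theorem but quotes it from \cite{tracy1994level}, identifies $B[(0,n)\LP(1,1)]$ with the top GUE eigenvalue via \cite{baryshnikov2001gues, gravner2001limit, o2002representation}, and points to \cite{ramirez2011beta} for exactly the short proofs of the two tail bounds you describe. So your reduction by stationarity, the GUE-edge identification, and the upper/lower tail estimates (Airy-kernel first moment, respectively Painlev\'e~II or stochastic Airy operator) reproduce the cited literature argument rather than a new one.
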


We will also use the following bound on two-point distributions of the Airy process.

\begin{lemma} [\cite{DV}, Lemma 6.1]
\label{L:airy-tails}
There are constants $c, d > 0$ such that for every $t \in \R, s, a > 0,$ we have
$$
\p(|\scrA_1(t) + t^2 - \scrA_1(t + s) - (t+s)^2| > a\sqrt{s}) \le ce^{-da^2}.
$$
\end{lemma}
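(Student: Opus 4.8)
The plan is to strip away the parabola, reduce to a Gaussian tail estimate for the increment of the top Airy line over an interval of length $s$, and then prove that estimate via the Brownian Gibbs property of the Airy line ensemble together with the one-point Tracy--Widom bounds. Concretely, since $\scrR_1$ is stationary I may assume $t=0$. Writing $\scrR_1 = \scrA_1 + (\cdot)^2$ gives $\scrR_1(0)-\scrR_1(s) = \scrA_1(0)-\scrA_1(s) - s^2$, and the hypothesis $a > 7s^{3/2}$ means $s^2 < a\sqrt{s}/7$, so $\{|\scrR_1(0)-\scrR_1(s)| > a\sqrt s\} \subseteq \{|\scrA_1(0)-\scrA_1(s)| > \tfrac67 a\sqrt s\}$. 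It therefore suffices to establish an estimate of the form $\p(|\scrA_1(0)-\scrA_1(s)| > m) \le C(s)\,e^{-m^2/(cs)}$ for all $m>0$, where $C(s)$ grows subexponentially in $s^3$: taking $m = \tfrac67 a\sqrt s$ gives $m^2/s = \tfrac{36}{49}a^2 \ge \tfrac{36}{49}\cdot 49\, s^3 = 36\,s^3$, which dominates $\log C(s)$ and absorbs the prefactor, leaving $ce^{-da^2}$.

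For the increment bound itself I would use the Brownian Gibbs property underlying Proposition \ref{P:brownian-airy}. Condition on the values of $\scrA_1$ at the endpoints of a window $J \supset [0,s]$ and on the whole curve $\scrA_2|_J$; conditionally, $\scrA_1|_J$ is a variance-$2$ Brownian bridge conditioned to stay above $\scrA_2$. For such a conditioned bridge the Radon--Nikodym derivative against the free bridge is the indicator of staying above $\scrA_2$ divided by the free-bridge probability of doing so, so
\[
\p\big(|\scrA_1(0)-\scrA_1(s)| > m \,\big|\, \mathcal G\big) \le \frac{\p_{\mathrm{free}}(|B(0)-B(s)| > m)}{\p_{\mathrm{free}}(B \ge \scrA_2 \text{ on } J)},
\]
where $B$ is the free bridge with the (conditioned) endpoints. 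The numerator is a Gaussian tail of order $e^{-m^2/(cs)}$. The denominator is bounded below using the one-point Tracy--Widom tails of Theorem \ref{T:TW-airy} for $\scrR_1$ at the endpoints of $J$ and for $\scrR_2$ on $J$ (the latter through the line-ensemble structure), which are stretched-exponential in scales of order $s^2$ over a window of length of order $s$, and hence give a lower bound $C(s)^{-1}$ with $\log C(s)$ at most of order $s^3$ -- and it is precisely this constant that the factor $7$ in $a > 7s^{3/2}$ is calibrated against. Integrating out $\mathcal G$ yields the bound needed above.

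The main obstacle is exactly this second step, and specifically its uniformity in $s$: turning the qualitative absolute continuity of Proposition \ref{P:brownian-airy} into a quantitative Gaussian tail with explicit control of how the random endpoint heights and the barrier $\scrA_2$ enter the constant, so that the calibration in the first step actually closes. For $s$ in a bounded range this is a routine Brownian-Gibbs computation and the bound is genuinely Gaussian in $a$ (this is the regime one typically uses); the delicate point is the $s$-uniformity, which is why the statement is imported from \cite{DV}, whose estimates on the Airy line ensemble are built to supply exactly this kind of quantitative control.
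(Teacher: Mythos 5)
There is a genuine gap here, and it is worth noting first that the paper itself does not prove this statement: it is imported verbatim from \cite{DV} (Lemma 6.1), so the only ``proof'' in the paper is the citation. Your reduction step is fine ($\scrR_1$ stationary, so take $t=0$; the parabola contributes $s^2<a\sqrt{s}/7$), but everything after that is a plan rather than a proof, and your own accounting does not close. The target bound $\p(|\scrA_1(0)-\scrA_1(s)|>m)\le C(s)e^{-m^2/(cs)}$ with $\log C(s)=O(s^3)$ is essentially equivalent to the lemma itself, and the Brownian Gibbs sketch omits the two points where all the work lies. First, the free-bridge numerator is Gaussian around a \emph{random mean}: the bridge increment over $[0,s]$ has mean $(s/|J|)$ times the difference of the conditioned endpoint values of $\scrA_1$ over the window $J$, and controlling that difference by one-point Tracy--Widom tails only gives stretched-exponential errors; to beat these down to $e^{-da^2}$ uniformly in $a$ you are forced to let the window length and the height cutoffs grow with $a$ (not just $s$), which then feeds back into the avoidance-probability denominator. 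Second, and relatedly, your bookkeeping puts events of probability $e^{-cs^3}$ (heights exceeding $O(s^2)$ over a window of length $O(s)$) into a multiplicative prefactor $C(s)$; but an additive exceptional event of probability $e^{-cs^3}$ is \emph{not} bounded by $ce^{-da^2}$ when $a\gg s^{3/2}$, so a cutoff depending only on $s$ cannot work in the stated range of $a$.

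Finally, the calibration is presented backwards. The constant $7$ is fixed by the statement; you cannot tune it to your constants. Writing the Gibbs-derived bound as $e^{Ks^3}e^{-m^2/(c_1 s)}$ and using $s^3<a^2/49$, the argument yields $e^{-da^2}$ only if $36/c_1>K$, i.e.\ only if the effective variance constant $c_1$ from the conditioned-bridge comparison and the constant $K$ in $\log C(s)\le Ks^3$ (coming from the avoidance probability and endpoint/barrier cutoffs) happen to satisfy a specific numerical inequality. Nothing in the sketch controls these constants, and you acknowledge as much by deferring the ``$s$-uniformity'' to \cite{DV} -- which is precisely the content of the lemma. As it stands the proposal is a reasonable outline of how such an estimate could be attacked (condition on $\scrA_2$ and endpoint data, compare with a free bridge, pay for the conditioning with one-point bounds), but the central quantitative estimate is asserted rather than proven, so it does not constitute an independent proof of the cited result.
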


Finally, we will also need the finite versions of Theorem \ref{T:TW-airy} and Lemma \ref{L:airy-tails}, as well as a proposition bounding the entire Brownian $n$-melon below a particular function.

\begin{theorem}
\label{T:top-bd}
Let $W^n_1$ be the top line of a Brownian $n$-melon. There exist constants $c$ and $d$ such that for all $m > 0$ and all $n \ge 1$ we have
$$
\prob(|W^n_1(1) - 2 \sqrt{n} | \ge m n^{-1/6} ) \le c e^{-d m^{3/2}}.
$$
\end{theorem}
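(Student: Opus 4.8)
The plan is to reduce to a non-asymptotic tail bound for the largest eigenvalue of a Gaussian unitary ensemble matrix. By the definition of the melon, $(WB^n)_1(1) = B^n[(0,n) \LP (1,1)]$, and by the Baryshnikov--Gravner--Tracy--Widom identity (\cite{baryshnikov2001gues}, \cite{gravner2001limit}; the same identity recalled in the introduction, extended to the whole line ensemble in \cite{o2002representation}) this last passage value is equal in law to $\lambda_{\max}(G)$, where $G$ is an $n \times n$ GUE matrix normalized so that its empirical spectral measure converges to the semicircle law on $[-2\sqrt n, 2\sqrt n]$. In this normalization the lines $\{(WB^n)_i(1)\}_{i=1}^n$ form the Hermite determinantal point process with the usual Hermite-function correlation kernel. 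It therefore suffices to bound $\prob(\lambda_{\max}(G) \ge 2\sqrt n + m n^{-1/6})$ and $\prob(\lambda_{\max}(G) \le 2\sqrt n - m n^{-1/6})$ by $c e^{-d m^{3/2}}$, uniformly in $n$. We may assume $m$ exceeds a fixed constant, since for small $m$ the right-hand side can be made $\ge 1$ by enlarging $c$, making the bound vacuous.

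For the upper tail I would use the first-moment bound
$$
\prob\big(\lambda_{\max}(G) \ge 2\sqrt n + m n^{-1/6}\big) \le \E\, \#\{i : (WB^n)_i(1) \ge 2\sqrt n + m n^{-1/6}\} = \int_{2\sqrt n + m n^{-1/6}}^{\infty} \rho_n(x)\,dx,
$$
where $\rho_n$ is the one-point correlation function of the Hermite ensemble, i.e.\ the diagonal of the Hermite kernel. Plancherel--Rotach asymptotics near the soft edge, with error bounds uniform in $n$, give $\rho_n(2\sqrt n + t n^{-1/6}) \le C\, n^{1/6} e^{-c t^{3/2}}$ for $t \ge 1$, the limiting profile being the diagonal of the Airy kernel, which decays like $e^{-\frac{4}{3}t^{3/2}}$. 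Changing variables $x = 2\sqrt n + t n^{-1/6}$ absorbs the $n^{1/6}$ against $dx = n^{-1/6}\,dt$, and integrating $\int_m^\infty C e^{-c t^{3/2}}\,dt$ yields $c e^{-d m^{3/2}}$ as required. Rather than re-deriving the Hermite asymptotics, I would cite the uniform deviation estimates of \cite{ledoux2010small} for this step.

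For the lower tail, $\prob(\lambda_{\max}(G) \le a)$ is the gap probability that the Hermite point process has no point in $[a,\infty)$, equal to a Fredholm determinant of the Hermite kernel on $[a,\infty)$; estimating this uniformly in $n$ near the edge is the delicate point, and here I would simply invoke the left-deviation inequality of \cite{ledoux2010small}, which gives $\prob(\lambda_{\max}(G) \le 2\sqrt n - m n^{-1/6}) \le c e^{-d m^{3}}$, stronger than needed (morally reflecting the $e^{-|x|^3/12}$ left tail of the Tracy--Widom law). Combining the two estimates and folding the small-$m$ case into the constants proves the theorem.

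The main obstacle is not the identification of $(WB^n)_1(1)$ with a GUE eigenvalue, which is classical, but securing both tail bounds \emph{uniformly in $n$}: the asymptotic Tracy--Widom convergence of the rescaled top line (the finite-$n$ analogue of Theorem~\ref{T:TW-airy}) does not by itself produce a bound valid for every $n$, so genuinely non-asymptotic deviation inequalities are needed, and the left tail is the harder of the two to obtain from scratch. This is exactly why the statement is attributed to \cite{ledoux2010small}.
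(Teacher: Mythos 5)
Your proposal is correct and matches the paper's treatment: the paper gives no proof of this statement, attributing it directly to \cite{ledoux2010small}, and your reduction via the classical identity of $(WB^n)_1(1)$ with the largest GUE eigenvalue followed by the non-asymptotic right- and left-deviation inequalities of \cite{ledoux2010small} is exactly the intended justification. Your observation that the content lies in the uniformity in $n$ of both tail bounds (with the weaker $e^{-dm^{3/2}}$ rate coming from the right tail) is precisely why the statement is cited rather than proved.
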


Theorem \ref{T:top-bd} is proven in \cite{ledoux2010small} for $m < n^{2/3}$. For greater values of $m$, the result is more classical and follows from the large deviation theory of the Gaussian unitary ensemble, see for example \cite{ledoux2007deviation}, equation (2.7).

\begin{prop}
\label{P:dyson-tails}
Fix $a > 0$. There exist constants $c, d > 0$ such that for every $n \in \N$, $t > 0, \;s \in [0, atn^{-1/3}]$, and $m > 0$ we have
$$
\p\bigg(\lf| W_1^n(t) - W_1^n(t +s) - s\sqrt{n/t}\rg| > m \sqrt{s} \bigg) \le c e^{-d m^{3/2}}.
$$
\end{prop}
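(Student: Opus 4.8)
The plan is to reduce to $t=1$ by the Brownian scale-invariance of the melon, dispose of the case $s\ge n^{-1/3}$ using the one-point Tracy--Widom tail, and treat $s<n^{-1/3}$ by a single Brownian Gibbs resampling of the top line on an interval of the coarse length $n^{-1/3}$. For the reduction, one checks from the definition of $W$ that replacing each input function by $f_i(\cdot)\mapsto c^{-1/2}f_i(c\,\cdot)$ rescales the melon in the same way, so that $(W_1^n(ct))_{t>0}\eqd(c^{1/2}W_1^n(t))_{t>0}$; taking $c=t$ replaces the pair of times $(t,t+s)$ by $(1,1+s/t)$ with $s/t\in[0,an^{-1/3}]$ and the drift by $(s/t)\sqrt n$, so we may assume $t=1$. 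Write $\phi(u)=2\sqrt{nu}$ and $V=W_1^n-\phi$. Since $|\phi(1+s)-\phi(1)-s\sqrt n|=O(\sqrt n\,s^2)=O(\sqrt s)$ for $s\le an^{-1/3}$, and $c\,e^{-dm^{3/2}}\ge1$ for bounded $m$ once $c$ is enlarged, it suffices to bound $\p(|V(1+s)-V(1)|>m\sqrt s)$ for all large $m$.

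Theorem~\ref{T:top-bd}, the scaling relation at times $1$ and $1+\ell$, and the triangle inequality give the crude estimate $\p(|V(1+\ell)-V(1)|>k\,n^{-1/6})\le c\,e^{-dk^{3/2}}$ for $0<\ell\le a$ and $k>0$, the exponent $3/2$ coming exactly from the Tracy--Widom right tail. When $s\ge n^{-1/3}$ one has $\sqrt s\asymp n^{-1/6}$, so this already proves the proposition in that range. For $s<n^{-1/3}$ set $L=n^{-1/3}$ and resample $W_1^n$ on $I=[1,1+L]$: by the Brownian Gibbs property of the melon (immediate from its description as non-intersecting Brownian motions), conditionally on the $\sigma$-field $\scrG$ generated by $W_1^n$ off $I$ together with $W_2^n$, the path $W_1^n|_I$ is a Brownian bridge $\beta$ from $W_1^n(1)$ to $W_1^n(1+L)$ conditioned to stay above $W_2^n$ on $I$. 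Splitting $\beta$ into its linear interpolation and a mean-zero bridge and using $1+s\in I$, one obtains, under $\scrG$,
\[
W_1^n(1+s)-W_1^n(1)-s\sqrt n\;=\;\tfrac sL\bigl(V(1+L)-V(1)\bigr)+\rho+N,
\]
where $\rho=\tfrac sL(\phi(1+L)-\phi(1))-s\sqrt n$ is deterministic with $|\rho|=O(sn^{1/6})=O(\sqrt s)$, the first term is $\scrG$-measurable, and $N$ is, under the \emph{unconditioned} bridge, centred Gaussian of variance $O(s)$.

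The first term is controlled by the crude estimate: $\p\bigl(\tfrac sL|V(1+L)-V(1)|>\tfrac m3\sqrt s\bigr)$ is the probability that $|V(1+L)-V(1)|$ exceeds $\tfrac m3(sn^{1/3})^{-1/2}n^{-1/6}\ge\tfrac m3 n^{-1/6}$ (using $sn^{1/3}<1$), hence is $\le c\,e^{-d(m/3)^{3/2}}$ — this is how the $3/2$ exponent reaches the fine regime. The term $\rho$ is absorbed for $m$ large. For $N$, comparing the conditioned and free bridges through the Radon--Nikodym derivative $\indic[\beta\ge W_2^n\text{ on }I]/Z(\scrG)$, where $Z(\scrG)=\p_{\mathrm{free}}(\beta\ge W_2^n\text{ on }I\mid\scrG)$ is the acceptance probability, gives $\p(|N|>\tfrac m3\sqrt s\mid\scrG)\le 2Z(\scrG)^{-1}e^{-cm^2}$. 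Since $e^{-cm^2}\le e^{-dm^{3/2}}$ for large $m$, the whole fine regime reduces to showing $\E[Z(\scrG)^{-1}]\le C$ uniformly in $n$.

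The bound $\E[Z(\scrG)^{-1}]\le C$ is the main obstacle. On the high-probability event $G$ — controllable by the same circle of ideas applied to the second line, or quotable from \cite{DV} — that $W_2^n$ has moderate oscillation on $I$, a reflection estimate for the free bridge staying above $M:=\max_I W_2^n$ yields $Z(\scrG)^{-1}\le C\bigl(1+L\,[(W_1^n(1)-M)(W_1^n(1+L)-M)]^{-1}\bigr)$, so by Cauchy--Schwarz it is enough to bound $\E[(W_1^n(1)-M)^{-2}\indic_G]$ by $Cn^{1/3}$, and then $L\cdot n^{1/3}=O(1)$ closes the argument. Since $M\le W_2^n(1)+\mathrm{osc}(W_2^n,I)$ with the oscillation controlled on $G$, this comes down to the near-edge gap bound $\p\bigl(W_1^n(1)-W_2^n(1)<t\,n^{-1/6}\bigr)\le Ct^{2+\epsilon}$ as $t\to0$, uniformly in $n$. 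This eigenvalue-repulsion estimate at the spectral edge is the single place where I expect to need the \emph{determinantal} structure of Dyson Brownian motion (the finite Hermite kernel); everything else uses only the Gibbs property and the one-point Tracy--Widom tails. Handling the complementary event on which $W_2^n$ overshoots an endpoint of $W_1^n$ inside $I$, and tracking the variance constant of the non-intersecting Brownian motions, are routine.
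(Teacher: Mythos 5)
First, a remark on context: the paper does not prove this proposition at all — it is imported verbatim as Proposition 4.1 of \cite{DV} — so your attempt has to be judged on its own terms rather than against an in-paper argument. Your reduction to $t=1$, the coarse regime $s\ge n^{-1/3}$ via two Tracy--Widom one-point tails, the algebra of the chord/bridge decomposition on $I=[1,1+n^{-1/3}]$, and the transfer of the coarse bound to the chord term are all fine (you also silently fix the sign in the displayed statement, correctly treating $W_1^n(t+s)-W_1^n(t)-s\sqrt{n/t}$ as the centered quantity).

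The genuine gap is at the step you yourself call the main obstacle, $\E[Z(\scrG)^{-1}]\le C$, and it is not just the unproved repulsion input. Your reduction runs the reflection estimate with the constant barrier $M=\max_I W_2^n$ and then Cauchy--Schwarz against $\E[(W_1^n(1)-M)^{-2}]$. But over an interval of length $n^{-1/3}$ the oscillation of $W_2^n$ is of order $n^{-1/6}$ — the \emph{same} order as the gap $W_1^n(1)-W_2^n(1)$ — so the event $\max_I W_2^n>W_1^n(1)$ has probability bounded away from zero, uniformly in $n$. On that event $W_1^n(1)-M$ is negative, the reflection bound is vacuous, and $(W_1^n(1)-M)^{-2}$ is not even the quantity your edge-repulsion estimate controls; so the chain ``reflection $+$ Cauchy--Schwarz $+$ gap repulsion'' does not close, and the sentence dismissing ``the complementary event on which $W_2^n$ overshoots an endpoint'' as routine conceals precisely the hard case. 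Handling a lower curve that rises above the resampling endpoints is the reason quantitative Brownian Gibbs arguments (e.g.\ Hammond's jump-ensemble machinery, or the arguments in \cite{DV} itself) require resampling on larger windows and substantially more work; it cannot be absorbed into a constant-barrier computation. In addition, the near-edge repulsion bound $\p(W_1^n(1)-W_2^n(1)<tn^{-1/6})\le Ct^{2+\epsilon}$ is left as an external determinantal input; that would be tolerable if the reduction to it were sound, but as it stands the fine regime $s<n^{-1/3}$ — the heart of the proposition — is not proved.
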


Proposition \ref{P:dyson-tails} follows from \cite{hammond2016brownian}, Theorem 2.14, see also \cite{DV}, Proposition 4.1 for an alternate proof.

\begin{prop}[\cite{DV}, Proposition 4.3]
\label{P:cross-prob}
There exist positive constants $b, c,$ and $d$ such that for all $m > 0$ and $n \ge 1$, with probabity at least $1- c e^{-d m^{3/2}}$ we have
$$
W^n_1(t) \le 2 \sqrt{nt} + \sqrt{t} n^{-1/6}[m + b\log^{2/3}(n^{1/3} \log(t \;\vee \;t^{-1}) + 1)] \;\; \mathforall t \in [0, \infty).
$$
Here $a \vee b$ is the maximum of $a$ and $b$.
\end{prop}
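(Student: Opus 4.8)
The plan is to cover $(0,\infty)$ by a multiplicative net, use Brownian scaling of the melon to collapse all of the net intervals to a single bounded‑window problem near time $1$, and then sum the resulting failure probabilities using a chained choice of parameters. First I would fix $a\ge 1$ and set $t_j=(1+an^{-1/3})^j$ for $j\in\Z$, so that the intervals $[t_j,t_{j+1}]$ partition $(0,\infty)$; the endpoint $t=0$ is handled trivially, since $W^n_1(0)=0$ equals the right–hand side there. Because $v\mapsto c^{-1/2}B_i(cv)$ is again a family of independent Brownian motions, the defining formula for the melon gives the scaling identity $(WB^n)_1(cv)\eqd c^{1/2}(WB^n)_1(v)$ jointly in $v$. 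Writing $F_n(u)=n^{1/6}\bigl(W^n_1(u)-2\sqrt{nu}\bigr)/\sqrt u$, this identity shows that for every $j$ the random variable $\sup_{u\in[t_j,t_{j+1}]}F_n(u)$ has the same law (scale by $c=t_j$), namely that of $\Xi_n:=\sup_{u\in[1,\,1+an^{-1/3}]}F_n(u)$. If the claimed inequality fails at some $u\in[t_j,t_{j+1}]$ then $\sup_{[t_j,t_{j+1}]}F_n>\underline\mu_j$, where $\underline\mu_j=\min_{[t_j,t_{j+1}]}\mu$ and $\mu(u)=m+b\log^{2/3}(n^{1/3}\log(u\vee u^{-1})+1)$. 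Hence
$$\p\bigl(\exists\,u>0:\ W^n_1(u)>2\sqrt{nu}+\sqrt u\,n^{-1/6}\mu(u)\bigr)\ \le\ \sum_{j\in\Z}\p(\Xi_n>\underline\mu_j).$$

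Next I would pass to Airy coordinates $u=1+2yn^{-1/3}$, $y\in[0,a/2]$. By definition of $A^n_1$ (Theorem \ref{T:airy-line}) and a Taylor expansion of $2\sqrt{nu}$ one has $W^n_1(u)-2\sqrt{nu}=n^{-1/6}\bigl(A^n_1(y)+y^2+O(y^3n^{-1/3})\bigr)$ and $\sqrt u\ge 1$, so $F_n(u)\le A^n_1(y)+y^2+O(y^3n^{-1/3})$ and therefore $\Xi_n\le \sup_{y\in[0,a/2]}A^n_1(y)+C_a$ for a constant $C_a$ depending only on $a$ (valid for $n\ge a^3$; for smaller $n$ everything is bounded and absorbed into constants). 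So it suffices to prove the uniform‑in‑$n$ tail bound $\p\bigl(\sup_{y\in[0,a/2]}A^n_1(y)>\nu\bigr)\le c\,e^{-d\nu^{3/2}}$.

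This maximal estimate is the technical heart of the argument, and I would obtain it by Kolmogorov–Chentsov–type chaining over the bounded interval $[0,a/2]$. Set $G^n(y):=A^n_1(y)+y^2$. Then $G^n(0)=n^{1/6}(W^n_1(1)-2\sqrt n)$, whose upper tail is $\le c\,e^{-d\nu^{3/2}}$ by Theorem \ref{T:top-bd}. Its increments are controlled by Proposition \ref{P:dyson-tails}: translating between $W^n_1$ and $A^n_1$ and absorbing the Taylor corrections (of size $O(|y-y'|^2+n^{-1/3})$) turns that proposition into the statement that $\p\bigl(|G^n(y)-G^n(y')|>m\sqrt{2|y-y'|}\bigr)\le c\,e^{-dm^{3/2}}$ for all $y,y'\in[0,a/2]$. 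Summing over a dyadic net of $[0,a/2]$ with chained levels $m_\ell=m+\ell$ at scale $2^{-\ell}$ — the series $\sum_\ell 2^{\ell}e^{-d(m+\ell)^{3/2}}\le c\,e^{-dm^{3/2}}$ converges because $\ell^{3/2}$ beats $\ell$ — gives $\sup_{[0,a/2]}G^n\le G^n(0)+m+C$ with probability $\ge 1-c\,e^{-dm^{3/2}}$, hence $\sup_{[0,a/2]}A^n_1\le\sup_{[0,a/2]}G^n\le G^n(0)+m+C$; choosing $m=\nu/2-C$ (for $\nu$ large; the bounded range of $\nu$ is absorbed into $c$) yields the claim. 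Crucially, since the window $[0,a/2]$ in Airy coordinates has bounded length, the chaining has $O(1)$ metric entropy and loses no power of $n$ — this is precisely what the scaling reduction of the first paragraph buys, by replacing the parabolic drift of $W^n_1$ (which moves by order $n^{1/6}$ across a single block) with the centered process $A^n_1$.

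Finally I would verify $\sum_{j}\p(\Xi_n>\underline\mu_j)\le c\,e^{-dm^{3/2}}$. For $j\ge 1$, $\min_{[t_j,t_{j+1}]}\log(u\vee u^{-1})=\log t_j=j\log(1+an^{-1/3})\ge\tfrac12jan^{-1/3}$ (for $n\ge a^3$), so $\underline\mu_j\ge m+b\log^{2/3}(\tfrac12ja+1)$, and symmetrically for $j\le-2$; only the two blocks $j\in\{-1,0\}$ adjacent to $t=1$ have $\underline\mu_j=m$. Using superadditivity of $x\mapsto x^{3/2}$ on $[0,\infty)$, i.e.\ $(p+q)^{3/2}\ge p^{3/2}+q^{3/2}$, together with the bound from the previous two paragraphs,
$$\sum_{j\in\Z}\p(\Xi_n>\underline\mu_j)\ \le\ c\,e^{-dm^{3/2}}\Bigl(2+2\sum_{j\ge1}(\tfrac12ja+1)^{-db^{3/2}}\Bigr)\ \le\ C\,e^{-dm^{3/2}},$$
which is finite once $b$ is chosen large enough that $db^{3/2}>1$. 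Relabelling the constants $c,d$ (to absorb the finite factor and to match the slightly different exponents produced along the way) finishes the proof. The one place calling for care is the step flagged above: arranging the problem so that the maximal control is required only over a bounded window in Airy coordinates, after which the chaining estimate is routine; I expect the bookkeeping in that reduction and in the chaining to be the main obstacle.
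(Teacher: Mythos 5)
This proposition is not proved in the paper at all: it is imported verbatim from \cite{DV} (Proposition 4.3), so there is no in-paper argument to compare against. Your proposal is a correct, self-contained derivation from ingredients the paper does state — Theorem \ref{T:top-bd} for the one-point anchor at $t=1$, Proposition \ref{P:dyson-tails} for increments over an $O(n^{-1/3})$ window, Brownian scaling of the melon to transport that window to every block $[t_j,t_{j+1}]$ of the multiplicative net, and a chaining/union-bound step (which you could equally delegate to Lemma \ref{L:levy-est}) — and it is essentially the same strategy as the cited proof: the $\log^{2/3}$ term exists precisely so that the per-block failure probability $e^{-d(m+b\log^{2/3}(\cdot))^{3/2}}\le e^{-dm^{3/2}}(\cdot)^{-db^{3/2}}$ is summable over the $\sim n^{1/3}\log(t\vee t^{-1})$ blocks, which your last display captures correctly. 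Two cosmetic points to tidy, neither a gap: dividing by $\sqrt{u}\ge 1$ only decreases the quantity when the numerator is nonnegative, which is all you need since you only bound the tail at positive levels, but say so; and the error in translating Proposition \ref{P:dyson-tails} into an increment bound for $G^n(y)=A^n_1(y)+y^2$ is not an additive $O(n^{-1/3})$ but is proportional to $y'-y$ (it is $(y'-y)^2+O((y'-y)y^2n^{-1/3})$, coming from $(y')^2-y^2$ versus $2(y'-y)n^{1/3}(1-\hat{y}^{-1/2})$), hence bounded by $C_a\sqrt{y'-y}$ on $[0,a/2]$ and absorbable by a constant shift of $m$ at every dyadic scale — with a genuinely additive $n^{-1/3}$ error the finest scales of the chaining would not be absorbable, so the correct proportional form matters.
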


Frequently in the paper, we use Theorem \ref{T:top-bd}, Proposition \ref{P:dyson-tails}, and Proposition \ref{P:cross-prob} to bound last passage values either between two fixed points, or to a single line. The connection is via the formula for $Wf$ in Section \ref{SS:melons}. Note that all three of these bounds give optimal or near-optimal results even in the limiting scaling.

\subsection{Modulus of Continuity}

In order to construct many of the objects in the paper, we will need a way of translating tail bounds on two-point differences into modulus of continuity bounds. For this, we use a generalized version of L\'evy's modulus of continuity for Brownian motion.

\begin{lemma}[\cite{DV}, Lemma 3.3]
	\label{L:levy-est}
	Let $T=I_1\times \dots \times I_d$ be a product of bounded real intervals of length $b_1, \dots, b_d$. Let $c, a>0$.
	Let $\scrH$ be a random continuous function from $T$ taking values in a vector space $V$ with norm $|\cdot |$. Assume that for every $i > 0$, there exists $\al_i \in (0,1), \beta_i, r_i > 0$ such that
	\begin{equation}
	\label{E:tail-bd}
	\p(|\scrH(t+u e_i) - \scrH(t)| \ge m u^{{\alpha_i}}) \le c e^{-a{m^{{\beta_i}}}}
	\end{equation}
	for every coordinate vector $e_i$, $m>0$, and points $t,t+u e_i\in T$ with $u < r_i$. Set $\beta = \min_i \beta_i, \al = \max_i \al_i$, and $r = \max_i r_i^{\al_i}$. Then with probability one we have
	\begin{equation}
	|\scrH(t + s) - \scrH(t)| \le C \lf(\sum_{i=1}^d |s_i|^{\al_i} \log^{1/\beta_i} \lf(\frac{2 r^{1/\al_i}}{|s_i|} \rg) \rg),
	\end{equation}
	for every $t,t+s\in T$ with $|s_i| \le r_i$ for all $i$ (here $s = (s_1, \dots, s_d)$).
	Here $C$ is random constant satisfying
	$$
	\p(C > m) \le \lf[\prod_{i=1}^d \frac{b_i}{r_i} \rg] c c_0 e^{-c_1 m^{\beta}},
	$$
	where $c_0$ and $c_1$ are constants that depend on $\al_1, \dots \al_d, \beta_1, \dots, \beta_d, d$ and $a$. Notably, they do not depend on $b_1, \dots, b_d, c$ or $r_1, \dots, r_d$.
\end{lemma}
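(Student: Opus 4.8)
The plan is to run a dyadic chaining argument of the same flavor as the classical proof of L\'evy's modulus of continuity, adapted to a $d$-dimensional parameter box and to the stretched-exponential tails of the hypothesis.

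First I would reduce to single-coordinate increments. Writing $v_0=t$ and $v_j = t+\sum_{k\le j}s_ke_k$ for $j=1,\dots,d$, the telescoping identity $\scrH(t+s)-\scrH(t)=\sum_{j=1}^d\big(\scrH(v_j)-\scrH(v_{j-1})\big)$ holds, and each difference $\scrH(v_j)-\scrH(v_{j-1})$ is an increment of size $|s_j|$ in the pure direction $e_j$ based at a point of $T$ (here one uses that $T$ is a product of intervals, so every $v_j$ lies in $T$). Hence it suffices to produce, for each fixed direction $i$, a random constant $C_i$ with $\p(C_i>m)\le c\,c_0\,e^{-c_1m^{\beta_i}}$ such that $|\scrH(u+he_i)-\scrH(u)|\le C_i|h|^{\al_i}\log^{1/\beta_i}(2b^{\al/\al_i}/|h|)$ for all $u,u+he_i\in T$ with $0<|h|\le r_i$; taking $C=\max_iC_i$ then yields the theorem, the tail exponent degrading to $\beta=\min_i\beta_i$ since the slowest-decaying term dominates the sum of the $d$ tail bounds.

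For a fixed direction $i$ I would discretize: for $n$ large enough that $b_j2^{-n}<r_j$ for all $j$, let $G_n\sset T$ be the product grid with mesh $b_j2^{-n}$ in coordinate $j$, and let $X_n^{(i)}$ be the maximum of $|\scrH(q)-\scrH(p)|/(b_i2^{-n})^{\al_i}$ over the at most $C_d2^{nd}$ pairs $p,q\in G_n$ with $q=p+b_i2^{-n}e_i$. The tail hypothesis \eqref{E:tail-bd} and a union bound give $\p(X_n^{(i)}\ge m)\le C_d2^{nd}c\,e^{-am^{\beta_i}}$. Setting $C_i:=\sup_n X_n^{(i)}/n^{1/\beta_i}$, a geometric-series estimate over $n$ (the $2^{nd}$ factors being beaten by $e^{-am^{\beta_i}n}$ once $m$ is not too small) shows $C_i<\infty$ almost surely with $\p(C_i>m)\le c\,c_0\,e^{-c_1m^{\beta_i}}$, where $c_0,c_1$ depend only on $d,a,\al_i,\beta_i$: they do not depend on $b$, since enlarging an interval only adds coarse levels that can be dropped from the sum, and they do not depend on the prefactor $c$, which is absorbed into $c_0$ in the range of small $m$. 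Pinning down this uniformity is the delicate point of the argument.

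Finally I would chain. Given $u,u+he_i\in T$ with $b_i2^{-N-1}<|h|\le b_i2^{-N}$, I approximate $u$ and $u+he_i$ by points of $G_n$ at successively finer levels $n\ge N$, connecting consecutive approximants by single grid edges (first for base points with dyadic passive coordinates, then removing that restriction by continuity of $\scrH$). Each level-$n$ edge used contributes at most $C_i\,n^{1/\beta_i}(b_i2^{-n})^{\al_i}$, and since $\al_i>0$ the series over $n\ge N$ is dominated by its first term, giving a bound of order $C_iN^{1/\beta_i}|h|^{\al_i}$; since $2^{-N}$ is of order $|h|/b_i$, we have $N\le\log(2b^{\al/\al_i}/|h|)$ up to a constant depending only on $\beta_i$, which is absorbed into $C_i$. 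Summing the $d$ single-direction bounds through the telescoping identity completes the proof. The main obstacle here is bookkeeping rather than conceptual difficulty: carrying the $d$-dimensional grid through the chain, making the exponents $\al_i,\beta_i$ and the logarithmic argument come out exactly as stated, and verifying that the tail constants $c_0,c_1$ of $C$ genuinely do not depend on $b$ or $c$.
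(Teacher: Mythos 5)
First, a point of comparison: the paper itself does not prove this lemma — it is quoted verbatim as Lemma 3.3 of \cite{DV} — so there is no in-paper proof to measure you against; your dyadic-chaining strategy is certainly the right family of argument, but as written it has a real gap.

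The gap is in the reduction to single-direction increments followed by a per-direction chain. When you chain the increment from $u$ to $u+he_i$ through the grids $G_n$, the approximating points $u^{(n)}$ move not only in direction $i$: the level-$n$ floors of the \emph{passive} coordinates of $u$ change as $n$ is refined, and the union bound only covers edges of $G_n$, whose passive coordinates are level-$n$ dyadics. So the chain necessarily uses direction-$j$ edges, $j\ne i$, at every level from the coarsest level $N\approx\log_2(b_i/|h|)$ upward, and their total contribution is a cross term of order $C_j\,(b_j|h|/b_i)^{\al_j}\log^{1/\beta_j}(2b_i/|h|)$, not something of order $|h|^{\al_i}\log^{1/\beta_i}$. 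These cross terms are not dominated by the claimed right-hand side when the exponents differ across coordinates: with $d=2$, $\al_1=1/2$, $\al_2=1/3$, $|s_1|=h$ small and $|s_2|=h^{10}$, the stated bound is of order $h^{1/2}$ times a log, while the cross term is of order $h^{1/3}$ times a log. Your proposed remedy (prove it first for base points with dyadic passive coordinates, then remove the restriction by continuity) does not repair this: if the passive coordinates are dyadic at a fine level $k$, the chain still uses passive-direction edges at the coarse levels $N\le n\le k$, so the same cross terms appear with a constant independent of $k$; and if you only allow passive coordinates dyadic at level at most $N$, those base points are no longer dense, so continuity cannot remove the restriction. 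Since the anisotropic form with separate $\al_i,\beta_i$ and separate logarithms in $|s_i|$ is exactly what is used later (e.g.\ Proposition \ref{P:mod-land}, where the $\tau^{1/3}$ and $\xi^{1/2}$ terms must stay decoupled), the cross terms cannot be absorbed; one needs an additional device, for instance rounding the passive coordinates to a much finer, $\al$-ratio-dependent dyadic scale (so the passive error is $O(|s_i|^{\al_i})$ while the extra entropy only inflates the logarithm), or a bootstrap from the cruder modulus that the naive chain does yield.

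A secondary issue is the tail of $C$. Your union bound gives a convergent geometric series only once $a m^{\beta_i}$ exceeds roughly $d\log 2$, and for smaller $m$ the phrase ``absorbed into $c_0$'' does not work when $c<1$: the trivial bound $\p\le 1$ is not of the form $c\,c_0e^{-c_1m^{\beta}}$ with $c_0$ independent of $c$. Handling the small-$m$ regime with the claimed linear dependence on $c$ (and with constants independent of $b$ and the $r_i$) is genuinely delicate and is not achieved by the argument as sketched; at minimum this step needs to be made explicit rather than waved through.
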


\subsection{Notation}
\label{SS:notation}

We now introduce notation for last passage values across Brownian motions, Brownian melons, and the Airy line ensemble. This notation will be used throughout the paper starting in Section \ref{S:short-lp}.
Letting $B \in C^\Z$ be a sequence of independent two-sided Brownian motions, we first define
\begin{align*}
[x \LP y]_n &:= B[(x, n) \LP (y, 1)],
\end{align*}
We will often omit the subscript $n$ from the brackets and simply write $[x \LP y]$ when the value of $n$ is clear from context. We will also use the mixed notation
$$
[(x, k) \LP y]_n := B[(x, k) \LP (y, 1)]  \quad \mathand \quad [x \LP (y, k)]_n := B[(x, n) \LP (y, k)].
$$
For last passage values in the melon $WB^n$, we will use all the same notation with curly brackets $\{ \;\}$ in place of square ones $[\;]$. We will also use angled brackets $\langle \; \rangle$ for last passage values across the parabolic Airy line ensemble:
$$
\langle (x, k) \LP y \rangle := \scrA[(x, k) \LP (y, 1)].
$$
We will write $\pi\{x, y\}$ for the rightmost last passage path in $WB^n$ from $(x, n)$ to $(y, 1)$. Here a last passage path $\pi$ is `rightmost' if $\pi \ge \tau$ for any other last passage path $\tau$, see Lemma \ref{L:rightmost}. We similarly write $\pi[x, y]$ be the rightmost last passage path in $B^n$ from $(x, n)$ to $(y, 1)$.

\medskip

To avoid carrying around $2n^{-1/3}$ spatial terms, we will often use the notation
$$
\bar{x} = 2xn^{-1/3} \quad \mathand \quad \hat{y} = 1 + 2yn^{-1/3}
$$
when the value of $n$ is clear from context.

\section{The geometry of last passage paths}
\label{S:geometry}

Last passage paths can be thought of as geodesics in a metric space. This is a guiding principle for many of the proofs in the paper. With this intuition in mind, we devote this section to stating and proving some basic facts about the geometry of last passage paths. Many of these facts are well-known and well-used in the context of last passage percolation.

\medskip

For the rest of this section, let $f\in C^\Z$. The first lemma states that last passage paths have the geodesic property that they maximize length between any two points on the path. Its proof is straightforward and hence omitted.

\begin{lemma}[Geodesic property]\label{L:geod-btw}
Let $((x, \ell), (y, m))$ be an endpoint pair of single points. Then for any $\pi$ in the set of last passage paths $P_f[(x, \ell), (y, m)]$, and any times $s < t \in [x, y]$, we have that
$$
\int_s^t df \circ \pi = \max \lf\{ \int_s^t df \circ \tau : \;\; \tau \text{ is a path from } (s,\pi(s)) \text{ to } (t,\pi(t)) \rg\}.
$$
\end{lemma}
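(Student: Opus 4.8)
The plan is to establish the two inequalities separately, the nontrivial direction being a standard cut-and-paste (surgery) argument. Write $\ell'=\pi(s)$ and $m'=\pi(t)$, so $\ell'\ge m'$ since $\pi$ is nonincreasing, and note that the restriction $\pi|_{[s,t]}$ is itself a path from $(s,\ell')$ to $(t,m')$. Hence
\[
\int_s^t f\,d\pi \;\le\; \max\Bigl\{\textstyle\int_s^t f\,d\tau : \tau \text{ is a path from }(s,\ell')\text{ to }(t,m')\Bigr\},
\]
and the content of the lemma is that equality holds, i.e.\ that $\pi|_{[s,t]}$ is length-maximizing among such paths.

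For the reverse inequality I would argue by contradiction. Suppose some path $\tau$ from $(s,\ell')$ to $(t,m')$ satisfied $\int_s^t f\,d\tau > \int_s^t f\,d\pi$. Form the spliced function $\sigma:[x,y]\to\Z$ equal to $\pi$ on $[x,s]$, to $\tau$ on $[s,t]$, and to $\pi$ on $[t,y]$, adjusting only the values at the two overlap times $s$ and $t$ so that $\sigma$ is right-continuous there; this is possible because $\pi$ and $\tau$ take the common value $\ell'$ at $s$ and the common value $m'$ at $t$, and because monotonicity is preserved ($\sigma\ge\ell'$ on $[x,s)$, $m'\le\sigma\le\ell'$ on $[s,t]$, and $\sigma\le m'$ on $(t,y]$). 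One checks $\sigma$ is then a path from $(x,\ell)$ to $(y,m)$, so $\sigma\in\scrQ((x,\ell),(y,m))$. Since $\int f\,d\cdot$ is a sum of increments of the continuous function $f$ over the subintervals on which a path is constant, it is additive over $[x,s]$, $[s,t]$, $[t,y]$ and is unaffected by the values of $\sigma$ at the single times $s,t$; therefore
\[
\int_x^y f\,d\sigma = \int_x^s f\,d\pi + \int_s^t f\,d\tau + \int_t^y f\,d\pi > \int_x^s f\,d\pi + \int_s^t f\,d\pi + \int_t^y f\,d\pi = \int_x^y f\,d\pi .
\]
But $\pi\in P_f[(x,\ell),(y,m)]$ means $\int_x^y f\,d\pi = f[(x,\ell)\LP(y,m)] = \sup_{\sigma'\in\scrQ((x,\ell),(y,m))}\int f\,d\sigma'$, contradicting the strict inequality above. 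Hence no such $\tau$ exists, $\pi|_{[s,t]}$ attains the maximum, and the lemma follows.

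The only genuine subtlety — and the one place I would be careful — is verifying that $\sigma$ is a legitimate path in the paper's sense at the gluing times $s$ and $t$, namely that it stays nonincreasing and remains cadlag on $(x,y)$; this is immediate from the value-matching and monotonicity observations, and everything else (additivity of the length functional, optimality of $\pi$) is formal. The boundary sub-cases $s=x$ or $t=y$ require no gluing on the corresponding side and are contained in the same argument. This is why the authors can call the proof straightforward and omit it.
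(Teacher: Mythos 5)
Your proof is correct: the splice-and-compare argument, together with the observation that the length functional is additive over the subintervals and insensitive to the path's value at the two gluing times, is exactly the standard argument, and your handling of the cadlag/monotonicity details at $s$ and $t$ is sound. The paper itself omits the proof as "straightforward," and what you have written is precisely the argument the authors intend the reader to supply.
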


The next fact is a straightforward consequence of Lemma \ref{L:geod-btw}. Its proof is again omitted.

\begin{lemma}[Metric composition law]
\label{L:metric}
Let $((x, \ell), (y, m))$ be an endpoint pair of single points. Then for any $k \in \{m, \dots, \ell\}$, we have that
\begin{align*}
f[(x, \ell) \LP (y, m)] &= \sup_{z \in [x, y]} f[(x, \ell) \LP (z, k)]  + f[(z, k) \LP (y, m)]
\end{align*}
and for any $k \in \{m + 1, \dots, \ell\}$, we have
\begin{align*}
f[(x, \ell) \LP (y, m)]&= \sup_{z \in [x, y]} f[(x, \ell) \LP (z, k)]  + f[(z, k - 1) \LP (y, m)].
\end{align*}
\end{lemma}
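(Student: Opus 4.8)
The plan is to prove each identity by establishing the two inequalities separately, using nothing more than the fact that last-passage paths can be cut and glued at an interior point; no existence of optimizing paths is needed, and indeed this is essentially the reasoning behind Lemma~\ref{L:geod-btw} (which could also be invoked directly once optimizers are known to exist). I describe the first identity; the second needs only a cosmetic change.

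For the inequality ``$\geq$'', fix $z\in[x,y]$. Given a path $\pi_1$ from $(x,\ell)$ to $(z,k)$ and a path $\pi_2$ from $(z,k)$ to $(y,m)$, glue them by setting $\pi=\pi_1$ on $[x,z)$ and $\pi=\pi_2$ on $[z,y]$, resetting the value at $z$ if necessary so that $\pi$ is right-continuous there. One checks that $\pi$ is a nonincreasing path from $(x,\ell)$ to $(y,m)$, cadlag on $(x,y)$, and (since length depends only on jump times, as in \eqref{E:lp-gaps}) that $\int df\circ\pi=\int df\circ\pi_1+\int df\circ\pi_2$. Hence $f[(x,\ell)\LP(y,m)]\geq\int df\circ\pi_1+\int df\circ\pi_2$; taking suprema over $\pi_1,\pi_2$ and then over $z$ gives ``$\geq$''.

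For the inequality ``$\leq$'', let $\pi$ be any path from $(x,\ell)$ to $(y,m)$. Since $\pi$ is integer-valued, nonincreasing, cadlag, with $\pi(x)=\ell\geq k$ and $\pi(y)=m\leq k$, set $z=\inf\{t\in[x,y]:\pi(t)\leq k\}$; then the left limit satisfies $\pi(z^-)\geq k$ while $\pi(z)\leq k$, so the point $(z,k)$ lies along $\pi$. Cut $\pi$ at $z$: let $\pi_1$ agree with $\pi$ on $[x,z)$ and equal $k$ at $z$, and let $\pi_2$ equal $k$ at $z$ and agree with $\pi$ on $(z,y]$. Monotonicity of $\pi$ together with $\pi(z^-)\geq k\geq\pi(z)$ makes $\pi_1$ a valid path from $(x,\ell)$ to $(z,k)$ and $\pi_2$ a valid path from $(z,k)$ to $(y,m)$ (recall a path need only be cadlag on the open interval). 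Changing $\pi$ at the single point $z$ does not affect length, so $\int df\circ\pi=\int df\circ\pi_1+\int df\circ\pi_2\leq f[(x,\ell)\LP(z,k)]+f[(z,k)\LP(y,m)]$; taking the supremum over $\pi$ gives ``$\leq$''. The second identity is handled identically, with two trivial adjustments: in ``$\geq$'' one first reads a path $\pi_2$ from $(z,k-1)$ to $(y,m)$ as a path from $(z,k)$ to $(y,m)$ by resetting its value at the left endpoint $z$ to $k$ (same length, still valid); in ``$\leq$'' one uses $z=\inf\{t:\pi(t)\leq k-1\}$ instead (which tacitly requires $k\geq m+1$ for the statement to be meaningful), observes that both $(z,k)$ and $(z,k-1)$ lie along $\pi$, and cuts $\pi$ into a path to $(z,k)$ and a path from $(z,k-1)$.

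The substance of the argument is not the inequalities but the bookkeeping at the junction time $z$, which I expect to be the only delicate point to write out with care. First, $\pi$ may jump by more than one and skip level $k$ entirely, so $\pi(z)$ need not equal $k$; this is exactly why one works with the ``lies along'' notion and permits the cut pieces (and, in the gluing direction, the glued path) to make a length-zero visit to level $k$ and perhaps fail right-continuity at $z$ until it is repaired by hand. Second, one must match the precise conventions of Section~\ref{S:prelim} --- nonincreasing, cadlag only on the open interval, prescribed endpoint values --- which is what the monotonicity and limit checks above accomplish. Third, the extreme cases $k=\ell$ and $k=m$ are degenerate: for instance $f[(x,\ell)\LP(z,\ell)]$ is just the increment $f_\ell(z)-f_\ell(x)$ realized by the constant path, and $z=x$ is forced in the ``$\leq$'' step; these are subsumed by the same formulas and need no separate treatment. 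It is cleanest throughout to compute lengths via the jump-time expression \eqref{E:lp-gaps}, which is manifestly additive under cutting and gluing at a point and avoids any fuss about the value of $f'$ at isolated times or about non-differentiable $f$.
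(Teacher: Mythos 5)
Your proof is correct, and it is essentially the argument the paper intends: the lemma is stated there with proof omitted as a straightforward consequence of the geodesic property (Lemma \ref{L:geod-btw}), which is exactly the cut-and-glue reasoning at a level-$k$ crossing that you carry out. Your version has the minor additional merit of working directly with near-optimal paths and suprema (so it never needs existence of optimizing paths), and you correctly identify and handle the only delicate points, namely the cadlag/endpoint conventions and length additivity at the junction time $z$, plus the degenerate cases $k=\ell$ and $k=m$.
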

Lemma \ref{L:metric} implies a \textbf{triangle inequality} for last passage values. For any $x \le z \le y$ and $m\le k \le \ell$ we have
\begin{equation}
\label{E:triangle-ineq}
f[(x, \ell) \LP (y, m)] \ge f[(x, \ell) \LP (z, k)]  + f[(z, k) \LP (y, m)].
\end{equation}
Note that in this equation, the inequality is reversed compared to the triangle inequality for metric spaces. It will also be useful to understand the right hand side above as a function of $z$.
\begin{lemma}
\label{L:double-mono}
Let $((x, \ell), (y, m))$ be an endpoint pair and fix $k \in \{m, \dots, \ell\}$. For $z \in [x, y]$, define
$$
h_1(z) = f[(x, \ell) \LP (z, k)]  - f_k(z) \quad \mathand \quad h_2(z) = f[(z, k) \LP (y, m)] + f_k(z).
$$
Then
$$
f[(x, \ell) \LP (y, m)] = \sup_{z \in [x, y]} h_1(z) + h_2(z),
$$
the function $h_1$ is nondecreasing and the function $h_2$ is nonincreasing.
\end{lemma}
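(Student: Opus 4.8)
The plan is to separate the claim into the identity and the two monotonicity statements, and to get all three essentially for free from facts already established. For the identity, note that the terms $-f_k(z)$ in $h_1$ and $+f_k(z)$ in $h_2$ cancel, so that
\[
h_1(z) + h_2(z) = f[(x, \ell) \LP (z, k)] + f[(z, k) \LP (y, m)] \qquad \text{for all } z \in [x, y].
\]
Taking the supremum over $z \in [x,y]$ and invoking Lemma \ref{L:metric} (applied with the chosen $k$) then gives $f[(x,\ell)\LP(y,m)] = \sup_{z\in[x,y]} h_1(z)+h_2(z)$. (When $k=\ell$ or $k=m$ one of the two last passage values is just a difference of endpoint heights, and the identity is a triviality; I would mention this but not dwell on it.)

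For the monotonicity I would use the gap representation \eqref{E:lp-gaps}. A path $\pi$ from $(x,\ell)$ to $(z,k)$ is determined by its weakly increasing tuple of jump times $x \le t_{\ell-1} \le t_{\ell - 2} \le \cdots \le t_k \le z$, where $t_i$ is the time at which $\pi$ crosses the gap $g_i = f_i - f_{i+1}$ (multi-level jumps correspond to ties, which the formulation already allows), and for such a path \eqref{E:lp-gaps} reads $\int df\circ\pi = f_k(z) - f_\ell(x) - \sum_{i=k}^{\ell-1} g_i(t_i)$. Hence, taking the supremum over $\pi$ — equivalently, the infimum of the gap sum over admissible tuples — and subtracting $f_k(z)$,
\[
h_1(z) = -f_\ell(x) - \inf\Big\{ \sum_{i=k}^{\ell-1} g_i(t_i) \ :\ x \le t_{\ell-1} \le \cdots \le t_k \le z \Big\}.
\]
As $z$ increases the feasible set of tuples only grows, so the infimum only decreases, and $h_1$ is nondecreasing. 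The argument for $h_2$ is symmetric: a path from $(z,k)$ to $(y,m)$ corresponds to a tuple $z \le t_{k-1}\le\cdots\le t_m \le y$, so \eqref{E:lp-gaps} gives
\[
h_2(z) = f_m(y) - \inf\Big\{ \sum_{i=m}^{k-1} g_i(t_i) \ :\ z \le t_{k-1}\le\cdots\le t_m \le y \Big\},
\]
and here the feasible set shrinks as $z$ increases, so the infimum increases and $h_2$ is nonincreasing.

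I do not expect any real obstacle here; the lemma is a bookkeeping exercise. The only points needing a little care are the correspondence between paths and weakly ordered tuples of jump times, and the degenerate cases $z\in\{x,y\}$ and $k\in\{m,\ell\}$, where some of the sums above are empty; in all of these the displayed formulas remain valid under the obvious conventions, and weak monotonicity (which is what is claimed) still holds.
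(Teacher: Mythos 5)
Your proof is correct and follows essentially the same route as the paper: the identity is the cancellation of the $\pm f_k(z)$ terms combined with Lemma \ref{L:metric}, and the monotonicity comes from writing each of $h_1$, $h_2$ via the gap representation \eqref{E:lp-gaps} as a $z$-independent constant minus an infimum of gap sums over a feasible set of jump times that is nested monotonically in $z$. Your write-up just makes explicit the path-to-jump-time correspondence and degenerate cases that the paper's shorter proof leaves implicit.
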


\begin{proof}
The representation of $f[(x, \ell) \LP (y, m)]$ as a supremum over $h_1 + h_2$ follows immediately from Lemma \ref{L:metric}. By the sum of gaps representation \eqref{E:lp-gaps},
$$
h_1(z) = - f_\ell(x) - \inf \sum_{i=k+1}^{\ell} f_{i-1}(t_i) - f_{i}(t_i),
$$
where the infimum is over all sequences $t_{\ell} \le \dots \le t_{k+1} \in [x, z]$. As we increase $z$, this infimum can only get smaller, so $h_1$ is nondecreasing. The proof that $h_2$ is nonincreasing is similar.
\end{proof}



\begin{lemma}
\label{L:closed-set}
For any endpoint pair $((x,\ell),(y,m))$, the set $P[(x,\ell),(y,m)]$ is closed with respect to the topology of convergence of jump times \eqref{E:jump}.
\end{lemma}

Lemma \ref{L:closed-set} follows from the continuity of $f$. The next lemma shows that  we can pick out rightmost and leftmost paths in the set $P[(x, \ell), (y, m)]$.

\begin{lemma}
\label{L:rightmost}
Let $((x, \ell), (y, m))$ be an endpoint pair. There exist paths $\pi^-, \pi^+ \in P[(x, \ell), (y, m)]$ such that for any $\pi \in P[(x, \ell), (y, m)]$ and any $z \in [x, y]$ we have
$$
\pi^-(z) \le \pi(z) \le \pi^+(z).
$$
We refer to $\pi^-$ as the \textbf{leftmost last passage path} and $\pi^+$ as the \textbf{rightmost last passage path}.
\end{lemma}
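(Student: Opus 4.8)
The plan is to establish that $P[(x,\ell),(y,m)]$ is closed under the pointwise operations $\vee$ (max) and $\wedge$ (min), and then to read off $\pi^+$ and $\pi^-$ as extremizers of a continuous functional over this set.

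\textbf{Step 1 (lattice property).} Fix $\pi_1,\pi_2\in P[(x,\ell),(y,m)]$ and set $\sigma=\pi_1\vee\pi_2$, $\tau=\pi_1\wedge\pi_2$ pointwise. Each of $\sigma,\tau$ is nonincreasing, cadlag on $(x,y)$, and takes the values $\ell$ at $x$ and $m$ at $y$, so $\sigma,\tau\in\scrQ((x,\ell),(y,m))$. For a path $\pi$ from $(x,\ell)$ to $(y,m)$ and $i\in\{m,\dots,\ell-1\}$ let $t_i(\pi)=\inf\{t:\pi(t)\le i\}$, so that \eqref{E:lp-gaps} reads
\[
\int df\circ\pi=f_m(y)-f_\ell(x)-\sum_{i=m}^{\ell-1}g_i\big(t_i(\pi)\big).
\]
Because $\pi_1,\pi_2$ are nonincreasing and cadlag, each level set $\{\pi_j\le i\}$ is a right-closed subinterval $[t_i(\pi_j),y]$ of $[x,y]$; hence $\{\sigma\le i\}=\{\pi_1\le i\}\cap\{\pi_2\le i\}$ gives $t_i(\sigma)=\max(t_i(\pi_1),t_i(\pi_2))$ and $\{\tau\le i\}=\{\pi_1\le i\}\cup\{\pi_2\le i\}$ gives $t_i(\tau)=\min(t_i(\pi_1),t_i(\pi_2))$. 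Applying the elementary identity $g(\max(a,b))+g(\min(a,b))=g(a)+g(b)$ to each gap function $g_i$ and summing over $i$, the gap sums of $\sigma$ and of $\tau$ together equal those of $\pi_1$ and of $\pi_2$, so
\[
\int df\circ\sigma+\int df\circ\tau=\int df\circ\pi_1+\int df\circ\pi_2=2\,f[(x,\ell)\LP(y,m)].
\]
Since $\sigma,\tau\in\scrQ((x,\ell),(y,m))$, each integral on the left is at most $f[(x,\ell)\LP(y,m)]$, so both equal it; thus $\sigma,\tau\in P[(x,\ell),(y,m)]$.

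\textbf{Step 2 (extremizers).} The space of all paths from $(x,\ell)$ to $(y,m)$ is compact in the graph topology — it is the continuous image of the compact set of ordered crossing-time tuples $\{x\le t_{\ell-1}\le\cdots\le t_m\le y\}$ — and $P[(x,\ell),(y,m)]$ is a closed subset by Lemma \ref{L:closed-set}, hence compact; it is nonempty because length is continuous on the compact set $\scrQ((x,\ell),(y,m))$. The functional $\pi\mapsto\int_x^y\pi(t)\,dt$ is continuous (indeed Lipschitz) on this space, so it attains a maximum over $P[(x,\ell),(y,m)]$ at some $\pi^+$. Given any $\pi\in P[(x,\ell),(y,m)]$, Step 1 yields $\pi^+\vee\pi\in P[(x,\ell),(y,m)]$, and $\int_x^y(\pi^+\vee\pi)\ge\int_x^y\pi^+$ forces equality by maximality; as $\pi^+\vee\pi\ge\pi^+$ pointwise with equal integrals, $\pi^+\vee\pi=\pi^+$ almost everywhere on $(x,y)$, hence everywhere (both are cadlag on $(x,y)$ and agree at $x$ and $y$). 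Therefore $\pi\le\pi^+\vee\pi=\pi^+$ on $[x,y]$. Taking a minimizer of $\int_x^y\pi(t)\,dt$ and using $\wedge$ in place of $\vee$ produces $\pi^-$ with $\pi\ge\pi^-$ for all $\pi\in P[(x,\ell),(y,m)]$.

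\textbf{Main obstacle.} The only genuine content is Step 1, the surgery principle that exchanging a lower excursion of one optimal path for the corresponding excursion of another preserves optimality; the gap representation \eqref{E:lp-gaps} together with the $\max/\min$ identity reduces this to bookkeeping with crossing times. (Alternatively, one can argue directly: $\pi_1-\pi_2$ changes sign finitely often since each path has at most $\ell-m$ jumps, and on each interval between consecutive sign changes the Geodesic Property of Lemma \ref{L:geod-btw} forces the two pieces to have equal length, so the pointwise-larger path is again a last passage path.) Step 2 is then soft analysis: compactness of $P[(x,\ell),(y,m)]$ plus strict monotonicity of the integral functional under $\vee$ and $\wedge$.
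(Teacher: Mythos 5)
Your proof is correct and takes essentially the same route as the paper: the core step in both is that the pointwise max and min of two last passage paths are again last passage paths because together they cover the same portions of the lines (your crossing-time/gap identity is just a spelled-out version of this), after which the extremal paths are extracted using closedness of $P[(x,\ell),(y,m)]$. Your Step 2 simply makes the paper's terse appeal to Lemma \ref{L:closed-set} explicit, via compactness of the path space and maximizing the functional $\pi\mapsto\int_x^y\pi(t)\,dt$.
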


\begin{proof}
The set of last passage paths is closed by Lemma \ref{L:closed-set}. Thus it suffices to show that for any paths $\pi_1, \pi_2 \in P[(x, \ell), (y, m)]$, that there exist paths $\tau_1, \tau_2 \in P[(x, \ell), (y, m)]$ such that
$$
\tau_1(z) \le \pi_i(z) \le \tau_2(z) \quad \text{ for } i = 1,2.
$$
Define paths   $\tau_1=\pi_1\wedge\pi_2$ and $\tau_2=\pi_1\vee\pi_2$ from $(x, \ell)$ to $(y, m)$. Then,
$$
\int d f \circ \tau_1 + \int d f \circ \tau_2 = \int d f \circ \pi_1 + \int d f \circ \pi_2,
$$
since the paths $\tau_1$ and $\tau_2$ cover the same parts of lines in $f$ as $\pi_1$ and $\pi_2$. Since the paths $\pi_1$ and $\pi_2$ maximize length, each of the paths $\tau_i$ must maximize length as well.
\end{proof}


\begin{lemma}
[Monotonicity and continuity of last passage paths]
\label{L:mono-path}For $x \le y$, let $\pi^+[x, y]$ denote the rightmost last passage path in $P[(x, n), (y, 1)]$. Then $\pi^+[x, y]$ is a nondecreasing, right continuous functions of both $x$ and $y$ in the topology of convergence of jump times. Similarly the leftmost path $\pi^-[x, y]$ is a nondecreasing, left continuous function of both $x$ and $y$.
\end{lemma}

\begin{proof} We just prove the statements for rightmost paths.
Let $x_1 \le x_2$, $y_1 \le y_2$, and $x_2 \le y_1$.
On the interval  $[x_2, y_1]$, define 
$$
\tau_1 = \pi^+[x_1, y_1]\wedge \pi^+[x_2, y_2],\quad \tau_2 = \pi^+[x_1, y_1]\vee \pi^+[x_2, y_2].
$$
We can extend $\tau_1$ to the interval $[x_1, x_2]$ by defining it to be equal to $\pi^+[x_1, y_1]$ there and similarly extend $\tau_2$ to $[y_1, y_2]$ by setting it to be equal to $\pi^+[x_2, y_2]$. For $i =1, 2$, $\tau_i$ is a path from $(x_i, n)$ to $(y_i, 1)$.
We have
$$
\int d f \circ \tau_1 + \int d f \circ \tau_2 = \int d f \circ \pi^+[x_1, y_1] + \int d f \circ \pi^+[x_2, y_2].
$$
Since $\pi^+[x_1, y_1]$ and $\pi^+[x_2, y_2]$ maximize length, we have that $\tau_i \in P[(x_i, n), (y_i, 1)]$ for $i =1, 2$. Moreover, $\tau_2 \ge \pi^+[x_2, y_2]$ by construction. Since $\pi^+[x_2, y_2]$ is a rightmost last passage path, this is in fact equality. Hence $\tau_1 = \pi^+[x_1, y_1]$ as well, showing monotonicity.

\medskip

Now we prove right continuity in $x$. The proof of right continuity in $y$ is similar. Fix $y \in \R$ and let $x_m \cvgdown x \in \R$. By monotonicity, the sequence of paths $\pi^+[x_m, y]$ has a limit $\pi$ in the topology of convergence of jump times. Since path length is a continuous function in this topology, we have
$$
\int df \circ \pi = \lim_{m \to \infty} \int df \circ \pi^+[x_m, y] = \lim_{m \to \infty} f[(x_m, n) \to (y, 1)] = f[(x, n) \to (y, 1)].
$$
The final equality follows from the continuity of last passage values in $x$, see \eqref{E:lp-gaps-cont}.
Therefore $\pi$ is a last passage path from $(x, n)$ to $(y, 1)$. Moreover, by monotonicity $\pi^+[x, y] \le \pi^+[x_m, y]$ on the interval $[x_m, y]$ for all $m$. Therefore $\pi^+[x, y] \le \pi$ as well. Since $\pi^+[x, y]$ is the rightmost last passage path, $\pi = \pi^+[x, y]$ as desired.
\end{proof}

We now show that paths exhibit a tree structure.

\begin{prop}
\label{P:tree-structure}
Let $x_1 \le x_2 < y_1 \le y_2$ be points in $\R$. Let $\pi^+[x_i, y_i]$ denote the rightmost last passage path in $P[(x_i, n), (y_i, 1)]$. Then there is a (possibly empty) interval $[a, b] \sset [x_2, y_1]$ such that the following holds.
\begin{enumerate}[label=(\roman*)]
\item $\pi^+[x_1, y_1](s) = \pi^+[x_2, y_2](s) \mathforall s \in (a, b).$
\item $\pi^+[x_1, y_1](s) < \pi^+[x_2, y_2](s) \mathforall s \in [x_2, y_1] \smin [a, b].$
\end{enumerate}
In particular, if $x_1 = x_2$, then the last passage paths to $y_1$ and $y_2$ follow the same path up to time $b$, and are entirely disjoint afterwards (so they form two branches in a tree).
The same tree structure holds for $\pi^-$ in place of $\pi^+$.
\end{prop}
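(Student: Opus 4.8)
The plan is to reduce the statement to a single ``touching geodesics coincide in between'' principle, proved by a splicing (surgery) argument. Note first that $[x_2,y_1]$ is exactly the common domain of $\pi^+[x_1,y_1]$ and $\pi^+[x_2,y_2]$, and that by the hypothesis $x_1\le x_2$, $y_1\le y_2$ together with Lemma \ref{L:mono-path} (monotonicity of rightmost last passage paths in their endpoints) we have the pointwise bound $\pi^+[x_1,y_1]\le \pi^+[x_2,y_2]$ on $[x_2,y_1]$. Hence it suffices to show that the coincidence set
$$
E=\{s\in[x_2,y_1]:\pi^+[x_1,y_1](s)=\pi^+[x_2,y_2](s)\}
$$
is an interval that contains its left endpoint; then (i) holds on the interior $[a,b)$ of $E$, and (ii) holds on the complement because the pointwise ordering makes every non-coincidence strict.

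The heart of the matter is the claim that $E$ is convex: if $s_1<s_2$ both lie in $E$, then $[s_1,s_2]\subseteq E$. Put $v_i=\pi^+[x_1,y_1](s_i)=\pi^+[x_2,y_2](s_i)$; since both paths are nonincreasing, $v_1\ge v_2$, and if $v_1=v_2$ both paths are forced to be constant on $[s_1,s_2]$ and the claim is trivial, so assume $v_1>v_2$. Let $\sigma$ be the rightmost last passage path from $(s_1,v_1)$ to $(s_2,v_2)$ (Lemma \ref{L:rightmost}). By the geodesic property (Lemma \ref{L:geod-btw}), the restriction of $\pi^+[x_1,y_1]$ to $[s_1,s_2]$ is itself a last passage path between these endpoints, so $\sigma$ dominates it pointwise. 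Now splice: let $\tilde\pi$ agree with $\pi^+[x_1,y_1]$ off $[s_1,s_2]$ and with $\sigma$ on $[s_1,s_2]$. Since $\sigma$ and the original restriction share the endpoint heights $v_1,v_2$, the glued function is still nonincreasing and cadlag, so $\tilde\pi\in\scrQ((x_1,n),(y_1,1))$; and since $\sigma$ and the original restriction have the same (maximal) length between $(s_1,v_1)$ and $(s_2,v_2)$, we get $\int df\circ\tilde\pi=\int df\circ\pi^+[x_1,y_1]$, so $\tilde\pi$ is again a last passage path. But $\tilde\pi\ge\pi^+[x_1,y_1]$ pointwise and $\pi^+[x_1,y_1]$ is the rightmost last passage path, forcing $\tilde\pi=\pi^+[x_1,y_1]$, i.e.\ $\sigma=\pi^+[x_1,y_1]$ on $[s_1,s_2]$. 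Running the identical splicing argument with $\pi^+[x_2,y_2]$ in place of $\pi^+[x_1,y_1]$ gives $\sigma=\pi^+[x_2,y_2]$ on $[s_1,s_2]$, hence $\pi^+[x_1,y_1]=\pi^+[x_2,y_2]$ there, proving $[s_1,s_2]\subseteq E$.

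It remains to handle endpoints. Being convex, $E$ is an interval, and it is closed under decreasing limits: if $s_k\downarrow s$ with $s_k\in E$, then right-continuity of both paths gives $s\in E$. Thus if $E\ne\emptyset$ then $a:=\inf E\in E$, and with $b:=\sup E$ we have $[a,b)\subseteq E\subseteq[a,b]$, which is exactly (i) and (ii) (the possible discrepancy at $b$ is the stated cadlag artifact). When $x_1=x_2$ the two paths share the initial point $(x_1,n)$, so $x_2\in E$ and $a=x_2$, giving the two-branch tree picture. The argument for $\pi^-$ is verbatim the same after reversing all inequalities, using the monotone-decreasing half of Lemma \ref{L:mono-path} and leftmost last passage paths.

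The main obstacle is the splicing step: one must check with care that gluing $\sigma$ into $\pi^+[x_1,y_1]$ yields a genuine element of $\scrQ$ (nonincreasing, cadlag, with the correct endpoints) and that the length is preserved, so that the maximality of the rightmost path can be invoked. Everything else — the monotonicity input, the geodesic property, and the right-continuity bookkeeping at $a$ and $b$ — is routine once this core lemma is established.
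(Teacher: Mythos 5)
Your proof is correct and follows essentially the same route as the paper: both arguments come down to the fact that two rightmost last passage paths agreeing at two times must coincide in between (via the geodesic property of Lemma \ref{L:geod-btw} together with rightmost-maximality), with monotonicity from Lemma \ref{L:mono-path} giving the strict inequality off $[a,b]$. Your explicit splicing step simply spells out what the paper asserts more tersely, namely that a restriction of a rightmost last passage path is itself a rightmost last passage path between its restricted endpoints, and your right-continuity bookkeeping at $a$ and $b$ fills in the endpoint-matching the paper leaves implicit.
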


The discrepancy at the endpoints $a,b$ is due to the fact that the paths are not continuous.

\begin{proof}
Let
$$
I = \{t \in [x_2, y_1]: \pi^+[x_1, y_1](t) = \pi^+[x_2, y_2](t) \}.
$$
For any two points $t_1 < t_2 \in I$, by the geodesic property of last passage paths, Lemma \ref{L:geod-btw}, the paths $\pi^+[x_i, y_i]|_{[t_1, t_2]}$ are both last passage paths between the points
$$
(t_1, \pi^+[x_1, y_1](t_1)) \quad \mathand \quad (t_2, \pi^+[x_1, y_1](t_2)).
$$
Moreover, both of these paths are rightmost last passage paths on this interval, so they must be equal. Hence $[t_1,t_2]\subset I$. Since $t_1,t_2$ are arbitrary points in $I$, it follows that $I$ is an interval, as desired.

Part (ii) of the lemma follows from monotonicity of last passage paths, Lemma \ref{L:mono-path}.
\end{proof}

We end this section with a monotonicity result for sums of last passage values.
\begin{prop}
\label{P:mono-inc} Let $x=(x_1,n), y=(y_1,1)$ and define $x'$,$y'$ similarly.
Assume  $x_1\le x_1' < y_1\le y_1' \in \R$. Then
$$
f[x\LP y]+f[x' \LP y']\ge f[x \LP y']+f[x' \LP y].
$$\end{prop}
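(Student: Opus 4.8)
The plan is to run the standard \textbf{path-swapping at a crossing} argument, exactly in the spirit of the proofs of Lemma \ref{L:rightmost} and Lemma \ref{L:mono-path}. Let $\pi$ be a last passage path in $\scrQ((x_1,n),(y_1',1))$ and $\sigma$ a last passage path in $\scrQ((x_1',n),(y_1,1))$, so $\pi$ has domain $[x_1,y_1']$ and $\sigma$ has domain $J:=[x_1',y_1]$, which is a genuine interval since $x_1' < y_1$. (If one prefers not to invoke existence of optimizers here, take $\pi,\sigma$ to be $\ep$-optimal and let $\ep\to 0$ at the end.) The key observation is that on the overlap $J$ the two paths are pinched in opposite senses at the two ends: at the left end $\pi(x_1') \le \pi(x_1) = n = \sigma(x_1')$, and at the right end $\pi(y_1) \ge \pi(y_1') = 1 = \sigma(y_1)$, using that $\pi$ is nonincreasing together with $x_1 \le x_1'$ and $y_1 \le y_1'$.

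Next I would build two new paths by taking pointwise minimum and maximum on $J$. Define $\tau_1$ to equal $\pi$ on $[x_1,x_1']$ and $\pi \wedge \sigma$ on $J$; define $\tau_2$ to equal $\pi \vee \sigma$ on $J$ and $\pi$ on $[y_1,y_1']$. Both $\pi\wedge\sigma$ and $\pi\vee\sigma$ are nonincreasing and cadlag on $J$, and the endpoint pinching makes the pieces knit together correctly: $(\pi\wedge\sigma)(x_1') = \pi(x_1')$ and $(\pi\wedge\sigma)(y_1) = 1$, so $\tau_1 \in \scrQ((x_1,n),(y_1,1))$; likewise $(\pi\vee\sigma)(x_1') = n$ and $(\pi\vee\sigma)(y_1) = \pi(y_1)$, so $\tau_2 \in \scrQ((x_1',n),(y_1',1))$. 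The one spot needing a moment's care is checking that $\tau_1,\tau_2$ remain nonincreasing and cadlag across the gluing times $x_1'$ and $y_1$; this is immediate from monotonicity of $\pi$ and the fact that $\sigma$ takes its top value $n$ at $x_1'$ and its bottom value $1$ at $y_1$.

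Finally, by construction, for each $t\in J$ the unordered pair $\{\tau_1(t),\tau_2(t)\}$ equals $\{\pi(t),\sigma(t)\}$, while off $J$ each of $\tau_1,\tau_2$ coincides with the corresponding piece of $\pi$. Hence $\tau_1$ and $\tau_2$ together cover exactly the same portions of the lines of $f$ as $\pi$ and $\sigma$ together, so, since length is additive over these portions (cf.\ the proof of Lemma \ref{L:rightmost}),
$$
\int df\circ\tau_1 + \int df\circ\tau_2 = \int df\circ\pi + \int df\circ\sigma = f[x \LP y'] + f[x' \LP y].
$$
As $\tau_1 \in \scrQ((x_1,n),(y_1,1))$ and $\tau_2 \in \scrQ((x_1',n),(y_1',1))$, the left-hand side is at most $f[x\LP y] + f[x'\LP y']$, which is the claim. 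I expect the only genuine obstacle to be the bookkeeping in this last step together with the gluing verification in the previous paragraph — essentially the same checks already performed in Lemma \ref{L:mono-path} — with no analytic difficulty.
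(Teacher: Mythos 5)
Your proof is correct, and it is a (mildly) different route from the one in the paper. The paper argues via a crossing point: by the ordering of the endpoints, the rightmost last passage paths for the two ``cross'' terms, from $x$ to $y'$ and from $x'$ to $y$, must share a point $z=(t,k)$; summing the triangle inequalities $f[x\LP y]\ge f[x\LP z]+f[z\LP y]$ and $f[x'\LP y']\ge f[x'\LP z]+f[z\LP y']$ and using that $z$ lies along the optimizers, so that (by the geodesic property, Lemma \ref{L:geod-btw}) $f[x\LP z]+f[z\LP y']=f[x\LP y']$ and $f[x'\LP z]+f[z\LP y]=f[x'\LP y]$, gives the claim. You instead take optimal (or $\ep$-optimal) paths for the two cross terms and perform the pointwise $\wedge/\vee$ surgery on the overlap $[x_1',y_1]$, producing admissible candidates in $\scrQ((x_1,n),(y_1,1))$ and $\scrQ((x_1',n),(y_1',1))$ of the same total length, and then bound by the supremum definition; your endpoint-pinching checks ($\pi(x_1')\le n=\sigma(x_1')$, $\pi(y_1)\ge 1=\sigma(y_1)$) and the ``cover the same parts of the lines'' length-conservation step are exactly the bookkeeping the paper itself uses in Lemmas \ref{L:rightmost} and \ref{L:mono-path}, so nothing new needs to be justified, and the degenerate cases $x_1=x_1'$ or $y_1=y_1'$ cause no trouble. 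What each approach buys: yours avoids invoking the existence of a common point on the two rightmost geodesics and the decomposition of an optimal value at a point along an optimal path (indeed, with $\ep$-optimal paths it avoids existence of optimizers altogether), at the cost of the gluing and length-accounting details; the paper's version is shorter on path surgery but leans on Lemma \ref{L:geod-btw} and the crossing-point observation. Both rest on the same underlying fact that the two cross paths must intersect.
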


\begin{proof}
By the ordering of the points, there must exist a point $z=(t, k)$ that lies along the rightmost last passage paths both  from $x$ to $y'$ and $x'$ to $y$. Because of the geodesic property
\begin{align*}
f[x \LP y']&= f[x\LP z]+f[z \LP y'] \quad \mathand \quad f[x' \LP y]= f[x'\LP z]+f[z \LP y].
\end{align*}
The result is then the sum of the triangle inequalities
\[
f[x \LP y]\ge f[x\LP z]+f[z \LP y] \quad \mathand 
\quad f[x' \LP y']\ge f[x'\LP z]+f[z \LP y'].  \qedhere
\]
\end{proof}

\section{Melons}
\label{S:melon}
Recall that $C^n_+$ is the space of $n$-tuples of continuous functions from $[0, \infty)$ to $\R$. Recall also the melon map $f \mapsto Wf$, introduced in Section \ref{SS:melons}, defined so that
\begin{equation}
\label{E:Wf-one}
\sum_{i=1}^k (Wf)_i(t) = f[(0, n)^k  \LP (t, 1)^k]
\end{equation}
for all $k \in \{1, \dots, n\}$ and $t \in [0, \infty)$.
In this section, we show that certain last passage values are preserved by the map $f \mapsto Wf$.

\medskip

To do this, we first approach the construction of $Wf$ by successively sorting pairs of functions. This approach is taken in \citet*{o2002representation, biane2005littelmann}. Let $f_1, f_2:[0, \infty) \to \R$ be two continuous functions. For $x< y \in [0, \infty)$, define the minimal gap size
$$
G(f_1, f_2)(x, y) = \min_{s \in [x, y]} [f_1(s) - f_2(s)].
$$
Then the last passage values satisfy
\begin{equation}
\label{E:W-Gdef}
\begin{split}
W(f_1,f_2)_1(t) &= f_1(t) - f_2(0) - G(f_1, f_2)(0, t), \\
W(f_1,f_2)_2(t) &= f_2(t) - f_1(0) + G(f_1, f_2)(0, t).
\end{split}
\end{equation}
To see the above formula for $W(f_1,f_2)_2$ given the formula for $W(f_1,f_2)_1$, simply note that 
$$
f[(0, n)^n  \LP (t, 1)^n] = \sum_{i=1}^n f_i(t) - f_i(0)
$$
for any $f \in C^n_+, t \ge 0$. The formula for $W(f_1,f_2)_2$ uses the case $n = 2$.
Now, for $f\in C^n_+$ and $i \in \{1, \dots, n -1 \}$,  define
$$
\sig_i(f) = (f_1, f_2, \dots, f_{i-1}, W(f_{i}, f_{i + 1})_1, W(f_{i}, f_{i + 1})_2, f_{i + 2} \dots, f_n).
$$
Now let $(i_1, \dots, i_{n \choose 2})$ be any sequence of numbers in $\{1, \dots, n-1\}$ such that $\tau_{i_1} \cdots \tau_{i_{n \choose 2}}$ is the reverse permutation $n (n-1) \cdots 1$, where $\tau_i=(i,i+1)$ is an adjacent transposition. Then we can alternately define the melon of $f$ by
\begin{equation}
\label{E:red-decomp}
\mathcal W f :=\sig_{i_1} \dots \sig_{i_{n \choose 2}} (f).
\end{equation}
By the discussion immediately preceding Proposition 2.8 in \citet*{biane2005littelmann}, the above function is independent of the choice of reduced decomposition of $n \cdots 1$. Moreover, by Corollary 2.9 there,
$$
\sig_i \scrW f =  \scrW f
$$
for any $\sig_i$.
This implies that $(\scrW f)_1 \ge (\scrW f)_2 \ge \dots \ge (\scrW f)_n$. Combining this with the fact that $(\scrW f)_i(0) = 0$ for all $i$, we get that for any $t > 0$, there exists a last passage path from $(0, n)^k$ to $(t, 1)^k$ that only uses the top $k$ paths. This implies that
\begin{equation}
\label{E:sWf-melon}
\sum_{i=1}^k (\scrW f)_i(t) = \scrW f[(0, n)^k \to (t, 1)^k].
\end{equation}
The fact that $Wf=\scrW f$ then follows from our Proposition \ref{P:wm-equivalent}.

\medskip
\citet*{biane2005littelmann} in Section 4.5 note that the transformation \eqref{E:red-decomp} yields a nonintersecting walk representation of the recording tableaux given by the RSK bijection.  From this the equivalent formula \eqref{E:Wf-one} follows from Greene's theorem, see \cite{sagan2013symmetric}. A proof of this connection with RSK can be found in \cite{o2003path}. 
After posting a previous version of this paper, we learned from Neil O'Connell that Proposition \ref{P:wm-equivalent} for $k$ identical starting points and $k$ identical endpoints follows from Lemma 4.8.\ of \citet*{biane2005littelmann}.

\begin{prop}
\label{P:wm-equivalent}
Let $n, k \in \nat$, and let $U = \{(x_i, n)\}_ {i \in \{1, \dots k\}}$ and $V = \{(y_i, 1)\}_{i \in \{1, \dots k\}}$ be an endpoint pair with $x_i \ge 0$ for all $i$. Then for any $f \in C^n_+$, we have that
$$
f[U \LP V] = \scrW f[U \LP V].
$$
\end{prop}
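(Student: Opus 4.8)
The plan is to construct the melon by elementary sorts and show each one preserves last passage values of the given kind. Recall $Wf=\sigma_{i_1}\cdots\sigma_{i_N}(f)$ with $N=\binom{n}{2}$, where $\sigma_m$ replaces the consecutive pair $(h_m,h_{m+1})$ of an $n$-tuple $h\in C^n_+$ by its two-line melon $W(h_m,h_{m+1})$ and leaves the other lines fixed. Since the last passage values in the statement are across paths running from level $n$ down to level $1$ (the endpoint-pair condition forces $U$ onto level $n$ and $V$ onto level $1$), it suffices to prove the single-swap invariance
\[
h[U\LP V]=\sigma_m(h)[U\LP V]
\]
for every $h\in C^n_+$, every $m\in\{1,\dots,n-1\}$, and every endpoint pair $(U,V)$ with $U=\{(x_i,n)\}_{i\le k}$, $V=\{(y_i,1)\}_{i\le k}$, $x_i\ge 0$; iterating along the reduced word then gives the proposition, since every intermediate tuple again lies in $C^n_+$.

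Everything reduces to a two-line statement: for continuous $a,b\colon[0,\infty)\to\R$ with $(g_1,g_2)=W(a,b)$ and any endpoint pair $(U',V')$ with $U'=\{(u_j,2)\}_{j\le k}$, $V'=\{(v_j,1)\}_{j\le k}$, $u_j\ge 0$, one has $(a,b)[U'\LP V']=(g_1,g_2)[U'\LP V']$. For a single path ($k=1$) this is a direct computation: a path from $(u,2)$ to $(v,1)$ collects $a(v)-b(u)+\max_{s\in[u,v]}\phi(s)$, where $\phi=b-a$, while in $(g_1,g_2)$ it collects $a(v)-b(u)+G(u)+G(v)+\max_{s\in[u,v]}\bigl(\phi(s)-2G(s)\bigr)$ with $G(t)=\max_{s\in[0,t]}\phi(s)$; so the claim becomes the running-maximum identity
\[
G(u)+G(v)+\max_{s\in[u,v]}\bigl(\phi(s)-2G(s)\bigr)=\max_{s\in[u,v]}\phi(s),
\]
which one verifies by splitting on whether $\phi$ attains a new running maximum on $[u,v]$. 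For general $k$, the point is that on two lines the optimal jump times $w_1\le\dots\le w_k$ decouple: a maximal block of indices whose domains overlap forces each $w_i$ into the interval $[v_{i-1},u_{i+1}]$ (with $v_0:=u_1$, $u_{k+1}:=v_k$ inside the block), and these intervals are pairwise disjoint and correctly ordered, so the optimization factors over $i$. Applying the $k=1$ identity on each interval and telescoping the boundary contributions, using $\sum_i(G(v_i)-G(v_{i-1}))+\sum_i(G(u_i)-G(u_{i+1}))=0$, yields the two-line identity for all $k$.

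To deduce the single-swap invariance I would localize a path family to levels $m$ and $m+1$. Given a family realizing (or nearly realizing) $h[U\LP V]$, each path $\pi_i$ reaches level $m+1$ at a first time $u_i\ge x_i\ge 0$ and leaves level $m$ at a last time $v_i$ (using that $(t,m+1)$ and $(t,m)$ ``lie along'' $\pi_i$ in the sense of Section~\ref{S:prelim}, so that even a jump skipping a level is accounted for); the restriction of the family to these two levels is then a family of the type handled by the two-line lemma, with endpoints still ordered as $u_1\le\dots\le u_k$ and $v_1\le\dots\le v_k$ because of the ordering of the original family. The pieces of the paths on the remaining levels $\ne m,m+1$ are identical in $h$ and $\sigma_m(h)$, and the two-line lemma says the length that can be collected on levels $m,m+1$ between those fixed entry and exit points is the same in $(h_m,h_{m+1})$ and in $W(h_m,h_{m+1})$. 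Splicing an optimizer of the two-line problem into the unchanged outer pieces produces, in either direction, a competing family of equal length, giving the desired equality.

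The step I expect to be the main obstacle is this splicing: one must check that gluing the two-line optimizer to the outer pieces respects disjointness at the interfaces between levels $m+1$ and $m+2$ and between levels $m$ and $m-1$ (this follows from the ordering of the induced two-line endpoints and from the fact that disjointness is not imposed at single jump times), and that the degenerate cases $m+1=n$ and $m=1$, where the outer pieces on one side are empty, go through --- this is precisely where the hypothesis $x_i\ge 0$ is used, so that the induced entry times into level $m+1$ are nonnegative and the two-line melon formulas apply. The two-line decoupling-and-telescoping computation, though elementary, is the other place where care is needed.
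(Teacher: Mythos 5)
Your proposal follows essentially the same route as the paper: reduce to invariance under a single elementary sort $\sigma_m$, prove the two-line single-path identity by the running-maximum computation (Lemma \ref{L:wm-lem}), extend to $k$ paths by the decoupling/forced-overlap argument (Lemma \ref{L:n2-general}), and then localize the $n$-line problem to the two sorted lines. The only cosmetic difference is that the paper packages your restrict-and-splice step as a double application of the metric-composition Lemma \ref{L:split-path}, taking a supremum over intermediate tuples at levels $m+2$ and $m$ (the $W^-$ shift there handling exactly the interface-disjointness issue you flag), so the deduction becomes term-by-term rather than a re-gluing of near-optimal path families.
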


We will prove this proposition in three steps. We first deal with the case $n = 2$ and when $U$ and $V$ each have one element.

\begin{lemma}
\label{L:wm-lem}
Let $f = (f_1, f_2)$. For every $0 \le x \le y$, we have that
$$
f\big[(x, 2) \LP (y, 1) \big] = Wf\big[(x, 2) \LP (y, 1) \big].
$$
\end{lemma}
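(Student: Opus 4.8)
The plan is a direct pathwise computation. Any path from $(x,2)$ to $(y,1)$ is nonincreasing into $\{1,2\}$, hence equal to $2$ on $[x,r)$ and $1$ on $[r,y]$ for a single jump time $r\in[x,y]$, so its length against $f$ is $(f_2(r)-f_2(x))+(f_1(y)-f_1(r))$. Optimising over $r$ gives
$$
f\big[(x,2)\LP(y,1)\big] \;=\; f_1(y)-f_2(x)+\max_{s\in[x,y]}\big(f_2(s)-f_1(s)\big)\;=\;f_1(y)-f_2(x)+G(f_1,f_2)(x,y),
$$
and the identical formula applied to the pair $g:=Wf=(g_1,g_2)$ yields $Wf\big[(x,2)\LP(y,1)\big]=g_1(y)-g_2(x)+\max_{s\in[x,y]}(g_2(s)-g_1(s))$. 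Thus the lemma reduces to an elementary identity between the two explicit continuous functions $g_1,g_2$ produced by the formulas for $W(f_1,f_2)$ recorded just before the lemma.

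Next I would substitute those formulas. Writing $h(s)=f_2(s)-f_1(s)$ and $M(s)=G(f_1,f_2)(0,s)=\max_{u\in[0,s]}h(u)$, the definitions give $g_1(s)=f_1(s)-f_2(0)+M(s)$ and $g_2(s)=f_2(s)-f_1(0)-M(s)$, hence $g_2(s)-g_1(s)=h(s)-2M(s)+h(0)$ and $g_1(y)-g_2(x)=f_1(y)-f_2(x)-h(0)+M(x)+M(y)$. Plugging these in and cancelling $h(0)$, the desired equality $Wf\big[(x,2)\LP(y,1)\big]=f\big[(x,2)\LP(y,1)\big]$ becomes exactly
$$
M(x)+M(y)+\max_{s\in[x,y]}\big(h(s)-2M(s)\big)\;=\;\max_{s\in[x,y]}h(s).
$$

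It remains to prove this identity, which I would do by cases on whether $P:=\max_{s\in[x,y]}h(s)$ is at least $M(x)$. Since $M$ is nondecreasing with $M(s)\ge h(s)$ pointwise, one always has $M(y)=\max(M(x),P)$ and $h(s)-2M(s)=(h(s)-M(s))-M(s)\le -M(x)$ for $s\in[x,y]$. If $P<M(x)$, then $M$ is constant equal to $M(x)$ on $[x,y]$ and the left side equals $2M(x)+(P-2M(x))=P$. If $P\ge M(x)$, then $M(y)=P$, the right side of the displayed identity is $-M(x)$, and it suffices to find $s_\ast\in[x,y]$ with $h(s_\ast)=M(s_\ast)=M(x)$; taking $s_\ast=\sup\{s\in[x,y]:M(s)=M(x)\}$ works, using continuity of $h$ and $M$ (if $s_\ast<y$ the running maximum strictly increases just to the right of $s_\ast$, which forces $h(s_\ast)=M(x)$; if $s_\ast=y$ then $P=M(x)$ and any point of $[x,y]$ at which $h$ attains $P$ does the job).

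The computation is routine; the only delicate point is this last step, locating a point where $h(s)-2M(s)$ attains $-M(x)$, which uses continuity of the running maximum $M$. Nothing probabilistic is involved — the statement holds pathwise for arbitrary continuous $f_1,f_2$.
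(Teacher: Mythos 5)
Your proof is correct and follows essentially the same route as the paper: after reducing via the explicit formulas for $W(f_1,f_2)$ to the identity $M(x)+M(y)+\max_{s\in[x,y]}\bigl(h(s)-2M(s)\bigr)=\max_{s\in[x,y]}h(s)$ (the paper's \eqref{E:sup-equal}, modulo the normalization $f_1(0)=f_2(0)=0$), you split into the same two cases, running maximum constant versus increasing on $[x,y]$, and in the increasing case use the same continuity argument to locate a time where $h$ meets its running maximum at level $M(x)$. The only cosmetic differences are that you carry $h(0)$ instead of normalizing and you use $M(y)=\max(M(x),P)$ in place of the paper's auxiliary time $t_2$.
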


\begin{proof}
Last passage values are left unchanged by shifting functions up or down by a constant. Hence we may assume that $f_1(0) = f_2(0) = 0$. To avoid carrying bulky notation, we also set
$$
s(x, y) = \max_{s \in [x, y]} [f_2(s) - f_1(s)].
$$
We have that 
\[
f\big[(x, 2) \LP (y, 1) \big] = f_1(y) - f_2(x) + s(x, y)
\]
and by \eqref{E:W-Gdef}, that
\[Wf\big[(x, 2) \LP (y, 1) \big] = f_1(y) + s(0, y) - f_2(x) + s(0, x) + \sup_{t \in [x, y]} [f_2(t) - f_1(t) - 2s(0, t)].
\]
Therefore to prove the lemma, it is enough to show that
\begin{equation}
\label{E:sup-equal}
s(x, y) - s(0, y) - s(0, x) = \sup_{t \in [x, y]} [f_2(t) - f_1(t) - 2s(0, t)].
\end{equation}
To prove \eqref{E:sup-equal}, we divide into cases. First suppose that $s(0, x) = s(0, y)$. In this case, since $s(0, \cdot)$ is an nondecreasing function, we have that $s(0, t) = s(0, x) = s(0, y)$ for all $t \in [x, y]$. Therefore
\begin{align*}
\sup_{t \in [x, y]} [f_2(t) - f_1(t) - 2s(0, t)] &= \sup_{t \in [x, y]} [f_2(t) - f_1(t)] - s(0, y) - s(0, x) \\
&= s(x, y) - s(0, y)-s(0, x).
\end{align*}
For the case when $s(0, x) < s(0, y)$, observe that
\begin{equation}
\label{E:ineq-sup}
\sup_{t \in [x, y]} [f_2(t) - f_1(t) - 2s(0, t)] \le - \inf_{t \in [x, y]} s(0, t) = - s(0, x).
\end{equation}
Moreover, since $s(0, \cdot)$ is nonconstant on the interval $[x, y]$, continuity of the functions $f_1$ and $f_2$ implies that there exist times $t_1$ and $t_2$ in $[x, y]$ such that
$$
s(0, t_1) = f_2(t_1) - f_1(t_1) = s(0, x) \qquad \mathand \qquad f_2(t_2) - f_1(t_2) = s(0, y).
$$
The first equation above implies that the inequality in \eqref{E:ineq-sup} is in fact equality, and the second equation implies that $s(x, y) = s(0, y)$. Combining these facts proves \eqref{E:sup-equal}.
\end{proof}

Next, we extend the $n=2$ case to deal with an arbitrary number of paths.

\begin{lemma}
\label{L:n2-general}
Let $f = (f_1, f_2).$ For every endpoint pair of the form $U = \{(x_i, 2)\}_{i \in \{1, \dots k\}}$ and $V = \{(y_i, 1)\}_{i \in \{1, \dots k\}}$, we have that
$$
f\big[U \LP V \big] = Wf[U \LP V].
$$
\end{lemma}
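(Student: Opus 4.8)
The plan is to reduce both $f[U\LP V]$ and $Wf[U\LP V]$ to explicit sums over the $k$ paths and then match them using the single-path identity \eqref{E:sup-equal} established in the proof of Lemma~\ref{L:wm-lem}. Write $s(x,y)=G(f_1,f_2)(x,y)=\max_{t\in[x,y]}(f_2(t)-f_1(t))$. The first step is to describe $\scrQ(U,V)$ concretely. With only two lines, a path $\pi_i$ from $(x_i,2)$ to $(y_i,1)$ is determined by its unique jump time $t_i\in[x_i,y_i]$, and \eqref{E:lp-gaps} gives $\int df\circ\pi_i=f_1(y_i)-f_2(x_i)+(f_2-f_1)(t_i)$. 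The disjointness condition $\pi_i<\pi_{i+1}$ on $(x_i,y_i)\cap(x_{i+1},y_{i+1})$ is vacuous unless $x_{i+1}<y_i$, in which case it forces $t_i\le x_{i+1}$ and $t_{i+1}\ge y_i$. Intersecting the constraints coming from the pairs $(i-1,i)$ and $(i,i+1)$ shows that $(\pi_1,\dots,\pi_k)\in\scrQ(U,V)$ exactly when $t_i\in[\ell_i,r_i]$ for all $i$, where $\ell_i=\max(x_i,y_{i-1})$, $r_i=\min(y_i,x_{i+1})$, with the conventions $y_0=-\infty$, $x_{k+1}=+\infty$ (meaning no constraint at the ends). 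The crucial observation is that every $[\ell_i,r_i]$ is nonempty: $\ell_i\le r_i$ can fail only if the three consecutive intervals $[x_{i-1},y_{i-1}],[x_i,y_i],[x_{i+1},y_{i+1}]$ share a common interior point, which would force $\pi_{i-1}(t)<\pi_i(t)<\pi_{i+1}(t)$ there — impossible with two lines — and is therefore excluded because $(U,V)$ is an endpoint pair, so $\scrQ(U,V)\neq\emptyset$. Since the length separates over $i$, this gives
$$ f[U\LP V]=\sum_{i=1}^k\Big(f_1(y_i)-f_2(x_i)+\max_{t\in[\ell_i,r_i]}(f_2-f_1)(t)\Big). $$

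Next I would run the same analysis on $Wf=(g_1,g_2)$, using $g_1(t)=f_1(t)-f_2(0)+s(0,t)$ and $g_2(t)=f_2(t)-f_1(0)-s(0,t)$. The pair $(g_1,g_2)$ gives rise to the same index set $\scrQ(U,V)$ and the same intervals $[\ell_i,r_i]$ (which depend only on the endpoints), so substituting the formulas for $g_1,g_2$ into the displayed expression with $g$ in place of $f$ — the $f_1(0)$ and $f_2(0)$ terms cancel within each summand — yields
$$ Wf[U\LP V]=\sum_{i=1}^k\Big(f_1(y_i)-f_2(x_i)+s(0,x_i)+s(0,y_i)+\max_{t\in[\ell_i,r_i]}\big((f_2-f_1)(t)-2s(0,t)\big)\Big). $$
Now apply \eqref{E:sup-equal} with $(x,y)$ replaced by $(\ell_i,r_i)$; the identity involves only $f_2-f_1$ and $s$, hence is insensitive to shifting $f_1,f_2$ by constants, and it holds trivially when $\ell_i=r_i$. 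It rearranges to $\max_{t\in[\ell_i,r_i]}\big((f_2-f_1)(t)-2s(0,t)\big)=\max_{t\in[\ell_i,r_i]}(f_2-f_1)(t)-s(0,\ell_i)-s(0,r_i)$. Substituting this and subtracting,
$$ Wf[U\LP V]-f[U\LP V]=\sum_{i=1}^k\big(s(0,x_i)+s(0,y_i)-s(0,\ell_i)-s(0,r_i)\big). $$

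Finally I would check that this sum vanishes. Because $\ell_i=\max(x_i,y_{i-1})$, the term $s(0,x_i)-s(0,\ell_i)$ is $0$ unless $x_i<y_{i-1}$, in which case it equals $s(0,x_i)-s(0,y_{i-1})$; likewise $s(0,y_i)-s(0,r_i)$ is $0$ unless $x_{i+1}<y_i$, in which case it equals $s(0,y_i)-s(0,x_{i+1})$. The condition ``$x_i<y_{i-1}$'' at index $i$ is the same as ``$x_{j+1}<y_j$'' at index $j=i-1$, so the surviving terms cancel in pairs: $s(0,x_i)-s(0,y_{i-1})$ coming from the left end of path $i$ cancels $s(0,y_{i-1})-s(0,x_i)$ coming from the right end of path $i-1$. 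Hence the sum is $0$ and $f[U\LP V]=Wf[U\LP V]$.

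The step I expect to be the real work is the first one: rigorously identifying $\scrQ(U,V)$ with the product of the intervals $[\ell_i,r_i]$ — in particular deriving their nonemptiness from the endpoint-pair hypothesis through the two-lines obstruction, and handling the cadlag conventions at path endpoints carefully enough that the last passage value is exactly the separated maximum displayed above. Once that bookkeeping is in place, the rest is just \eqref{E:sup-equal} applied on each $[\ell_i,r_i]$ followed by the one-line cancellation.
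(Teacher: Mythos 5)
Your proof is correct, and it follows essentially the same route as the paper: both arguments localize each path's freedom to the interval $[x_i\vee y_{i-1},\, y_i\wedge x_{i+1}]$ (with nonemptiness of these intervals forced by the endpoint-pair hypothesis through the two-line obstruction) and then invoke the two-function single-path identity there. The only difference is bookkeeping: the paper treats the forced overlap segments via the invariance $(Wf)_1+(Wf)_2=f_1+f_2$ and applies Lemma \ref{L:wm-lem} as a black box on each free interval, whereas you substitute the explicit melon formulas and recover the same cancellation through \eqref{E:sup-equal} and the telescoping of the $s(0,\cdot)$ terms.
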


\begin{proof}
Since there are only two lines, there can only be disjoint paths from $U$ to $V$ if $y_i \le x_{i + 2}$ for all $i \in \{1, \dots, k - 2\}$.
For the same reason, whenever $x_{i + 1} < y_i$, if $\pi$ is a $k$-tuple of disjoint paths from $U$ to $V$ then
\begin{equation}
\label{E:j-conds}
\pi_i(t) = 1 \mathand \pi_{i+1}(t) = 2 \qquad \mathforall t \in [x_{i+1}, y_i).
\end{equation}
Therefore we can write, recalling that $\scrQ(U,V)$ denotes the set of all $k$-tuples of disjoint paths from $U$ to $V$
\begin{align*}
&\sup_{\pi \in \scrQ(U, V)} \sum_{i=1}^k \int df \circ \pi_i = \\
&\sup_{\pi \in \scrQ(U, V)} \lf( \sum_{i=1}^k\int_{x_i \vee y_{i-1}}^{y_i \wedge x_{i+1}} df \circ \pi_i + \sum_{i=1}^{k -1} \indic(x_{i+1} < y_i)[f_1(y_i) + f_2(y_i) - f_1(x_{i+1}) - f_2(x_{i+1})] \rg).
\end{align*}
In the above formula, we treat $y_0$ as $0$ and $x_{k+1}$ as $\infty$.
Now, since $(Wf)_1 + (Wf)_2 = f_1 + f_2$, the second term under the right hand supremum above is preserved by mapping $f \mapsto Wf$. We need to check that the same is true of the first term. Since the intervals $(x_i \vee y_{i-1},  y_i \wedge x_{i+1})$ are all disjoint from each other, the only condition that forces interactions between the coordinates of a path $\pi \in \scrQ(U, V)$ is condition \eqref{E:j-conds}. Therefore
$$
\sup_{\pi \in \scrQ(U, V)}  \sum_{i=1}^k\int_{x_i \vee y_{i-1}}^{y_i \wedge x_{i+1}} df \circ \pi_i = \sum_{i=1}^k \;\; \sup_{\pi_i \in \scrQ((x_i \vee y_{i-1}, 2), (y_i \wedge x_{i+1}, 1))} \;\; \int df \circ \pi_i.
$$
By Lemma \ref{L:wm-lem}, each supremum term in the sum on the right hand side is preserved by the map $f \mapsto Wf$.
\end{proof}

Before proving Proposition \ref{P:wm-equivalent}, we record an extension of the metric composition law in Lemma \ref{L:metric}.  First, for a set of $k$-tuples $Z = \{(w_i, m)\}_{i \in \{1, \dots, k\}}$, define
$Z^- = \{(w_i, m - 1)\}_{i \in \{1, \dots, k\}}$.

\begin{lemma}
\label{L:split-path}
Let $f \in C^\Z$. Let $(U, V)$ be an endpoint pair of the form $U = \{(x_i, n)\}_{i \in \{1, \dots, k\}}$, $V = \{(y_i, 1)\}_{i \in \{1, \dots, k\}}$, and let $m \in \{1, \dots, n - 1\}$. Then
$$
f[U \LP V] = \sup_Z f[U \LP Z] +  f[Z^- \LP V],
$$
where the supremum is taken over all $k$-tuples $Z = \{(z_i, m)\}_{i \in \{1, \dots, k\}},$ such that both $(U, Z)$ and $(Z^-, V)$ are endpoint pairs.
\end{lemma}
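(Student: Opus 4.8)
The plan is to prove the two inequalities separately; both come from a single cut-and-paste principle, namely that a disjoint $k$-tuple of paths from $U$ to $V$ is equivalent to a compatible pair consisting of a $k$-tuple from $U$ into rows $\{m,\dots,n\}$ and a $k$-tuple from rows $\{1,\dots,m-1\}$ to $V$, glued along level $m/(m{-}1)$. This is exactly the multi-path analogue of the metric composition law (Lemma~\ref{L:metric}). For ``$\ge$'': fix $W=\{(w_i,m)\}_i$ with $(U,W)$ and $(W^-,V)$ endpoint pairs, and maximizing paths $\pi\in\scrQ(U,W)$, $\sigma\in\scrQ(W^-,V)$; define $\rho_i$ on $[x_i,y_i]$ to follow $\pi_i$ on $[x_i,w_i)$ and $\sigma_i$ on $[w_i,y_i]$, adjusting the value at $w_i$ to keep $\rho_i$ cadlag there. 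Since $\pi_i$ has range $\{m,\dots,n\}$ and $\sigma_i$ range $\{1,\dots,m-1\}$, the $\rho_i$ are nonincreasing cadlag paths from $(x_i,n)$ to $(y_i,1)$ with $\int df\circ\rho_i=\int df\circ\pi_i+\int df\circ\sigma_i$. The only real check is that $\rho\in\scrQ(U,V)$, i.e.\ $\rho_i<\rho_{i+1}$ on each overlap: using $w_i\le w_{i+1}$ (guaranteed by the endpoint-pair hypotheses), split the overlap into the part before $w_i$ (there $\rho_i=\pi_i<\pi_{i+1}=\rho_{i+1}$), the part $[w_i,w_{i+1})$ (there $\rho_i\le m-1<m\le\rho_{i+1}$), and the part after $w_{i+1}$ (there $\rho_i=\sigma_i<\sigma_{i+1}=\rho_{i+1}$). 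Summing over $i$ and taking suprema over $\pi,\sigma$ and then over $W$ gives the inequality.

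For ``$\le$'': take a maximizing (or near-maximizing) $\rho\in\scrQ(U,V)$ and set $w_i=\inf\{t\in[x_i,y_i]:\rho_i(t)\le m-1\}$. Right-continuity and monotonicity of $\rho_i$, with $\rho_i(x_i)=n>m-1$ and $\rho_i(y_i)=1\le m-1$, give $x_i<w_i\le y_i$, with $\rho_i\ge m$ on $[x_i,w_i)$ and $\rho_i\le m-1$ on $(w_i,y_i]$. Cut $\rho_i$ at $w_i$: let $\pi_i$ agree with $\rho_i$ on $[x_i,w_i)$ with value $m$ at $w_i$, and let $\sigma_i$ take value $m-1$ at $w_i$ and agree with $\rho_i$ on $(w_i,y_i]$; then $\pi_i$ is a path from $(x_i,n)$ to $(w_i,m)$, $\sigma_i$ one from $(w_i,m-1)$ to $(y_i,1)$, and $\int df\circ\rho_i=\int df\circ\pi_i+\int df\circ\sigma_i$. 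The inequalities $\rho_i<\rho_{i+1}$ on overlaps force $w_i\le w_{i+1}$ (if $\rho_{i+1}$ is $\le m-1$ at some overlap time, $\rho_i$ is strictly smaller there, so $\rho_i$ reached level $\le m-1$ no later than $\rho_{i+1}$), whence $(U,W)$ and $(W^-,V)$ with $W=\{(w_i,m)\}$ are endpoint pairs, $\pi\in\scrQ(U,W)$, $\sigma\in\scrQ(W^-,V)$, and $\int df\circ\rho=\int df\circ\pi+\int df\circ\sigma\le f[U\LP W]+f[W^-\LP V]\le\sup_W(\dots)$. Taking the supremum over $\rho$ finishes.

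The step requiring the most care is verifying that the glued, resp.\ cut, families genuinely satisfy all the conditions defining $\scrQ$: the cadlag property at the interior junction time $w_i$, and the strict ordering on overlaps, which rests on the monotonicity $w_i\le w_{i+1}$ of the cut times. One slightly delicate point in the ``$\le$'' direction is the degenerate configuration where some $\rho_i$ stays at level $\ge m$ on all of $[x_i,y_i)$ and descends past level $m$ only at $y_i$, so that $w_i=y_i$ and $(W^-,V)$ violates the ``$x_i<y_i$'' condition of an endpoint pair; this is handled by first replacing $\rho$ by a path descending through the lower rows just before $y_i$, which changes its length by at most a quantity controlled by the modulus of continuity of $f$ on an arbitrarily short interval, so the bound is recovered in the limit.
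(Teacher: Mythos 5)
The paper offers no proof of this lemma (it is declared straightforward and omitted), so there is nothing to compare against line by line; your cut-and-glue argument is certainly the intended route. In the generic case your proof is correct: for ``$\ge$'' you glue at level $m/(m-1)$ and check monotonicity, the cadlag adjustment at $w_i$, additivity of length (changing a path at a single time does not change its length), and disjointness of the glued tuple; for ``$\le$'' you cut at $w_i=\inf\{t:\rho_i(t)\le m-1\}$, and the ordering $w_i\le w_{i+1}$ together with the inherited disjointness of the cut tuples gives the inequality. This is the multi-path analogue of Lemma \ref{L:metric}, exactly as needed for Proposition \ref{P:wm-equivalent}.

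Two points in your treatment of the degenerate configurations are not right as written. First, $x_i<w_i$ does not follow from right-continuity: paths are required to be cadlag only on the \emph{open} interval $(x_i,y_i)$, so $\rho_i$ may sit at a level $\le m-1$ immediately after $x_i$, in which case $w_i=x_i$; this left-endpoint degeneracy needs the same treatment as the right-endpoint one you do flag. Second, and more substantively, your fix for $w_i=y_i$ (``replace $\rho$ by a path descending through the lower rows just before $y_i$'') ignores the disjointness constraints defining $\scrQ$: if a neighbouring path occupies line $m-1$ (or lines $m-1,m-2,\dots$) right up to the same terminal time, the modified $\rho_i$ cannot enter the lower block before $y_i$ without colliding, so the perturbation has to be carried out jointly, cascading the neighbours downward. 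Worse, when at least $m$ paths share a terminal point --- a situation the paper's endpoint pairs explicitly allow and use, e.g.\ $V=(y,1)^k$ with $k\ge m$ --- the lower block $\{1,\dots,m-1\}$ cannot accommodate $k$ disjoint paths on a common neighbourhood of $y$, so \emph{no} admissible $W$ with $w_i<y_i$ for all $i$ exists (and for $m\le 2$ condition (i) for $(W^-,V)$ can never hold); no local modulus-of-continuity perturbation can recover the bound there, and the identity only survives with a convention allowing degenerate cut points or excluding these cases. So the skeleton of your proof is sound and covers the main content, but the degenerate cases, which you correctly single out as the delicate point, are not actually handled by the argument you give.
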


The proof of Lemma \ref{L:split-path} is straightforward and hence we omit it.
\begin{proof}[Proof of Proposition \ref{P:wm-equivalent}.]
It is enough to show that for any sequence of continuous paths $f = (f_1, \dots, f_n)$ with $f_i(0) = 0$ for all $i$ and any $m \in \{1, \dots, n - 1\}$,
$$
\sig_mf[U \LP V] = f[U \LP V].
$$
For any $f$, by Lemma \ref{L:split-path} applied twice, we can write
\begin{equation}
\label{E:split-at-m}
f[U \LP V] =  \sup_{T, Z} \Big(f[U \LP T] + f[T^- \LP Z] +  f[Z^- \LP V] \Big).
\end{equation}
Here the supremum is over all $T = \{(t_i, m+2)\}_{i \in \{1, \dots, k\}},$ $Z = \{(z_i, m)\}_{i \in \{1, \dots, k\}}$ such that  $(U, T),$ $(T^-, Z)$, and $(Z^-, V)$ are all endpoint pairs. For a fixed $T, Z$, when we apply $\sig_m$ to $f$, the first and third terms under the supremum in \eqref{E:split-at-m} do not change since the relevant components $f_i$ do not change. The middle term is preserved under the transformation $\sig_m$ by Lemma \ref{L:n2-general}. Hence the right hand side of \eqref{E:split-at-m} is also preserved under $\sig_m$.
\end{proof}

We finish this section with a straightforward consequence of Proposition \ref{P:wm-equivalent}.
As in  Section \ref{SS:lpp} define two paths $\pi_1$ and $\pi_2$ to be disjoint if either $\pi_1 > \pi_2$ or $\pi_1 < \pi_2$ on the entire interior of both functions' domains. As before, we define $\pi_f^+[x, y]$ as the rightmost last passage path in the set $P_f[(x, n), (y, 1)]$. We define the leftmost path $\pi_f^-[x, y]$ similarly. 

\begin{lemma}
\label{L:zk-tilde}
Let $U = ((x_1, n), (x_2, n))$ and $V = ((y_1, 1), (y_2, 1))$ be an endpoint pair with $0 \le x_1$. The paths $\pi_f^-[x_1, y_1]$ and $\pi_f^+[x_2, y_2]$ are disjoint if and only if the paths $\pi_{Wf}^-[x_1, y_1]$ and $\pi_{Wf}^+[x_2, y_2]$ are disjoint.
\end{lemma}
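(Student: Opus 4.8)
The plan is to reduce the disjointness statement to a single additivity identity among last passage values which, by Proposition~\ref{P:wm-equivalent}, is unchanged when $f$ is replaced by $Wf$. (I read the second path in the statement as $\pi_f^+[x_2,y_2]$, the rightmost last passage path from $(x_2,n)$ to $(y_2,1)$, this being the pairing dictated by the endpoint pair $(U,V)$.)

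The first step is to prove, for an arbitrary $g$ in the relevant function space, that $\pi_g^-[x_1,y_1]$ and $\pi_g^+[x_2,y_2]$ are disjoint if and only if
\begin{equation}
\label{E:proposal-additivity}
g[U\LP V]=g[(x_1,n)\LP(y_1,1)]+g[(x_2,n)\LP(y_2,1)].
\end{equation}
The key preliminary observation is that these two paths are automatically ordered on the common part of their domains: by Lemma~\ref{L:mono-path} together with $x_1\le x_2$ and $y_1\le y_2$ we have $\pi_g^+[x_1,y_1]\le\pi_g^+[x_2,y_2]$, and $\pi_g^-[x_1,y_1]\le\pi_g^+[x_1,y_1]$ by Lemma~\ref{L:rightmost}, so $\pi_g^-[x_1,y_1]\le\pi_g^+[x_2,y_2]$ throughout the overlap. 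Hence ``disjoint'' can only mean the strict inequality $\pi_g^-[x_1,y_1]<\pi_g^+[x_2,y_2]$ on the interior $(x_2,y_1)$ of the overlap (a condition that is vacuous, hence automatically satisfied, when $x_2\ge y_1$), which is exactly the condition for the ordered pair $(\pi_g^-[x_1,y_1],\pi_g^+[x_2,y_2])$ to lie in $\scrQ(U,V)$.

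Given this, \eqref{E:proposal-additivity} follows in both directions. If the two paths are disjoint, then $(\pi_g^-[x_1,y_1],\pi_g^+[x_2,y_2])\in\scrQ(U,V)$ has length equal to the right side of \eqref{E:proposal-additivity}, which is always an upper bound for $g[U\LP V]$, forcing equality. Conversely, if \eqref{E:proposal-additivity} holds, choose a maximizing pair $(\sigma_1,\sigma_2)\in\scrQ(U,V)$ (such a pair exists since $\scrQ(U,V)$ is closed in the graph topology by Lemma~\ref{L:closed-set} and last passage paths exist). Since the total length $\int dg\circ\sigma_1+\int dg\circ\sigma_2$ equals the sum of the two individual optima while each summand is at most the corresponding individual optimum, each $\sigma_i$ is itself a last passage path, so $\pi_g^-[x_1,y_1]\le\sigma_1$ and $\sigma_2\le\pi_g^+[x_2,y_2]$; and $\sigma_1<\sigma_2$ on $(x_2,y_1)$ because $(\sigma_1,\sigma_2)\in\scrQ(U,V)$. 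Chaining these inequalities gives $\pi_g^-[x_1,y_1]<\pi_g^+[x_2,y_2]$ on $(x_2,y_1)$, i.e.\ disjointness.

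Finally I would invoke Proposition~\ref{P:wm-equivalent}: applied with $k=2$ to the endpoint pair $(U,V)$ it gives $f[U\LP V]=Wf[U\LP V]$, and applied with $k=1$ to $((x_i,n),(y_i,1))$ it gives $f[(x_i,n)\LP(y_i,1)]=Wf[(x_i,n)\LP(y_i,1)]$ for $i=1,2$; the hypotheses $x_i\ge 0$ hold since $0\le x_1\le x_2$, and these are endpoint pairs since $x_i<y_i$. Thus \eqref{E:proposal-additivity} holds for $g=f$ if and only if it holds for $g=Wf$, and applying the first-step equivalence to both $f$ and $Wf$ finishes the proof. I expect the only genuinely delicate point to be the careful bookkeeping around the orientation of the two paths and the non-strict endpoint convention in $\scrQ(U,V)$; the monotonicity observation in the first step is precisely what keeps this clean, and everything else is a direct application of results already established.
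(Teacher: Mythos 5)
Your proposal is correct and follows essentially the same route as the paper: both reduce disjointness of $\pi^-_g[x_1,y_1]$ and $\pi^+_g[x_2,y_2]$ to the additivity identity $g[U\LP V]=g[(x_1,n)\LP(y_1,1)]+g[(x_2,n)\LP(y_2,1)]$ (forward direction by exhibiting the pair as an element of $\scrQ(U,V)$, converse by taking a maximizing disjoint pair and pushing the paths left and right), and then transfer the identity between $f$ and $Wf$ via Proposition~\ref{P:wm-equivalent}. Your added monotonicity observation pinning down the orientation, and your reading of $\pi_f^+[x_2,x_2]$ as $\pi_f^+[x_2,y_2]$, are both consistent with the paper's intent.
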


\begin{proof}
We first show that the paths $\pi_f^-[x_1, y_1]$ and $\pi_f^+[x_2, y_2]$ are disjoint precisely when
\begin{equation}
\label{E:f-dj-cond}
f[(x_1, n) \to (y_1, 1)] + f[(x_2, n) \to (y_2, 1)] = f[U \to V].
\end{equation}
To see this, note that 
if the paths $\pi_f^-[x_1, y_1]$ and $\pi_f^+[x_2, y_2]$ are disjoint, then $(\pi_f^-[x_1, y_1], \pi_f^+[x_2, y_2])$ forms a last passage path from $U$ to $V$. Moreover, if equality holds in \eqref{E:f-dj-cond}, then there must exist $\pi \in P_f[(x_1, n), (y_1, 1)]$ and $\tau \in P_f[(x_2, n), (y_2, n)]$ that are disjoint. Pushing these paths further left and right, respectively, preserves disjointness.

\medskip

Now, Proposition \ref{P:wm-equivalent} implies that \eqref{E:f-dj-cond} holds if and only if it holds with $Wf$ in place of $f$. By the same reasoning as above, this is true if and only if the paths $\pi_{Wf}^-[x_1, y_1]$ and $\pi_{Wf}^+[x_2, y_2]$ are disjoint.
\end{proof}

\section{Properties of melon paths}
\label{S:properties-melon}

In this section, we collect some key deterministic facts about last passage paths in melons.
The first is about the location of last passage paths.  We state the following lemma for functions $f \in C^n_+$ that start at $0$ and stay ordered, as any melon $Wf$ has this property.

\begin{lemma}
\label{L:high-paths}
Let $f \in C^n_+$ be such that $f_i(0) = 0$ and $f_i \ge f_{i+1}$ for all $i$. Fix $j  \le k \le n \in \N$. Let $U = \{(x_i, n)\}_{i \in \{1, \dots, k\}}, V = \{(y_i, 1)\}_{i \in \{1, \dots, k\}}$ be an endpoint pair with $x_i = 0$ for all $i \in \{1, \dots, j\}$. Then there exists a last passage path $\pi$
from $U$ to $V$, see \eqref{E:lp-paths},
such that $\pi_i(t) = i$ for all $t \in [0, y_1), i \in \{1, \dots, j\}$.
\end{lemma}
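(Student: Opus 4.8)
The plan is to argue that any last passage path from $U$ to $V$ can be modified, without decreasing its length, so that its top $j$ coordinates are pinned to the lines $1, 2, \dots, j$ on the whole interval $(0, y_1)$. The key structural inputs are (a) the functions $f_i$ all start at $0$ and are ordered $f_1 \ge f_2 \ge \dots \ge f_n$, so the gap process $g_i = f_i - f_{i+1}$ is nonnegative, and (b) the length formula \eqref{E:lp-gaps}, which expresses $\int df\circ\pi$ as a difference of endpoint values minus a sum of gaps $g_i$ evaluated at the jump times of $\pi$.

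First I would take an arbitrary $\pi \in P_f[U,V]$ and consider the top coordinate $\pi_1$, a nonincreasing path from $(0, n)$ to $(y_1, 1)$. The idea is to ``push $\pi_1$ up'': replace $\pi_1$ by the path $\tilde\pi_1$ that equals $1$ on all of $(0, y_1)$, jumping from $n$ down to $1$ instantly at time $0^+$ and back up to... no — more carefully, since $\pi_1$ must end at height $1$ and starts at height $n$, and it is nonincreasing, it is already $\le$ the constant-$n$ path; pushing it up means making it take the value $n$ as long as possible. But what we actually want is the \emph{bottom} line argument: the relevant fact is that for $i \le j$ we want $\pi_i(t) = i$, i.e. the top $j$ paths hug the top $j$ lines. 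So instead I would push each $\pi_i$ \emph{down} toward line $i$: because $x_i = 0$ for $i \le j$, each such $\pi_i$ starts at $(0,n)$ and can be rerouted to descend immediately to height $i$ and stay there until just before $y_i$ (where, if necessary, it completes its descent to $1$). Concretely, for $i \le j$ define $\tilde\pi_i(t) = i$ for $t \in [0, y_i)$ and $\tilde\pi_i(y_i) = 1$ (the terminal adjustment is forced and contributes nothing since it happens at a single point), and keep $\tilde\pi_i = \pi_i$ for $i > j$. I would then check $\tilde\pi \in \scrQ(U,V)$: the ordering condition $\tilde\pi_i(t) < \tilde\pi_{i+1}(t)$ holds among the first $j$ coordinates since they are pinned to the distinct heights $1 < 2 < \dots < j$; for the interface between $\tilde\pi_j$ and $\pi_{j+1}$ one uses that $\pi_{j+1}(t) > \pi_j(t) \ge j$ wherever they overlap, hence $\pi_{j+1}(t) > j = \tilde\pi_j(t)$ there (here $\pi_j(t) \ge j$ because $\pi_1 > \pi_2 > \dots > \pi_j$ forces $\pi_j \ge j$ at every time, a consequence of the strict ordering and integer values).

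The heart is to show $\int df \circ \tilde\pi \ge \int df \circ \pi$. Using \eqref{E:lp-gaps}, both sides have the same endpoint contributions (the endpoints of $\pi_i$ and $\tilde\pi_i$ agree, and sums are over the same index ranges), so the comparison reduces to showing the total gap sum does not increase. For coordinates $i > j$ nothing changes. For $i \le j$, the path $\tilde\pi_i$ occupies only lines $1, \dots, i$ — in fact it jumps only once, from $i$ to $1$ at time $y_i$ — so it picks up no gap cost on $(0, y_i)$ at all: $\int df \circ \tilde\pi_i = f_1(y_i) - f_i(0) = f_1(y_i)$, which is the \emph{maximum} possible length of any path from $(0, i)$... but wait, $\tilde\pi_i$ goes from height $n$ to height $1$, so more precisely $\int df \circ \tilde\pi_i = f_1(y_i) - f_n(0) - \sum_{\ell=1}^{n-1} g_\ell(t_\ell^{(i)})$; I would instead choose $\tilde\pi_i$ to descend through lines $n, n-1, \dots, i$ at time $0^+$ (free, since all $g_\ell(0) = 0$ as $f_\ell(0)=0$ for all $\ell$), stay at $i$, then descend $i, i-1, \dots, 1$ at time $y_i^{-}$. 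The gaps incurred at time $0$ vanish, and the gaps incurred near $y_i$ are $\sum_{\ell=1}^{i-1} g_\ell(y_i)$, versus $\pi_i$'s cost $\sum_{\ell \in \text{range}} g_\ell(t_\ell)$ over its (weakly larger) range of occupied lines. Since each $g_\ell \ge 0$ and the jump-time-evaluated gap sum for $\tilde\pi_i$ involves a subset of the lines — those below $i$ only, evaluated at $y_i$ — I need the monotonicity-type inequality that concentrating all descent at the right endpoint minimizes the gap sum; this follows because $g_\ell \ge 0$ and we are free to evaluate $g_\ell$ wherever we like, but the constraint is that $\tilde\pi$ must dominate $\pi_{j+1}$. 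The cleanest route, and the one I would actually write, is to invoke the disjointness/order-statistics swapping argument used in Lemma \ref{L:rightmost} and Lemma \ref{L:mono-path}: replace $(\pi_1, \dots, \pi_j)$ by $(\tilde\pi_1, \dots, \tilde\pi_j)$ and note that the pair $(\tilde\pi_i)_{i\le j}$ together with $(\pi_i)_{i>j}$ covers the same portions of the lines $f$ \emph{except} that the part of $f$ covered by $\bigcup_{i\le j}\pi_i$ near the top is rearranged — and since all these lines are compared via the Young-tableaux-type inequality, sorting to hug the top can only help.

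The main obstacle, and the step that needs genuine care, is precisely this last length comparison: verifying that re-routing the top $j$ coordinates to hug lines $1, \dots, j$ does not destroy length, while simultaneously keeping the modified tuple inside $\scrQ(U,V)$ (the non-crossing constraint against $\pi_{j+1}$). I expect to handle it by an exchange argument local to each pair of adjacent coordinates: show that if $\pi_i(t) > i$ on some maximal subinterval, one can lower $\pi_i$ there (pushing it down toward $i$) and correspondingly adjust, gaining at least as much length because the omitted gaps $g_{\pi_i(t)-1}, \dots$ are nonnegative and the newly traversed lines were previously unused by any $\pi_{i'}$ with $i' \le i$. Iterating over $i = 1, \dots, j$ and using closedness of $P_f[U,V]$ (Lemma \ref{L:closed-set}) to pass to a limiting optimizer with the pinning property completes the argument.
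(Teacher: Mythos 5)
Your key ingredients are the same ones the paper uses: the gap representation \eqref{E:lp-gaps}, nonnegativity of the gaps $g_\ell=f_\ell-f_{\ell+1}$, the fact that every gap vanishes at time $0$ because $f_\ell(0)=0$, and the observation that the ordering constraint in $\scrQ(U,V)$ forces $\pi_i\ge i$ on $(0,y_1)$ (note that the constraint is $\pi_i<\pi_{i+1}$, so your chain of inequalities is written in the reversed direction, though the conclusion you draw is the right one). However, the construction you actually commit to does not work, and neither justification you offer for its length comparison is valid. Pinning $\tilde\pi_i$ at line $i$ on all of $(0,y_i)$ and performing the descent from $i$ to $1$ at time $y_i^-$ makes the modified path pay $\sum_{\ell<i}g_\ell(y_i)$, whereas the original path paid each $g_\ell$ at its own jump time, which can be far smaller; there is no monotonicity saying that concentrating the descent at the right endpoint minimizes the gap sum, since the gaps fluctuate in $t$. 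Likewise, the appeal to ``sorting to hug the top can only help'' via $f_1\ge f_2\ge\cdots$ is not an argument: length is built from increments, not values, and a lower line can have much larger increments over a given interval. Note also that the lemma only asserts pinning on $(0,y_1)$, not on $(0,y_i)$; your stronger pinning is in general not achievable by a maximizer.

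The repair is exactly the surgery your final paragraph gestures at, and it is the paper's proof: for $i\le j$ replace $\pi_i$ by $\tilde\pi_i=\min(\pi_i,i)$ on $(0,y_i]$, i.e.\ move only the jumps through lines $n,n-1,\dots,i$ to time $0$, where each costs $g_r(0)=0$, and leave the jumps through lines $r<i$ exactly where the original path made them. The gap sum can only decrease, since the removed terms $g_r(T_r)$ with $r\ge i$ are nonnegative; disjointness is preserved because $\pi_{i-1}<\pi_i$ with integer values gives $\min(\pi_{i-1},i-1)<\min(\pi_i,i)$, while $\tilde\pi_j\le\pi_j<\pi_{j+1}$; and the pinning $\tilde\pi_i=i$ on $(0,y_1)$ is then immediate from $\pi_i\ge i$ there. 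Since this is a single explicit modification of a maximizer, the iteration-plus-closedness limiting step you invoke via Lemma \ref{L:closed-set} is unnecessary.
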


For the proof, it may help the reader to recall the ordering constraints required on $U, V$ and any set of disjoint paths from $U$ to $V$, see Section \ref{SS:lpp}.

\begin{proof}
Let $g$ be the gap process of $f$ defined by $g_i = f_{i} - f_{i+1}$. By the identity \eqref{E:lp-gaps}, we can write
\begin{equation}
\label{L:sum-of-gaps}
\begin{split}
f[U \LP V] &= \sup_{\tau} \sum_{i=1}^k \lf(f_1(y_i) - f_n(x_i) - \sum_{r=1}^{n-1} g_r(t_{i,r})\rg) \\
&=\sum_{i=1}^k \lf(f_1(y_i) - f_n(x_i)  - \inf_{\tau} \sum_{r=1}^{n - 1} g_r(t_{i,r}) \rg).
\end{split}
\end{equation}
Here the supremum is over all disjoint $k$-tuples of paths from $U$ to $V$, and $t_{i,r}$ is the jump time of the path $\tau_i$ to line $r$, see \eqref{E:jump}. Hence, any last passage path $\pi$ across $f$ minimizes the sum of the gaps. In particular, this implies that if $\pi$ is a last passage path, then for $i \in [1, j]$ when $x_i = 0$, we can replace each function $\pi_i$ with a function that immediately jumps up to line $i$ to take advantage of the zero-sized gaps there: since $g_i(t) \ge 0$ for all $i, t$, this process cannot decrease the length of the path. By the ordering constraints on the functions $\{\pi_i\}_{i \in \{1, \dots, k\}}$, we must also have have that $\pi_i(t) \ge i$ for all $t < y_1$.
\end{proof}

\subsection*{Melons opened up at other times}
We will introduce melons starting at points other than $0$. Let $f \in C^{n}$.
We define the \textbf{melon at $z$} by
$$
W_zf = W[f(z + \cdot)].
$$
We also define the reversing maps $R_z: C^n \to C^n$ by
$$
R_zf_i(t) = -f_{n+1-i}(z - t).
$$
We can now define the \textbf{reverse melon opened up at $z$} by
\begin{equation}
\label{E:Wrzf}
W^*_z f = W R_z f
\end{equation}
We record the following fact about reverse melon last passage values. 

\begin{lemma}
\label{L:rev-wm-fact} Let $f \in C^n$ and let $U = \{(x_i, n)\}_{i \in \{1, \dots, k\}}, V = \{(y_i, 1)\}_{i \in \{1, \dots, k\}}$ be an endpoint pair. For any $z \ge y_k$, we have that
$$
f[U \LP V] = R_z f[V_z \LP U_z] = W^*_z f[V_z \LP U_z].
$$
Here $U_z = \{(z - x_{k + 1 -i}, 1)\}_{i \in \{1, \dots, k\}}$ and $V_z = \{(z - y_{k + 1 - i}, n)\}_{i \in \{1, \dots, k\}}$.
\end{lemma}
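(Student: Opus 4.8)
The statement bundles a reflection symmetry of last passage, $f[U\LP V]=R_z f[V_z\LP U_z]$, with a melon identity, $R_z f[V_z\LP U_z]=W^*_z f[V_z\LP U_z]$, and I would prove them in that order. For the reflection symmetry the plan is to produce a length-intertwining bijection between $\scrQ(U,V)$ and $\scrQ(V_z,U_z)$. Given a single path $\pi$ from $(x,n)$ to $(y,1)$, define $\tilde\pi(t)=n+1-\pi(z-t)$ on $[z-y,z-x]$. Because $t\mapsto z-t$ reverses orientation and $j\mapsto n+1-j$ reverses the order on $\{1,\dots,n\}$, $\tilde\pi$ is again nonincreasing with $\tilde\pi(z-y)=n$ and $\tilde\pi(z-x)=1$; after the harmless adjustment on its countably many jump times (which affects neither length nor which points lie along it) it is a path from $(z-y,n)$ to $(z-x,1)$. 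Differentiating $(R_z f)_j(t)=-f_{n+1-j}(z-t)$, the outer minus sign cancels the $-1$ from differentiating $z-t$, so $(R_z f)'_j(t)=f'_{n+1-j}(z-t)$, and the substitution $s=z-t$ gives
$$
\int d(R_z f)\circ\tilde\pi \;=\; \int_{z-y}^{z-x} f'_{\pi(z-t)}(z-t)\,dt \;=\; \int_x^y f'_{\pi(s)}(s)\,ds \;=\; \int df\circ\pi .
$$
For merely continuous $f$ this is the telescoping identity of increments of $f$ along the segments of $\pi$ with those of $R_z f$ along the segments of $\tilde\pi$. Swapping endpoints, $\pi\mapsto\tilde\pi$ is an involution, hence a bijection.

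I would then lift this to $k$-tuples. For $\pi=(\pi_1,\dots,\pi_k)\in\scrQ(U,V)$ set $\tilde\pi=(\tilde\pi_k,\dots,\tilde\pi_1)$, reversing the index order: since $y_1\le\cdots\le y_k$ forces $z-y_1\ge\cdots\ge z-y_k$, the set $V_z$ is naturally listed left to right as $(z-y_k,n),\dots,(z-y_1,n)$, and likewise for $U_z$. One checks $\tilde\pi\in\scrQ(V_z,U_z)$: the constraint $\pi_i(t)<\pi_{i+1}(t)$ on overlapping open domains becomes, under $j\mapsto n+1-j$ and the relabeling, exactly the constraint $\tilde\pi_j(t)<\tilde\pi_{j+1}(t)$, the ``open overlap'' qualifier being preserved since reflection sends open intervals to open intervals; and $(V_z,U_z)$ is an endpoint pair, its conditions (i)--(ii) transferring verbatim from $(U,V)$ and (iii) holding because $\tilde\pi$ exhibits nonemptiness. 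Since lengths are preserved termwise, taking suprema over $\scrQ(U,V)$ and $\scrQ(V_z,U_z)$ gives $f[U\LP V]=R_z f[V_z\LP U_z]$.

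For the melon identity, note that $z\ge y_k=\max_i y_i$ makes every first coordinate in $V_z$ or $U_z$ nonnegative (the smallest is $z-y_k\ge 0$), so $R_z f$ restricted to $[0,\infty)$ lies in $C^n_+$; Proposition \ref{P:wm-equivalent} applied to this function and the endpoint pair $(V_z,U_z)$ yields $R_z f[V_z\LP U_z]=W R_z f[V_z\LP U_z]=W^*_z f[V_z\LP U_z]$, using $W^*_z f=W R_z f$. The only delicate point is the reflection step: $t\mapsto z-t$ interchanges right- and left-continuous step functions, so one should work with the ``connected graph''/``lies along a path'' description of paths, under which both length and the ordering conditions are insensitive to the cadlag convention, and keep the index reversal of the tuple aligned with the natural left-to-right ordering of $V_z$ and $U_z$. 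Beyond that the argument is routine and, as the paper states, follows straight from the definitions and Proposition \ref{P:wm-equivalent}.
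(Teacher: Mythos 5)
Your argument is correct and is exactly the route the paper intends: the paper omits the proof, remarking that it "follows straight from the definition and Proposition \ref{P:wm-equivalent}," and your reflection bijection (with the index reversal and cadlag adjustment) plus the application of Proposition \ref{P:wm-equivalent} to $R_z f$ on $[0,\infty)$ supplies precisely those omitted details.
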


\begin{proof}
The first equality follows from the symmetry of the definition of last passage paths under reversal. The second equality follows from Proposition \ref{P:wm-equivalent}.
\end{proof}

In the remainder of this section, we will build a path transformation lemma which represents $Wf[(x, n) \LP (y, k)]$ in terms of a first passage value across a reverse melon. For $f \in C^n$ and $k < n$ define the \textbf{ backwards first passage value}
\begin{equation}
\label{E:mathfrakf}
f[(x, 1) \LP_\mathfrak{f} (y, k)] = \inf_{\pi} \int df \circ \pi,
\end{equation}
where the infimum is taken over all nondecreasing cadlag functions $\pi:[x, y] \to \{1, \dots, k\}$.
\begin{lemma}
\label{L:k-line-prior}
Let $f \in C^n$.  For every $0 \le x < z $, we have that
\begin{align*}
Wf[(x, n) \LP (z, k)] = (Wf)_k(z) - W^*_zf[(z - x, 1) \to_{\mathfrak{f}} (z, k)].
\end{align*}
\end{lemma}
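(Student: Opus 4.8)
The plan is to strip off the level‑$k$ endpoint in $Wf$, transport the resulting identity to the reverse melon $h:=W^*_zf$ via Proposition \ref{P:wm-equivalent} and Lemma \ref{L:rev-wm-fact}, and then recognize what remains as the backwards first passage value across $h$. \emph{Step 1 (peeling).} Since $Wf$ is a melon — ordered and vanishing at $0$ — Lemma \ref{L:high-paths} with $j=k-1$ applies to the endpoint pair $U=\{(0,n)^{k-1},(x,n)\}$, $V=(z,1)^k$: some optimal configuration has its top $k-1$ paths equal to the straight lines $\pi_i\equiv i$ on $(0,z)$. By \eqref{E:lp-gaps} and the vanishing of melon gaps at $0$, path $i$ then contributes $(Wf)_i(z)$, while the last path is confined to levels $\{k,\dots,n\}$ on $(x,z)$, may drop to level $1$ only at time $z$, and so has maximal length $Wf[(x,n)\LP(z,k)]$; exhibiting such a configuration gives the matching lower bound, so
$$
Wf\big[\{(0,n)^{k-1},(x,n)\}\LP(z,1)^k\big]=\sum_{i=1}^{k-1}(Wf)_i(z)+Wf[(x,n)\LP(z,k)].
$$

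\emph{Step 2 (to the reverse melon).} The two values $Wf[\{(0,n)^{k-1},(x,n)\}\LP(z,1)^k]$ and $\sum_{i=1}^k(Wf)_i(z)=Wf[(0,n)^k\LP(z,1)^k]$ are bottom‑to‑top, so Proposition \ref{P:wm-equivalent} replaces $Wf$ by $f$ in both, and Lemma \ref{L:rev-wm-fact} (reversal point $z$) rewrites the $f$‑values as $h[(0,n)^k\LP\{(z-x,1),(z,1)^{k-1}\}]$ and $h[(0,n)^k\LP(z,1)^k]=\sum_{i=1}^k h_i(z)$ respectively. Comparing defining formulas and using Lemma \ref{L:rev-wm-fact} once more gives $h_j(z)=(Wf)_j(z)$ for all $j$. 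Feeding these into Step 1 reduces the lemma to the purely melonic identity
$$
\sum_{i=1}^k h_i(z)-h\big[(0,n)^k\LP\{(z-x,1),(z,1)^{k-1}\}\big]=F(h)[(z-x,1)\to(z,k)].
$$

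\emph{Step 3 (the melonic identity).} For any ordered function vanishing at $0$, $h[(0,n)^k\LP(z,1)^k]=\sum_{i=1}^k h_i(z)$ is forced, since disjointness pins the $i$‑th path to level $\ge i$ on $(0,z)$, so it may cross $g_{i-1},\dots,g_1$ only at time $z$. For the remaining term, Lemma \ref{L:high-paths} (with $j=k-1$, after pushing the $k$‑th path up to level $k$ at time $0$ at no cost) gives an optimal configuration in which the first path rides level $1$ up to time $z-x$ (contributing $h_1(z-x)$) and the remaining $k-1$ disjoint paths descend to level $1$ on $[z-x,z]$; disjointness pins the $j$‑th of them to level $\ge j-1$ on $(z-x,z)$, so it crosses $g_j,\dots,g_{n-1}$ at time $0$ for free, crosses the single new gap $g_{j-1}$ at some $u_j\in[z-x,z]$, and crosses $g_1,\dots,g_{j-2}$ at time $z$, with disjointness further forcing $u_2\le\dots\le u_k$. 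Summing \eqref{E:lp-gaps} over all paths and minimizing the gap contribution yields
$$
h\big[(0,n)^k\LP\{(z-x,1),(z,1)^{k-1}\}\big]=h_1(z-x)+\sum_{i=1}^{k-1}h_i(z)-\inf_{z-x\le v_1\le\dots\le v_{k-1}\le z}\sum_{r=1}^{k-1}g_r(v_r),
$$
and subtracting this from $\sum_{i=1}^k h_i(z)$ leaves $h_k(z)-h_1(z-x)+\inf_{z-x\le v_1\le\dots\le v_{k-1}\le z}\sum_{r=1}^{k-1}g_r(v_r)$, which is exactly $F(h)[(z-x,1)\to(z,k)]$ by \eqref{E:lp-gaps} applied to a non‑decreasing path. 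The degenerate case $x=0$ (so $z-x=z$) is covered verbatim, all infima being vacuous.

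The main obstacle is the combinatorics inside Step 3: one must check that the disjointness constraints among the $k-1$ descending paths on $[z-x,z]$ collapse to exactly the ordered chain $u_2\le\dots\le u_k$ carried by a single backwards first passage path — neither more nor fewer constraints — and that the ``free'' crossings at times $0$ and $z$ reassemble into the term $\sum_{i=1}^{k-1}h_i(z)$. This is precisely where the melon ordering and the open‑interior‑only disjointness built into $\scrQ(U,V)$ are essential, since together they force each lower path to reach level $j-2$ or below only at the right endpoint $z$, leaving exactly one gap crossing per path inside $(z-x,z)$.
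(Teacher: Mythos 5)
Your proof is correct and follows essentially the same route as the paper: the same peeling step via Lemma \ref{L:high-paths} for the endpoint pair $\{(0,n)^{k-1},(x,n)\}\to(z,1)^k$, and the same transfer to the reverse melon via Proposition \ref{P:wm-equivalent} and Lemma \ref{L:rev-wm-fact}. The only divergence is in the final step, where you evaluate the $k-1$ descending paths on $[z-x,z]$ explicitly through their gap-crossing times using \eqref{E:lp-gaps} and recognize the ordered infimum of gaps as the backwards first passage value, whereas the paper packages the same duality via the complementary non-decreasing path $\tau_*$ tiling $[z-x,z]\times\{1,\dots,k\}$ — two implementations of the same argument.
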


The proof of this lemma is illustrated in Figure \ref{fig:melontrans}.

\begin{figure}%
	\centering
	\begin{subfigure}[t]{4cm}
		\includegraphics[width=4cm]{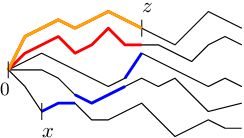}
		\caption{}
	\end{subfigure}
	\qquad
	\begin{subfigure}[t]{5cm}
		\includegraphics[width=5cm]{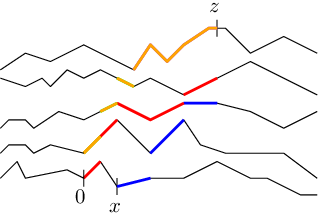}
		\caption{}
	\end{subfigure}
	\qquad
\begin{subfigure}[t]{4cm}
	\includegraphics[width=4cm]{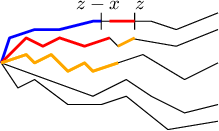}
	\caption{}
\end{subfigure}
	\caption{An illustration of the proof of Lemma \ref{L:k-line-prior}. We want to understand the last passage value in the melon from $(x, n)$ to $(z, k)$; here $n=5$ and $k =3$. To do this, note that this is the bottom path in a collection of $k-1$ disjoint last passage paths from $(0, n)$ to $(z, 1)$ and one path from $(x, n)$ to $(z, 1)$: this is Figure \ref{fig:melontrans} (a). We can then `demelonize' (Figure \ref{fig:melontrans} (b)) and then `remelonize' at $z$ after reversing the original lines (Figure \ref{fig:melontrans} (c)). These transformations leave the appropriate last passage values (i.e the sum of the lengths of the coloured paths) unchanged. In Figure \ref{fig:melontrans}  (c), the last passage paths use the top 3 lines up to time $z$, except for the graph of one increasing path from $(z-x, 1)$ to $(z, 3)$: this path gives the first passage value that appears in the statement of the lemma. This is only a sketch, i.e. Figures \ref{fig:melontrans} (a) and (c) are not truly the melon and reverse melon of Figure \ref{fig:melontrans} (b).} %
	\label{fig:melontrans}%
\end{figure}

\begin{proof}
To simplify notation in the proof, we write
$
\{z^{k-1}, w\}
$
for the $k$-tuple of points consisting of $k-1$ copies of $z$ and one copy of $w$. Similarly, we write $\{w, z^{k-1}\}$ if the points are in the opposite order. First by Lemma \ref{L:high-paths}, there is a last passage path $\pi$ in $Wf$ from $\{(0, n)^{k-1}, (x, n)\}$ to  $(z, 1)^{k}$
with $\pi_i (t) = i$ for all $t \in [0, z]$ and $i \le k - 1$. The only constraint on the remaining path $\pi_k$ is that it avoids the first $k-1$ lines on $[0, z)$. In other words, $\pi_k$ must be a last passage path from $(x, n)$ to $(z, k)$ in $Wf$. More formally, $\pi_k$ becomes a last passage path from $(x,n)$ to  $(z,k)$ if we redefine a single value $\pi_k(z):=k$.
Moreover,
$$
\sum_{i=1}^{k-1} \int d (Wf) \circ \pi_i = Wf [(0, n)^{k-1} \to (z, 1)^{k-1}],
$$
again because of Lemma \ref{L:high-paths}. Putting these facts together gives that
\begin{equation}
\label{E:split-1}
\begin{split}
Wf &[(x, n) \to (z, k)]\\
&= Wf [\{(0, n)^{k-1}, (x, n)\} \to (z, 1)^{k}] - Wf [(0, n)^{k-1} \to (z, 1)^{k-1}].
\end{split}
\end{equation}
We now analyze the first term on the right hand side above.
By Proposition \ref{P:wm-equivalent}, this term does not change when we replace $Wf$ by the original function $f$. Then we  use Lemma \ref{L:rev-wm-fact} to rewrite it in terms of the last passage values in the reverse melon $W^*_z f$ opened up at $z$. This gives that
\begin{equation}
\label{E:revv}
\begin{split}
Wf [\{(0, n)^{k-1}, (x, n)\} \to (z, 1)^{k}] = W^*_z f [(0, n)^k \to \{(z-x, 1), (z, 1)^{k-1} \}].
\end{split}
\end{equation}
By Lemma \ref{L:high-paths}, there exists a last passage path $\pi$  from $(0, n)^k$ to $\{(z-x, 1), (z, 1)^{k-1} \}$ 
in $W^*_z f$
such that $\pi_i(t) \le i$ for all $t > 0$ and $i \in \{1, \dots, k\}$. Hence $\pi|_{[0, z -x]}$ will be a last passage path from $(0, n)^k$ to $(z-x, 1)^k$ in $W^*_z f$. More formally, $\pi|_{[0, z -x]}$ will be a last passage path between these points if the endpoint values are redefined to be $1$ at $z-x$.
In particular, this implies that
\begin{align}
\nonumber
W^*_z f [(0, n)^k &\to \{(z-x, 1), (z, 1)^{k-1} \}]\\
\label{E:almost-fp} &= W^*_z f [(0, n)^k \to (z-x, 1)^k] + \sup_{\tau} \sum_{i=2}^k \int_{z-x}^z d W_z^* f \circ \tau_i,
\end{align}
where the supremum above is over all sequences of nonincreasing cadlag paths $\tau = (\tau_2, \dots, \tau_k)$ 
with $\tau_i(t) \le i$ and $\tau_i < \tau_{i+1}$ on the interval $(z-x, z)$. This restriction on the paths $\tau_i$ implies that for all $i$, $\tau_i(t) \in \{i-1, i\}$ for $t \in [z - x, z)$. Moreover, letting 
$$
t_i = z \wedge \inf \{ s \in [z- x, z] : \tau_i(s) = i-1 \}
$$ 
we have $z -x \le t_2 \dots \le t_k \le z$, and the complement of the union of the graphs of $\tau_2, \dots, \tau_k$ in $[z -x, z] \X \{1, \dots, k\}$ is given by
$$
[z-x, t_2) \X \{1\} \cup [t_2, t_3) \X \{2\} \cup \dots \cup [t_k, z] \X \{k\}.
$$
This is the graph of a nondecreasing cadlag path $\tau_*:[z-x,z]\to \{1, \dots, k\}$. 
Together, the paths $\tau_2, \dots, \tau_k, \tau_*$ cover all lines $\{1,\ldots, k\}$ on the interval $(z-x,z)$, so we have
\begin{align*}
\sum_{i=2}^k \int_{z-x}^z d W_z^* f \circ \tau_i + \int_{z-x}^z d W_z^* f \circ \tau_* &= \sum_{i=1}^k W^*_z f(z) - W^*_z f(z-x)   \\
&=W^*_z f [(0, n)^k \to (z, 1)^k]  - W^*_z f [(0, n)^k \to (z-x, 1)^k].
\end{align*}
Here the final equality follows from \eqref{E:sWf-melon}.
Hence the right hand side of \eqref{E:almost-fp} is equal to
$$
W^*_z f [(0, n)^k \to (z, 1)^k] - \inf_{\tau^*} \int_{z-x}^z d W_z^* f \circ \tau_*,
$$
where the infimum is taken over all nondecreasing paths $\tau_*:[z-x, z] \to \{1, \dots, k\}$. This infimum is simply the backwards first passage value $W^*_zf[(z - x, 1) \to_\mathfrak{f} (z, k)]$. Finally, combining this representation with \eqref{E:split-1} and \eqref{E:revv} implies that
\begin{equation}
\label{E:W-many}
\begin{split}
Wf [(x, n) \to (z, k)]  &= W^*_z f [(0, n)^k \to (z, 1)^k] \\
&- W f [(0, n)^{k-1} \to (z, 1)^{k-1}] - W^*_zf[(z - x, 1) \to_\mathfrak{f} (z, k)].
\end{split}
\end{equation}
We can rewrite the first term on the right hand side of \eqref{E:W-many} in terms of a last passage time across $f$ by using Lemma \ref{L:rev-wm-fact}. This gives that
$$
W^*_z f [(0, n)^k \to (z, 1)^k] = f [(0, n)^k \to (z, 1)^k] = W f [(0, n)^k \to (z, 1)^k],
$$
where the second equality uses Proposition \ref{P:wm-equivalent}.
Furthermore, by Lemma \ref{L:high-paths}, the above last passage is equal to $(Wf)_1(z) + \dots + (Wf)_k(z)$. The second term on the right hand side of \eqref{E:W-many} can similarly be written as $(Wf)_1(z) + \dots + (Wf)_{k-1}(z)$, completing the proof of the lemma.
\end{proof}

\section{The Airy line ensemble last passage problem}
\label{S:short-lp}

In this section, we study a last passage problem in the Airy line ensemble that arises naturally when we try to understand the limit of the right side of the equality in Proposition \ref{P:wm-equivalent}. To motivate the study of this problem, we first see how it arises in the study of Brownian last passage percolation via the melon identity in Lemma \ref{L:k-line-prior}. 

\medskip

Recall from Section \ref{SS:notation} the scaling operations $\bar{x} = 2xn^{-1/3}$ and $\hat{y} = 1 + 2yn^{-1/3}$, and the bracket notation $[]$, $\{\}$, and $\langle \rangle$ for last passage across Brownian motions, a Brownian melon, and the Airy line ensemble. By Proposition \ref{P:wm-equivalent}, the Brownian last passage value from $\bar{x}$ to $\hat{y}$ equals a last passage value across Brownian melon. By Lemma \ref{L:metric}, this analysis can be broken down into an analysis of a last passage problem across the top right corner of the melon and a last passage problem up to that corner. By continuity, the last passage problem across the top right corner will translate to a last passage problem in the Airy line ensemble.

\medskip

The harder part is the last passage up to the top right corner; these are values of the form $\{\bar x \to (\hat z,k)\}_n$. To tackle this problem, we approximate a variant,
$$
\{\bar x \to (\hat{z},k)\}_n-W_k^n(\hat z),
$$
which has the added advantage of monotonicity in $z$ (see Lemma \ref{L:double-mono}). Here and throughout we use the notation $W^n_k :=(WB^n)_k$ for the $k$th line in a Brownian $n$-melon.
\newcommand{\oo}{{\mathfrak o}}
In the sequel, for a random array $\{R_{n, k} :n, k \in \N\}$ we will write
\begin{equation}\label{E:onotation}
R_{n,k} = \oo(r_{k})  \qquad \text{ if for all }\epsilon>0 \qquad  \;\; \sum_{k=1}^\infty \limsup_{n\to\infty} \prob(|R_{n,k}/r_{k}|>\epsilon) < \infty.
\end{equation}
The idea behind this notation is that if we can pass to a limit in $n$ to get a sequence $R_k$, then by the Borel-Cantelli lemma, $R_k/r_k \to 0$ almost surely.

\begin{prop}\label{P:longbound} For each $n$, let $W^n = WB^n$ be a Brownian $n$-melon. Let $x > 0$ and let $z_k$ be an arbitrary sequence of real numbers. Let
$$
F^n_k(z)=n^{1/6}\left[\{\bar x \to (\hat{z},k)\}_n -W_k^n(\hat z) +2xn^{1/6}\right].
$$
Then
$$
F^n_k(z_k)= 2\sqrt{2kx}+2z_kx + \oo(\sqrt{k}).
$$
\end{prop}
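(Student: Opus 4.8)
The plan is to use the melon path transformation of Lemma \ref{L:k-line-prior} to turn $F^n_k$ into a backwards first passage value across a reverse melon near its corner, to evaluate all of its endpoint contributions explicitly, and thereby to reduce the whole statement to one estimate: a (backwards first, equivalently last) passage value across the top $k$ lines of an Airy line ensemble over a window of width $\asymp x$, which by the local Brownian structure of those lines equals $2\sqrt{2kx}$ to the required precision.

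\textbf{Step 1 (reduction via the reverse melon).} Write $g = W^*_{\hat z}B^n$, again a Brownian $n$-melon. Lemma \ref{L:k-line-prior} gives $\{\bar x \to (\hat z, k)\}_n = W^n_k(\hat z) - F(g)[(\hat z - \bar x, 1) \to (\hat z, k)]$, hence $F^n_k(z) = n^{1/6}\bigl(2xn^{1/6} - F(g)[(\hat z - \bar x, 1) \to (\hat z, k)]\bigr)$. Writing path length through the gap process of $g$ as in \eqref{E:lp-gaps},
\[
F(g)[(\hat z - \bar x, 1) \to (\hat z, k)] = g_k(\hat z) - g_1(\hat z - \bar x) + M^n_k, \qquad M^n_k := \inf_{\hat z-\bar x\le t_1\le\cdots\le t_{k-1}\le\hat z}\ \sum_{i=1}^{k-1}(g_i - g_{i+1})(t_i).
\]
By Proposition \ref{P:wm-equivalent} and Lemma \ref{L:rev-wm-fact} the endpoint terms are identified: $g_k(\hat z) = W^n_k(\hat z) = (WB^n)_k(1+2zn^{-1/3})$, and $g_1(\hat z - \bar x) = \{\bar x \to \hat z\}_n$, which is the top line at time $1+2(z-x)n^{-1/3}$ of the Brownian $n$-melon of the shifted motions $t\mapsto B_i(\bar x+t)-B_i(\bar x)$. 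Applying Theorem \ref{T:top-bd} and Proposition \ref{P:dyson-tails} to the latter and the finite-$n$ analogue of Theorem \ref{T:TW-airy} to the former, the $2\sqrt n$ and $n^{1/6}$ orders cancel in $2xn^{1/6} + g_1(\hat z - \bar x) - g_k(\hat z)$, giving the \emph{exact} identity $F^n_k(z) = \tilde A^n_1(z-x) - A^n_k(z) - n^{1/6}M^n_k$, where $A^n, \tilde A^n$ are the rescaled melon lines; the two parabola contributions $z^2-(z-x)^2 = 2zx-x^2$ will be the source of the $2zx$ in the statement.

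\textbf{Step 2 (the Airy line ensemble last passage estimate).} Under the rescaling applied to $g$ about its corner, $n^{1/6}(g_i-g_{i+1})(\hat z+2\,\cdot\,n^{-1/3})$ is, uniformly on $[-x,0]$, close to the $i$th gap $\scrR_i-\scrR_{i+1}$ of a stationary Airy line ensemble, and we make this quantitative with tails uniform in $n$ and $k$ using Propositions \ref{P:dyson-tails} and \ref{P:cross-prob}, Theorem \ref{T:top-bd}, Theorem \ref{T:TW-airy} and Lemma \ref{L:airy-tails}; this yields $n^{1/6}M^n_k = \widehat M_k + \oo(\sqrt k)$ with $\widehat M_k := \inf_{s_1\le\cdots\le s_{k-1}\,\mathrm{in}\,[-x,0]}\sum_{i<k}(\scrR_i-\scrR_{i+1})(s_i)$, and at the same time $\tilde A^n_1(z-x) - A^n_k(z)$ may be replaced by its limit up to $\oo(\sqrt k)$. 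By the gap formula and reversing the line indices ($h_i := -\scrR_{k+1-i}$), the combination $-\scrA_k(z) + \tilde\scrA_1(z-x) - \widehat M_k$ equals, up to $\oo(\sqrt k)$ errors involving only top-line increments and the parabola, $2zx$ plus a last passage value $h[(-x,k)\to(0,1)]$ across $k$ lines whose \emph{increments} are, by Proposition \ref{P:brownian-airy}, absolutely continuous with respect to those of $k$ independent variance-$2$ Brownian motions. The key point is
\[
h[(-x,k)\to(0,1)] = 2\sqrt{2kx} + \oo(\sqrt k):
\]
last passage across $k$ independent variance-$2$ Brownian motions over time $x$ is (by the classical identification with the top eigenvalue of a $k\times k$ Hermitian Brownian motion, scaled: $2\sqrt{2kx}$ to leading order, fluctuations $o(\sqrt k)$ with exponential tails — quantitatively, Theorem \ref{T:top-bd} rescaled), and the Radon--Nikodym bound of Proposition \ref{P:brownian-airy}, with its $k$-uniform control, transfers this to the Airy ensemble while keeping the error probabilities summable in $k$. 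Combining Steps 1 and 2 gives $F^n_k(z_k) = 2\sqrt{2kx} + 2z_kx + \oo(\sqrt k)$ (the residual $-x^2$ being $\oo(\sqrt k)$).

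\textbf{Main obstacle.} The crux is Step 2, and in particular the control of the ordered infimum $M^n_k$: one must show that the constraint $t_1\le\cdots\le t_{k-1}$ prevents all of the deep, closely spaced gaps of the melon from being simultaneously driven to zero — which would produce an anomalous contribution of order $k^{2/3}$ — while the genuine gain over a window of width $x$ is exactly of order $2\sqrt{2kx}$. This needs the sharp absolute-continuity comparison with Brownian motion, uniform in $k$, together with its transfer down to the pre-limit melon with constants independent of $n$; it is precisely here that the quantitative Airy line ensemble and Brownian melon estimates of \cite{DV} are indispensable.
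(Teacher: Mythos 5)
Your Step 1 is essentially the paper's own reduction: Lemma \ref{L:k-line-prior} turns $F^n_k$ into a backwards first passage value across the reverse melon opened at $\hat z$, and (for fixed $k$, letting $n\to\infty$) this is exactly Lemma \ref{L:reverseW}, producing the term $2z_kx$ and reducing everything to the claim that the last passage value across the top $k$ lines of the Airy line ensemble over a window of width $x$ is $2\sqrt{2kx}$ up to an error which, after summing probabilities over $k$, is negligible at scale $\sqrt k$. (Your insistence on tails ``uniform in $n$ and $k$'' is stronger than needed: the $\limsup_n$ inside the definition of the error notation lets you pass to the Airy limit for each fixed $k$, which is what the paper does.)

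The genuine gap is your Step 2, i.e.\ the analogue of the paper's Theorem \ref{T:airy-lp}. You propose to transfer the Brownian estimate (Theorem \ref{T:top-bd} rescaled) to the Airy line ensemble via ``the Radon--Nikodym bound of Proposition \ref{P:brownian-airy}, with its $k$-uniform control.'' No such bound exists: Proposition \ref{P:brownian-airy} is a purely qualitative absolute continuity statement for a \emph{fixed} number of lines, with no estimate on the Radon--Nikodym derivative and certainly no uniformity in $k$. Absolute continuity alone transfers null sets, not quantitative tail bounds, and the density of $k$ centered Airy lines with respect to $k$ independent Brownian motions degenerates badly as $k$ grows (the ordering/non-intersection constraint has an enormous cost), so a $k$-uniform transfer of the fluctuation bound with summable error probabilities cannot be extracted from it. This is precisely why the paper does not argue this way: the proof of Theorem \ref{T:airy-lp} instead uses the quantitative bridge representation of the Airy line ensemble from \cite{DV} (Theorem \ref{T:bridge-rep}), which replaces the top $k$ lines, up to a total variation error $\ell e^{-d\ga k^{\ga/12}}$, by concatenated Brownian bridges whose non-intersection conditioning is confined to a sparse random graph; the last passage value is then split by subadditivity (Lemma \ref{L:lp-subadd}) into a genuine $k$-line Brownian term, a piecewise-linear correction controlled by Theorem \ref{T:mod-cont}, and a ``large component'' error controlled by Proposition \ref{P:edge-spread}. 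Even with this machinery the achievable error is only $k^{9/21}\log^d k$ (fortunately still $o(\sqrt k)$), far from the $O(k^{-1/6})$ one would naively expect, which underscores that the ordered-infimum difficulty you correctly single out cannot be dispatched by an appeal to local Brownian absolute continuity. Without a proof of this Airy-ensemble last passage estimate, your argument does not close.
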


The analysis in Proposition \ref{P:longbound} will boil down to understanding a last passage problem across the Airy line ensemble. As mentioned at the beginning of the section, this Airy line ensemble last passage problem arises out of an application of Lemma \ref{L:k-line-prior}. This application is dealt with by the following lemma. 
\begin{lemma}\label{L:reverseW} Fix $k \in \N$. Then in the setup of Proposition \ref{P:longbound}, we have
\begin{equation}\label{E:Lknlim}
  F^n_k(z_k) - 2z_kx \cvgd \langle (0, k) \LP (x, 1) \rangle \qquad \mathas n \to \infty.
\end{equation}
\end{lemma}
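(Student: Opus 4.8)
The plan is to use Lemma \ref{L:k-line-prior} to rewrite $\{\bar x \to (\hat z,k)\}_n - W^n_k(\hat z)$ as a backwards first passage value confined to the top right corner of a melon, to observe that this melon is an ordinary Brownian $n$-melon in law, and then to pass to the limit through Theorem \ref{T:airy-line}. Concretely, applying Lemma \ref{L:k-line-prior} to $B^n$ at the points $\bar x<\hat z$ gives $F^n_k(z)=n^{1/6}\big(2xn^{1/6}-F(W^*_{\hat z}B^n)[(\hat z-\bar x,1)\to(\hat z,k)]\big)$. I would then note that, restricted to $[0,\hat z]$, the reverse melon $W^*_{\hat z}B^n=WR_{\hat z}B^n$ has the law of $W^n=WB^n$: the melon map is unchanged by adding a line-dependent constant to each coordinate (last passage values depend only on increments), and after subtracting $(R_{\hat z}B^n)_i(0)$ the $i$-th coordinate becomes $t\mapsto B^n_{n+1-i}(\hat z)-B^n_{n+1-i}(\hat z-t)$ on $[0,\hat z]$, which are $n$ independent standard Brownian motions (time reversal around $\hat z$, composed with negation). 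Since the first passage value only involves $W^*_{\hat z}B^n$ on $[\hat z-\bar x,\hat z]\subset[0,\hat z]$, this lets me replace it in law by $F(W^n)[(\hat z-\bar x,1)\to(\hat z,k)]$.

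Next I would rescale. Because $k$ is fixed, $z=z_k$ is a fixed real number and $[z-x,z]$ a fixed compact set, so Theorem \ref{T:airy-line} applies with $W^n_i(1+2yn^{-1/3})=2\sqrt n+2yn^{1/6}+n^{-1/6}A^n_i(y)$, $A^n\to\scrA$. Using the analogue of \eqref{E:lp-gaps} for first passage across an ordered ensemble,
$$F(W^n)[(\hat z-\bar x,1)\to(\hat z,k)]=W^n_k(\hat z)-W^n_1(\hat z-\bar x)+\inf_{\hat z-\bar x\le s_1\le\dots\le s_{k-1}\le\hat z}\sum_{i=1}^{k-1}(W^n_i-W^n_{i+1})(s_i),$$
and substituting the Airy scaling (so that $\hat z$, $\hat z-\bar x$, and the $s_i$ correspond to Airy coordinates $z$, $z-x$, and $y_i\in[z-x,z]$), the $2\sqrt n$ terms cancel internally and the residual $2xn^{1/6}$ cancels against the $2xn^{1/6}$ in the definition of $F^n_k$, leaving after multiplication by $n^{1/6}$
$$F^n_k(z)\;\eqd\;A^n_1(z-x)-A^n_k(z)-\inf_{z-x\le y_1\le\dots\le y_{k-1}\le z}\sum_{i=1}^{k-1}(A^n_i-A^n_{i+1})(y_i).$$
The right side is a continuous functional of $(A^n_1,\dots,A^n_k)$ restricted to $[z-x,z]$ — the infimum is of a jointly continuous function over a fixed compact simplex — so the continuous mapping theorem gives convergence in law to the same expression with $\scrA$ in place of $A^n$.

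Finally I would identify the limit. Writing $\scrA_i=\scrR_i-(\cdot)^2$, the parabolic terms combine into $\scrA_1(z-x)-\scrA_k(z)=\scrR_1(z-x)-\scrR_k(z)+2zx-x^2$, the gaps $\scrA_i-\scrA_{i+1}=\scrR_i-\scrR_{i+1}$ being unaffected; subtracting $2z_kx$ and shifting by $-z_k$ via stationarity of $\scrR$ shows $F^n_k(z_k)-2z_kx$ converges in law to $\scrA_1(-x)-\scrA_k(0)-\inf_{-x\le y_1\le\dots\le y_{k-1}\le0}\sum_{i=1}^{k-1}(\scrA_i-\scrA_{i+1})(y_i)$, which by the gap representation equals $-F(\scrA)[(-x,1)\to(0,k)]$. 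Invoking the reflection invariance $\{\scrA_i(-\cdot)\}_{i\ge1}\eqd\{\scrA_i(\cdot)\}_{i\ge1}$ of the Airy line ensemble and substituting $r_i=-y_i$ turns this into $\scrA_1(x)-\scrA_k(0)-\inf_{0\le r_{k-1}\le\dots\le r_1\le x}\sum_{i=1}^{k-1}(\scrA_i-\scrA_{i+1})(r_i)$, which is precisely $\scrA[(0,k)\LP(x,1)]=\langle(0,k)\LP(x,1)\rangle$ by the gap representation of last passage in the Airy line ensemble.

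I do not expect any single step to be a serious obstacle once Lemma \ref{L:k-line-prior} is in hand; the care lies in (a) the reverse-melon distributional identity, which hinges on melonization ignoring per-line additive constants, and (b) the scaling bookkeeping, where the parabola of the Airy line ensemble must be tracked precisely enough both to cancel all $\sqrt n$-order terms and to produce the shift $2z_kx$. The only ingredient beyond the excerpt is the standard reflection symmetry of the Airy line ensemble.
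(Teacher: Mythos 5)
Your proof is correct and follows essentially the same route as the paper: Lemma \ref{L:k-line-prior}, the distributional identity between a reverse melon and an ordinary Brownian melon, convergence to the Airy line ensemble together with continuity of the (compactly supported) backwards first passage functional, and finally stationarity, flip symmetry, and the first/last-passage time-reversal identity to recognize the limit. Your explicit bookkeeping of the parabolic terms (which produces exactly the centering $2z_kx$ and identifies the limit as a last passage value across the parabolic Airy line ensemble, in line with the formulaic definition of $\langle\,\cdot\,\rangle$) is if anything more careful than the paper's own displayed intermediate expression, whose $z_kx/2$ correction appears to be a leftover from an earlier scaling convention.
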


\begin{proof}
  By Lemma \ref{L:k-line-prior}, we have
  \begin{equation}
  \label{E:FnzkW}
  F^n_k(z_k) = 2xn^{1/3} - n^{1/6}W^{*}_{\hat z_k}B^n[(\hat z_k - \bar x, 1) \to_{\mathfrak{f}} (\hat z_k, k)]
  \end{equation}
where the notation $\mathfrak{f}$ refers to the backwards first passage value, see \eqref{E:mathfrakf}, and $W^{*}_{\hat z_k}$ is the reverse melon opened up at $\hat z_k$, see \eqref{E:Wrzf}. By time-reversal symmetry of Brownian motion, $W^*_{\hat z_k} B_n$ is equal to $WB_n$ in distribution. Therefore by Theorem \ref{T:airy-line}, the top corner of $W^*_{\hat z_k} B_n$ converges in distribution after proper rescaling to the Airy line ensemble.

\medskip

Now, for $f, \hat g = (g, \dots, g) \in C^n,$ and $\al \in \R$, backwards first passage values across $f$ and $\al f + \hat g$ are simply related by scaling by $\al$ and translation by an increment of $g$. Using this, we can rewrite the right side of \eqref{E:FnzkW} as a backwards first passage value across a sequence of functions that converge uniformly on compact sets to the Airy line ensemble. Since backwards first passage values are continuous with respect to uniform convergence on compact sets, this implies that the right hand side of \eqref{E:FnzkW} converges to
\begin{equation}
\label{E:Azkx}
-\mathcal A[ (z_k-x,1) \LP_\mathfrak{f} (z_k,k)].
\end{equation}
By flip symmetry of Airy line ensemble,
\eqref{E:Azkx} is equal in distribution to
\begin{equation}
\label{E:Azkx2}
\mathcal A[ (-z_k,k) \to (x - z_k,1)].
\end{equation}
This is equal in distribution to the right hand side of \eqref{E:Lknlim} plus $2z_kx$ since $t \mapsto \scrA(t) + t^2$ is stationary.
\end{proof}

We can now state the key theorem about last passage values across the Airy line ensemble. Together with Lemma \ref{L:reverseW} this implies Proposition \ref{P:longbound}.

\begin{theorem}
\label{T:airy-lp} Fix $x>0$, and recall that $\langle (0, k) \LP x \rangle$ is the last passage value across the Airy line ensemble $\scrA$ from line $k$ at time $0$ to line $1$ at time $x$. Then there exists a constant $d \in \N$ such that for every $\ep > 0$, we have
$$
\sum_{k \in \N} \p \lf(\frac{\langle (0, k) \LP x \rangle-{2\sqrt{2kx}}}{k^{3/7}\log^dk} > \ep\rg) < \infty.
$$
\end{theorem}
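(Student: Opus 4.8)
Since the statement is a one-sided bound, it suffices to show that $\langle(0,k)\LP x\rangle$ exceeds $2\sqrt{2kx}$ by more than $\ep k^{9/21}\log^d k$ with probability summable in $k$. A useful preliminary observation is that adding a constant to a single line changes no last passage value, so $\langle(0,k)\LP x\rangle=\hat\scrA[(0,k)\LP(x,1)]$ where $\hat\scrA_i=\scrA_i-\scrA_i(0)$; since all the $\hat\scrA_i$ start at $0$, this is the time-$x$ value of the top line of the melon of the first $k$ rebased Airy lines. Heuristically, if those lines were genuine independent variance-$2$ Brownian motions the answer would be exactly $2\sqrt{2kx}$, with a $k^{-1/6}$-size fluctuation, by (the variance-$2$, time-$x$ rescaling of) Theorem~\ref{T:top-bd}; so the theorem is a quantitative statement that the bias of the Airy ensemble costs at most a subleading polynomial factor. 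One could also run the entire argument at the level of the Brownian $n$-melon $WB^n$ with constants uniform in $n$ and pass to the limit via Theorem~\ref{T:airy-line}; I will phrase everything directly for $\scrA$, using its Brownian Gibbs property.

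The heart of the proof is a multi-scale decomposition. Fix an integer $N$ (to be optimized against $k$) and choose levels $k=\ell_0>\ell_1>\dots>\ell_N=1$ spaced by $\ell_{j-1}-\ell_j\approx k/N$. An optimal last passage path from $(0,k)$ to $(x,1)$ crosses level $\ell_j$ at some time $t_j$, and since the path is monotone these times are ordered, $0=t_0\le t_1\le\dots\le t_N=x$. Iterating the metric composition law (Lemma~\ref{L:metric}) at the levels $\ell_1,\dots,\ell_{N-1}$ then gives
$$
\langle(0,k)\LP x\rangle\ \le\ \sup_{0=t_0\le t_1\le\dots\le t_N=x}\ \sum_{j=1}^N\scrA\big[(t_{j-1},\ell_{j-1})\LP(t_j,\ell_j)\big].
$$
It therefore suffices to bound, uniformly over all choices of $\delta_j=t_j-t_{j-1}\ge0$ with $\sum_j\delta_j=x$, the last passage across the band of lines $\{\ell_j,\dots,\ell_{j-1}\}$ over a window of length $\delta_j$. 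The uniformity over $(\delta_j)$ makes this a process-level bound, which I would deduce from the pointwise estimates of the next step together with a chaining argument of the type encoded in Lemma~\ref{L:levy-est}.

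The core \textbf{per-band estimate} asserts that the last passage across a band of $M$ consecutive Airy lines at depth $\approx L$, over a window of length $\delta$, is at most $2\sqrt{2M\delta}+m\,\sigma$ with probability $1-ce^{-dm^{3/2}}$, for an explicit scale $\sigma=\sigma(M,\delta,L)$. To establish it I would rebase the band to start at $0$ (harmless again) and invoke the Brownian Gibbs property of the Airy line ensemble: conditionally on the neighbouring line and on the two endpoint values, the band is a family of non-intersecting variance-$2$ Brownian bridges, whose rebased melon top line is, by the definition of the melon together with Theorem~\ref{T:top-bd} rescaled, $2\sqrt{2M\delta}$ up to an $\exp(-cm^{3/2})$-tailed fluctuation. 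What then remains is to pay for (i) replacing the true conditional law by free Brownian motions, whose Radon--Nikodym cost grows with $M$ and so genuinely requires the quantitative resampling and line-position estimates of \cite{DV} (the deep-band analogues of Propositions~\ref{P:dyson-tails} and~\ref{P:cross-prob}), not mere absolute continuity; and (ii) the slack in the comparison, namely that rebased Airy lines are only \emph{almost} non-intersecting, the permitted crossing being of order the local gap $\asymp L^{-1/3}$. I expect this per-band estimate for deep bands to be the main obstacle: the Radon--Nikodym derivatives in play are unbounded in $M$, so one must lean on the sharper depth-uniform control of \cite{DV} and track the crossing slack carefully.

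It remains to reassemble. By Cauchy--Schwarz,
$$
\sum_{j=1}^N 2\sqrt{2M_j\delta_j}\ \le\ 2\sqrt{2\Big(\sum_j M_j\Big)\Big(\sum_j\delta_j\Big)}\ =\ 2\sqrt{2kx},
$$
with near-equality precisely when the $M_j$ and the $\delta_j$ are near-equal --- which is why the levels must be evenly spaced and why the leading constant emerges as exactly $2\sqrt2$ rather than something larger. The residual error is $\sum_j m\,\sigma(M_j,\delta_j,\ell_{j-1})$. Choosing $m\asymp\log^{2/3}k$ forces each band's failure probability below $k^{-D}$ for large $D$, so the union bound over the $N$ bands and the sum over $k$ both converge, at the cost of a fixed power $\log^d k$. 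The polynomial factor $k^{9/21}$ is what falls out of optimizing $N$: for the Brownian comparison in each band to be efficient the window $\delta_j\asymp x/N$ must dominate $(k/N)^{-2/3}$, the square of the local gap, and this constraint, balanced against the way $\sum_j\sigma$ grows in $N$, pins down both the exponent $9/21$ and the power $d$. Combining with the reduction of the first step completes the proof.
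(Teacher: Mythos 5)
Your outline takes a genuinely different route from the paper (decomposing the geodesic into $N$ depth-bands via Lemma \ref{L:metric} and Cauchy--Schwarz, rather than decomposing the ensemble itself), but it has a genuine gap at exactly the point you yourself flag: the per-band estimate is not a technical lemma you can defer, it is the theorem in disguise, and the mechanism you propose for it cannot work. A band of $M\approx k/N$ consecutive lines at depth $L$ of order $k$ has gaps of order $L^{-1/3}$, so over a time window of macroscopic length $\delta\approx x/N$ the conditional law given its boundary data is a family of bridges confined between two walls at spacing $\sim k^{-1/3}$; the probability of the corresponding non-intersection/confinement event for free bridges decays like a large-deviation event in $M$ and $\delta/g^2$, so the Radon--Nikodym comparison you invoke (even fortified by the quantitative inputs of \cite{DV}) gives nothing --- this is not a matter of ``tracking the crossing slack carefully.'' Note also that the trivial bound via endpoints minus gaps (equation \eqref{E:lp-gaps}) gives only $\sim M k^{-1/3}$ for a deep band, which exceeds your claimed $2\sqrt{2M\delta}$ by a factor $k^{1/6}$: to do better one must show the geodesic cannot find $M$ jump times with an atypically small total gap, a bulk rigidity statement that none of the cited results (Propositions \ref{P:dyson-tails}, \ref{P:cross-prob}, \ref{P:edge-spread}, Theorems \ref{T:mod-cont}, \ref{T:bridge-rep}) supply for interior bands; the bridge representation of Theorem \ref{T:bridge-rep} is a statement about the top $k$ lines as a whole, not about a band conditioned on its surroundings. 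Compounding this, the quantitative skeleton is missing: $\sigma(M,\delta,L)$ is never specified, the uniformity over the crossing times $(t_j)$ (an $N$-dimensional supremum with $N$ growing in $k$) is waved at via Lemma \ref{L:levy-est}, and the claim that optimizing $N$ ``pins down'' $k^{9/21}\log^d k$ is asserted without any computation, so the stated exponent is not actually derived from your scheme.

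For contrast, the paper avoids conditioning on deep bands altogether: it couples the top $k$ Airy lines with the bridge representation of Theorem \ref{T:bridge-rep}, writes each line as a genuine Brownian motion plus a piecewise-linear correction plus a local interaction error supported on small components of the graph $G_k$, and uses the subadditivity of last passage under addition of functions (Lemma \ref{L:lp-subadd}) so that the main term is an honest Brownian last passage value $2\sqrt{2k}+O(k^{-1/6})$ by Theorem \ref{T:top-bd}, while the two correction terms are bounded by $k^{1/3+\ga/2}\log^d k$ (slopes, via Theorem \ref{T:mod-cont}) and $k^{2/3-5\ga/4}\log^d k$ (component errors, via Proposition \ref{P:edge-spread} and a box-counting argument); balancing these at $\ga=4/21$ is what produces $k^{9/21}$. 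If you want to salvage your band decomposition, you would need an independent proof of the per-band upper bound with explicit $\sigma$ and summable tails uniform in the band's depth, and at present the only available technology for that is essentially the paper's argument applied band by band.
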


The intuition behind Theorem \ref{T:airy-lp} is as follows. Since the Airy line ensemble arises as the scaling limit at the edge of Dyson's Brownian motion, we can loosely interpret it as an infinite sequence of Brownian motions conditioned never to intersect. This intuition is made rigorous by the Brownian Gibbs property for the Airy line ensemble. This property states that conditioned on the values of $\scrA$ on the boundary of a region, inside that region $\scrA$ consists of independent Brownian bridges, conditioned so that the whole process remains nonintersecting and continuous. The typical spacing between the $k$th and $(k+1)$st Airy lines is $O(k^{-1/3})$, so this picture, along with Brownian scaling, suggests that on $o(k^{-2/3})$ time scales, $\scrA_k$ behaves like an independent Brownian motion.

\medskip

It is reasonable to expect that the last passage path across $\scrA$ from $(0, k)$ to $(x, 1)$ spends roughly the same amount of time -- $O(k^{-1})$ -- on each Airy line. By the above heuristic, Airy lines behave like Brownian motions on this scale, suggesting that $\langle (0, k) \LP x \rangle$ should be close to the corresponding Brownian last passage value of $2 \sqrt{2kx}$. Theorem \ref{T:airy-lp} proves this.

\medskip

We believe that $\langle (0, k) \LP x \rangle$ still behaves like a Brownian last passage value at a finer precision. In particular, we expect that the true fluctuation of $\langle (0, k) \LP x \rangle$ around $2 \sqrt{2kx}$ should be $O(k^{-1/6})$ as in Brownian last passage percolation.
 While the $o(k^{3/7}\log^d k)$ error we get could be improved by a more careful application of our methods, even our most optimistic heuristic proofs of the above theorem did not yield this error. It would be of interest to improve the above result to get a bound that is $o(1)$ as $k \to \infty$, as this would yield a slightly nicer representation of the Airy sheet in the limit; see Problem \ref{P:det-correction}.

\medskip

To prove Theorem \ref{T:airy-lp} we require structural results about the Airy line ensemble from \cite{DV}. The Brownian Gibbs property suggests that if we sample the points in $\scrA$ on a fine grid, then what lies in between is simply independent Brownian motions, conditioned not to intersect only when the grid points are close together. \cite{DV} make this picture rigorous. To state the results of that paper, we need two definitions. These definitions are illustrated in Figure \ref{fig:Bridge}.

\begin{definition}
\label{D:bridge-graph}
Fix $t > 0$. For a fixed $\ell > 0$, define $s_r = rt/\ell$ for all $r \in \{0, 1, \dots, \ell\}$. For $k, \de > 0$, we define a random graph $G_k(t, \ell, \de)$ on the set
$$
S_k(\ell) = \{1, \dots, k\} \X \{1, \dots, \ell \},
$$
where the points $(i, r)$ and $(i + 1, r)$ are connected if the two lines $\scrA_i$ and $\scrA_{i+1}$ are close at one of the two endpoints $s_{r-1}$ or $s_r$. That is,
$$
|\scrA_i(s_{r-1}) - \scrA_{i+1}(s_{r-1})| \le \de \quad \mathor \quad |\scrA_i(s_r) - \scrA_{i+1}(s_r)| \le \de.
$$
\end{definition}

\begin{definition}
\label{D:bridge-rep} The \textbf{bridge representation} $\mathcal{B}^k(t, \ell, \de)$ of the Airy line ensemble $\scrA$ is a sequence $(\scrB_1, \dots, \scrB_{2k})$ of functions from $[0, t]$ to $\R$ constructed as follows. For every line $i \in \{1, \dots, 2k\}$ and every $r \in \{1, \dots, \ell\}$, sample a Brownian bridge $B_{i, r}:[s_{r-1}, s_{r}] \to \R$ of variance $2$ with
$$
B_{i, r} (s_{r-1}) = \scrA_{i}(s_{r-1}) \quad \mathand \quad B_{i, r} (s_r) = \scrA_{i}(s_r).
$$
The bridges $B_{i, r}$ and $B_{i', r}$ are conditioned not to intersect if $(i, r)$ and $(i',r)$ are in the same component of $G_{2k}(t, \ell, \delta)$. We then define the $i$th line $\scrB_i$ of the line ensemble $\mathcal{B}^k(t, \ell, \de)$ by concatenating the bridges $B_{i, r}$. That is, $\scrB_i|_{[s_{r-1}, s_r]} = B_{i, r}$ for all $r \in \{1, \dots, \ell\}$.
\end{definition}

We now state the main structural result about the Airy line ensemble from \cite{DV}.

\begin{theorem}[\cite{DV}, Theorem 7.2]
\label{T:bridge-rep} There exist constants $c, d > 0$ such that the following holds for all $k \ge 3$, $t > 0$, $\ga \in (c \log(\log k)/\log k, 2]$ and $\ell \ge tk^{2/3 + \ga}$. The total variation distance between the laws of $\scrB^k(t, \ell, k^{-1/3 - \ga/4})|_{\{1, \dots, k\} \X [0, t]}$ and $\scrA|_{\{1, \dots, k\} \X [0, t]}$ is bounded above by
$$
\ell e^{-d \ga k^{\ga/12}}.
$$
\end{theorem}

\begin{figure}%
	\centering
	\begin{subfigure}[t]{6cm}
		\includegraphics[width=6cm]{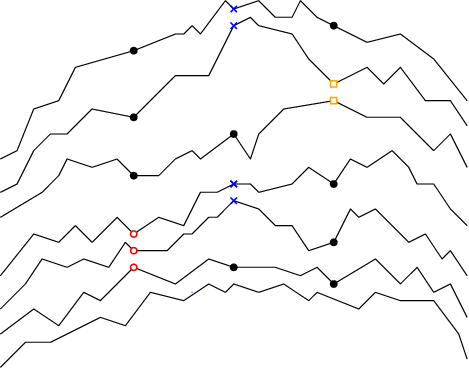}
		\caption{}
	\end{subfigure}
	\qquad
	\begin{subfigure}[t]{6cm}
		\includegraphics[width=6cm]{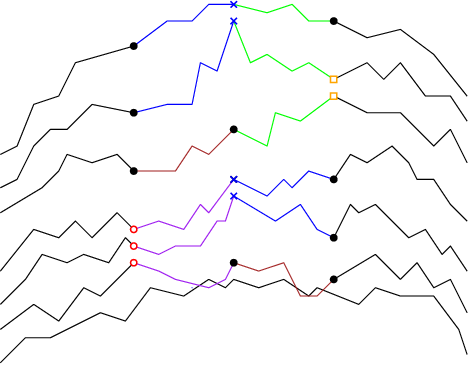}
		\caption{}
	\end{subfigure}
	\caption{An illustration of the bridge representation $\scrB$ of the Airy line ensemble. Figure \ref{fig:Bridge}(a) is the Airy line ensemble, with points at three times identified. Points with the same time coordinate are grouped together if they are close. To sample the bridge representation on this grid, we erase all lines between the specified points and resample independent Brownian bridges that are conditioned not to intersect each other if either of their endpoints are close (i.e. have the same colour). The result is Figure \ref{fig:Bridge}(b). If we only look at the top half of the lines in the bridge representation, then with high probability they do not intersect each other and resemble the Airy line ensemble.}%
	\label{fig:Bridge}%
\end{figure}

Theorem \ref{T:bridge-rep} shows that at the right scale, the Airy line ensemble can be represented as sequences of concatenated Brownian bridges. It will allow us to relate last passage across the Airy line ensemble to last passage across concatenated Brownian bridges, and then in turn to Brownian last passage percolation. The precise scale is essentially optimal given that the typical distance between the $k$th and $(k+1)$st Airy lines is $O(k^{-1/3})$. See \cite{DV} for more discussion of the scaling parameters in Theorem \ref{T:bridge-rep}. Note that in Theorem \ref{T:bridge-rep} we are required to sample bridges for $2k$ Airy lines, even though our comparison only concerns the top $k$ lines. The bridges with indices from $k+1$ to $2k$ take the place of the lower boundary condition in the usual Gibbs resampling. An index smaller than $2k$ is also possible, but for practical purposes this does not improve any of our estimates. 

\medskip

We will also need a structural result showing that edges in the graph $G_{2k}(t, \ell, \delta)$ are rare, and a modulus of continuity result for the Airy line ensemble.

\begin{prop}[Proposition 7.4, \cite{DV}]
\label{P:edge-spread}
	Fix $\ga \in (0, 2]$, and let $k \in \nat, t \in (0, \infty), \ell \in \N$. Let the graph $G := G_{2k}(t, \ell, k^{-1/3 - \ga/4})$ be as in Definition~\ref{D:bridge-graph}. For each $r \in \{1, \dots, \ell\}$, let
$$
V_r = \{i \in \{1, \dots, 2k\} :  \deg_G(i, r) \ge 1\}.
$$
In other words, $V_r$ is the set of vertices in $G$ with second coordinate $r$ that are connected to at least one other vertex. Then for any $\al \in (0, 1]$ there exist constants $c_{\al}, d_{\al}$ such that for all  $k \in \N, i \in \{\floor{k^{\al}} + 1, \dots, 2k\},$ and $m \le k^{\al/2}$, we have that
\begin{equation}
\label{E:Vj}
\p(|V_r \cap \{i - \floor{k^{\al}}, \dots, i \}| > mk^{\al - 3\ga/4}) \le c_{\al} e^{-d_{\al} m}.
\end{equation}

\end{prop}

\begin{theorem}[Theorem 8.1, \cite{DV}]
\label{T:mod-cont}
There exists a constant $d > 0$ such that for any $t > 0$, we have that
\begin{equation}
\label{E:k-summa}
\sum_{k \in \N} \p\lf( \sup_{s, s + r \in [0, t]} \frac{|\scrA_k(s) - \scrA_k(s + r)|}{\sqrt{r} \log^{1/2}(1 + r^{-1}) \log^d k} > 1 \rg) < \infty.
\end{equation}
\end{theorem}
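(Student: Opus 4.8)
The plan is to split at the temporal scale $r_k:=ck^{-2/3}$ (the square of the order of a typical gap $\scrA_{k-1}-\scrA_{k+1}$): below $r_k$ the line $\scrA_k$ behaves like a free variance-$2$ Brownian motion, and I would invoke the generalized L\'evy estimate, Lemma~\ref{L:levy-est}; between $r_k$ and $t$ I would use a rigidity estimate for the $k$th line. Fix $t>0$. Since only finitely many $k$ lie below any threshold and each contributes at most $1$ to the sum, it suffices to treat large $k$ and to bound, summably in $k$, the probabilities of $\sup_{0<r\le r_k}\rho_k>\tfrac12$ and of $\sup_{r_k\le r\le t}\rho_k>\tfrac12$, where $\rho_k=\rho_k(s,r)=|\scrA_k(s)-\scrA_k(s+r)|\big/\bigl(\sqrt r\,\log^{1/2}(1+r^{-1})\,\log^d k\bigr)$ and the suprema run over $s$ with $s,s+r\in[0,t]$.

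\textbf{Large scales ($r_k\le r\le t$).} Writing $\scrR_k=\scrA_k+(\cdot)^2$, one has $|\scrA_k(s)-\scrA_k(s+r)|\le\operatorname{osc}_{[0,t]}(\scrR_k)+3tr$; the $3tr$ term contributes at most $3t^{3/2}\big/\bigl(\log^{1/2}(1+t^{-1})\log^d k\bigr)$ to $\rho_k$, which tends to $0$, so it is harmless for large $k$. Since $\sup_{r\ge r_k}\bigl(\sqrt r\log^{1/2}(1+r^{-1})\bigr)^{-1}\le C_t\,k^{1/3}(\log k)^{-1/2}$, the remaining event is contained in $\bigl\{\operatorname{osc}_{[0,t]}(\scrR_k)>\de\,k^{-1/3}(\log k)^{p}\bigr\}$ for fixed $\de,p>0$. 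Its probability is summable in $k$ by a rigidity estimate for the $k$th Airy line — the line-$k$ analogue of Propositions~\ref{P:dyson-tails} and~\ref{P:cross-prob} for the top line of the melon — asserting that $\scrA_k$ stays within $k^{-1/3}$ times a power of $\log k$ of its parabola, uniformly on $[0,t]$, with Tracy--Widom-type tails $ce^{-c'm^{3/2}}$ (cf.\ Theorem~\ref{T:TW-airy}); at $m\asymp(\log k)^p$ these are $e^{-c'(\log k)^{3p/2}}$, summable once $3p/2>1$.

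\textbf{Small scales ($0<r\le r_k$).} By the Brownian Gibbs property of the Airy line ensemble (\cite{CH}), conditionally on $\{\scrA_j:j\ne k\}$ and on $\scrA_k$ off $[s,s+r]$, the path $\scrA_k|_{[s,s+r]}$ is a variance-$2$ Brownian bridge between its endpoints conditioned to stay in the corridor $(\scrA_{k+1},\scrA_{k-1})$. Let $\scrG_k$ be a global event (independent of $s,r$) on which this corridor has width at least $\tfrac12 k^{-1/3}$ throughout $[0,t]$ and $\scrA_k$ keeps distance at least $\tfrac14 k^{-1/3}$ from both neighbours; a variance-$2$ bridge of length $r\le r_k$ has maximal fluctuation at most $\tfrac18 k^{-1/3}$ with universal positive probability, so on $\scrG_k$ the corridor-avoidance probability is bounded below by a universal constant, the conditional law of the increment has Radon--Nikodym derivative $O(1)$ with respect to a centred Gaussian of variance $2r$, and hence $\p\bigl(|\scrA_k(s+r)-\scrA_k(s)|\ge m\sqrt r\mid\scrG_k,\ \text{Gibbs data}\bigr)\le ce^{-am^2}$ with $c,a$ universal. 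Running the union-bound-over-dyadic-pairs proof of Lemma~\ref{L:levy-est} conditionally on $\scrG_k$ (which only adds $\p(\scrG_k^c)$ to the failure probability) gives $|\scrA_k(s+r)-\scrA_k(s)|\le C_k\sqrt r\log^{1/2}(2t/r)$ for $r\le r_k$ with $\p(C_k>m)\le c_0e^{-c_1m^2}+\p(\scrG_k^c)$, $c_0,c_1$ uniform in $k$. As $r_k\to0$ we have $\log^{1/2}(2t/r)\le C_t\log^{1/2}(1+r^{-1})$ on $(0,r_k]$, so $\sum_k\p\bigl(\sup_{0<r\le r_k}\rho_k>\tfrac12\bigr)\le\sum_k\bigl(c_0 e^{-c_1(\log^d k)^2/(4C_t^2)}+\p(\scrG_k^c)\bigr)$, which is finite for $d\ge1$ once $\sum_k\p(\scrG_k^c)<\infty$.

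The scheme thus reduces the theorem to two uniform-in-$k$ inputs about the Airy line ensemble: the rigidity estimate for $\operatorname{osc}_{[0,t]}(\scrR_k)$ at scale $k^{-1/3}$, and the local-gap bound $\sum_k\p(\scrG_k^c)<\infty$ (controlling how often $\scrA_{k-1}-\scrA_{k+1}$ dips below $\tfrac12k^{-1/3}$ or $\scrA_k$ comes within $\tfrac14k^{-1/3}$ of a neighbour on $[0,t]$). These are the main obstacle; neither is formal, both resting on the determinantal structure of the Airy point process together with the Brownian Gibbs property. The cleanest route is to prove the corresponding statements for the Brownian $n$-melon first — the top-line versions are Propositions~\ref{P:dyson-tails} and~\ref{P:cross-prob}, and the line-$k$ analogues follow by the same methods (or may be imported from \cite{DV}) — and then transfer to $\scrA$ through the convergence $A^n\to\scrA$ of Theorem~\ref{T:airy-line}. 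Granting those, the remaining pieces — the L\'evy reduction, the Brownian Gibbs comparison at small scales, and the bookkeeping of powers of $\log k$ ensuring a single finite $d$ works — are routine.
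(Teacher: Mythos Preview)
The paper does not prove this statement. Theorem~\ref{T:mod-cont} is quoted verbatim as Theorem~8.1 of the companion paper \cite{DV} and is used as a black box in the proof of Theorem~\ref{T:airy-lp}; no argument for it appears in the present manuscript. So there is nothing here to compare your proposal against.

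As for the proposal itself: the two-scale split at $r_k\asymp k^{-2/3}$ and the use of Brownian Gibbs at small scales versus rigidity at large scales is a reasonable outline, and in fact is close in spirit to how such bounds are obtained in \cite{DV}. But you correctly identify that the substance lies entirely in the two inputs you label ``the main obstacle'': a uniform-in-$k$ rigidity bound for $\scrA_k$ around its parabola at scale $k^{-1/3}$, and summable control of the event $\scrG_k^c$ that adjacent gaps drop below order $k^{-1/3}$ somewhere on $[0,t]$. Neither follows from the tools stated in the present paper (Propositions~\ref{P:dyson-tails} and~\ref{P:cross-prob} treat only the top line), and you say as much. So your write-up is not a proof but a reduction to precisely the content of \cite{DV}; given that the theorem is already cited from there, this is honest but does not add anything the paper doesn't already import.
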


The proof of Theorem \ref{T:airy-lp} relies on rewriting each line in the bridge representation of the Airy line ensemble as a Brownian motion plus error terms. Understanding last passage in this case can be handled by the following subadditivity lemma. The proof is straightforward, and so we omit it.

\begin{lemma}
\label{L:lp-subadd}
Let $f = (f_1, \dots, f_n)$ and
$$
L(f) = f[(0, n) \LP (t, 1)]
$$
be the last passage value across $f$ from time $0$ to time $t$, and let $F(f) = -L(-f)$ be the first passage value across $f$, again from the point $(0, n)$ to $(t, 1)$. Then for any $f, g \in C^n$ we have that
$$
L(f) + F(g) \le L(f + g) \le L(f) + L(g).
$$
\end{lemma}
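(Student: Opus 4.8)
The plan is to exploit the single structural fact that path length is \emph{linear} in the underlying function: for any fixed path $\pi$ one has $\int d(f+g)\circ\pi = \int df\circ\pi + \int dg\circ\pi$, and all three last passage values $L(f), L(g), L(f+g)$, as well as the first passage value $F(g)$, are optimizations of such linear functionals over the \emph{same} competitor set, namely $\scrQ((0,n),(t,1))$.

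First I would record that $F$ is genuinely a first passage (minimal weight) value: since $\int d(-g)\circ\pi = -\int dg\circ\pi$,
$$
F(g) = -L(-g) = -\sup_{\pi}\bigl(-\int dg\circ\pi\bigr) = \inf_{\pi}\int dg\circ\pi .
$$
For the right-hand inequality, fix any $\pi\in\scrQ((0,n),(t,1))$; then $\int d(f+g)\circ\pi = \int df\circ\pi + \int dg\circ\pi \le L(f)+L(g)$, and taking the supremum over $\pi$ gives $L(f+g)\le L(f)+L(g)$.

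For the left-hand inequality, let $\pi^\ast$ be an optimizer for $L(f)$ — this exists by continuity of length in the graph topology together with Lemma \ref{L:closed-set}, or one may simply run the argument with an $\ep$-optimal path and let $\ep\to 0$. Then
$$
L(f+g) \;\ge\; \int d(f+g)\circ\pi^\ast \;=\; L(f) + \int dg\circ\pi^\ast \;\ge\; L(f) + \inf_{\pi}\int dg\circ\pi \;=\; L(f) + F(g).
$$
There is no real obstacle here; the only points needing a word are the existence of the optimizer $\pi^\ast$ (or the $\ep$-approximation workaround) and the observation that the competitor set is identical in all four optimization problems, so that the chain of inequalities is legitimate.
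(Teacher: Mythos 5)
Your proof is correct, and it is exactly the straightforward argument the paper has in mind (the paper omits the proof of Lemma \ref{L:lp-subadd} as routine): linearity of $\int d(\cdot)\circ\pi$ in the function, the identity $F(g)=\inf_\pi\int dg\circ\pi$, and optimizing over the common competitor set, with the $\ep$-optimal path device neatly sidestepping any existence issue. Nothing to add.
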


\begin{proof}[Proof of Theorem \ref{T:airy-lp}]
We set $x=1$ for notational simplicity as the value of $x$ plays no important role. Let
$
\scrB^k =  \scrB^k(1, \ceil{k^{2/3 + \ga}}, k^{-1/3 - \ga/4})
$
be the bridge representation induced the division of time $\{s_r : r \in \{1, \dots, \ceil{k^{2/3 + \ga}}\}\}$ and the graph
$$
G_{2k} =G_{2k}(1, \ceil{k^{2/3 + \ga}}, k^{-1/3 - \ga/4}).
$$
Here $\ga \in (0, 1/3)$ is a parameter that we will optimize over later in the proof. By Theorem \ref{T:bridge-rep}, we can couple all the representations $\scrB^k$ with the Airy line ensemble $\scrA$ so that
\begin{equation}
\label{E:scrBkcouple}
\sum_{k \in \N} \p\lf(\scrB^k|_{\{1, \dots, k\} \X [0, 1]} \ne \scrA|_{\{1, \dots, k\} \X [0, 1]} \rg) < \infty.
\end{equation}
Hence it suffices to analyze the last passage time $L(\scrB^k)$ from $(0, k)$ to $(x, 1)$.

\medskip

\textbf{Step 1: Splitting up the paths.} By representing each of the Brownian bridges used to create $\scrB^k = (\scrB_{k, 1}, \dots, \scrB_{k, k})$ as a Brownian motion minus a random linear term, we can write
$$
\scrB_{k, i} = H_{k, i} + R_{k, i} + X_{k, i}
$$
Here the $k$-tuple $H_k = (H_{k, 1}, \dots, H_{k, k})$ consists of $k$ independent Brownian motions of variance $2$ on $[0, 1]$. The functions $R_{k, i}$ are piecewise linear with pieces defined on the time intervals $[s_{r-1}, s_{r}]$ for $r \in \{0, \dots, \ceil{k^{2/3 + \ga}}\}$, and the error term $X_{k, i}$ is equal to zero except for on intervals $[s_{r-1}, s_{r}]$ where the vertex $(i, r)$ is in a component of size greater than one in the graph $G_{2k}$. On such intervals, $X_{k, i}$ is the difference between a Brownian bridge from $0$ to $0$ and a Brownian bridge conditioned to avoid $U_{i, r} -1$ other Brownian bridges with certain start and endpoints. Here $U_{i, r}$ is the size of the component of $(i, r)$ in $G_{2k}$ and the two Brownian bridges used in the definition of $X_{k, i}$ are independent.

\medskip

By Lemma \ref{L:lp-subadd} applied twice, we have that
\begin{equation}
\label{E:subadd}
L(H_k)+F(R_k)+F(X_k) \le L(\scrB^k)\le L(H_k)+L(R_k)+L(X_k).
\end{equation}
By Theorem \ref{T:top-bd}, the main term
\begin{equation}
\label{E:Wk-2}
L(H_k)=2\sqrt{2k}+Y_kk^{-1/6},
\end{equation}
where $\{Y_k\}_{k \in \N}$ is a sequence of random variables satisfying a tail bound
$$
\p(|Y_k| > m) \le ce^{-dm^{3/2}}
$$
for  $c, d$ not depending on $m$ and $k$. To translate Theorem \ref{T:top-bd} to a bound on last passage values, we have used the identity \eqref{E:Wf-one}.

\medskip

\noindent \textbf{Step 2: Bounding the piecewise linear term.}  First, we have the bound
$$
|L(R_k)|, |F(R_k)| \le M_k,
$$
where $M_k$ is the maximum absolute slope of any of the piecewise linear segments in $R_k$. The slopes in $R_k$ come from increments in the Airy line ensemble minus the increments of the Brownian motions $H_k$ on the grid points. Recalling that $S_k(\ell) = \{1, \dots, k\} \X \{1, \dots, \ell\}$, we have the following upper bound for $M_k$:
\begin{equation*}
\ceil{k^{2/3 + \ga}}\lf[\max_{(i, r) \in S_k(\ceil{k^{2/3 + \ga}})} |H_{k, i}(s_{r}) - H_{k, i}(s_{r-1})| + \max_{(i, r) \in S_k(\ceil{k^{2/3 + \ga}})} |\scrA_i(s_{r}) - \scrA_i(s_{r-1})|\rg].
\end{equation*}
By a standard Gaussian bound on the first term and Theorem \ref{T:mod-cont} for the second term, for some $d \in \N$ we have that
\begin{equation}
\label{E:Mk}
\sum_{k \in \N} \p \lf(M_k \ge k^{1/3 + \ga/2} \log^d k \rg) < \infty.
\end{equation}
\medskip

\noindent \textbf{Step 3: Bounding the large component error.}
To bound $L(X_k)$ and $F(X_k)$, we divide $\{1,\dots,k\}$ into $n= \ceil{k^{2/3+\gamma}}$ intervals
$$
I_{k, i} = \lf\{ \floor{\frac{(i-1)k}n} + 1, \dots, \floor{\frac{ik}n}\rg\}, \qquad i \in \{1, \dots, n\}.
$$
This, and the division of time into the intervals $[s_{r-1}, s_r]$ for $r \in \{1, \dots, n\}$ breaks the line ensemble $X_k$ into $n^2$ boxes. Each last passage path can meet at most $2n-1$ of these boxes. So we have that
\begin{equation}
\label{E:Xk}
L(X_k) \le (2n-1)Z_k,
\end{equation}
where $Z_k$ is the maximal last passage value among all values that start and end in the same box (including the boundary). Specifically,
$$
Z_k = \max_{(i, r) \in [1, n]^2} \max \lf\{ X_k[(\ell_1, t_1) \LP (\ell_2, t_2)] : \ell_1, \ell_2 \in I_{k, i}, \; t_1, t_2 \in [s_{r-1}, s_r]\rg\}.
$$
We have that $Z_k \le N_kD_k$, where
\begin{align*}
N_k &= \max_{(i, r) \in [1, n]^2} \card{\lf\{ \ell \in I_{k, i} : X_{k,\ell}|_{[s_{r-1}, s_{r}]} \ne 0\rg\}} \quad \mathand \\
 D_k &= \max \bigg\{ |X_{k, \ell}(t) - X_{k, \ell}(m)| : \ell \in [1, k], t, m \in [s_{r-1}, s_r] \text{ for some } r \in \{1, \dots, n\} \bigg\}.
\end{align*}
That is, $N_k$ is the maximum number of nonzero line segments in any box, and $D_k$ is the maximum increment over any line segment in a box. Since $X_{k, \ell} = \scrB_{k, \ell} - H_{k, \ell} - R_{k, \ell}$, we can bound $D_k$ in terms of the deviations of the other paths. To bound the deviation of $R_{k, \ell}$, we use the bound on $M_k$ above. The deviation of $H_{k, \ell}$ can be bounded with standard bounds on Gaussian random variables. On the event where $\scrB^k|_{\{1, \dots, k\} \X [0, 1]} = \scrA|_{\{1, \dots, k\} \X [0, 1]}$, we can bound the deviation of $\scrB_{k, \ell}$ using Theorem \ref{T:mod-cont}. Therefore for some constant $d \in \N$, we have
\begin{equation}
\label{E:Dka}
\sum_{k=1}^\infty \p\lf(D_k > k^{-1/3 - \gamma/2}\log^d k, \; \scrB^k = \scrA|_{\{1, \dots, k\} \X [0, 1]} \rg) < \infty.
\end{equation}
Combining equations \eqref{E:Dka} and \eqref{E:scrBkcouple} gives
\begin{equation}
\label{E:Dk}
\sum_{k=1}^\infty \p\lf(D_k > k^{-1/3 - \gamma/2}\log^d k \rg) < \infty.
	\end{equation}
The quantity $N_k$ is equal to the maximum number of edges in the graph $G_k$ in a region of the form $I_{k, i} \X \{r\}$ for some $r \in \{1, \dots, n\}$. This can be bounded by using Proposition \ref{P:edge-spread} and a union bound, which yields
$$
\sum_{k \in \N} \p \lf(N_k > k^{1/3 - \ga} k^{- 3 \ga/4} \log^2 k \rg) < \infty.
$$
Combining this with the bound in \eqref{E:Xk} and \eqref{E:Dk} implies that for some constant $d > 0$, that
\begin{equation}
\label{E:Mk-2}
\sum_{k \in \N} \p\lf(L(X_k) > k^{2/3 - 5\ga/4} \log^d k \rg) < \infty.
\end{equation}
We can symmetrically bound $F(X_k)$.

\medskip

\noindent \textbf{Step 4: Putting it all together.}
By combining the inequalities \eqref{E:subadd}, \eqref{E:Wk-2}, \eqref{E:Mk} and \eqref{E:Mk-2}, we get that for some $d \in \N$,
$$
\sum_{k \in \N} \p\lf( |L(\scrB^k) - 2\sqrt{2k}| >  k^{2/3 - 5\ga/4} \log^d k + k^{1/3 + \ga /2} \log^d k \rg) < \infty.
$$
Taking $\ga = 4/21$ and increasing the power of $\log k$ from $d$ to $d + 1$ completes the proof.
\end{proof}

\section{Melon paths are parabolas}
\label{S:parabola}
In this section we use the results of Section \ref{S:short-lp} to establish bounds on the location of melon last passage paths. We will also establish that last passage paths that start or end close together meet with high probability. These facts will allow us to construct the Airy sheet in Section \ref{S:airy-sheet}. Throughout the section we write $W = WB^n$ for the melon, and use the last passage and scaling notation $\bar x = 2x n^{-1/3}$ and $\hat y = 1 + 2y n^{-1/3}$ introduced in Section \ref{SS:notation}.

\medskip
Let $\hat Z^n_k(x,y) = 1 + n^{-1/3} Z^n_k(x,y)$ denote the jump time from line $k+1$ to $k$ on the rightmost last passage path from $\bar{x}$ to $\hat y$ in the melon, see \eqref{E:jump}. Observe that $Z^n_k(x, y)$ is nonincreasing in $k$, and $Z^n_k(x,y)$ is a nondecreasing function in both $x$ and $y$ by monotonicity of last passage paths, Lemma \ref{L:mono-path}. The next lemma gives asymptotics for $Z^n_k$.

\begin{lemma}
\label{L:zk-unif-bound}
Let $K$ be a compact subset of $(0, \infty) \X \R$. Then
\begin{equation}
\label{E:xyKxyK}
\sup_{(x,y)\in K} \lf|Z^n_k(x, y) +\sqrt{\frac{k}{2x}}\rg| =\oo(\sqrt{k})
\end{equation}
and $Z^n_k(x, y)$ is tight as a function of $n$ for each fixed $k \in \N, (x, y) \in (0, \infty) \X \R$.
\end{lemma}

\begin{proof}
We first fix $x,y\in K$, rescale by $n^{1/6}$ and center so that the triangle inequality
\begin{equation*}
\{\bar x \LP (\hat z,k)\} + \{(\hat z,k)\LP \hat y\} \le   \{\bar x\LP \hat y \}
\end{equation*}
reads 
\begin{equation}\label{E:triangle2}
F^n_k(x, z)+G^n_k(z, y)\le H_n(x, y) 
\end{equation}
with
\begin{eqnarray*}
H_n(x, y)&=&n^{1/6}\{\bar x\LP \hat y \}-2n^{2/3}-2(y-x)n^{1/3}, \\
F^n_k(x, z) &=& n^{1/6}(\{\bar x \LP (\hat z,k)\}-W^n_k(\hat z))+2xn^{1/3},
\\ G^n_k(z, y)&=&n^{1/6}(W^n_k(\hat z) + \{(\hat z,k )\LP \hat y\})-2yn^{1/3}-2n^{2/3}.
\end{eqnarray*}
The basic proof strategy for bounding $Z^n_k(x, y)$ is as follows. On the one hand,
\begin{equation}
\label{E:Hnxy}
F^n_k(x, Z^n_k(x, y)) + G^n_k(Z^n_k(x, y),y)=H_n(x, y)
\end{equation}
On the other hand, we can show that $ F^n_k(x, z) + G^n_k(z, y)<H_n(x, y)$ whenever $z$ is sufficiently far away from $-\sqrt{k/(2x)}$. 
To show this inequality, we use the bound on melon last passage values given in Proposition \ref{P:longbound} to control $F^n_k$, and Theorem \ref{T:top-bd} which implies that $H_n(x,y)$ is tight in $n$ for $x,y$ fixed.

\medskip

We will show that for every $\ep \in (0, 1)$ we have
\begin{equation}
\label{E:zzz}
\sup_{z : \;|z+\sqrt{k/(2x)}|>\epsilon \sqrt{k}} F^n_k(x, z)+G^n_k(z, y) \le -\epsilon^2\sqrt{kx}/2 + \oo(\sqrt{k}).
\end{equation}
By Lemma \ref{L:double-mono},
$F^n_k(x, \cdot)$ is monotonically increasing and $G^n_k(\cdot, y)$ is monotonically decreasing. We can use this monotonicity to bound the left hand side of \eqref{E:zzz} by a supremum over a finite set. 
 Let
$
A = (12\ep^2)^{-1}\Z \cap [1/4, 2],
$
and for $z \in [-n^{1/3} + x, y]$, define 
\begin{equation}
\label{E:floorznk}
\begin{split}
    \floor{z}_{n, k} &= \max \{w \in -\sqrt{k/x}A \cup \{x - n^{1/3}\} : w < z\} \qquad \mathand\\
\ceil{z}_{n, k} &= \min \{w \in -\sqrt{k/x}A \cup \{y\} : w > z\}.
\end{split}
\end{equation}
We also set $\floor{x - n^{1/3}}_{n, k} = x - n^{1/3}$ and $\ceil{y}_{n, k} = y$. The monotonicity of $F^n_k(x, \cdot)$ and  $G^n_k(\cdot, y)$ implies that the left hand side of \eqref{E:zzz} is bounded above by
\begin{equation}
\label{E:with-part}
    \sup_{z : \;|z+\sqrt{k/(2x)}|>\epsilon \sqrt{k}} F^n_k(x, \ceil{z}_{n, k})+G^n_k(\floor{z}_{n, k}, y).
\end{equation}
Therefore to show \eqref{E:zzz}, we just need to show that \eqref{E:with-part} is bounded above by $-\ep^2\sqrt{kx}/2 + \oo(\sqrt{k})$. The quantity in \eqref{E:with-part} is easier to work with since we are taking a supremum over only finitely many distinct terms. Moreover, the number of terms is uniformly bounded in $n$ and $k$, so it is enough to control the terms individually. Note that the bound on $F^n_k(x, z)+G^n_k(z, y)$ in \eqref{E:with-part} is much cruder when $z \notin -\sqrt{k/x} A$. 
This crude estimate will suffice for our purposes since the quantity inside the supremum in \eqref{E:zzz} is not close to the maximum for such $z$. 

\medskip

In particular, setting $z_{k, a} =  - a\sqrt{k/x}$, it is enough to show that
\begin{equation}
\label{E:with-part-2}
F^n_k(x, \ceil{z_{k, a}}_{n, k})+G^n_k(\floor{z_{k, a}}_{n, k}, y) \le -\ep^2\sqrt{kx}/2 + \oo(\sqrt{k})
\end{equation}
for every fixed $a \in [1/4, 1/\sqrt{2} - \ep] \cup [1/\sqrt{2} + \ep, 2]$.




\medskip

To prove \eqref{E:with-part-2}, we establish pointwise bounds on $F^n_k$ and $G^n_k$. Proposition \ref{P:longbound} gives that for a fixed $a > 0$ we have
\begin{equation}
    \label{E:Fnk-bd-2}
    F^n_k(x, z_{k, a}) = 2\sqrt{kx}(\sqrt{2} - a) +  \oo(\sqrt{k}).
\end{equation}
Proposition \ref{P:longbound} also yields the bound
\begin{equation}
    \label{E:Fnk-bd}
    F^n_k(x, y) = 2\sqrt{2kx}  + \oo(\sqrt{k}).
\end{equation}
The triangle inequality \eqref{E:triangle2} with   $x'=  x/(2a^2)$ gives
\begin{equation}
    \label{E:FnkGnk}
    G^n_k(z_{k, a}, y) \le H_n(x', y) - F^n_k(x', z_{k, a}).
\end{equation}
Now, $H_n(x', y)$ is equal to a rescaled and shifted Brownian last passage value by Proposition \ref{P:wm-equivalent}. Therefore by Theorem \ref{T:top-bd} and \eqref{E:Wf-one}, which together give bounds on single Brownian last passage values, it is tight in $n$ and hence $H_n(x', y)  = \oo(\sqrt{k})$. Moreover, Proposition \ref{P:longbound} gives that
$$
F^n_k(x', z_{k, a}) = 2 \sqrt{2 k x'} + 2 z_{k, a}  x' + \oo(\sqrt{k}) = \frac{\sqrt{kx}}{a}  + \oo(\sqrt{k})
$$
and so
\begin{equation}
    \label{E:FnkGnk-2}
    G^n_k(z_{k, a}, y) \le -\frac{\sqrt{kx}}a  + \oo(\sqrt{k}).
\end{equation}
We also have the bound
\begin{equation}
    \label{E:FnkGnk-3}
    G^n_k(x - n^{1/3}, y) \le H_n(0, y) - F^n_k(0, x - n^{1/3}) = H_n(0, y) = \oo(\sqrt{k}).
\end{equation}
The first equality here follows from the fact that  $F^n_k(0,\cdot)=0$, and the second equality again follows from Theorem \ref{T:top-bd}.

\medskip

Finally, the bound in \eqref{E:with-part-2} follows from \eqref{E:Fnk-bd-2} and \eqref{E:FnkGnk-3} when $a \le \inf A$, from \eqref{E:Fnk-bd-2} and \eqref{E:FnkGnk-2} when $a \in (\inf A, \sup A)$,  and from \eqref{E:Fnk-bd} and \eqref{E:FnkGnk-2} when $a \ge \sup A$.

\medskip

Now by Theorem \ref{T:top-bd},  $H^n(x, y)$ is tight in $n$. With \eqref{E:zzz} and \eqref{E:Hnxy} this implies that $Z^n_k(x,y)=\sqrt{k/(2x)}+ \oo(\sqrt{k})$ for $x,y$ fixed. Since $Z^n_k(x,y)$ is monotone in $x$ and $y$, the claim \eqref{E:xyKxyK} follows.

\medskip

For every $k, x, y$, the sequence $Z^n_k(x, y)$ is tight in $n$ since $Z^n_k(x, y) + \sqrt{k/(2x)} = \oo(\sqrt{k})$ and we have the monotonicity $y = Z^n_1(x, y) \ge Z^n_{2}(x, y) \ge \dots$. 
\end{proof}

Lemma \ref{L:zk-unif-bound} has an important consequence for disjointness of last passage paths.
Recall that two paths $\pi$ and $\tau$ are \textbf{disjoint} if either $\pi > \tau$ or $\tau > \pi$ on the intersection of the interiors of both their domains. Recall that $\pi\{x, y\}$ is the rightmost last passage path in the melon from $(x, n)$ to $(y, 1)$.

\begin{lemma}
	\label{L:close-not-dj}
	Fix $x > 0$ and $y_1 < y_2 \in \R$. Then
	\begin{equation}\label{E:close-not-dj}
	\lim_{\ep \to 0^+} \limsup_{n \to \infty} \p \big( \pi\{\bar{x} - \bar \ep, \hat{y}_1\} \;\;\; \mathand \;\;\; \pi\{\bar{x} + \bar{\ep}, \hat{y}_2\} \text{ are disjoint} \big) = 0.
	\end{equation}
\end{lemma}

\begin{proof}
	We will prove a stronger statement, with the leftmost last passage path $\pi^-\{\bar{x} - \bar \ep, \hat{y}_1\}$ replacing one  of the rightmost paths $\pi\{\bar{x} - \bar \ep, \hat{y}_1\}$. Disjointness of $\pi\{\bar{x} - \bar \ep, \hat{y}_1\}$ and $\pi\{\bar{x} + \bar{\ep}, \hat{y}_2\}$ implies disjointness of $\pi^-\{\bar{x} - \bar \ep, \hat{y}_1\}$ and $\pi\{\bar{x} + \bar{\ep}, \hat{y}_2\}$ by monotonicity. 
	
	\medskip
	
	By Lemma \ref{L:zk-tilde}, disjointness of the paths $\pi^-\{\bar{x} - \bar \ep, \hat{y}_1\}$ and $\pi\{\bar{x} + \bar{\ep}, \hat{y}_2\}$ is equivalent to disjointness of the original Brownian last passage paths $\pi^-[\bar{x} - \bar \ep, \hat{y}_1]$ and $\pi[\bar{x} + \bar{\ep}, \hat{y}_2]$. Here $\pi^-[\bar{x} - \bar \ep, \hat{y}_1]$ is the leftmost last passage path in $B^n$ from $\bar{x} - \bar \ep$ to $\hat{y}_1$. Hence the probability in \eqref{E:close-not-dj} is bounded above by 
	\begin{equation}
	\label{E:new53}
 \p \big( \pi^-[\bar{x} - \bar \ep, \hat{y}_1] \;\;\; \mathand \;\;\; \pi[\bar{x} + \bar{\ep}, \hat{y}_2] \text{ are disjoint} \big).
	\end{equation}
	By time-reversal symmetry of the increments of Brownian motion under the map $t\mapsto 1-t$
	 the probability in \eqref{E:new53} equals
\begin{equation}
	\label{E:brown-path}
 \p \big( \pi^-[-\bar{y}_2, 1-\bar{x} - \bar \ep] \;\;\; \mathand \;\;\; \pi[-\bar{y}_1, 1-\bar{x} + \bar{\ep}] \text{ are disjoint} \big).
\end{equation}
	By translation invariance and Brownian scaling, the probability \eqref{E:brown-path} remains unchanged if the points $-\bar{y}_2, 1-\bar{x} - \bar \ep, -\bar{y}_1, 1 - \bar{x} + \bar{\ep}$ are replaced by their images under any linear function $L(t) = at + b$ for some $a > 0$. In particular, for each $n$ we may choose the linear function $L = L_{n, \ep}$ sending $-\bar y_1 \mapsto \bar y:=2(\bar y_2 - \bar y_1)$ and $1 - \bar x + \bar \ep \mapsto 1$. For $t \in [-1, 2]$, we have
	$$
	L_{n, \ep}(t) = (1 - 2\bar y_2 + \bar y_1 + \bar x - \bar \ep)t + 2 \bar y_2 - \bar y_1 + O(n^{-2/3}).
	$$
	Therefore for all large enough $n$, we have
	\begin{equation}
	\label{E:Lnbd}
	L_{n, \ep} (-\bar y_2)  = \bar y_2 - \bar y_1 + O(n^{-2/3}) \ge 0, \qquad L_{n, \ep} (1 - \bar x - \bar \ep) = 1 - 2 \bar \ep + O(n^{-2/3}) \ge 1 - 3 \bar \ep.
	\end{equation}
	For such $n$, after translating back to melon paths we get that the probability in \eqref{E:brown-path} is equal to
	$$
	\p \big( \pi^-\{L_{n, \ep} (-\bar y_2), L_{n, \ep}(1 - \bar x - \bar \ep)\} \;\;\; \mathand \;\;\; \pi\{\bar y, \hat 0 \} \text{ are disjoint} \big).
	$$
By monotonicity of last passage paths, Lemma \ref{L:mono-path}, and \eqref{E:Lnbd}, this is bounded above by
	\begin{equation}
	\label{E:pbigg}
	    	\p \big( \pi^-\{0 , 1-3 \bar \ep \} \;\;\; \mathand \;\;\; \pi\{\bar y, \hat 0 \} \text{ are disjoint} \big).
	\end{equation}
	Now, the path $\pi^-\{0,1-3 \bar \ep \}$ starts at zero and therefore simply follows the top line in the melon, so the paths $\pi^-\{0 , 1-3 \bar \ep \}$ and $\pi\{\bar y, \hat 0 \}$ are disjoint if and only if $\pi\{\bar y, \hat 0 \}$ jumps up to line $1$ after time $1-3\bar \ep$. This jump time is $\hat Z^n_1(y, 0)$, so \eqref{E:pbigg} is equal to
	$$
	\p \big( Z^n_1(y, 0) \ge - 3\ep \big).
	 $$
	Hence to prove \eqref{E:close-not-dj} we just need to show that
	\begin{equation}
	\label{E:brown-path'}
	\lim_{\ep \to 0^+} \limsup_{n \to \infty}\p \big( Z^n_1(y, 0) \ge - 3\ep \big) = 0.
	\end{equation}
	To prove \eqref{E:brown-path'}, we just need to show that any subsequential limit $Z_1(x, 0)$ of the sequence of random variables $\{Z^n_1(x, 0) : n \in \N \}$ is strictly negative almost surely. Note that this sequence is tight by Lemma \ref{L:zk-unif-bound}.
	
	\medskip
	
	Let $A^n$ denote the rescaling of the melon $WB^n$ in Theorem \ref{T:airy-line}. By that theorem and Lemma \ref{L:zk-unif-bound}, the collection of random variables $(A^n, \{Z^n_k(x, 0) : k \in \N\})$ is tight. Let $(\scrA, \{Z_k(x, 0) : k \in \N\})$  denote a joint subsequential limit of these random variables. The asymptotics in Lemma \ref{L:zk-unif-bound} guarantee that
	\begin{equation}
	\label{E:kinfty}
	\lim_{k \to \infty} Z_k(x, 0) = -\infty.
	\end{equation}
	almost surely. Moreover, for any $k \in \N$ the points $\{(Z_i(x, 0), i) : i \le k\}$ are jump times along a last passage path from $(Z_k(x, 0), k)$ to $(1, 0)$ in $\scrA$ since the prelimiting points satisfied this property.
	
	\medskip
	
By \eqref{E:kinfty}, there exists a random $K \in \N$ such that $Z_1(x, 0)$ is a jump time on a last passage path in $\scrA$ from $(-1, K)$ to $(0, 1)$. Now, by Proposition \ref{P:brownian-airy}, the top $k$ lines of $\scrA$ restricted to the interval $[-1,0]$ are absolutely continuous with respect to $k$ independent Brownian motions. Therefore for every $k \in \N$, all jump times on any last passage path in $\scrA$ from $(-1, k)$ to $(0, 1)$ are contained in the open interval $(-1, 0)$. Hence all jump times on a last passage path in $\scrA$ from $(-1, K)$ to $(0, 1)$ are contained in the open interval $(-1, 0)$. In particular, $Z_1(x, 0) < 0$ almost surely, as desired.
	\end{proof}

\section{Constructing the Airy sheet}
\label{S:airy-sheet}

In this section, we construct the joint limit of last passage values at two times, known as the Airy sheet. We start by recalling the definition given in the introduction. Recall the notation $\langle \rangle$ for last passage values across $\scrA$ from Section \ref{SS:notation}.

\begin{definition}
\label{D:airy-sheet} The \textbf{Airy sheet} is a random continuous function $\scrS:\mathbb R^2\to \mathbb R$ so that
\begin{enumerate}[label=(\roman*)]
\item
$\scrS$ has the same law as  $\scrS(\cdot+t, \cdot+t)$ for all $t\in \mathbb R$.
\item $\scrS$ can be coupled with an Airy line ensemble $\scrA$ so that $\scrS(0,\cdot)=\scrA_1(\cdot)$ and for all $(x, y, z) \in \Q^+ \X \Q^2$ there exists a random variable $K_{x, y,z} \in \N$ such that for all $k \ge K_{x, y,z}$, almost surely
$$
\big\langle (-\sqrt{k/(2x)}, k) \LP z \big\rangle - \big \langle (-\sqrt{k/(2x)}, k) \LP y \big\rangle = \scrS(x, z) - \scrS(x, y).
$$
\end{enumerate}
\end{definition}

We leave the existence of the Airy sheet to Theorem \ref{T:airy-sheet}, and first show that it is unique.

\begin{prop}
	\label{P:uniq-sheet}
The Airy sheet is unique in law.
\end{prop}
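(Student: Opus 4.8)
The plan is to reduce uniqueness of the law of $\scrS$ to uniqueness of the law of the Airy line ensemble (Theorem~\ref{T:airy-line}), by showing that $\scrS$ is built, through a recipe not depending on the particular sheet, out of the coupled Airy line ensemble $\scrA$ of property (ii). First I would extract the part of $\scrS$ that is visibly $\sigma(\scrA)$-measurable. For $(x,y,z)\in\Q^+\X\Q^2$, property (ii) says the difference $\langle(-\sqrt{k/(2x)},k)\LP z\rangle-\langle(-\sqrt{k/(2x)},k)\LP y\rangle$ is almost surely eventually constant in $k$ with value $\scrS(x,z)-\scrS(x,y)$, so this difference is $\sigma(\scrA)$-measurable; by continuity of $\scrS$ and density of $\Q$, the increment function $\tilde D(x,y):=\scrS(x,y)-\scrS(x,0)$ is a continuous $\sigma(\scrA)$-measurable function on $[0,\infty)\X\R$ (the same functional of $\scrA$ for every sheet), and by the first part of (ii) also $\tilde D(0,\cdot)=\scrA_1(\cdot)-\scrA_1(0)$. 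Hence $\scrS(x,y)=\tilde D(x,y)+C(x)$ on $[0,\infty)\X\R$ with $C(x):=\scrS(x,0)$ continuous and $C(0)=\scrA_1(0)$. Moreover, by the diagonal stationarity (i), the law of $\scrS$ on $\R^2$ is determined by its law on the half-plane $\{x>0\}$ (shift by $t$ and let $t\to\infty$), so it suffices to pin down the joint law of $\scrA$ and the one-parameter profile $x\mapsto C(x)$ on $[0,\infty)$ — ideally, to show $C$ is itself a fixed measurable functional of $\scrA$.

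Pinning down $C$ is the substance of the proof, and this is where the melon-path analysis of Section~\ref{S:parabola} is used. The melon last passage path from $\bar x$ to the top corner runs along the parabola $k\mapsto\sqrt{k/(2x)}$ (Lemma~\ref{L:zk-unif-bound}); after the reversal and demelonization of Lemmas~\ref{L:k-line-prior} and~\ref{L:reverseW} this transfers into the Airy line ensemble, so that the profile $y\mapsto\scrS(x,y)$ is recovered, up to a single additive constant, from the deep last passage data $y\mapsto\langle(-\sqrt{k/(2x)},k)\LP y\rangle$ in the limit $k\to\infty$. The hard part is exactly that additive constant: the last passage value at one endpoint, recentered by its deterministic leading order, does \emph{not} converge as $k\to\infty$ — its fluctuations are controlled only at scale $k^{9/21}$ up to logarithms, by Theorem~\ref{T:airy-lp} — so the constant cannot be read off from $\scrA$ at any finite level $k$; only the within-column differences stabilize, because there the diverging fluctuations cancel along a shared initial segment of the path (tree structure, Proposition~\ref{P:tree-structure}).

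To pin the constant anyway, I would combine two ingredients. First, continuity of $\scrS$ together with $\scrS(0,\cdot)=\scrA_1$ forces $\scrS(x,\cdot)\to\scrA_1$ uniformly on compacta as $x\downarrow0$, hence $C(x)\to\scrA_1(0)$; together with the $\sigma(\scrA)$-measurability of $\tilde D$ this determines $C$ for small $x$, the necessary quantitative control coming from the parabola bounds (Lemmas~\ref{L:zk-unif-bound} and~\ref{L:close-not-dj}) and the Airy modulus-of-continuity estimates of \cite{DV}. Second, the diagonal stationarity (i) should rigidify $C$ across all of $[0,\infty)$: for two Airy sheets $\scrS=\tilde D+C$ and $\scrS'=\tilde D+C'$ coupled to the same $\scrA$, a nonzero discrepancy $C-C'$ would be an $x$-dependent shift invisible to the shared increments $\tilde D$ but altering the law of the field along non-diagonal directions, contradicting the requirement that both fields be diagonally stationary. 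Turning this rigidity into a rigorous argument — in particular excluding that the conditional law of $C(x)$ given $\scrA$ has genuine spread — is the main obstacle. Once $C$, and hence $\scrS|_{[0,\infty)\X\R}$ and hence the law of $\scrS$ on $\R^2$, is shown to be a fixed functional of $\scrA$, the proof concludes: given two Airy sheets with coupled ensembles $\scrA,\scrA'$, couple so that $\scrA=\scrA'$; then both are produced by the same recipe from the common ensemble, so $\scrS\eqd\scrS'$.
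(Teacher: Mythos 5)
Your reduction is set up correctly: you isolate exactly the right difficulty, namely that property (ii) only determines the $y$-increments $\scrS(x,z)-\scrS(x,y)$ at fixed $x$ as functionals of $\scrA$, leaving the additive level $C(x)=\scrS(x,0)$ undetermined. But neither of your two ingredients closes this gap, and you acknowledge as much. Continuity plus $\scrS(0,\cdot)=\scrA_1$ only gives $C(x)\to\scrA_1(0)$ as $x\downarrow 0$; it says nothing about $C(x)$ for any fixed $x>0$, so it does not "determine $C$ for small $x$" — the parabola bounds of Section \ref{S:parabola} concern the prelimiting melon paths and do not produce such a formula either. And your rigidity heuristic from diagonal stationarity (that an $x$-dependent shift would "alter the law along non-diagonal directions") is exactly the statement that needs proof; as written it is not an argument, since two couplings $\scrS=\tilde D+C$ and $\scrS'=\tilde D+C'$ could a priori both be diagonally stationary in law without $C=C'$. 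So the proposal, as it stands, does not prove Proposition \ref{P:uniq-sheet}.

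The missing idea in the paper is short but essential: stationarity \emph{and ergodicity} of the Airy process $\scrA_1(z)+z^2$ (equation (5.15) of \cite{prahofer2002scale}). Since $\scrS(0,\cdot)=\scrA_1$, the ergodic theorem gives, for fixed $y$, almost surely
\begin{equation*}
\scrS(0,y)=\lim_{m\to\infty}\frac{1}{2m}\int_{-m}^{m}\Big(\scrS(0,y)-\big(\scrS(0,z)+z^2\big)\Big)\,dz+\expt\,\scrA_1(0),
\end{equation*}
and applying property (i) to shift this identity yields the same almost-sure formula with $\scrS(x,y)$ on the left and the integrand $\scrS(x,y)-\big(\scrS(x,z+x)+z^2\big)$ on the right. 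The point is that the right-hand side involves only increments in the second variable at fixed $x$ (which are $\scrA$-measurable by (ii), first for rational $z$ and then for all $z$ by continuity) plus the deterministic constant $\expt\,\scrA_1(0)$; the ergodic average thus recovers the absolute level from the increments, which is precisely the step your continuity-at-$0$ and rigidity arguments were meant to supply. Once $\scrS(x,y)$ for $(x,y)\in\Q^+\X\Q$ is an almost-sure functional of $\scrA$, your concluding step (couple the two ensembles, then use stationarity and continuity to determine the law on all of $\R^2$) goes through as in the paper.
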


\begin{proof}
By equation (5.15) in \cite{prahofer2002scale} $\scrA_1(z)+z^2$ is stationary and ergodic.  By Birkhoff's ergodic theorem for any fixed $y$ we have that almost surely,
\begin{equation*}
\scrS(0, y)  =  \lim_{m \to \infty} \frac{1}{2m} \int_{-m}^{m} \scrS(0, y)-(\scrS(0,z) + z^2) dz + \expt \scrA_1(0).
\end{equation*}
We can then use property (i) to translate the above formula (applied to $\scrS(0, y- x)$) to get an almost sure formula for any $\scrS(x, y)$:
\begin{equation*}
\scrS(x, y)  =  \lim_{m \to \infty} \frac{1}{2m} \int_{-m}^{m} \scrS(x, y)-(\scrS(x,z + x) + z^2) dz + \expt \scrA_1(0).
\end{equation*}
When $(x,y) \in \Q^+ \X \Q$, the integrand on right hand side above is determined by condition (ii) for rational values of $z$, and hence is determined by that condition for all values of $z$ by continuity. Therefore $\{\scrS(x, y) : (x,y) \in \Q^+ \X \Q\}$ is determined by the definition of $\scrS$. By stationarity and continuity, this implies that the distribution of $\scrS$ is uniquely determined by its definition.
\end{proof}

\begin{remark}
	\label{R:busemann-def}
	We can exchange condition (ii) in Definition \ref{D:airy-sheet} for the following Busemann function definition. Almost surely, for all $x > 0$ and $y, z \in \R$, we have that
	\begin{equation}
	\label{E:buse}
\lim_{k \to \infty} \big\langle (-\sqrt{k/(2x)}, k) \LP z \big\rangle - \big \langle (-\sqrt{k/(2x)}, k) \LP y \big\rangle = \scrS(x, z) - \scrS(x, y).
	\end{equation}
	The proof of Proposition \ref{P:uniq-sheet} implies that this definition gives rise to a unique object.
	 Moreover, condition (ii) of Definition \ref{D:airy-sheet} implies this definition, and so they must be the same. To see this, note that it clearly implies \eqref{E:buse} for rational triples. To extend to $x \in \Q^+$ and $y, z \in \R$, observe that
	$$
	\big\langle (-\sqrt{k/(2x)}, k) \LP z + \ep \big\rangle \ge \big\langle (-\sqrt{k/(2x)}, k) \LP z \big\rangle + \scrA_1(z + \ep) - \scrA_1(z).
	$$
	Therefore since the left hand side of \eqref{E:buse} is continuous when restricted to rational $x, y, z$, it is continuous for $x \in \Q^+$ and $y, z \in \R$. To extend to $x \in \R^+$, note that the left hand side of \eqref{E:buse} is monotone in $x$ by Proposition \ref{P:mono-inc}. Therefore it must again be continuous since it is continuous on rationals.
\end{remark}

The Airy sheet exists because it is the limit of Brownian last passage percolation.  More precisely, we have the following. For $n\in \N$ let $B^n$ be an $n$-tuple of independent two-sided Brownian motions and let $[x \LP y]_n$ be the last passage value there from $(x, n)$ to $(y, 1)$. Recall the scaling $\bar{x} = 2xn^{-1/3}$ and $\hat{y} = 1 + 2yn^{-1/3}.$ Define the sequence of prelimiting Airy sheets $\scrS_n(x, y)$ by the formula
$$
[\bar{x} \LP \hat{y}]_n = 2\sqrt{n} + (y - x)n^{1/6} + n^{-1/6} \scrS_n(x, y).
$$
\begin{theorem}
\label{T:airy-sheet} The Airy sheet $\scrS$ exists. Moreover, there exists a coupling so that $\scrS_n-\scrS$ is asymptotically small in the sense that
\begin{equation}\label{E:super-exp}
\text{for every compact }K\subset \R^2\text{ there exists }a>1\text{ with }\E a^{\sup_K |\scrS_n-\scrS|^{3/2}}\to 1.
\end{equation}
\end{theorem}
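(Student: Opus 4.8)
\emph{Overview.} The statement has two parts --- existence of the Airy sheet and the quantitative coupling --- and I would obtain the second from the first by soft arguments. The skeleton is: (1) the family $\{\scrS_n\}$ is tight, with $n$-uniform super-exponential tails; (2) every subsequential limit satisfies Definition \ref{D:airy-sheet} and hence, by Proposition \ref{P:uniq-sheet}, has the Airy sheet law, so $\scrS$ exists and $\scrS_n$ converges to it in law; (3) the Skorokhod representation theorem plus uniform integrability deliver \eqref{E:super-exp}.

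\emph{Tightness (Step 1).} Using Proposition \ref{P:wm-equivalent} to pass from Brownian to melon last passage, and decomposing $\{\bar x \LP \hat y\}$ through line $k$ of the melon via Lemma \ref{L:metric} and Lemma \ref{L:zk-unif-bound}, one controls the one-point law of $\scrS_n(x,y)$ by Theorem \ref{T:top-bd} and Proposition \ref{P:dyson-tails}, and the two-point increments of $\scrS_n$ in each coordinate by tail bounds of the type of Lemma \ref{L:airy-tails} and Proposition \ref{P:dyson-tails} --- in the spatial coordinate directly on the top line of the melon near its corner, and in the coordinate $x$ after the time reversal of Lemma \ref{L:rev-wm-fact}. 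Feeding these into Lemma \ref{L:levy-est} produces an $n$-uniform modulus of continuity for $\scrS_n$, hence tightness of $\{\scrS_n\}$ in $C(\R^2)$ with the topology of uniform convergence on compacts, together with the $n$-uniform bound $\p(\sup_K |\scrS_n| \ge m) \le c_K e^{-d_K m^{3/2}}$ for every compact $K$. (This is essentially Lemma \ref{L:Sn-tight}.)

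\emph{Identifying the limit (Step 2).} Along any subsequence, pass to a further subsequence $n_j$ along which $(\scrS_{n_j}, A^{n_j})$ converges jointly in law --- possible since $A^n$ is tight by Theorem \ref{T:airy-line} --- and realize the limit by Skorokhod as a.s. locally uniform convergence $(\scrS_{n_j}, A^{n_j}) \to (\scrS, \scrA)$ with $\scrA$ an Airy line ensemble. Property (i) of Definition \ref{D:airy-sheet} is inherited from the prelimit: shifting all driving Brownian motions in time by $2tn^{-1/3}$ shows $\scrS_n(\cdot + t, \cdot + t) \eqd \scrS_n$ jointly in $(x,y)$, and this passes to $\scrS$. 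Next $\scrS(0,\cdot) = \scrA_1(\cdot)$, because $\scrS_n(0,\cdot) = A^n_1(\cdot)$ by Lemma \ref{L:high-paths}. For property (ii), fix $(x,y,z) \in \Q^+ \X \Q^2$, set $\zeta_k := -\sqrt{k/(2x)}$, and apply the metric composition law (Lemma \ref{L:metric}) to $\{\bar x \LP \hat z\}$ and $\{\bar x \LP \hat y\}$ across line $k$ of the melon. By Lemma \ref{L:zk-unif-bound} both optimal split times lie within $\mathfrak{o}(\sqrt{k})$ of the common location $\zeta_k$, which depends on neither $y$ nor $z$; using the flatness of $h_1 + h_2$ near its maximum (Lemma \ref{L:double-mono}) together with the coalescence estimate of Lemma \ref{L:close-not-dj}, one shows that freezing both split times at $\zeta_k$ perturbs $\scrS_n(x,z) - \scrS_n(x,y)$ by an amount negligible uniformly as first $n \to \infty$ and then $k \to \infty$. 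The ``down'' part $\{\bar x \LP (\widehat{\zeta_k}, k)\}$ is independent of $y$ and $z$ and cancels in the difference, while the ``up'' part, rescaled, converges by Theorem \ref{T:airy-line} to $\langle (\zeta_k, k) \LP z \rangle - \langle (\zeta_k, k) \LP y \rangle$ in $\scrA$; letting $k \to \infty$ and using Proposition \ref{P:longbound}/Theorem \ref{T:airy-lp} (which make precise that the split location ultimately does not matter) yields the Busemann identity of Remark \ref{R:busemann-def}, equivalently (ii). Thus $\scrS$ satisfies Definition \ref{D:airy-sheet}; by Proposition \ref{P:uniq-sheet} it has the unique Airy sheet law, so the Airy sheet exists and $\scrS_n$ converges to it in law.

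\emph{Quantitative coupling (Step 3), and the main obstacle.} Since convergence in law holds in the Polish space $C(\R^2)$, the Skorokhod representation theorem furnishes a single probability space carrying all $\scrS_n$ (with their prelimit laws) and $\scrS$ (the Airy sheet) with $\scrS_n \to \scrS$ locally uniformly a.s.; in particular $\sup_K|\scrS_n - \scrS| \to 0$ a.s. for every compact $K$, and no convergence rate is needed. From $\sup_K|\scrS_n - \scrS| \le \sup_K|\scrS_n| + \sup_K|\scrS|$, the $n$-uniform super-exponential tail on $\sup_K|\scrS_n|$ from Step 1 (and the same bound for $\scrS$ by Fatou), the elementary inequality $(u+v)^{3/2} \le 2^{3/2}(u^{3/2}+v^{3/2})$, and Cauchy--Schwarz, one obtains $\sup_n \E a^{\sup_K|\scrS_n - \scrS|^{3/2}} < \infty$ for some $a > 1$; uniform integrability then upgrades the a.s. convergence to $\E {a'}^{\sup_K|\scrS_n - \scrS|^{3/2}} \to 1$ for a slightly smaller $a' > 1$, which is \eqref{E:super-exp}. (A more hands-on coupling, realizing the melons and $\scrA$ simultaneously through the bridge representations of Theorem \ref{T:bridge-rep}, would give explicit rates, but is not needed for this statement.) The genuine difficulty is the part of Step 2 that freezes the split at the deterministic parabola location $\zeta_k$: the true split time $Z^n_k$ deviates from $\zeta_k$ by an amount that is only $\mathfrak{o}(\sqrt{k})$ rather than $o(1)$, so controlling the induced error requires both the coalescence of nearby last passage paths (Lemma \ref{L:close-not-dj}) and the sharp-enough Airy last passage estimate (Theorem \ref{T:airy-lp}) --- which is precisely why the parabola and coalescence results of the preceding sections are set up first.
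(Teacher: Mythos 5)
Your Steps 1 and 3 are fine and coincide with the paper's Lemma \ref{L:Sn-tight} (tightness with $n$-uniform super-exponential tails, then Skorokhod plus uniform integrability). The gap is in Step 2, and it is twofold. First, the error estimate you propose does not exist: ``freezing both split times at $\zeta_k=-\sqrt{k/(2x)}$'' is not justified by the lemmas you cite. Lemma \ref{L:zk-unif-bound} only locates the true split time $Z^n_k$ within $\oo(\sqrt{k})$ of $\zeta_k$, Lemma \ref{L:double-mono} gives monotonicity of $h_1,h_2$ but no quantitative flatness near the maximizer, and Theorem \ref{T:airy-lp} controls Airy-line-ensemble passage values only to precision $k^{9/21}\log^d k$ --- all of these are enormous compared with the $O(1)$ accuracy needed to say the triangle-inequality gap at $(\hat\zeta_k,k)$ is negligible, even at the level of the difference $\scrS_n(x,z)-\scrS_n(x,y)$. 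So the step you yourself flag as ``the genuine difficulty'' is not resolved by the tools you invoke. Second, even if such a limit statement were proved, it would only give the Busemann form of Remark \ref{R:busemann-def}, whereas Theorem \ref{T:airy-sheet} asserts existence of an object satisfying Definition \ref{D:airy-sheet}(ii), i.e.\ the \emph{exact} identity for all $k\ge K_{x,y,z}$; the equivalence asserted in the remark is itself deduced from the existence theorem, so you cannot use it to upgrade a limit statement into (ii) without circularity.

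The paper's route to (ii) is genuinely different and involves no error analysis at all. In the Skorokhod coupling one records, besides convergence of $\scrS_n$, the melons and the split times $Z^n_k$, that for each rational triple there are random $X_1<x<X_2$ whose melon geodesics $\pi\{\bar X_1,\hat y\}$ and $\pi\{\bar X_2,\hat z\}$ are \emph{not disjoint} for large $n$ (Lemma \ref{L:close-not-dj}). Monotonicity (Lemma \ref{L:mono-path}) and the tree structure (Proposition \ref{P:tree-structure}) sandwich the geodesics from $\bar x$ to $\hat y$ and to $\hat z$ between these, forcing them to agree below a random time $R_n$; the difference $\scrS_n(x,z)-\scrS_n(x,y)$ then telescopes \emph{exactly} into a difference of passage values from the common point, and this identity passes to the Airy line ensemble limit. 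Lemma \ref{L:zk-unif-bound} is used only to show $Z_k\to-\infty$ and that $-\sqrt{k/(2x)}$ eventually lies in $[Z_k(X_1,y),Z_k(X_2,z)]$, so that by monotonicity and the tree structure the identity can be rewritten with basepoint $(-\sqrt{k/(2x)},k)$, which is precisely condition (ii) (Lemma \ref{L:sheet-lem}). To repair your argument you would need to replace the freezing-and-error step by this coalescence/sandwiching argument, at which point you recover the paper's proof.
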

We first  show tightness and then prove  that all subsequential limits satisfy the
definition.
\begin{lemma}
\label{L:Sn-tight}
$\scrS_n$ is a tight sequence of random functions in $C(\R^2, \R)$. Moreover, if $\scrS$ is a limit of $\scrS_n$ along any subsequence, then there exists a coupling of $\scrS_n$ and $\scrS$ such that \eqref{E:super-exp} holds.
\end{lemma}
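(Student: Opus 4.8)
The plan is to prove tightness of $\{\scrS_n\}$ from a one-point tail estimate together with two-point increment estimates, feeding these into the L\'evy-type modulus Lemma \ref{L:levy-est}, and then to obtain the super-exponential coupling by a Skorokhod-representation argument combined with the resulting uniform modulus of continuity.

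Fix a compact box $K=[x_-,x_+]\X[y_-,y_+]\subset\R^2$. It suffices to produce constants $c,a,r_0>0$ \emph{independent of $n$} with $\p(|\scrS_n(u_0)|>m)\le ce^{-am^{3/2}}$ for some fixed $u_0\in K$, and $\p(|\scrS_n(u+re_i)-\scrS_n(u)|>m\sqrt r)\le ce^{-am^{3/2}}$ for $i\in\{1,2\}$, all $u,u+re_i\in K$, $0<r<r_0$, $m>0$. Indeed, applying Lemma \ref{L:levy-est} to the continuous function $\scrS_n$ (with $\al_i=1/2$, $\beta_i=3/2$) produces a random constant $C_n$ controlling the modulus of $\scrS_n$ on $K$, with $\p(C_n>m)\le c'e^{-c''m^{3/2}}$ and $c',c''$ not depending on $n$; together with the one-point bound this gives tightness of $\{\scrS_n\}$ in $C(K)$, and exhausting $\R^2$ by such boxes gives tightness in $C(\R^2,\R)$. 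The one-point bound is immediate: by Proposition \ref{P:wm-equivalent} and Lemma \ref{L:rev-wm-fact} (reversing at $\hat y$) together with the Brownian scaling $W^n_1(\lambda\cdot)\eqd\sqrt\lambda\,W^n_1(\cdot)$ of melons, $[\bar x\LP\hat y]_n\eqd\sqrt{1+2(y-x)n^{-1/3}}\,W^n_1(1)$; expanding the square root and invoking Theorem \ref{T:top-bd} gives $\p(|\scrS_n(x,y)|>m)\le ce^{-dm^{3/2}}$ uniformly on $K$.

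For the increment bounds, the time-reversal of $B$ on $[0,1]$ gives the process-level identity $\{\scrS_n(x,y)\}_{(x,y)}\eqd\{\scrS_n(-y,-x)\}_{(x,y)}$, so an $x$-increment has the law of a $y$-increment and it suffices to control $y$-increments in both directions. The downward direction is soft: extending the optimal melon path to $(\hat y_1,1)$ along the top line gives $\{\bar x\LP\hat y_2\}_n\ge\{\bar x\LP\hat y_1\}_n+W^n_1(\hat y_2)-W^n_1(\hat y_1)$, and after recentering, Proposition \ref{P:dyson-tails} (with $t=\hat y_1\asymp1$ and $s=\hat y_2-\hat y_1=2(y_2-y_1)n^{-1/3}$) yields $\scrS_n(x,y_2)-\scrS_n(x,y_1)\ge-m\sqrt{y_2-y_1}$ off an event of probability $\le ce^{-am^{3/2}}$. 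The upward direction is the main obstacle. Here I would split the last passage at level $2$ (Lemma \ref{L:metric}); simplifying the contribution of the top two lines leads to the exact identity
$$
\scrS_n(x,y)=F^n_2\big(Z^n_2(x,y)\big)+A^n_1(y)-n^{1/6}\min_{[\,\widehat{Z^n_2(x,y)},\,\hat y\,]}g_1,\qquad g_1=(WB^n)_1-(WB^n)_2,
$$
with $F^n_2,\,Z^n_2$ as in Section \ref{S:parabola}. The first term is controlled using Lemma \ref{L:reverseW}, Proposition \ref{P:longbound} and Lemma \ref{L:zk-unif-bound} (which place $Z^n_2(x,y)$ within $\oo(1)$ of a deterministic value uniformly over $K$, with $Z^n_2(x,\cdot)$ monotone), the increments of $A^n_1$ are controlled by Proposition \ref{P:dyson-tails}, and the $\min g_1$ term is handled using $g_1\ge0$, the monotonicity of $Z^n_2(x,\cdot)$, and a finite-$n$ modulus bound for $g_1$ over intervals of length $O(n^{-1/3})$ (again from Proposition \ref{P:dyson-tails} and standard Gaussian estimates). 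The delicate point is that the interval $[\widehat{Z^n_2(x,y)},\hat y]$ over which $g_1$ is minimized depends on $y$: one must show this dependence does not inflate the increment beyond order $\sqrt{y_2-y_1}$, which is where the geometric estimates of Section \ref{S:parabola} are essential and where one must take care not to lose a factor $n^{1/6}$ to the much larger fluctuations of the lower melon lines.

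Finally, for the super-exponential coupling, fix a subsequential limit $\scrS$ of $\scrS_n$ along $Y$ and use Skorokhod's representation theorem to realize $\scrS_n\to\scrS$ almost surely, uniformly on compacts, along $Y$; in particular $\sup_K|\scrS_n-\scrS|^{3/2}\to0$ a.s.\ for every compact $K$. The one-point bound and Lemma \ref{L:levy-est} give that $\sup_K|\scrS_n|$ has a tail $\le c e^{-dm^{3/2}}$ uniformly in $n$, and passing to the limit, so does $\sup_K|\scrS|$; hence $a_0^{\sup_K|\scrS_n-\scrS|^{3/2}}$ is uniformly integrable for any $a_0>1$ with $\log a_0$ small enough. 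Since these variables are $\ge1$ and converge to $1$ a.s., $\E a_0^{\sup_K|\scrS_n-\scrS|^{3/2}}\to1$, which is \eqref{E:super-exp}.
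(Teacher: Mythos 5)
Your overall skeleton (one-point tail plus two-point increment tails with $n$-independent constants, fed into Lemma \ref{L:levy-est}, then Skorokhod representation and uniform integrability to get \eqref{E:super-exp}) is exactly the paper's, and your final coupling/uniform-integrability step is correct. The gap is in the heart of the lemma: the upper tail of the spatial increment $\scrS_n(x,y_2)-\scrS_n(x,y_1)$ (your ``upward direction'') is never actually established. The route you sketch through $F^n_2$, $Z^n_2(x,y)$ and $\min g_1$ cannot deliver what Lemma \ref{L:levy-est} needs, because the inputs you cite — Proposition \ref{P:longbound}, Lemma \ref{L:zk-unif-bound}, Lemma \ref{L:reverseW} — are qualitative statements in the $\oo(\cdot)$ sense (summability over $k$ of $\limsup_n$ probabilities, designed for the subsequential-limit arguments of Sections \ref{S:parabola}--\ref{S:airy-sheet}); they give no quantitative, uniform-in-$n$ tail bound of the form $ce^{-am^{3/2}}$ at scale $\sqrt{y_2-y_1}$, and you yourself flag, without resolving, the delicate dependence of the minimizing interval $[\widehat{Z^n_2(x,y)},\hat y]$ on $y$. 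As written, tightness is therefore not proved.

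The difficulty is self-inflicted: for \emph{fixed} $x$, the whole process $y\mapsto[\bar x\LP\hat y]_n$ is itself the top line of a Brownian melon, namely $(W_{\bar x}B^n)_1(\hat y-\bar x)$, since $(W_{\bar x}B^n)_1(t)=B^n[(\bar x,n)\LP(\bar x+t,1)]$ by the defining property of the melon, and $W_{\bar x}B^n$ has the law of $WB^n$ because Brownian increments are stationary. (Symmetrically, $x\mapsto[\bar x\LP\hat y]_n$ is the top line of the reverse melon opened at $\hat y$.) This is the one-line observation the paper uses: both one-variable marginals of $\scrS_n$ are recentred, rescaled melon top lines, so Proposition \ref{P:dyson-tails} applies directly and is \emph{two-sided}, giving $\p(|\scrS_n(u+re_i)-\scrS_n(u)|>m\sqrt r)\le ce^{-dm^{3/2}}$ in both coordinates with constants independent of $n$; no splitting at level $2$, no control of $Z^n_2$, and no gap-minimum analysis is needed. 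With that substitution for your increment step, the rest of your argument (modulus via Lemma \ref{L:levy-est}, one-point bound, Skorokhod coupling, and uniform integrability of $a^{\sup_K|\scrS_n-\scrS|^{3/2}}$ for $a$ close to $1$) goes through as in the paper.
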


For the proof, $c$ and $d$ will be constants that may change from line to line and are independent of $n$. They will depend on an initial choice of a compact set.
\begin{proof}
It suffices to prove tightness and  \eqref{E:super-exp} for $\scrS$ restricted to compact sets of the form $K = [-b, b]^2$. First, by Theorem \ref{T:TW-airy}, we have
\begin{equation}
\label{E:Sn-m}
\p(|\scrS_n(0, 0)| > m) \le ce^{-dm^{3/2}}.
\end{equation}
Second, $\scrS_n(x, \cdot)$ and $\scrS_n(\cdot, y)$ are both given by the rescaled top line of a Brownian melon. Therefore tail bounds for the melon in Proposition \ref{P:dyson-tails} and the modulus of continuity of Lemma \ref{L:levy-est} imply that on $[-b, b]^2$ we have
$$
|\scrS_n(x, y) - \scrS_n(x', y')| \le C_n ||(x, y) - (x', y')||^{1/2} \log^{1/2} \lf( \frac{2b}{||(x, y) - (x', y')||} \rg)
$$
for a sequence of constants $C_n$ satisfying
\begin{equation}
\label{E:Cn-m}
\p(C_n > m) \le ce^{-dm^{3/2}}.
\end{equation}
 By the Kolmogorov-Chentsov criterion, see Corollary 16.9 in \cite{kallenberg2006foundations}, this uniform modulus of continuity bound coupled with the bound \eqref{E:Sn-m}  implies tightness of $\scrS_n|_K$. Moreover, if $\scrS_n|_K \to \scrS|_K$ in distribution along a subsequence, then by Skorokhod's representation theorem we can find a coupling so that $\scrS_n \to \scrS$ almost surely. In particular, this implies that $a^{\sup_K |\scrS_n-\scrS|^{3/2}}\to 1$ almost surely for every $a > 1$. The limit $\scrS$ satisfies the same modulus of continuity estimate as the sequence $\scrS_n$, with a random constant $C$ satisfying the tail bound in \eqref{E:Cn-m}. Therefore we have the bound
$$
a^{\sup_K |\scrS_n-\scrS|^{3/2}} \le a^{c(|\scrS_n(0, 0)|^{3/2} + |\scrS(0, 0)|^{3/2} + C_n^{3/2} + C^{3/2})}.
$$
All four of the random variables in the exponent above satisfy tail bounds of the form \eqref{E:Sn-m} or \eqref{E:Cn-m}, and so the above random variable is uniformly integrable for small enough $a > 1$. Hence we can conclude the desired convergence in expectation.
\end{proof}

Any subsequential limit $\scrS$ of $\scrS_n$ satisfies property (i) of the Airy sheet since the $\scrS_n$ are stationary by stationarity of Brownian increments. So it suffices to show that any limit restricted to $\Q^+ \X \Q$ satisfies property (ii) in Definition \ref{D:airy-sheet}.

\medskip

With this in mind, let $\scrS$ be any subsequential limit of $\scrS_n$ along a subsequence $Y'$. We first show that there is a further subsequence $Y \sset Y'$, and a coupling of the processes $B^n, n \in Y$ with limiting objects $\scrS, \scrA$ such that the following convergences hold on a set $\Omega$ of probability 1. All limits and claims about $n$ are for  $n\in Y$.
\begin{enumerate}
\item $\scrS_n\to \scrS$ uniformly on compact sets in $\R^2$.
\item Let $A_n$ denote the rescaling of the melon $WB^n$ in Theorem \ref{T:airy-line}. Then $A_n$ converges to the Airy line ensemble $\scrA$ uniformly on compact sets in $\Z \X \R$.
\item For every $(x, y) \in \Q^+ \X \Q$ and $n \in \N$ the sequence $Z_k^n(x,y)$ has some  limit $Z_k(x,y)$. Moreover, as $k\to\infty$,
\begin{equation}
\label{E:asym-Z}
Z_k(x, y)/\sqrt{k} \to -1/{\sqrt{2x}}.
\end{equation}
\item For every triple $(x, y, z) \in \Q^+ \X \Q^2$ with $y < z$, there exist random points $X_1 < x < X_2 \in \Q^+$ such that the melon paths
$$
\pi\{\bar{X}_1, \hat{y}\}_n \;\; \mathand \;\; \pi\{\bar{X}_2, \hat{z} \}_n
$$
are not disjoint for $n$ large enough.
\end{enumerate}

\begin{proof}[Proof of the existence of such a coupling]
Define indicator functions
$$
I_n(x, y, z, \ep) = \indic \Big\{\,\pi\{\bar x - \bar {\ep}, \hat y\}_n \; \mathand \; \pi\{\bar x + \bar {\ep}, \hat{z} \}_n \text{ are not disjoint}\Big\}.
$$
Each of the countably many sequences in $n$:
$$
\scrS_n, A_n, \{Z^n_k(x, y) : k \in \N, (x, y) \in \Q^+ \X \Q \}, \{I_n(x, y, z, m^{-1}) : (x,y,z,m)\in \Q^+\times  \Q^2 \times \N\},
$$
is tight. This uses Lemma \ref{L:Sn-tight} for $\scrS_n$, Theorem \ref{T:airy-line} for $A_n$, Lemma \ref{L:zk-unif-bound} for $Z^n_k(x, y)$, and the boundedness of $I_n(x, y, z, \ep)$. Thus they are jointly tight in the product of the appropriate topologies.

By Skorokhod's representation theorem, along any subsequence where $\scrS_n \cvgd \scrS$ we can find a further subsequence and a coupling of the environments such that all of these random variables converge almost surely. Property 1 clearly holds along this subsequence. The limit of $A_n$ is an Airy line ensemble $\scrA$ by Theorem \ref{T:airy-line}, giving property 2. The limits $Z_k(x, y)$ of $Z^n_k(x, y)$ satisfy \eqref{E:asym-Z} almost surely by the asymptotics in Lemma \ref{L:zk-unif-bound}, giving property 3. The limits of each $I_n(x, y, z, m^{-1})$ is an indicator function $I(x, y, z, m^{-1})$. Lemma \ref{L:close-not-dj} implies that for all $(x,y, z) \in \Q^+\times\Q^2$ we have
\begin{equation}
\label{E:cvge}
\p (I(x, y, z, m^{-1}) = 1) \to 1 \qquad \mathas m \to \infty.
\end{equation}
Monotonicity of last passage paths guarantees that $I_n(x, y, z, m^{-1})$ is nondecreasing in $m$, and this carries over to the limit $I(x, y,z, m^{-1})$. This monotonicity and \eqref{E:cvge} guarantees that there exists a random $M(x, y, z) \in \N$ such that
$
I(x, y, z, M(x, y, z)^{-1}) = 1
$
almost surely. Since
$$
I_n(x, y, z, M(x, y, z)^{-1}) \to I(x, y, z, M(x, y, z)^{-1})
$$
almost surely, $I_n(x, y, z, M(x, y, z)^{-1}) = 1$ for all large enough $n$, giving property 4 with $X_1 = x -M(x, y, z)^{-1}$ and $X_2 = x + M(x, y, z)^{-1}$.
\end{proof}

Theorem \ref{T:airy-sheet} then follows immediately from the following deterministic statement about the relationship between the subsequential limit $\scrS$ and the Airy line ensemble $\scrA$. Indeed, this next lemma shows that any distributional subsequential limit $\scrS$ of $\scrS_n$ must be an Airy sheet. Hence by the uniqueness of the Airy sheet law (Proposition \ref{P:uniq-sheet}) and the tightness of $\scrS_n$ (Lemma \ref{L:Sn-tight}), the sequence $\scrS_n$ converges in distribution to the Airy sheet.

\begin{lemma}
	\label{L:sheet-lem}
On the set $\Om$, the processes $\scrS$ and $\scrA$ satisfy condition (ii) of Definition \ref{D:airy-sheet}.
\end{lemma}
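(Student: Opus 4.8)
The plan is to verify the two clauses of condition (ii) on the almost-sure event $\Om$, using the four listed properties together with the melon and last-passage geometry of Sections \ref{S:melon}--\ref{S:parabola}. The first clause, $\scrS(0,\cdot)=\scrA_1(\cdot)$, is immediate: taking $x=0$ and $k=1$ in the definition of the melon gives $[\bar 0\LP\hat v]_n=(WB^n)_1(\hat v)$ for every $v$, so by the definition of $\scrS_n$ the function $\scrS_n(0,\cdot)$ is a rescaling of the top melon line that, by Theorem \ref{T:airy-line} and Property 2, converges to $\scrA_1$ uniformly on compacts along $Y$; comparison with Property 1 gives the claim. For the second clause, fix $(x,y,z)\in\Q^+\X\Q^2$; the case $y=z$ is trivial and $y>z$ follows by interchanging $y$ and $z$, so assume $y<z$. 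The first step is to produce a finite ``split line''. Apply Property 4 to obtain rationals $X_1<x<X_2$ with $\pi\{\bar X_1,\hat y\}_n$ and $\pi\{\bar X_2,\hat z\}_n$ not disjoint for all large $n$; passing to the limit along $Y$, the rightmost Airy paths $\pi\langle X_1,y\rangle$ and $\pi\langle X_2,z\rangle$ are not disjoint and hence share a point $(s^*,j^*)$. Monotonicity of rightmost last passage paths (Lemma \ref{L:mono-path}) gives $\pi\langle X_1,y\rangle\le\pi\langle x,y\rangle,\pi\langle x,z\rangle\le\pi\langle X_2,z\rangle$ (using $X_1<x<X_2$ and $y<z$), so $\pi\langle x,y\rangle$ and $\pi\langle x,z\rangle$ both pass through $(s^*,j^*)$ and in particular $Z_{j^*}(x,y)=Z_{j^*}(x,z)$. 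Since these two paths share the starting parabola at $x$, the tree structure of Proposition \ref{P:tree-structure} (applied on compact intervals and transferred to the limit) forces them to coincide on all lines $k\ge K_1$ for some finite $K_1=K_1(x,y,z)$; in particular $Z_k(x,y)=Z_k(x,z)=:v_k$ for $k\ge K_1$, and for such $k$ and all large $n$ the melon paths $\pi\{\bar x,\hat y\}_n$ and $\pi\{\bar x,\hat z\}_n$ also agree on line $k$, so $Z^n_k(x,y)=Z^n_k(x,z)=:v^n_k$.

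Next I would push the melon metric composition through to the limit. For such $k$ and $n$, Lemma \ref{L:metric} and the definition of $v^n_k$ give
\begin{equation*}
[\bar x\LP\hat y]_n=\{\bar x\LP(\widehat{v^n_k},k)\}_n+\{(\widehat{v^n_k},k)\LP\hat y\}_n,
\end{equation*}
and the identical formula with $y$ replaced by $z$. Subtracting, the first summands cancel exactly (same base point), so $[\bar x\LP\hat z]_n-[\bar x\LP\hat y]_n=\{(\widehat{v^n_k},k)\LP\hat z\}_n-\{(\widehat{v^n_k},k)\LP\hat y\}_n$. Rewriting both sides via the definitions of $\scrS_n$ and of $A^n$ (Theorem \ref{T:airy-line}), the affine $n^{1/6}$-terms cancel and this reduces to $\scrS_n(x,z)-\scrS_n(x,y)=A^n[(v^n_k,k)\LP(z,1)]-A^n[(v^n_k,k)\LP(y,1)]$. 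Letting $n\to\infty$ on $\Om$ and using $v^n_k\to v_k$, $A^n\to\scrA$ uniformly on compacts, continuity of last passage, and $\scrS_n\to\scrS$, we obtain
\begin{equation*}
\scrS(x,z)-\scrS(x,y)=\langle(v_k,k)\LP z\rangle-\langle(v_k,k)\LP y\rangle,\qquad k\ge K_1 .
\end{equation*}

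The last, and hardest, step is to replace the empirical base point $v_k=Z_k(x,y)$ by the prescribed $-\sqrt{k/(2x)}$, which by Lemma \ref{L:zk-unif-bound} agrees with $v_k$ only up to an error that is $\oo(\sqrt k)$ rather than $o(1)$. Here I would use that $w\mapsto\langle(w,k)\LP z\rangle-\langle(w,k)\LP y\rangle$ is monotone (Proposition \ref{P:mono-inc}), and argue that the last passage path from $(-\sqrt{k/(2x)},k)$ to $y$ coalesces with $\pi\langle x,y\rangle$ before dropping below line $K_1$ once $k$ is large --- the displacement $\oo(\sqrt k)$ being negligible on the transversal scale relevant to a span of $\sim k$ lines --- so that the split point of the two paths from $(-\sqrt{k/(2x)},k)$ to $y$ and to $z$ again lies on $\pi\langle x,y\rangle\cap\pi\langle x,z\rangle$ at a line $\ge K_1$; feeding this into the previous display yields condition (ii) with $K_{x,y,z}$ a suitable multiple of $K_1$. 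Making this coalescence precise --- essentially the statement that the parabolic part of a melon last passage path carries no information, as highlighted in the introduction --- is, I expect, the genuine obstacle; everything else is bookkeeping with the scalings and routine applications of the geometry already in place.
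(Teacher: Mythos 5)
Your opening steps (the clause $\scrS(0,\cdot)=\scrA_1(\cdot)$, the use of property 4 to get a meeting point of the outer paths $\pi\langle X_1,y\rangle$ and $\pi\langle X_2,z\rangle$, the squeeze of $\pi\langle x,y\rangle,\pi\langle x,z\rangle$ between them, and the cancellation/geodesic identity expressing $\scrS(x,z)-\scrS(x,y)$ as a difference of Airy last passage values from a common base point) follow the paper's route. But the final step --- replacing the empirical base point $Z_k(x,y)$ by the deterministic point $-\sqrt{k/(2x)}$ --- is precisely the content of condition (ii), and your proposal leaves it as an acknowledged gap. The ``coalescence before dropping below line $K_1$'' of the path started at $(-\sqrt{k/(2x)},k)$ with $\pi\langle x,y\rangle$ is a genuine quantitative statement that does not follow from Lemma \ref{L:zk-unif-bound}: knowing $Z_k(x,y)=-\sqrt{k/(2x)}+$ an error of smaller order than $\sqrt{k}$ says nothing about whether two last passage paths started that far apart on line $k$ merge above a fixed line, and no such coalescence estimate is available (or proved) in the paper.

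The paper closes this without any coalescence argument, by exploiting the strict separation $X_1<x<X_2$. By property 3 (the asymptotics \eqref{E:asym-Z}), $Z_k(X_1,y)/\sqrt{k}\to -1/\sqrt{2X_1}$ and $Z_k(X_2,z)/\sqrt{k}\to -1/\sqrt{2X_2}$, and since $-1/\sqrt{2X_1}<-1/\sqrt{2x}<-1/\sqrt{2X_2}$ there is a random $K$ with $-\sqrt{k/(2x)}\in[Z_k(X_1,y),Z_k(X_2,z)]$ for all $k\ge K$: the order-$\sqrt{k}$ divergence of the three parabolas beats the sublinear-in-$\sqrt{k}$ errors. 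Monotonicity (Lemma \ref{L:mono-path}) then squeezes the rightmost Airy paths $\tau_y,\tau_z$ from $(-\sqrt{k/(2x)},k)$ to $(y,1)$ and $(z,1)$ between the outer paths $\pi\langle X_1,y\rangle$ and $\pi\langle X_2,z\rangle$, which meet at the point $(\pi\langle X_1,y\rangle(R),R)$, where $R$ is a subsequential limit of the minimal pre-limit meeting times $R_n$. Hence $\tau_y$ and $\tau_z$ both pass through that point, and the tree structure (Proposition \ref{P:tree-structure}) forces $\tau_y=\tau_z$ below it and $\tau_y=\pi\langle X_1,y\rangle$, $\tau_z=\pi\langle X_2,z\rangle$ above it; the geodesic property (Lemma \ref{L:geod-btw}) then converts the identity with base point $(\pi\langle X_1,y\rangle(R),R)$ (essentially your identity with base point $Z_k(x,y)$) into the required one with base point $(-\sqrt{k/(2x)},k)$. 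So the missing idea is not coalescence of paths from nearby starting points, but a sandwich of the prescribed starting point between the two already-coalesced outer paths from $X_1$ and $X_2$; with that substitution your outline becomes the paper's proof.
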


Lemma \ref{L:sheet-lem} puts together all the bounds that we have developed over the previous few sections. See Figure \ref{fig:lemma85} for a sketch of its proof.

\begin{figure}
	\centering
	\includegraphics[width=2.5in]{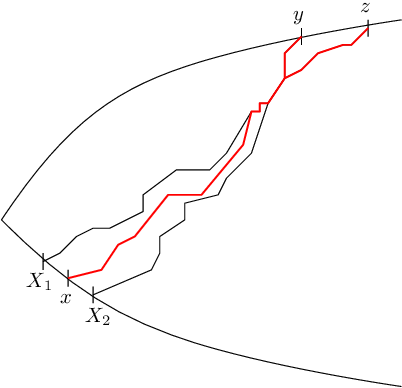}
	\caption{The idea of the proof of Lemma \ref{L:sheet-lem}. We start with points $\bar X_1$ and $\bar X_2$ on either side of a rational $\bar x$ whose melon last passage paths to $\hat y$ and $\hat z$ are not disjoint. These last passage paths both follow particular parabolas, so the place where they meet has to be in the part of the melon that converges to the Airy line ensemble. The last passage paths from $\bar x$ to $\hat y$ and $\hat z$ are squeezed by these outer paths, and hence the branch point between the paths from $\bar x$ must also occur in the part of the melon that converges to the Airy line ensemble.}
	\label{fig:lemma85}
\end{figure}

\medskip

To prove Lemma \ref{L:sheet-lem}, it will be convenient to translate condition 3 into a statement about limits of last passage paths. Fix $(x, y) \in \Q^+ \X \Q$. The function $k\mapsto \hat Z^n_k(x,y)$ records the time when the rightmost last passage path  $\pi\{\bar x,\hat y\}_n$ jumps from (possibly below) $k+1$ to (possibly above) $k$. Thus the function $k \mapsto \hat Z^n_k(x,y)$ acts as the inverse of the nonincreasing function $\pi\{\bar x,\hat y\}_n$. Now, for every $k$, $Z^n_k(x,y) \to Z_k(x,y)$, and $Z_k(x, y) \to -\infty$ as $k\to\infty$. It follows deterministically that there is a nonincreasing cadlag integer-valued function $\pi \langle x,y\rangle:(-\infty, y] \to \N$ so that
$$
\pi\{\bar x,\hat y\}_n(\hat t)\to \pi \langle x,y\rangle(t)
$$
for every $t$ not in the set of jump times $\{Z_k(x, y) : k \in \N\}$.
Moreover, even at these jump times we have
\begin{equation}
\label{E:pi-tt}
\pi \langle x,y\rangle(t_-)\ge \pi\{\bar x,\hat y\}_n(\hat t_-)\ge \pi\{\bar x,\hat y\}_n(\hat t)\ge \pi \langle x,y\rangle(t)
\end{equation}
for all large enough $n$.
This and \eqref{E:asym-Z} imply that
\begin{equation}\label{E:asym-pi}
\lim_{t\to -\infty} \frac{\pi \langle x,y\rangle (t)}{t^2}=2x.
\end{equation}

\begin{proof}[Proof of Lemma \ref{L:sheet-lem}] The proof proceeds deterministically for a fixed $\om \in \Om$.
We have $\scrS(0, y) = \scrA(y)$ for all $y \in \R$ because $\scrS_n(0, y) = A_n(y)$.
Now fix $(x, y, z) \in \Q^+ \X \Q^2$ with $y < z$, and let $X_1 < x < X_2$ be as in property 4 above.
By property 3 and \eqref{E:asym-pi} there exists a random $T<y$ so that for all $t<T$ we have
\begin{equation}\label{E:close}
\pi\langle X_1,y\rangle(t_-) < \pi \langle x,y\rangle(t) \qquad \mathand \qquad
\pi\langle X_1,y\rangle(t_-) < 2xt^2 < \pi\langle X_2,z\rangle(t).
\end{equation}
Let $k$ be arbitrary but large enough so that  $-\sqrt{k/(2x)}<T$. Let $t =-\sqrt{k/(2x)}$ and let $N$ be a large enough random integer so that \eqref{E:pi-tt} holds at the point $t$ for the three
paths $\pi\langle X_1,y\rangle, \pi \langle x,y\rangle,$ and $\pi\langle X_2,z\rangle$. Further, by property 4, we can require that $N$ is large enough so that for all $n\ge N$ there is a point lying along both $\pi\{\bar{X}_1, \hat{y}\}_n$ and  $\pi\{\bar{X}_2, \hat{z}\}_n$. We call this point $(R,W)$; it may depend on $n$. By monotonicity of last passage paths, $(R,W)$ must also lie along $\pi\{\bar{x}, \hat{y}\}_n$ and $\pi\{\bar{x}, \hat{z}\}_n$. Our next goal is to show that $(R, W)$ also lies along the melon last passage paths $\pi\{(\hat t,k),\hat y\}_n$ and $\pi\{(\hat t,k),\hat z\}_n$, which go from $(\hat t,k)$ to $(\hat y, 1)$ and from $(\hat t,k)$ to $(\hat z, 1)$.

\medskip

To this end, observe that \eqref{E:pi-tt} and \eqref{E:close} imply that
 \begin{equation}
\label{E:mono-mono}
\pi\{\bar{X}_1, \hat{y}\}_n(\hat t) \ne \pi\{\bar{x}, \hat{y}\}_n(
\hat t) \qquad \mathand \qquad \pi\{\bar{X}_1, \hat{y}\}_n(\hat t) < k < \pi\{\bar{X}_2, \hat{z}\}_n(\hat t)
\end{equation}
for $n \ge N$. Now, the set $I=\{s \in \R :\pi\{\bar{X}_1, \hat{y}\}_n(s)=\pi\{\bar{x}, \hat{y}\}_n(s)\}$ is an interval by Proposition \ref{P:tree-structure}. Since $R,\hat y\in I$ and $\hat t < \hat y$, the first condition in \eqref{E:mono-mono} guarantees that $\hat t<R$. The second condition in \eqref{E:mono-mono} and monotonicity of last passage paths guarantees that $\pi\{(\hat t,k),\hat y\}_n$ and $\pi\{(\hat t,k),\hat z\}_n$ are sandwiched between $\pi\{\bar{X}_1, \hat{y}\}_n$ and $\pi\{\bar{X}_2, \hat{z}\}_n$ restricted to the interval $[\hat t, \hat y]$. Since $R \in [\hat t, \hat y]$, the point $(R,W)$ must lie along these paths as well.

\medskip

Next, since metric composition holds at any point lying along  a last passage path, we have
\begin{align*}
\{\bar x\to \hat y\} &=  \{\bar x\to (R,W)\} + \{ (R,W)\to \hat y\},\\
\{\bar x\to \hat z\} &=  \{\bar x\to (R,W)\} + \{ (R,W)\to \hat z\},\\
\{(\hat t, k)\to \hat y\} &=  \{(\hat t, k)\to (R,W)\} + \{ (R,W)\to \hat y\},\\
\{(\hat t, k)\to \hat z\} &=  \{(\hat t, k)\to (R,W)\} + \{ (R,W)\to \hat z\}.
\end{align*}
Adding these equations with signs $-,+,+,-$ we get
\begin{align*}
\{\bar x\to \hat z\} -\{\bar x\to \hat y\} =
\{(\hat t,k) \to \hat z\} -\{(\hat t, k) \to \hat y\}.
\end{align*}
By this and the definition of $\scrS_n$ we have
\begin{align}\label{E:Sn}
\scrS_n&(x, z) - \scrS_n(x, y)
= n^{1/6}\lf[\{(\hat t, k)\to \hat z\} - \{(\hat t, k) \to \hat y\}\rg] - 2(z - y)n^{1/3}.
\end{align}
As $n\to \infty$ the uniform convergence of $A_n$ to $\scrA$ on $[t, z]\times \{1,\ldots, k\}$ implies the convergence of last passage values. Taking limits of \eqref{E:Sn} we get
\begin{equation}
\label{E:SS}
\scrS(x, z) - \scrS(x, y) = \langle (t, k) \to z\rangle - \langle(t, k) \to y\rangle.
\end{equation}
Since this holds for any $k$ large enough and  $t=-\sqrt{k/(2x)}$, the proof is complete.
\end{proof}

\begin{remark}
	\label{R:differences}
	One of the crucial ideas in Definition \ref{D:airy-sheet} is to look at \emph{differences} of last passage values, rather than just the last passage values themselves. Hopefully, the reader can now appreciate the importance of this. We showed that differences of last passage values are contained in $\scrA$ via coalescence arguments, which only required that last passage paths starting at distinct points $\bar x \ne \bar x'$ diverge away from the Airy line ensemble corner. Ultimately, this just required that the error estimate on the $F^n_k$ in Proposition \ref{P:longbound} was of lower order than the leading $O(\sqrt{k})$ terms. On the other hand, to prove convergence of last passage values directly, we would have needed a much finer error estimate: $\oo(1)$, rather than $\oo(\sqrt{k})$.
\end{remark}

\section{Properties of the Airy sheet}
\label{S:properties-Airy}

In this section we prove a few basic properties of the Airy sheet.

\begin{lemma}
\label{L:airy-facts} As a random continuous function in $\R^2$, for any $c \in \R$ the Airy sheet satisfies
$$
\scrS(x, y) \eqd \scrS(-y, -x) \quad \mathand \quad \scrS(x, y) \eqd \scrS(x, y + c) + 2c(y-x) + c^2.
$$
Moreover, for $x\le x',\; y\le y' $ we have
$$\scrS(x,y)+\scrS(x',y')\ge \scrS(x,y')+\scrS(x',y).
$$\end{lemma}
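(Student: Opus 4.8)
The plan is to transfer each assertion from the prelimiting Airy sheets $\scrS_n$ (recall $[\bar x \LP \hat y]_n = 2\sqrt n + (y-x)n^{1/6} + n^{-1/6}\scrS_n(x,y)$) to the limit $\scrS$, using the coupling of Theorem \ref{T:airy-sheet} along which $\scrS_n \to \scrS$ locally uniformly.

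For the superadditivity inequality, fix $x \le x'$ and $y \le y'$. For all large enough $n$ one has $\bar x \le \bar{x}' < \hat y \le \hat{y}'$: the outer inequalities are immediate, and the middle one holds because $\bar{x}' = 2x'n^{-1/3} \to 0$ while $\hat y = 1 + 2yn^{-1/3} \to 1$. So Proposition \ref{P:mono-inc} applies with $f = B^n$ (its proof uses only the triangle inequality and the existence of a crossing point of two last passage paths, hence is valid for an $n$-tuple of functions; alternatively, append independent Brownian lines so that $B^n \in C^\Z$, which does not change last passage from line $n$ to line $1$), and gives
$$[\bar x \LP \hat y]_n + [\bar{x}' \LP \hat{y}']_n \;\ge\; [\bar x \LP \hat{y}']_n + [\bar{x}' \LP \hat y]_n .$$
Expanding each bracket through the definition of $\scrS_n$, the $2\sqrt n$ terms contribute $4\sqrt n$ to both sides and the $n^{1/6}$ terms contribute $(y+y'-x-x')n^{1/6}$ to both sides, so both cancel; multiplying by $n^{1/6}$ leaves $\scrS_n(x,y) + \scrS_n(x',y') \ge \scrS_n(x,y') + \scrS_n(x',y)$. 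Passing to the limit in the coupling gives the inequality for $\scrS$ at this fixed quadruple almost surely, and since $\scrS$ is continuous, running this over a countable dense set of quadruples and invoking continuity yields an almost sure statement that holds simultaneously for all $x \le x'$, $y \le y'$.

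For the two distributional symmetries, the same scheme works: each holds exactly for $\scrS_n$ and is preserved in the limit. The identity $\scrS(x,y) \eqd \scrS(-y,-x)$ comes from the reversal symmetry of Brownian last passage percolation — by Lemma \ref{L:rev-wm-fact}, applied with $z$ taken above all endpoints in a fixed compact window, $(x,y)\mapsto [\bar x \LP \hat y]_n$ is, termwise, a last passage functional of the reversed array $R_z B^n$, which has the same increment law as $B^n$; tracking the scaling and absorbing the shift by $z$ using the fact that $\scrS_n$ is, by Brownian scaling, stationary on compacts up to an $o_n(1)$ error, one gets $\scrS_n(x,y) \eqd \scrS_n(-y,-x)$ jointly in $(x,y)$, hence $\scrS(x,y)\eqd\scrS(-y,-x)$. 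The symmetry $\scrS(x,y)\eqd\scrS(-x,-y)$ (equivalently, combined with the previous one, $\scrS(x,y)\eqd\scrS(y,x)$) is obtained the same way from the corresponding transpose-type symmetry of the Brownian $n$-melon/RSK picture, again up to a shift that vanishes in the limit.

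The only step carrying genuine content is the superadditivity inequality, and its sole input is Proposition \ref{P:mono-inc}; the rest is bookkeeping. The main thing to be careful about is checking that the hypotheses of that proposition — in particular the strict inequality $\bar{x}' < \hat y$ — hold for all large $n$, and that the deterministic $2\sqrt n$ and $n^{1/6}$ terms cancel exactly so that no residual drift blocks the descent of the inequality to $\scrS_n$; both are immediate from the definitions.
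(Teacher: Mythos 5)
Your treatment of the quadrangle inequality and of one of the symmetries is fine and is exactly the paper's (very terse) route: the paper's entire proof is the assertion that the properties are inherited from Brownian last passage percolation, with Proposition \ref{P:mono-inc} cited for the inequality, and your prelimit application of Proposition \ref{P:mono-inc} (the ordering $\bar x\le \bar x'<\hat y\le\hat y'$ for large $n$, exact cancellation of the centering, passage to the limit plus continuity) is the intended argument. Likewise, the reversal argument via Lemma \ref{L:rev-wm-fact} and $R_zB\eqd B$ is correct; it yields exactly the joint identity $\scrS_n(x,y)\eqd\scrS_n(-y,-x)$, hence $\scrS(x,y)\eqd\scrS(-y,-x)$.

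The gap is in the second symmetry. Note that the map $(x,y)\mapsto(-y,-x)$ you obtain from reversal is the \emph{composition} of the two symmetries stated in the lemma, so you still owe one of them, say $\scrS(x,y)\eqd\scrS(-x,-y)$ (equivalently the pure swap). Your appeal to a ``transpose-type symmetry of the Brownian $n$-melon/RSK picture, obtained the same way up to a vanishing shift'' does not deliver this: in the semi-discrete model the time axis is continuous and the line index is discrete, so there is no transpose symmetry of the environment, and the only flip-type transformation compatible with the last passage functional is the full reversal $f_i(t)\mapsto -f_{n+1-i}(z-t)$ (time reflection alone turns maximizing nonincreasing paths into a first-passage problem over nondecreasing paths; negation or line relabelling alone does the same in disguise). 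Any combination of this reversal with diagonal shifts and Brownian scaling acts on the parameters only through maps of the form $(x,y)\mapsto(c-y,c-x)$, which never produce $(y,x)$ or $(-x,-y)$. So the pure swap/negation is an emergent symmetry of the limit, not an exact (nor obviously approximate) prelimit invariance, and it needs a genuine separate argument — this is closely related to the ``other tableau'' subtlety the paper itself raises as a conjecture in its final section. To be fair, the paper's one-line proof glosses over the same point; but as written, your proposal establishes only the anti-diagonal symmetry $\scrS(x,y)\eqd\scrS(-y,-x)$ and the quadrangle inequality, and leaves one of the two asserted symmetries unproved.
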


\begin{proof}
The first distributional equality in Lemma \ref{L:airy-facts} is inherited from the stationarity of Brownian increments under the map $t \mapsto 1 - t$. The second distributional equality follows from Brownian scaling. Indeed, letting $a = 1 + 2 c
n^{-1/3}$ we have that
\begin{align*}
&B^n[(2xn^{-1/3}, n) \to (1 + 2 y n^{-1/3}, 1)] \\
\eqd \;\; &a^{-1/2} B^n[(2xn^{-1/3} + O(n^{-2/3}), n) \to (1 + 2 (y + c) n^{-1/3} + O(n^{-2/3}), 1)], 
\end{align*}
jointly as functions in $x, y$, where on every compact set $K$, the error terms $O(n^{-2/3})$ are bounded above by $c_K n^{-2/3}$ for some constant $c_K$. Taking the limit of this equation after proper rescaling and centering yields the second distributional equality.
The quadrangle inequality is inherited from Proposition \ref{P:mono-inc}.
\end{proof}

The metric composition law is also inherited from Brownian last passage percolation. For the proof, we have to guarantee that the prelimiting optimal location is tight. Recall from the introduction that an \textbf{Airy sheet of scale $s$} is given by
$$
\scrS_s(x, y) = s \scrS(xs^{-2}, ys^{-2}).
$$
\begin{prop}[Metric composition  law]
\label{P:geod-eqn}
Let $\scrS_s,\, \scrS_t$ be independent Airy sheets of scale $s$ and $t$. For $(x, z) \in \R^2$, define
\begin{equation}
\label{E:stable-st}
Q(x, z) = \sup_{y \in \R} \scrS_s(x,y)+\scrS_t(y,z),
\end{equation}
The function $Q$ is an Airy sheet of scale $r$, where $r^3 = s^3 + t^3$.
Moreover, the largest value $Z^+(x,z)$ where the maximum in \eqref{E:stable-st} is achieved  is nondecreasing in both $x$ and $z$.  The same holds for $Z^-$ defined analogously.
\end{prop}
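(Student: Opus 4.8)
The plan is to realize $Q$ as the large-$n$ limit of Brownian last passage percolation cut at an intermediate line, and to identify the limit using Theorem~\ref{T:airy-sheet} together with uniqueness of the Airy sheet (Proposition~\ref{P:uniq-sheet}). First I would reduce to $r=1$. Both the rescaling $f\mapsto \la f(\cdot\,\la^{-2},\cdot\,\la^{-2})$ and the composition \eqref{E:stable-st} are compatible with the diagonal spatial scaling, and a change of variables in the supremum shows $Q(x,z)=r\,\widetilde Q(xr^{-2},zr^{-2})$, where $\widetilde Q$ is the composition of the scale-$(s/r)$ and scale-$(t/r)$ sheets built from the same underlying standard sheets as $\scrS_s,\scrS_t$; since $(s/r)^3+(t/r)^3=1$ it is enough to treat $s^3+t^3=1$. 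In that case, because the law of $Q$ depends only on the joint law of $(\scrS_s,\scrS_t)$, and any two pairs of independent Airy sheets of scales $s,t$ have the same joint law (Proposition~\ref{P:uniq-sheet}), it suffices to exhibit one pair of independent Airy sheets $\scrS_s,\scrS_t$ of scales $s,t$ for which $\sup_y \scrS_s(x,\cdot)+\scrS_t(\cdot,z)\eqd\scrS$.

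To do this, fix $n$, let $B_1,\dots,B_n$ be independent two-sided Brownian motions, and set $\ell=n-\lfloor s^3 n\rfloor$. The metric composition law (Lemma~\ref{L:metric}) gives
\[
B[(\bar x,n)\LP(\hat z,1)]=\sup_{w\in\R}\big(B[(\bar x,n)\LP(w,\ell)]+B[(w,\ell-1)\LP(\hat z,1)]\big).
\]
The first summand is a last passage value over the $\approx s^3 n$ lines $B_\ell,\dots,B_n$ and the second over the $\approx t^3 n$ lines $B_1,\dots,B_{\ell-1}$, so the two processes in $w$ are independent. Writing $w=s^3+2vn^{-1/3}$, performing a Brownian rescaling that turns the spatial windows $[\bar x,w]\approx[0,s^3]$ and $[w,\hat z]\approx[s^3,1]$ into unit-size windows, and applying Theorem~\ref{T:airy-sheet} at $\approx s^3 n$ and at $\approx t^3 n$ lines, I would get, uniformly for $v$ in compact sets,
\[
B[(\bar x,n)\LP(w,\ell)]=2s^3\sqrt n+\la(v-x)n^{1/6}+n^{-1/6}\scrS^n_s(x,v)+o(n^{-1/6}),
\]
\[
B[(w,\ell-1)\LP(\hat z,1)]=2t^3\sqrt n+\la(z-v)n^{1/6}+n^{-1/6}\scrS^n_t(v,z)+o(n^{-1/6}),
\]
where $\la$ is the linear constant in Theorem~\ref{T:airy-sheet} and $\scrS^n_s,\scrS^n_t$ converge in law, jointly and as an independent pair, to an Airy sheet of scale $s$ and one of scale $t$. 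The choice $\ell=n-\lfloor s^3 n\rfloor$ is exactly what makes the maximizing $w$ sit near $v=0$: to leading order the two summands are $2\sqrt{s^3 n\,(w-\bar x)}+2\sqrt{t^3 n\,(\hat z-w)}$, which is maximized near $w=s^3$; the linear terms then combine to $\la(z-x)n^{1/6}$ and the constants to $2s^3\sqrt n+2t^3\sqrt n=2\sqrt n$. Summing and comparing with the definition of $\scrS_n$ gives the identity $\scrS_n(x,z)=\sup_v\big(\scrS^n_s(x,v)+\scrS^n_t(v,z)+o_n(v)\big)$.

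The hard part, and what the phrase ``guarantee the prelimiting optimal location'' refers to, is showing that this supremum is, uniformly in $n$, effectively over a fixed compact set: I expect to need that for each $L$,
\[
\limsup_{n\to\infty}\ \p\big(\text{some maximizer of }\scrS^n_s(x,\cdot)+\scrS^n_t(\cdot,z)+o_n\text{ lies outside }[-L,L]\big)\ \le\ \eta(L),
\]
with $\eta(L)\to0$. This is a prelimit form of the parabolic confinement $\scrS^n_s(x,v)+\scrS^n_t(v,z)\le \mathrm{const}-c\,v^2+(\text{tight})$, and I would establish it from uniform-in-$n$ estimates: an upper bound on the first summand over all spatial endpoints from Proposition~\ref{P:cross-prob} (applied to the melon top line that represents it), a matching lower bound at $v=0$ from Theorem~\ref{T:top-bd} and Proposition~\ref{P:dyson-tails}, and the fact that the parabolic gain for $|v|\ge L$ beats the fluctuations with probability $1-\eta(L)$. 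Granting this, I would restrict the supremum to $[-L,L]$, pass to a Skorokhod coupling in which $\scrS_n\to\scrS$ and $(\scrS^n_s,\scrS^n_t)\to(\widetilde\scrS_s,\widetilde\scrS_t)$ uniformly on compacts, let $n\to\infty$ and then $L\to\infty$, and conclude $\scrS(x,z)=\sup_v\big(\widetilde\scrS_s(x,v)+\widetilde\scrS_t(v,z)\big)$ a.s.\ for each fixed $(x,z)$, hence for all $(x,z)$ by continuity, with $\widetilde\scrS_s,\widetilde\scrS_t$ independent Airy sheets of scales $s,t$. By the first paragraph this yields $Q\eqd\scrS$, hence the claim for all $r$.

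For the monotonicity of $Y^\pm$ I would argue directly, without the prelimit. The parabolic growth of the Airy sheet --- its stationary version $\scrS(x,y)+(x-y)^2$ is stationary with Tracy--Widom one-point marginals --- forces $\scrS_s(x,v)+\scrS_t(v,z)\to-\infty$ as $|v|\to\infty$, so for each $(x,z)$ the maximizer set is a nonempty compact set and $Y^\pm(x,z)$ are well defined. Fix $x\le x'$, $z\le z'$, put $y=Y^+(x,z)$, $y'=Y^+(x',z')$, and suppose $y>y'$. Applying the quadrilateral inequality of Lemma~\ref{L:airy-facts} (valid for scaled sheets) to $\scrS_s$ at the index pair $\{(x,y'),(x',y)\}$ and to $\scrS_t$ at the index pair $\{(y',z),(y,z')\}$,
\[
\big[\scrS_s(x,y')+\scrS_t(y',z)\big]+\big[\scrS_s(x',y)+\scrS_t(y,z')\big]\ \ge\ \big[\scrS_s(x,y)+\scrS_t(y,z)\big]+\big[\scrS_s(x',y')+\scrS_t(y',z')\big].
\]
The right-hand side equals $Q(x,z)+Q(x',z')$ since $y,y'$ are maximizers, while the left-hand side is $\le Q(x,z)+Q(x',z')$ since $y'$ is a candidate at $(x,z)$ and $y$ at $(x',z')$; so equality holds throughout, whence $y$ is a maximizer at $(x',z')$ and $y\le Y^+(x',z')=y'$, contradicting $y>y'$. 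The same computation with $Y^-$ forces $y'$ to be a maximizer at $(x,z)$, so $y=Y^-(x,z)\le y'$, again a contradiction; hence both $Y^+$ and $Y^-$ are nondecreasing in each variable. (Alternatively one can transfer the monotonicity of the prelimiting cut location, which is immediate from Lemma~\ref{L:mono-path}, to the limit.)
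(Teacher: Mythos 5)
Your proposal is correct and follows essentially the paper's route: you split the prelimiting Brownian last passage problem at an intermediate line, use the prelimit metric composition law and independence of the two blocks of Brownian motions, and pass the supremum through the limit by confining the maximizer location uniformly in $n$ via the melon top-line estimates (Proposition \ref{P:cross-prob}, Theorem \ref{T:top-bd}) --- your confinement step is precisely the content of Lemma \ref{L:wm-max}, proved from the same inputs, and your extension to all $(x,z)$ (by continuity, or alternatively by monotonicity of the prelimiting cut location via Lemma \ref{L:mono-path}) matches the paper's use of monotone maximizers. The only genuine departure is deriving the monotonicity of $Y^\pm$ directly in the limit from the quadrilateral inequality of Lemma \ref{L:airy-facts} rather than from the finite case, which is exactly the alternative argument the paper itself points out immediately after its proof.
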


We have a true maximum, rather than a supremum. To prove Proposition \ref{P:geod-eqn}, we use the following lemma which gives tightness of the maximum location for two prelimiting Airy processes.

\begin{lemma}
\label{L:wm-max}
For $n \in \N$ and $t \in \{1/n, \dots, (n-1)/n\}$, let $k = nt$ and $m = n(1-t)$, and set $s = \min(t, 1-t)$. Let $W^k_1$ and $W^m_1$ be the top lines of two independent Brownian melons with $k$ lines and $m$ lines respectively. Define the melon sum
$$
A(z) = W^k_1(z) + W^m_1(1 - z).
$$
Let $S = \{z : A(z) = \max_{w \in [0, 1]} A(w) \}$. Then there exist constants $c$ and $d$ such that for all $r > 0$, we have
$$
\sup_{n \in \N} \sup_{t \in \{1/n, \dots, (n-1)/n\}} \p(S \not\sset [t - r s^{1/3}n^{-1/3}, t + r s^{1/3}n^{-1/3}]) \le ce^{-dr^{3/4}}.
$$
\end{lemma}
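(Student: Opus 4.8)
The approach is a comparison of $A$ with its deterministic drift $\phi(z) := 2\sqrt{kz} + 2\sqrt{m(1-z)}$. Since $k = nt$ and $m = n(1-t)$, a short computation shows that $\phi$ is maximized on $[0,1]$ exactly at $z = t$, with $\phi(t) = 2\sqrt n$, $\phi'(t) = 0$, $\phi''(t) = -\sqrt n/(2t(1-t))$, and more quantitatively $\phi(t) - \phi(z) \ge c\sqrt n\,\min\!\big((z-t)^2/(t(1-t)),\,1\big)$ for all $z \in [0,1]$. Writing $\de_r := r\,s^{1/3}n^{-1/3}$, the plan is to produce a high‑probability event on which $A(t) > A(z)$ for every $z$ with $|z - t| \ge \de_r$; on that event no such $z$ maximizes $A$, so $S \subset [t - \de_r, t + \de_r]$. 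Using the symmetry $z \leftrightarrow 1-z$, $(k,t)\leftrightarrow(m,1-t)$ I would reduce to $t \le \tfrac12$ (so $s = t$ and $t(1-t) \asymp s$) and bound the right and left tails of $S-t$ separately; and since $ce^{-dr^{3/4}}$ exceeds $1$ once $r$ is below a fixed constant (after enlarging $c$), I may assume $r$ exceeds a large absolute constant throughout.

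For the lower bound at the centre, Brownian scaling of the melon gives $W^k_1(t) \eqd \sqrt t\,W^k_1(1)$ and likewise $W^m_1(1-t) \eqd \sqrt{1-t}\,W^m_1(1)$, so Theorem \ref{T:top-bd} yields $A(t) \ge 2\sqrt n - (Y_1 + Y_2)n^{-1/6}$ with $Y_1, Y_2 \ge 0$ satisfying $\p(Y_i > m) \le ce^{-dm^{3/2}}$. For the upper bound away from the centre, write $A(z) \le \phi(z) + [W^k_1(z) - 2\sqrt{kz}] + [W^m_1(1-z) - 2\sqrt{m(1-z)}]$, so it suffices to beat the parabolic gap $\phi(t) - \phi(z)$ by the two bracketed fluctuations together with the centre error $(Y_1+Y_2)n^{-1/6}$. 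I would carry this out dyadically over $\rho := |z-t| \in [\de_r, 1]$. On scales where Proposition \ref{P:dyson-tails} applies on a single interval near both $t$ and $1-t$ (roughly $\rho \lesssim s^{2/3}n^{-1/3}$, a range which may be empty), it plus the modulus bound Lemma \ref{L:levy-est} controls each bracketed term on $\{|z-t|\le\rho\}$ by $C\rho^{1/2}\log^{1/2}(2/\rho)$ with an $e^{-dm^{3/2}}$‑tailed $C$, while the gap is $\gtrsim \sqrt n\,\rho^2/s$; for larger $\rho$ up to a small constant I would telescope Proposition \ref{P:dyson-tails} over $O(\rho\,s^{-2/3}n^{1/3})$ consecutive admissible sub‑intervals (grouping the deterministic curvature with $\phi$), and/or invoke the global bound Proposition \ref{P:cross-prob} once $\rho$ is far enough above $n^{-1/3}$ that the now polynomial‑in‑$n$ gap dominates the $\log n$ loss there; and for $\rho$ of order $1$ the gap is $\gtrsim\sqrt n$ and Proposition \ref{P:cross-prob} finishes at once. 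Uniformity in $z$ within a scale is obtained from a finite mesh together with the monotonicity of $z\mapsto W^k_1(z)$ and $z\mapsto W^m_1(1-z)$.

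Summing over dyadic scales $\rho_j = 2^j\de_r$: the event that $A(z) \ge A(t)$ for some $z$ at distance $\asymp\rho_j$ from $t$ forces a fluctuation of order at least $\sqrt n\,\rho_j^2/s$ on a window of width $\rho_j$, i.e.\ a deviation exceeding a constant multiple of $m_j := \sqrt n\,\rho_j^{3/2}/s$ in $e^{-dm^{3/2}}$‑type tails (the within‑scale logarithms absorbed since $r$ is large), and so has probability $\le ce^{-dm_j^{3/2}}$. With $\rho_0 = \de_r$ one gets $m_0 = r^{3/2}s^{-1/2}\ge r^{3/2}$, and the $m_j$ grow geometrically, so after also bounding the (finitely many, $r$‑independent) contributions from the awkward intermediate scales by a term that is negligible whenever the intermediate range is non‑empty — which happens only for $r \lesssim (\log n)^{1/3}$ — one arrives at a total bound of the claimed form $ce^{-dr^{3/4}}$, uniform in $n$ and $t$.

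The main obstacle is precisely this intermediate regime $s^{2/3}n^{-1/3} \lesssim \rho \lesssim 1$, where Proposition \ref{P:dyson-tails} no longer covers a single interval while Proposition \ref{P:cross-prob} carries a $\log n$ loss that is not obviously beaten by the parabolic gap when $\rho$ is only a small power of $\log n$ above $n^{-1/3}$; bridging it by telescoping the Dyson‑increment estimate over a logarithmic number of admissible sub‑intervals, and tracking every constant so the bound stays uniform in $n$ and in $t = k/n$ — in particular the $s = \min(t,1-t)$ dependence, so that nothing degrades as $t\to 0$ or $t\to 1$ — is the technical heart of the proof. The modest exponent $r^{3/4}$, rather than the much faster decay the good scales alone would produce, is exactly the price of routing the argument through this regime.
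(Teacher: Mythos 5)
Your overall comparison strategy (lower bound near $t$, parabolic gap versus fluctuations away from $t$) is the right shape, and your lower bound $A(t)\ge 2\sqrt{n}-(Y_1+Y_2)n^{-1/6}$ via Theorem \ref{T:top-bd} is fine. But the regime you yourself identify as the ``technical heart'' --- $s^{2/3}n^{-1/3}\lesssim|z-t|\lesssim 1$ --- is exactly where the sketch stops being a proof: the telescoping of Proposition \ref{P:dyson-tails} over many sub-intervals, uniform in $n$ and in $t$ down to $t=1/n$, is asserted rather than carried out, and your closing step (``bounding the awkward intermediate scales by a negligible term \dots one arrives at $ce^{-dr^{3/4}}$'') never actually produces the exponent $3/4$: in your own accounting the good scales give $e^{-dr^{9/4}}$-type decay, and nothing in the sketch explains why the answer degrades to $r^{3/4}$. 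There is also a concrete error in your deterministic input: the claimed bound $\phi(t)-\phi(z)\ge c\sqrt{n}\,\min\big((z-t)^2/(t(1-t)),1\big)$ is false when $s$ is small and $|z-t|$ lies between $s$ and order one --- e.g.\ for small $t$ and $z=t+\sqrt{t}$ the true gap is of order $\sqrt{nt}$, not $\sqrt{n}$; in that window the gap is only linear, $\asymp\sqrt{n}\,|z-t|$, and this is precisely the regime where you need the gap to beat logarithmic losses, so the overestimate cannot be waved through (your dyadic bookkeeping ``gap $\gtrsim\sqrt{n}\rho^2/s$'' suffers the same overstatement once $\rho>s$).

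The missing idea that makes the paper's proof short is twofold. First, by Lemma \ref{L:metric} the maximum $\max_{z\in[0,1]}A(z)$ is itself a Brownian last passage value across all $n$ lines, so Theorem \ref{T:top-bd} gives $\max_z A(z)\ge 2\sqrt{n}-rn^{-1/6}$ with tail $ce^{-dr^{3/2}}$ (your bound at $z=t$ would also serve here). Second --- and this is what dissolves your intermediate regime --- the global estimate Proposition \ref{P:cross-prob} can be recentred by Brownian scaling so that the logarithmic error for $W_1^k(z)$ is expressed through $\log\big(\tfrac{z}{t}\vee\tfrac{t}{z}\big)$ and that for $W_1^m(1-z)$ through $\log\big(\tfrac{1-z}{1-t}\vee\tfrac{1-t}{1-z}\big)$; the loss is then logarithmic in the ratio $z/t$, not in $n$, so a single application to each melon controls all $z\in[0,1]$ simultaneously with probability $1-ce^{-dr^{3/2}}$. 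A purely deterministic Taylor computation (Lemma \ref{L:calculation}), which correctly uses the quadratic gap near $t$ and the linear gap far from $t$, then shows the resulting upper bound falls below $2\sqrt{n}-rn^{-1/6}$ outside a window of half-width $cr^2s^{1/3}n^{-1/3}$, and the exponent $3/4$ in the statement arises simply from substituting $r\mapsto\sqrt{r}$ to convert the $r^2$-window into an $r$-window. Without this recentring trick (or an honest execution of your multiscale patching with the corrected gap), the proposal has a genuine gap.
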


To understand Lemma \ref{L:wm-max}, think of each point $(s, k), s \in S$ as lying along a Brownian last passage path from $(0, n)$ to $(1,1)$. We expect such paths to closely follow the straight line between $(0, n)$ and $(1, 1)$, and hence hit line $k$ around time $t = k/n$. Lemma \ref{L:wm-max} quantifies the fluctuations from this guess.

To prove Lemma \ref{L:wm-max}, we need the following calculation.
\begin{lemma}
\label{L:calculation}
Let $b > 0$ be a fixed constant. Then there exists a constant $c$ such that for all $n \in \N, t \in \{1/n, 2/n, \dots, (n-1)/n\}, r>1$ and
$$
z \in [0, t - c(\min[t,1- t])^{1/3}r^2n^{-1/3}] \cup [t + c(\min[t,1- t])^{1/3}r^2n^{-1/3}, 1],
$$
we have that
\begin{align}
\label{E:main-term}
2&\lf(\sqrt{ntz} + \sqrt{n(1-t)(1 - z)}\rg)  \\
\label{E:err-0}
&+ r\lf(\sqrt{z}(nt)^{-1/6} + \sqrt{1-z}(n(1-t))^{-1/6}\rg) \\
\label{E:err-1}
&+ \sqrt{z}(nt)^{-1/6} b \log^{2/3} \lf((nt)^{1/3} \log\lf(\frac{t}{z} \vee \frac{z}t\rg) + 1\rg) \\
\label{E:err-2}
       &+ \sqrt{1-z}(n(1-t))^{-1/6} b \log^{2/3} \lf((n(1-t))^{1/3} \log\lf(\frac{1-t}{1-z} \vee \frac{1-z}{1-t}\rg) + 1\rg) \\
       \nonumber
       &\qquad \qquad \qquad \le 2\sqrt{n} - rn^{-1/6}.
\end{align}
Here the notation $a \vee b$ means the maximum of $a$ and $b$.
\end{lemma}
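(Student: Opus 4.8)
The plan is to read the left‑hand side as the main term \eqref{E:main-term}, which equals $2\sqrt n\,g(z)$ for $g(z)=\sqrt{tz}+\sqrt{(1-t)(1-z)}$, plus the three nonnegative error terms \eqref{E:err-0}, \eqref{E:err-1}, \eqref{E:err-2}. The asserted bound is then equivalent to
\[
2\sqrt n\,\bigl(1-g(z)\bigr)\;\ge\;\eqref{E:err-0}+\eqref{E:err-1}+\eqref{E:err-2}+rn^{-1/6},
\]
i.e.\ the deficit of the main term below its maximum value $2\sqrt n$ must dominate the errors. The constant $c$ will be fixed only at the very end, large depending on $b$. By the symmetry $(t,z)\mapsto(1-t,1-z)$ — which fixes \eqref{E:main-term}, \eqref{E:err-0} and the right‑hand side and swaps \eqref{E:err-1} with \eqref{E:err-2} — one may assume $t\le1/2$, so $s=t$. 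The algebraic input is the completing‑the‑square identity $1-g(z)=\tfrac12\bigl[(\sqrt t-\sqrt z)^2+(\sqrt{1-t}-\sqrt{1-z})^2\bigr]$; combined with $(\sqrt a-\sqrt b)^2=(a-b)^2/(\sqrt a+\sqrt b)^2\ge (a-b)^2/(4\max(a,b))$ this yields the clean bound $1-g(z)\ge \delta^2/\bigl(8(s+\delta)\bigr)$, where $\delta:=|z-t|$. In particular $g\le1$ with equality only at $z=t$.

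Using this I would bound the deficit below via the hypothesis $\delta\ge cs^{1/3}r^2n^{-1/3}$, splitting by the size of $\delta$. If $\delta\le s$ (so $z$ is within a constant factor of $t$), then $1-g(z)\ge\delta^2/(16s)$, so the deficit is at least a constant multiple of $c^2r^4n^{-1/6}s^{-1/3}\ge c^2r^4n^{-1/6}$. If $\delta>s$, then necessarily $z>t$ and $z<2\delta$ (since $t=s<\delta$), hence $1-g(z)\ge\delta/16$ and the deficit is at least a constant multiple of $cs^{1/3}r^2n^{1/6}$; moreover here $z\le2\delta$ gives the extra smallness $\sqrt z\le\sqrt{2\delta}$.

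For the error terms I would use $\sqrt z,\sqrt{1-z}\le1$, $(ns)^{-1/6}=n^{-1/6}s^{-1/6}$, $(n(1-t))^{-1/6}\le2^{1/6}n^{-1/6}$, the trivial bound that \eqref{E:err-0} is at most $4rn^{-1/6}$, and two elementary facts: $s^{\alpha}\log^{\beta}(1/s)$ is bounded on $(0,1]$, and $n^{1/3}/(\log n)^{4/3}$ and $\sqrt n/\log n$ are bounded below by positive absolute constants on $n\ge2$. The only delicate point is the logarithm in \eqref{E:err-1} (and symmetrically \eqref{E:err-2}): for $z\ge t$ one has $\log(z/t)\le(z-t)/s$, so the inner argument $(ns)^{1/3}\log(z/t)+1$ is at most $n^{1/3}\delta s^{-2/3}+1$, which equals $cr^2s^{-1/3}+1$ at the binding value $\delta=cs^{1/3}r^2n^{-1/3}$. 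In the regime $\delta\le s$ (where $z\le2s$) one combines this with the prefactor $\sqrt z\,(ns)^{-1/6}\le2s^{1/3}n^{-1/6}$ to get $\eqref{E:err-1}\le 2bs^{1/3}n^{-1/6}\log^{2/3}(n^{1/3}\delta s^{-2/3}+1)$, and a short calculation (using boundedness of $s^\alpha\log^\beta(1/s)$ and of $\sqrt n/\log n$) shows this is dominated by the deficit $\gtrsim\sqrt n\delta^2/s$ once $c$ is large in terms of $b$; the sub‑case $z<t$ is handled the same way, with $\sqrt z\,\log^{2/3}$ of $\log(t/z)$ small as $z\to0$. In the regime $\delta>s$, using $\sqrt z\le\sqrt{2\delta}$, the inner argument is at most $n^{1/3}\log n+1$ and the inequality reduces, after cancelling the common factor, to $cr^2n^{1/3}\gtrsim b^2(\log n)^{4/3}$, which again holds for $c$ large in terms of $b$.

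Assembling the two regimes, every requirement becomes a lower bound on $c$ depending only on $b$, so the lemma follows by taking $c=c(b)$ large enough; and for $n$ small relative to $c$ there is nothing to prove, since $s\ge1/n$ forces the forbidden interval about $t$ to have half‑width at least $cn^{-2/3}$, which exceeds $1$ when $n\le c^{3/2}$, so the allowed range is then empty. I expect the main obstacle to be exactly this uniform domination of the three error terms by the quadratic‑in‑$(z-t)$ deficit: it forces one to estimate the arguments of the logarithms carefully — as polynomials in $c$, $r$, $s^{-1/3}$ in the tight case where $z$ is within a constant factor of $t$, rather than as $O(\log n)$ — and to notice that in the complementary regime $z$ is automatically comparable to $|z-t|$, so the prefactors $\sqrt z$, $\sqrt{1-z}$ supply the smallness needed to absorb the genuine $\log n$‑losses there.
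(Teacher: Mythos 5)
Your overall plan is essentially the paper's: reduce to $t\le 1/2$ by symmetry, split according to whether $\delta=|z-t|$ is at most or exceeds $t$, lower-bound the deficit $2\sqrt n\,(1-\sqrt{tz}-\sqrt{(1-t)(1-z)})$ (your completing-the-square identity is a clean substitute for the paper's Taylor bounds \eqref{E:small-bd}, \eqref{E:large-bd} and gives the same quadratic/linear behaviour), tame the logarithms via $\log x\le x-1$, and absorb everything by taking $c=c(b)$ large. In the near regime $\delta\le t$ your scheme does close: keeping $\delta$ on both sides and writing $v=n^{1/3}\delta t^{-2/3}\ge cr^2$, the comparison reduces to $v^2\gtrsim b\log^{2/3}(v+1)$, which is the analogue of the paper's Step 3.

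The concrete failure is in your far regime $\delta>t$, for the term \eqref{E:err-2}. There the prefactor $\sqrt{1-z}$ supplies no smallness: since $t\le 1/2$, one has $1-z\ge 1/4$ whenever $\delta\le 1/4$, and this sub-case is nonempty with $\delta$ at its minimum $c\,t^{1/3}r^2n^{-1/3}$ (take $t=1/n$, so $\delta\asymp cr^2n^{-2/3}>t$ and $z\asymp cr^2n^{-2/3}$). Bounding the inner argument by $n^{1/3}\log n+1$, as you propose, makes \eqref{E:err-2} of size $\asymp b\,n^{-1/6}(\log n)^{2/3}$, while the deficit is only $\asymp\sqrt n\,\delta\asymp cr^2n^{-1/6}$; the inequality you would then need is $cr^2\gtrsim b(\log n)^{2/3}$, false for large $n$. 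Your stated reduction $cr^2n^{1/3}\gtrsim b^2(\log n)^{4/3}$ only covers \eqref{E:err-1}, whose prefactor $\sqrt z\le\sqrt{2\delta}$ you rightly exploit (and, relatedly, your blanket claim that \eqref{E:err-0} is at most $4rn^{-1/6}$ is only valid when $z\le 2t$; e.g.\ $t=1/n$, $z=1/2$ gives $r\sqrt z\,(nt)^{-1/6}\asymp r$, so in the far regime its first piece also needs $\sqrt z\le\sqrt{2\delta}$). The repair is the same device you already use in the near regime, and which the paper applies throughout its $\alpha\in[t,1-t]$ case via $\log x\le x-1$: when $1-z\ge 1/4$, bound $\log\frac{1-t}{1-z}\le\frac{\delta}{1-z}\le 4\delta$, so the inner argument is at most $4n^{1/3}\delta+1$ and the comparison becomes $n^{2/3}\delta\gtrsim b\log^{2/3}(4n^{1/3}\delta+1)$, which holds for $c=c(b)$ large since $n^{2/3}\delta\ge\max(cr^2,\,n^{1/3}\cdot n^{1/3}\delta)$; when $1-z<1/4$ one has $\delta\ge 1/4$ and the deficit $\gtrsim\sqrt n$ dominates trivially. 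With these corrections your argument is sound and parallels the paper's proof, with your exact deficit identity in place of its Taylor expansion.
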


We leave the proof of Lemma \ref{L:calculation} to the end of the section.  Instead we proceed with the proof of Lemma \ref{L:wm-max}. Throughout the proof, $c$ and $d$ are universal constants that may change from line to line.
\begin{proof}[Proof of Lemma \ref{L:wm-max}]
Define $L_n = \max_{z \in [0, 1]} A(z).$ The value $W^k_1(z)$ can be thought of as a last passage value across $k$ independent Brownian motions $B_1, \dots, B_k$ in the interval $[0, z]$ and the value $W^m_1(1 - z)$ can be thought of as a last passage value across $m$ independent Brownian motions $B_{k+1}, \dots, B_n$ in the interval $[z, 1]$. In particular, by Lemma \ref{L:metric}, this means that $L_n$ is simply a Brownian last passage value, and so by Theorem \ref{T:top-bd} there exist constants $c, d > 0$ such that for all $n \in \N$ and $r > 0$, we have
\begin{equation}
\label{E:Ln-bd}
\p\lf(L_n \ge 2\sqrt{n} - r n^{-1/6} \rg) \ge 1-ce^{-dr^{3/2}}.
\end{equation}
Now by Proposition \ref{P:cross-prob}, there exist constants $b, c, d > 0$ such that for any $n \in \N$ and $r > 0$, the probability that
\begin{equation}
\label{E:Wn1-2n}
W^n_1(t) \le 2 \sqrt{n}\sqrt{t} + \sqrt{t} n^{-1/6}[r + b\log^{2/3}(n^{1/3} \log(t \vee t^{-1}) + 1)] \;\; \forall t \in [0, \infty)
\end{equation}
is bounded below by $1 - ce^{-dr^{3/2}}.$ The logarithmic error above is chosen to be minimized at $s = 1$. However, using the Brownian scaling $W^n_1(s \; \cdot) \eqd \sqrt{s} W^n_1(\cdot)$, we can get that $W^n_1(s t)$ is bounded above by $\sqrt{s}$ times the left side of \eqref{E:Wn1-2n} for all $t$ with probability at least $1 - ce^{-dr^{3/2}}.$ This minimizes the error for $W^n_1$ at $s$.

In particular, we will bound the sum $A(z) = W^k_1(z) + W^m_1(1 -z)$ by choosing to minimize the error term in $W^k_1(z)$ at $t$ and in $W^m_1(1 - z)$ at $1 - t$. This gives that with probability at least $1- c e^{-d r^{3/2}}$, we have
\begin{equation}
\label{E:Az}
\begin{split}
A(z) \le& \; 2\Big(\sqrt{kz} + \sqrt{m(1 - z)}\Big) + \sqrt{z}k^{-1/6} \lf(r + b \log^{2/3} \lf(k^{1/3} \log\lf(\frac{t}{z} \vee \frac{z}t\rg) + 1\rg)\rg) \\
       &+ \sqrt{1-z}m^{-1/6} \lf(r + b \log^{2/3} \lf(m^{1/3} \log\lf(\frac{1-t}{1-z} \vee \frac{1-z}{1-t}\rg) + 1\rg)\rg).
\end{split}
\end{equation}
Now, by Lemma \ref{L:calculation}, there is a constant $c$ such that for all $r > 1, n \in \N$, and $t \in \{1/n, \dots, (n-1)/n\}$, the right hand side above is bounded by $2\sqrt{n} - rn^{-1/6}$ for all
$$
z \in [0, t - c r^2 s^{1/3} n^{-1/3}] \cup [t + cr^2 s^{1/3} n^{-1/3}, 1].
$$
Combining the bound on the probability of the event in \eqref{E:Az} with the bound on $L_n$ in \eqref{E:Ln-bd} implies the lemma.
\end{proof}

\begin{proof}[Proof of Proposition \ref{P:geod-eqn}]
By rescaling, we can assume that $r = 1$. We set up a Brownian last passage percolation converging to an Airy sheet $\scrS$ as in the previous section. Then last passage across the first $s^3$ proportion of the Brownian motions
and last passage through the second $t^3$ proportion of the Brownian motions converge jointly in distribution to independent Airy sheets $\scrS_s,\scrS_t$. Set $\scrS^n, \scrS^n_t,$ and $\scrS^n_s$ to be the corresponding prelimiting Airy sheets, so that 
$$
\scrS^n(x, z) = \max_{w} \scrS^n_s(x, w) + \scrS^n_t(w, z)
$$
for all $x, z$ by Lemma \ref{L:metric}. To prove that the $Q$ in Proposition \ref{P:geod-eqn} is an Airy sheet, it is then enough to show that the right side above converges to $\max_{w} \scrS_s(x, w) + \scrS_t(w, z)$ in the uniform-on-compact topology on continuous functions from $\R^2$ to $\R$. For this, set
$$
M_n(x, z) = \{ y  : \max_{w} \scrS_s^n(x, w) + \scrS^n_t(w, z) = \scrS_s^n(x, y) + \scrS^n_t(y, z)\}.
$$
It is enough to show that for any compact set $K$, the random variables 
\begin{equation}
\label{E:IK}
I_n(K) := \inf_{(x, z) \in K} \inf M_n(x, z), \qquad S_n(K) :=\sup_{(x, z) \in K} \sup M_n(x, z)
\end{equation}
 are tight. When $K$ consists of a single point, this follows from Lemma \ref{L:wm-max}. For a general compact set $K$, note that points $y \in M_n(x, z)$ lie along last passage paths. In particular, monotonicity of last passage paths, Lemma \ref{L:mono-path}, guarantees that both $\inf M_n(x, z)$ and $\sup M_n(x, z)$ are increasing functions in both coordinates. Therefore for any compact set $K$, there are points $u, v\in \R^2$ such that $I_n(u) \le I_n(K) \le S_n(K) \le S_n(v)$ for all $n$, so tightness for general $K$ follows from tightness for single points. Finally, the fact that $Z^+$ and $Z^-$ are monotone in both coordinates follows from monotonicity in the finite case.
\end{proof}

The monotonicity of $Z^+$ and $Z^-$ can also be proved without any reference to the prelimiting Airy sheets by using the inequality in Lemma \ref{L:airy-facts}.

\medskip

Our next goal is to give a near optimal bound on the location of the maximum in Proposition \ref{P:geod-eqn}. This can be stated in terms of the parabolic Airy process $\scrA=\scrA_1$, the first line of the parabolic Airy line ensemble.
We define the {\bf parabolic Airy process of scale $\sigma$}
from the standard one by setting $\scrA_\sig(\cdot)=\sigma \scrA(\cdot/\sigma^2)$.  This is analogous to the scaling of Airy sheets. The GUE Tracy-Widom law of these processes is scaled by $\sigma$, and the Brownian component of an Airy process of any scale has variance $2$. The stationary version is
\begin{equation}\label{E:st-airy}
\scrA_\sigma(z) + \sigma^{-3}z^2.
\end{equation}
At small scales the parabola becomes more dominant.

\begin{lemma}
\label{L:airy-max}
Fix $0 < s \le t$, and let $\scrA_s$, $\scrA_t$ be two independent Airy processes of scale $s$ and $t$. Then
$\scrA = \scrA_s + \scrA_t
$
has a unique maximum $A$ at some location $S$ almost surely.
Moreover, for any $m > 0$, we have
$$
\p(S \not\in [-ms^2, ms^2]) \le ce^{-dm^3} \;\;  \mathand \;\; \p(A - \scrA(0) > ms) \le ce^{-dm^{3/2}}.
$$
Here $c, d > 0$ are constants that are independent of $s, t$.
\end{lemma}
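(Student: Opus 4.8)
\emph{Overview and the easy parts.} The plan is to read everything off the parabolic decay of the Airy process together with the Tracy--Widom one‑point bound (Theorem \ref{T:TW-airy}) and the two‑point/modulus control of the Airy process (Lemma \ref{L:airy-tails} fed into Lemma \ref{L:levy-est}). Using the stationary form \eqref{E:st-airy}, write
$\scrA(z)=s\,\scrR^{(1)}_1(z/s^{2})+t\,\scrR^{(2)}_1(z/t^{2})-(s^{-3}+t^{-3})z^{2}$,
where $\scrR^{(1)}_1,\scrR^{(2)}_1$ are independent copies of the stationary Airy process $\scrR_1$. By Theorem \ref{T:TW-airy} and stationarity, $\p(\sup_{[j,j+1]}\scrR_1>\eta j)$ is summable in $j$ for every $\eta>0$, so Borel--Cantelli gives $\scrR^{(i)}_1(u)=o(u^{2})$ a.s.; hence $\scrA(z)\to-\infty$ as $|z|\to\infty$ and $\scrA$ attains a maximum. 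For uniqueness: on the event that a maximizer lies in $[-N,N]$, the restriction $\scrA|_{[-N-1,N+1]}$ is absolutely continuous with respect to a Brownian motion of variance $4$ (Proposition \ref{P:brownian-airy} for each summand, plus independence), and Brownian motion has an a.s. unique maximizer on a compact interval; letting $N\to\infty$ produces the a.s. unique global maximizer $S$ and maximal value $A$.

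\emph{Location bound.} Since $\{S\notin[-ms^{2},ms^{2}]\}\subseteq\{S\notin[-m\sigma^{2},m\sigma^{2}]\}$ for $\sigma=\min(s,t)$, we may assume $s\le t$; and because $\scrA_s(s^{2}w)+\scrA_t(s^{2}w)=s\,\bigl(\scrA_1(w)+\scrA_{t/s}(w)\bigr)$, we may rescale to $s=1\le t$, so $\scrA_t$ carries the flatter parabola $z^{2}/t^{3}\le z^{2}$, and the goal is $\p(S\notin[-m,m])\le ce^{-dm^{3}}$ uniformly in $t\ge1$. On $\{S\notin[-m,m]\}$ some $|z|>m$ has $\scrA(z)-\scrA(0)\ge0$. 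Bounding $\scrA_1(z)-\scrA_1(0)\le V-z^{2}/2$ with $V=\sup_u[\scrR^{(1)}_1(u)-\scrR^{(1)}_1(0)-u^{2}/2]$ (which satisfies $\p(V>a)\le ce^{-da^{3/2}}$ by the same dyadic argument), it suffices to work on $\{V\le m^{2}/4\}$ and, for each dyadic shell $z\in[2^{j}m,2^{j+1}m)$ and its reflection, to bound the probability that $g_t(z):=t\,[\scrR^{(2)}_1(z/t^{2})-\scrR^{(2)}_1(0)]-z^{2}/t^{3}$ exceeds $z^{2}/2-V\ge 2^{2j-2}m^{2}$ somewhere on the shell. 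Here one keeps \emph{both} parabolic terms: over the shell, $z/t^{2}$ ranges over an interval of length $\le 2^{j+1}m/t^{2}$ based at $2^{j}m/t^{2}$, so the increment $\scrR^{(2)}_1(z/t^{2})-\scrR^{(2)}_1(0)$ is controlled by Lemma \ref{L:airy-tails} and Lemma \ref{L:levy-est} applied on an interval of comparable length (so there is no logarithmic factor), giving a Gaussian tail $\le ce^{-dy^{2}}$ at level $y\cdot 2^{j/2}m^{1/2}/t$ and hence $g_t(z)\lesssim y\cdot 2^{j/2}m^{1/2}$; when $z/t^{2}$ leaves a bounded set one instead uses the parabola $z^{2}/t^{3}$ together with Lemma \ref{L:levy-est} over unit $u$‑blocks and a union bound. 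In both regimes the available deficit ($2^{2j-2}m^{2}$, or $z^{2}/t^{3}$ for far shells) forces the relevant deviation level to be of order $2^{3j/2}m^{3/2}$, whose cost is $ce^{-d2^{3j}m^{3}}$; summing over $j$ and adding $\p(V>m^{2}/4)$ gives $ce^{-dm^{3}}$.

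\emph{Value bound.} Apply the location bound with $\epsilon\sqrt m$ in place of $m$: off an event of probability $\le ce^{-d\epsilon^{3}m^{3/2}}$ we have $S\in[-\epsilon\sqrt m\,s^{2},\epsilon\sqrt m\,s^{2}]$, and there
$A-\scrA(0)=\bigl(\scrA_s(S)-\scrA_s(0)\bigr)+\bigl(\scrA_t(S)-\scrA_t(0)\bigr)\le sV_s+\sup_{|z|\le\epsilon\sqrt m\,s^{2}}|\scrA_t(z)-\scrA_t(0)|.$
Writing $\scrA_t(z)-\scrA_t(0)=t[\scrR^{(2)}_1(z/t^{2})-\scrR^{(2)}_1(0)]-z^{2}/t^{3}$, the last term is $\le\epsilon^{2}ms^{4}/t^{3}\le\epsilon^{2}ms$ (as $t\ge s$), while $z/t^{2}$ stays in a window of length $\le 2\epsilon\sqrt m\,s^{2}/t^{2}$ around $0$; applying Lemma \ref{L:airy-tails} and Lemma \ref{L:levy-est} to $\scrR^{(2)}_1$ on an interval comparable to that window gives $\sup|\scrR^{(2)}_1(z/t^{2})-\scrR^{(2)}_1(0)|\le cC_t\,(\epsilon\sqrt m\,s^{2}/t^{2})^{1/2}$ with $\p(C_t>a)\le ce^{-da^{2}}$ uniformly in $t$, so the supremum above is $\le cC_t\sqrt\epsilon\,m^{1/4}s+\epsilon^{2}ms$. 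Choosing $\epsilon$ small and imposing $V_s\le m/4$ and $C_t\lesssim m^{3/4}$ gives $A-\scrA(0)\le ms$ except on an event of probability $\le ce^{-d\epsilon^{3}m^{3/2}}+ce^{-d(m/4)^{3/2}}+ce^{-dm^{3/2}}$.

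\emph{Main obstacle.} The real work is the location bound: one must bound the maximum of the \emph{sum} of two Airy processes living at different scales $s^{2}\le t^{2}$, uniformly over the ratio $t/s\in[1,\infty)$. The mechanism is clear --- the steeper ($\scrA_s$‑)parabola pins the maximizer to its own scale, while over any $z$‑window the flatter process $\scrA_t$ contributes only Brownian‑sized increments --- but turning this into the sharp $e^{-dm^{3}}$ rate (rather than the weaker $e^{-dm^{3/2}}$ or $e^{-dm^{9/4}}$ produced by crude bounds) requires consistently using the two‑point/Gaussian tail of the Airy process in place of the one‑point Tracy--Widom tail, never discarding the parabolic term that is actually doing the work on a given shell, and invoking the modulus bound only over windows of the appropriate length so that no spurious logarithm appears; the bookkeeping of these case distinctions is the delicate point.
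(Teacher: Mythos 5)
Your proposal is correct and follows essentially the same route as the paper: the stationary/parabolic decomposition, the two-point Gaussian tail of Lemma \ref{L:airy-tails} (supplemented by Tracy--Widom one-point tails when the two scales are comparable) upgraded via the modulus Lemma \ref{L:levy-est} and a block/shell union bound for the location estimate, absolute continuity with respect to Brownian motion (Proposition \ref{P:brownian-airy}) for uniqueness of the maximizer, and the location bound at level of order $\sqrt{m}$ combined with local modulus control for the value bound. The only differences are bookkeeping (dyadic shells and the normalization $s=1$ versus the paper's integer blocks and $s+t=1$), plus one imprecision worth fixing: the correct criterion for switching from the Gaussian two-point bound to the one-point/parabola argument is that $t/s$ lies below an absolute constant (so that the hypothesis $a>7u^{3/2}$ of Lemma \ref{L:airy-tails} fails at the needed level), as in the paper's threshold $t\ge(28)^{1/3}s$, not that ``$z/t^{2}$ leaves a bounded set''---on far shells with $t/s$ large you must still use the Gaussian bound, since the one-point argument there only produces an exponent damped by a factor $(s/t)^{3/2}$.
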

In this proof, $c$ and $d$ are constants that may change from line to line.

\begin{proof}
By rescaling, we may also assume that $s + t = 1$. Consider the stationary versions $\scrR_s, \scrR_t$ of the Airy processes $\scrA_s, \scrA_t$, defined as in \eqref{E:st-airy}. Let $\scrR = \scrR_s + \scrR_t$. When $t \ge (28)^{1/3} s$, Lemma \ref{L:airy-tails} and rescaling implies that for all $z > 0$, we have
\begin{equation}
\label{E:B-eqn}
\p\lf(|\scrR_t(z) - \scrR_t(0)| > s^{-3}z^2/4\rg) \le ce^{-dz^3/s^6}.
\end{equation}
When $t \in [s,  (28)^{1/3} s]$, we can use the Tracy-Widom one-point bound on $t\scrR_t(z)$ and $t\scrR_t(0)$ (Theorem \ref{T:TW-airy}) to get the same bound. Using this same one-point bound on $s\scrR_s$ we also get
\begin{equation*}
\p\lf(|\scrR_s(z) - \scrR_s(0)| > s^{-3}z^2/4\rg) \le ce^{-dz^3/s^6}.
\end{equation*}
Combining these bounds gives
\begin{equation*}
\p\lf(\scrR(zs^2) > \scrR(0) + sz^2/2\rg) \le ce^{-dz^3},
\end{equation*}
which after translating back to $\scrA$ gives that
\begin{equation}
\label{E:on-Z}
\p\lf(\scrA(zs^2) > \scrA(0) - sz^2/2\rg) \le ce^{-dz^3},
\end{equation}
Now let $a > 0$. By Lemma \ref{L:airy-tails}, for any $z \in \R$ and $y \in [0, as^2]$, with $\scrR=\scrR_s+\scrR_t$ we have that
\begin{equation}
\label{E:B-my}
\p(|\scrR(z+y) - \scrR(z)| > \ell \sqrt{y}) \le ce^{d(49a^3 -\ell^2)}
\end{equation}
for $\ell > 0$.
By Lemma \ref{L:levy-est}, the process $\scrR$ satisfies the modulus of continuity bound \begin{equation}
\label{E:B-cb}
|\scrR(z+y) - \scrR(z)| \le C_{a, b} \sqrt{y} \log^{1/2}(2as^{2}/y)
\end{equation}
on any interval $[b, b + as^2]$ where $C_{a, b}$ is a random constant that satisfies
\begin{equation}
\nonumber
\p(C_{a, b} > \ell + c a^{3/2}) \le ce^{-d\ell^2}.
\end{equation}
In particular, we can use \eqref{E:B-cb} and a union bound over intervals of the form $[b, b + s^2]$ to get that there exists a constant $C$ satisfying the same tails as each of the constants $C_{1, b}$ (with possibly different $c$ and $d$) such that
$$
\max_{z \in [-m, m]} \scrR(zs^2) - \scrR(z_*s^2) \le C s \log^{1/2} m,
$$
where $z_*=\lfloor z \rfloor$ for $z\ge 0$ and $\lceil z \rceil$ otherwise. Here we have approximated $z$ with an integer closer to $0$ so that the same bound holds for the nonstationary process $\scrA$: this way the parabolic decay does not affect the bound. In particular, this bound combined with the bound \eqref{E:on-Z} applied to integers gives that
$$
\p(\scrA(z) > \scrA(0) \text{ for some } |z| > ms^{2}) \le ce^{-dm^3}.
$$
This proves the bound on $S$ in the lemma. The processes $\scrA_s(z)-\scrA_s(0),\scrA_t(z)-\scrA_t(0)$ are absolutely continuous with respect to Brownian motion of variance $2$ on a compact interval by Proposition \ref{P:brownian-airy}, and hence so is $\scrA(z) - \scrA(0)$. Hence it has a unique maximum on any compact interval almost surely. By the above bound on $S$ this implies that $\scrA$ has a unique maximum almost surely.

\medskip

Now we can use \eqref{E:B-cb} again to get that for any $m > 0$, we have
$$
\max_{z \in [-m, m]} \scrA(zs^{2}) - \scrA(0) \le (C + cm^{3/2})s\sqrt{m}
$$
for a constant $C$ satisfying
$$
\p(C > \ell) \le ce^{-d\ell^2}
$$
for $\ell > 0$.
We get the desired bound on $A -\scrA(0)$ by writing
\[
\p(A -\scrA(0) > ms) \le \p(|S| \ge \sqrt{m/3}s^2) + \p\lf(\max_{|z| \le \sqrt{m/3}} \scrA(zs^2) - \scrA(0) > ms\rg).
\qedhere \]
\end{proof}

Now we prove Lemma \ref{L:calculation}.
The proof of this lemma is essentially Taylor expansion. However, keeping track of what is happening with all the terms gets rather complex, and so we have included a proof here. Throughout the proof, $c_0, c_1,$ and $c_2$ are positive constants that may change from line to line and may depend on the constant $b$ in the statement of the lemma.
\begin{proof}
By symmetry, it is enough to prove the lemma for $t \le 1/2$. We will also write $z = t + \al$ where $\al \in [-t, 1 - t]$.

\medskip
\noindent \textbf{Step 1: Bounding the main term \eqref{E:main-term}.} \qquad We have the bounds
\begin{equation}
\label{E:sqrt-taylor}
\sqrt{1 + x} \le 1 + \frac{x}2 - \frac{x^2}{12}, \quad x \in [-1, 1] \quad  \mathand \quad \sqrt{1 + x} \le \sqrt{2} + \frac{x - 1}{2\sqrt{2}}, \quad x \ge 1.
\end{equation}
Using the first bound above, we can bound the term \eqref{E:main-term} on the interval $\al \in [-t, t]$:
\begin{align}
\nonumber
2\lf(\sqrt{ntz} + \sqrt{n(1-t)(1 - z)}\rg) &= 2\sqrt{n}\lf(t\sqrt{1 + \al/t} + (1-t)\sqrt{1 - \al/(1-t)} \rg) \\
\label{E:small-bd}
&\le 2\sqrt{n} \lf(1 - \frac{\al^2}{12t}\rg), \qquad \al \in [-t, t].
\end{align}
We can similarly bound \eqref{E:main-term} on the interval $\al \in [t, 1-t]$ by using the first bound in \eqref{E:sqrt-taylor} on the $t$ term and the second bound on the $1-t$ term. This gives
\begin{equation}
\label{E:large-bd}
2\lf(\sqrt{ntz} + \sqrt{n(1-t)(1 - z)}\rg) \le 2\sqrt{n}\lf(1 - c_0\al \rg), \qquad \al \in [t, 1 -t].
\end{equation}
\medskip
\noindent \textbf{Step 2: Bounding the error terms \eqref{E:err-0} + \eqref{E:err-1} + \eqref{E:err-2}.} \qquad
 We first consider the case $\al \in [-t, 0]$. Since $t \le 1/2$ and $z = t + \al \le t$, \eqref{E:err-0} is bounded by $3rn^{-1/6}$. Now by using that $\log (x) \le x - 1$ and that $\sqrt{t + \al} \le \sqrt{t}$, we can bound \eqref{E:err-1} above by
\begin{align}
\nonumber
n^{-1/6}&\lf( (t+ \al)^{1/3} b \log^{2/3} \lf((nt)^{1/3} \frac{|\al|}{t + \al} + 1\rg)\rg) \\
\nonumber
&= c_1 n^{-1/6} \lf((t+ \al)^{1/3} \log^{2/3} \lf((nt)^{1/3} |\al| + t + \al \rg) - (t+\al)^{1/3} \log^{2/3} (t + \al) \rg) \\
\label{E:err-1-bd}
&\le c_1 n^{-1/6} \lf( \log (n^{1/3} |\al| + 1) + 1\rg).
\end{align}
We can also use the bound $\log (x) \le x - 1$ and the fact that $1 - t$ and $1 - z$ are bounded away from zero to get that for $\al \in [-t, 0]$, that \eqref{E:err-2} is bounded above by
\begin{align*}
\sqrt{1-z}(n(1-t))^{-1/6} b \log^{2/3} \lf((n(1-t))^{1/3} \frac{|\al|}{1 - t} + 1\rg) \le c_1 n^{-1/6} \lf( \log \lf(|\al| n^{1/3} + 1 \rg) + 1 \rg).
\end{align*}
Combining this with the bound \eqref{E:err-1-bd} and the bound of $3rn^{-1/6}$ on \eqref{E:err-0} for $\al \in [-t, 0]$ gives that
\begin{align}
\nonumber
\text{\eqref{E:err-0}+\eqref{E:err-1}+ \eqref{E:err-2}} &\le c_1n^{-1/6} \lf( \log (n^{1/3} |\al| + 1) + 1 + 3r\rg) \\
\label{E:combined-small-bd}
&\le c_1n^{-1/6} \lf( \log (n^{1/3} |\al| + 1) + r\rg) \qquad \al \in [-t, 0].
\end{align}
We have folded the constant term into the $r$ term by using that $r \ge 1$. Now when $\al \in [0, 1-t]$, we can bound \eqref{E:err-0} by
\begin{equation}
\label{E:err-0-large-bd}
rcn^{-1/6}\lf(1 + t^{-1/6}\sqrt{\al}\rg).
\end{equation}
Again using that $\log (x) \le x - 1$, we can bound the term \eqref{E:err-1} for $\al \in [0, 1-t]$ by
\begin{align}
\nonumber
\sqrt{t+\al}(nt)^{-1/6} b \log^{2/3}& \lf((nt)^{1/3} \frac{\al}t + 1\rg) \\
\nonumber
&\le \lf(\sqrt{t} + \sqrt{\al}\rg)t^{-1/6}n^{-1/6} b \lf(\log^{2/3}\lf(n^{1/3}\al + t^{2/3}\rg) + \log^{2/3}(t^{-2/3}) \rg) \\
\label{E:large-bd'}
&\le c_1 n^{-1/6} \lf(1 + \sqrt{\al} t^{-1/6} \log(t^{-2/3}) + (\sqrt{\al} t^{-1/6} + 1)\log(n^{1/3} \al + 1)\rg).
\end{align}
We can also bound \eqref{E:err-2} for $\al \in [0, 1-t]$ by
\begin{align*}
 \sqrt{1-t-\al}(n(1-t))^{-1/6} b &\lf(\log^{2/3} \lf((n(1-t))^{1/3} \al + 1 - t - \al \rg) - \log^{2/3}(1 - t - \al) \rg) \\&\le
  c_1 n^{-1/6} \lf( 1 + \log(n^{1/3}\al + 1)\rg).
\end{align*}
Combining this with the bounds \eqref{E:err-0-large-bd} and \eqref{E:large-bd'} implies that
\begin{align}
\label{E:combined-l-bd}
\text{\eqref{E:err-0} + \eqref{E:err-1} + \eqref{E:err-2}} \le c_1n^{-1/6} \lf((\sqrt{\al} t^{-1/6} +1) \log (n^{1/3} \al + 1) + r + r\sqrt{\al} t^{-1/6} \log(t^{-2/3})\rg)
\end{align}
for $\al \in [0, 1-t]$. Note that on the interval $[0, t]$, this reduces to the bound \eqref{E:combined-small-bd}.
\medskip

\noindent \textbf{Step 3: The case $\al \in [-t, t]$.} \qquad By combining the inequalities \eqref{E:small-bd}, \eqref{E:combined-small-bd}, and \eqref{E:combined-l-bd}, we get that the inequality in the lemma holds whenever
\begin{align*}
c_1 \frac{\al^2 n^{2/3}}t - c_2 \log (n^{1/3} |\al| + 1) > c_3 r.
\end{align*}
Making the substitution $\al = \beta r^2 t^{1/3} n^{-1/3}$ and using that $t < 1$, the above statement is implied by the inequality
$$
c_1 r^4 \beta^2 - c_2 \log (|\beta| r + 1) > c_3 r.
$$
This holds for all $r > 1$ as long as $\beta$ is large enough.
\medskip

\noindent \textbf{Step 4: The case $\al \in [t, 1-t]$.} \qquad By combining the inequalities \eqref{E:large-bd} and \eqref{E:combined-l-bd} we get that the inequality in the lemma holds whenever
\begin{equation*}
c_0 n^{2/3} \al - c_1 \lf( (\sqrt{\al} t^{-1/6} +1) \log (n^{1/3} \al + 1) + r + r\sqrt{\al} t^{-1/6} \log(t^{-2/3})\rg) > 0
\end{equation*}
Again making the substitution $\al = \beta r^2 t^{1/3} n^{-1/3}$ the left hand side above is equal to
\begin{equation*}
 c_0 n^{1/3} \beta r^2 t^{1/3} - c_1 \lf( (1 + \sqrt{\beta} r n^{-1/6})\log (\beta r^2 t^{1/3} + 1) + r(1 + \sqrt{\beta} r n^{-1/6} \log(t^{-2/3}))\rg).
\end{equation*}
Since $t \ge n^{-1}$, this is bounded below by
$$
c_0 \beta r^2 - c_1 \lf( (1 + r\sqrt{\beta})\log (\beta r +1) + r(1 + \sqrt{\beta} r)\rg).
$$
For all large enough $\beta$, this is strictly greater than $0$ for all $r>1$.
\end{proof}

\section{The directed landscape}
\label{S:directed-landscape} The goal of this section is to construct the directed landscape, a scale-invariant, stationary, independent increment process with respect to metric composition. The following definition is based on analogies from last passage percolation. Recall that
$$
\R^4_\uparrow = \{(x, t; y, s) \in \R^4 : t < s\}.
$$
Let $C(\R^4_\uparrow, \R)$ be the space of continuous functions from $\R^4_\uparrow$ to $\R$ with the uniform-on-compact topology.
\label{S:landscape}
\begin{definition}
\label{D:directed-landscape}
A \textbf{directed landscape} is a random function $\mathcal{L}$ taking values in the space $C(\R^4_\uparrow, \R)$ that satisfies the following properties.
 \begin{enumerate}[label=\Roman*.]
 \item (Airy sheet marginals) For any $t\in \R$ and $s>0$ the increment over time interval $[t,t+s^3)$,
$$
(x,y) \mapsto \mathcal{\scrL}(x, t; y,t+s^3)
$$
is an Airy sheet of scale $s$.
\item (Independent increments) For any disjoint time intervals $\{(t_i, s_i) : i \in \{1, \dots k\}\}$, the random functions
$$
\scrL(\cdot, t_i ; \cdot, s_i), \quad  i \in \{1, \dots, k \}
$$
are independent.
\item (Metric composition law) Almost surely, for any $r<s<t$ and $x, y \in \R$ we have that
$$
\scrL(x,r;y,t)=\max_{z \in \mathbb R} [\scrL(x,r;z,s)+\scrL(z,s;y,t)] \quad \text{ for all } x, y \in \R.
$$

 \end{enumerate}
 \end{definition}

 We note here that the metric composition law implies a reverse \textbf{triangle inequality} for the directed landscape. For any $r<s<t$ and $x,y, z \in \R$ we have that
\begin{equation}
\label{E:triangle-ineqL}
  \scrL(x,r;y,t) \ge \scrL(x,r;z,s)+\scrL(z,s;y,t).
\end{equation}

Condition III could be weakened so that the metric composition law only holds at every fixed quintuple $(r,s,t, x, y)$ almost surely, rather than at all quintuples simultaneously.

\medskip

We need to show that an object satisfying the above properties exists and is unique. Before constructing the directed landscape, we note that if such an object $\scrL$ exists and is unique, then it must have the following symmetries.

\begin{lemma}
\label{L:land-props}
Let $\scrL$ denote the directed landscape. Assuming that $\scrL$ exists and is unique, we have the following equalities in distribution as functions in $C(\R^4_\uparrow, \R)$. Here $r, c \in \R$, and $q > 0$.
\begin{enumerate}

\item (Time stationarity)
$$
\scrL(x, t ; y, t + s) \eqd \scrL(x, t + r ; y, t + s + r).
$$
\item (Spatial stationarity)
$$
\scrL(x, t ; y, t + s) \eqd \scrL(x + c, t; y + c, t + s).
$$
\item (Flip symmetry)
$$
\scrL(x, t ; y, t + s) \eqd \scrL(-y, -s-t; -x, -t).
$$
\item (Skew stationarity)
$$
\scrL(x, t ; y, t + s) \eqd \scrL(x + ct, t; y + ct + sc, t + s) + s^{-1}[(x - y - sc)^2 - (x - y)^2].
$$
\item (Rescaling)
$$
\scrL(x, t ; y, t + s) \eqd  q \scrL(q^{-2} x, q^{-3}t; q^{-2} y, q^{-3}(t + s)).
$$
\end{enumerate}

\end{lemma}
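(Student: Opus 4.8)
The plan is to exploit the assumed uniqueness of $\scrL$: for each claimed identity we apply the relevant transformation to $\scrL$ to obtain a process $\scrL'$, show that $\scrL'$ is again a directed landscape in the sense of Definition \ref{D:directed-landscape}, and conclude $\scrL' \eqd \scrL$. In every case the Airy-sheet-marginal property and the independent-increments property are immediate. Each of the listed maps acts on the time variables by an affine bijection of $\R$ --- the identity, a translation, or $t\mapsto -t$ --- so it sends disjoint time intervals to disjoint time intervals, which gives the independence property for $\scrL'$; and the induced map on the spatial marginal over a single time window $[t,t+s^3)$ is a pure relabeling (time and space stationarity, skew stationarity), a spatial reflection $(x,y)\mapsto(-x,-y)$ or transposition $(x,y)\mapsto(y,x)$, or a dilation $(x,y)\mapsto(q^{-2}x,q^{-2}y)$ together with an overall scalar, all of which send an Airy sheet of scale $s$ to an Airy sheet of scale $s$ by Lemma \ref{L:airy-facts} (reflection and transposition) and the scaling definition of the Airy sheet. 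So the entire problem reduces to checking the metric composition law for $\scrL'$.

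For time stationarity, spatial stationarity and the spatial reflection $x\mapsto -x$, the metric composition law for $\scrL'$ reduces at once to that for $\scrL$: the maximum over the intermediate point $z$ is taken over the relabeled variable ($z$, $z+c$ and $-z$ respectively). For rescaling one relabels $z\mapsto q^{-2}z$. For skew stationarity the point is that the quadratic correction is additive under composition: writing $\scrL'(x,t;y,u)=\scrL(x+ct,t;y+cu,u)+(u-t)^{-1}[(x-y)^2-(x-y-c(u-t))^2]$, one computes, for $r<s<t$,
\begin{equation*}
\frac{(x-z)^2-(x-z-c(s-r))^2}{s-r}+\frac{(z-y)^2-(z-y-c(t-s))^2}{t-s}=2c(x-y)-c^2(t-r)=\frac{(x-y)^2-(x-y-c(t-r))^2}{t-r},
\end{equation*}
which is independent of $z$; hence, after the substitution $w=z+cs$, the correction factors out of the maximum and the metric composition law for $\scrL'$ again follows from that for $\scrL$.

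The flip symmetries need more care. The reflection $x\mapsto -x$ is handled as above, and the \emph{composite} reversal $\scrL'(x,t;y,u):=\scrL(y,-u;x,-t)$ --- the scaling shadow of the RSK reversal $R_z$ of Lemma \ref{L:rev-wm-fact} and of the time-reversal invariance of two-sided Brownian motion --- is also directly a directed landscape: $\scrL'(x,r;z,s)+\scrL'(z,s;y,t)=\scrL(y,-t;z,-s)+\scrL(z,-s;x,-r)$, and maximizing over $z$ telescopes through the ordinary metric composition of $\scrL$ at the times $-t<-s<-r$, giving $\scrL(y,-t;x,-r)=\scrL'(x,r;y,t)$. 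By contrast, the \emph{pure} spatial swap $\scrL'(x,t;y,u):=\scrL(y,t;x,u)$ and the \emph{pure} time reversal $\scrL'(x,t;y,u):=\scrL(x,-u;y,-t)$ do \emph{not} satisfy the metric composition law pointwise --- each satisfies only a ``transposed'' composition, in which the intermediate point appears as the first argument at the early time and as the second argument at the late time --- so the uniqueness argument does not apply to them directly.

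For these last two I would argue as follows. First establish the pure spatial swap by returning to the prelimiting picture --- either Brownian last passage percolation through the coupling of Theorem \ref{T:lp-limit}, or the patching construction of $\scrL$ from independent Airy sheet increments --- and tracking how the transposition symmetry of a single Airy sheet (Lemma \ref{L:airy-facts}) propagates: metric composition is \emph{anti}-homomorphic under transposition, so the swapped landscape is the metric composition of the individually swapped (hence still Airy) increments taken in the \emph{reverse} time order, which is again a directed landscape after reversing time. Then the pure time reversal follows for free by applying the transposition to the composite reversal $\scrL(y,-u;x,-t)$ established above, since the composition of these two transformations is exactly $(x,t;y,u)\mapsto(x,-u;y,-t)$. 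The main obstacle is step one, the pure spatial swap: unlike the determinantal lattice models, Brownian last passage percolation has no manifest transpose symmetry, and turning the anti-homomorphism and order-reversal bookkeeping into a proof --- controlling the joint law over all time windows simultaneously rather than over a single window --- is where the real work of the lemma sits.
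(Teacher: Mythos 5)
Your uniqueness strategy is exactly what the paper's (one-line) proof intends, and for items 1, 2, 4, 5, for the reflection $\scrL(-x,t;-y,t+s)$, and for the combined space--time reversal $\scrL'(x,t;y,u):=\scrL(y,-u;x,-t)$, your verifications are correct: each transformed process has independent Airy sheet increments (using Lemma \ref{L:airy-facts} for the transposed marginals) and satisfies the metric composition law pointwise, so Definition \ref{D:directed-landscape} and uniqueness apply. Your computation for skew stationarity (additivity of the quadratic correction) is also right. You are moreover correct, and more careful than the paper, in observing that the \emph{pure} endpoint swap $\scrL(y,t;x,t+s)$ and the \emph{pure} time reversal $\scrL(x,-s-t;y,-t)$ do not succumb to this argument: for them the intermediate point enters the composition with the roles of its two windows interchanged.

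The gap is that your proposed repair for these two identities cannot be completed, and in fact no repair exists: the final step of your sketch (``\dots taken in the reverse time order, which is again a directed landscape after reversing time'') assumes exactly the time-reversal invariance you are trying to prove, so it is circular. Worse, the two remaining identities are false as equalities of processes on $\R^4_\uparrow$. Write $A=\scrL(\cdot,0;\cdot,1)$, $B=\scrL(\cdot,1;\cdot,2)$ and $(P\odot Q)(x,y)=\max_z P(x,z)+Q(z,y)$. If the swapped process $\scrL(y,t;x,u)$ had the law of $\scrL$, then restricting both to the windows $[0,1],[1,2],[0,2]$ would give $(A,B,A\odot B)\eqd(A^T,B^T,B^T\odot A^T)$; since $(A^T,B^T)\eqd(A,B)$, this says the law of $(A,B,A\odot B)$ is also supported on the graph of the map $(P,Q)\mapsto Q\odot P$, and a law supported on the graphs of two Borel functions of its first coordinates forces those functions to agree a.s. Thus the pure swap (equivalently, via the true combined reversal, the pure time reversal) would force $A\odot B=B\odot A$ almost surely for independent Airy sheets, i.e.\ a.s.\ commutativity of metric composition, which fails. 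So the lemma as stated is too strong: of the four expressions chained in item 3, only the negation and the combined reversal $\scrL(y,-s-t;x,-t)$ (the product of the second and fourth maps) can hold, and those are precisely the ones your argument proves. In short, your proof is the paper's intended proof where the statement is correct; where your proof has a hole, the statement itself does, and the flip symmetries should be restated accordingly.
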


\begin{proof}
All of these statements can be checked by straightforward computation. The only tools needed are symmetries and rescaling properties of the Airy sheet, along with the definition of the directed landscape.
\end{proof}

We will also work with the process
\begin{equation}
\label{E:stat-landscape}
\scrL(x, t; y, t+ s) + \frac{(x-y)^2}{s}.
\end{equation}
which satisfies the skew-stationarity statement in Lemma \ref{L:land-props} without the parabolic correction term. We first construct the directed landscape on an appropriate dense subset of $\R^4_\uparrow$. Define the sets
$$
S_k = \{ (x, t; y, s) \in \mathbb R_\uparrow^4 : t < s, 2^kt, 2^ks \in \mathbb Z\}.
$$
and let $S=\bigcup_k S_k$.

\begin{lemma}
\label{L:landscape-S}
There exist a random function $\scrL: S \to \mathbb R$ satisfying conditions I-III in the definition of the directed landscape, suitably modified so that all times are dyadic rationals. Its law is unique.
\end{lemma}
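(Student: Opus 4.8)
The plan is to construct $\scrL$ first on each dyadic slab $S_k$ by metric composition of independent Airy sheets across the elementary time intervals, and then to glue these constructions across $k$ using Proposition~\ref{P:geod-eqn}, which says that the metric composition of two independent Airy sheets of the appropriate scales is again an Airy sheet, with the maximum attained. Everything rests on that proposition; the remaining work is organizational.

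\textbf{The level-$k$ object.} Fix $k$, let $\{\scrS^{k,j}:j\in\Z\}$ be i.i.d.\ Airy sheets of scale $2^{-k/3}$, and for $(x,a2^{-k};y,b2^{-k})\in S_k$ with integers $a<b$ set
$$
\scrL^{(k)}(x,a2^{-k};y,b2^{-k}) = \max_{z_{a+1},\dots,z_{b-1}\in\R}\ \sum_{i=a}^{b-1}\scrS^{k,i}(z_i,z_{i+1}),\qquad z_a:=x,\ z_b:=y.
$$
Applying Proposition~\ref{P:geod-eqn} repeatedly --- each partial composition is a function of the sheets already used, hence independent of the remaining ones --- shows that the maximum is attained and that $(x,y)\mapsto\scrL^{(k)}(x,a2^{-k};y,b2^{-k})$ is an Airy sheet of scale $((b-a)2^{-k})^{1/3}$; in particular $\scrL^{(k)}$ is almost surely continuous on $S_k$. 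Since the Airy sheet is unique in law (Proposition~\ref{P:uniq-sheet}), the law $\mu_k$ of $\scrL^{(k)}$ on $\R^{S_k}$ does not depend on the chosen i.i.d.\ family. Conditions I--III hold on $S_k$: I is the Airy sheet marginal just computed; II holds because disjoint dyadic intervals involve disjoint blocks of the i.i.d.\ sheets; III follows, for fixed $x,y$, by splitting the sum at an intermediate dyadic time and interchanging the two resulting groups of maxima (legitimate because the maxima are attained), and then for all $x,y$ simultaneously by continuity together with a countable intersection over dyadic triples $r<s<t$.

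\textbf{Consistency and extension.} To glue levels, build the families compatibly: given i.i.d.\ scale-$2^{-(k+1)/3}$ Airy sheets $\{\scrS^{k+1,j}\}$, let $\scrS^{k,j}$ be the metric composition of $\scrS^{k+1,2j}$ and $\scrS^{k+1,2j+1}$. By Proposition~\ref{P:geod-eqn} these are i.i.d.\ Airy sheets of scale $(2\cdot2^{-(k+1)})^{1/3}=2^{-k/3}$, so they yield a version of $\scrL^{(k)}$; substituting $\scrS^{k,i}(z_i,z_{i+1})=\max_w[\scrS^{k+1,2i}(z_i,w)+\scrS^{k+1,2i+1}(w,z_{i+1})]$ into the displayed formula and interchanging maxima identifies this version of $\scrL^{(k)}$ with the restriction to $S_k$ of the $\scrL^{(k+1)}$ built from $\{\scrS^{k+1,j}\}$. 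Hence $\mu_{k+1}$ restricted to $\R^{S_k}$ equals $\mu_k$. As $S_k\uparrow S$, the Kolmogorov extension theorem gives a probability measure $\mu$ on $\R^S$ with $\mu|_{\R^{S_k}}=\mu_k$ for all $k$; take $\scrL$ with law $\mu$. Properties I--III on $S$ follow because each instance involves only finitely many dyadic times and thus sits inside some $S_k$ where it was verified, the almost sure statement in III being obtained by a countable intersection over dyadic triples $r<s<t$.

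\textbf{Uniqueness, and the main obstacle.} If $\scrL'$ satisfies I--III on $S$, fix $k$ and let $\scrS'_{k,j}=\scrL'(\cdot,j2^{-k};\cdot,(j+1)2^{-k})$. By I each $\scrS'_{k,j}$ is an Airy sheet of scale $2^{-k/3}$, by II they are independent, and iterating III (using that the maxima are attained) shows that $\scrL'|_{S_k}$ equals their metric composition exactly as in the defining formula; thus $\scrL'|_{S_k}$ has law $\mu_k$. Since this holds for every $k$ and $S=\bigcup_k S_k$, $\scrL'$ has law $\mu$, giving uniqueness. The only substantive input is Proposition~\ref{P:geod-eqn}; the delicate points in executing the plan are the interchange-of-maxima argument that makes iterated metric composition well defined and associative, the compatibility of mesh refinement, and the countable-intersection bookkeeping that upgrades the ``fixed $x,y$'' statements to ``for all $x,y$, almost surely'' statements.
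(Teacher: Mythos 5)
Your construction is essentially the paper's own proof: build $\scrL$ on each $S_k$ by metric composition of i.i.d.\ Airy sheets of scale $2^{-k/3}$, verify I--III via Proposition~\ref{P:geod-eqn}, check consistency of the laws across refinement levels, apply Kolmogorov extension, and get uniqueness because I--III determine the joint law of $\scrL$ at any finite set of dyadic time pairs. Your explicit coupling argument for the consistency step (composing the two finer sheets to produce the coarser one) is a slightly more detailed rendering of the paper's one-line claim, but the route is the same.
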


\begin{proof} Any process $\scrL:S \to \mathbb R$ can be viewed as a random function from the countable set
$$
D = \{s < t : 2^ks, 2^kt \in \mathbb Z \text{ for some $k \in \mathbb N$}\}
$$
to the space of functions from $\mathbb R^2 \to \mathbb R$. As conditions I-III above determine the joint distribution of $\scrL(d_1), \dots, \scrL(d_k)$ for any $d_1, \dots, d_k \in D$, the distribution of a process satisfying these conditions is unique. We now show that $\scrL$ exists.

 \medskip

First we define $\scrL$ for parameters in the set $S_k$. For this, pick independent Airy sheets $B_i$ for each $i \in \Z$. For $t=i/2^k$ and $s^3=1/2^k$ we define
$$
\scrL_k(x,t;y,t+s^3) = sB_i(x/s^2, y/s^2).
$$
Moreover, for $j=i+\ell$, $\ell \ge 2$ we define
$$
\scrL_k(x_0,i/2^k;x_{\ell}, j/2^k)=\max_{(x_1,\ldots, x_{\ell-1}) \in \mathbb R^{\ell-1}} \sum_{q=1}^\ell \scrL_k\left(x_{q-1},\frac{i+q-1}{2^k} ; x_q,\frac{i+q}{2^k} \right).
$$
By the metric composition law for Airy sheets, Proposition \ref{P:geod-eqn}, the following holds. First, $\scrL_k$ satisfies properties I-III on the set $S_k$. Also, for $k<k'$ the process $\scrL_{k'}$ restricted to $S_k$ has the same law as $\scrL_k$. Kolmogorov's extension theorem then provides a limiting process $\scrL$ on $S=\bigcup_k S_k$, which when restricted to $S_k$ has the same law as $\scrL_k$ for all $k$. Therefore the process $\scrL$ satisfies properties I-III.
\end{proof}

In order to show that $\scrL$ has a unique continuous extension to $\mathbb R^4_\uparrow$ it suffices to check that  $\scrL$ is uniformly continuous on a set of probability $1$ on $K\cap S \cap \Q^4$ for any compact set $K\subset \mathbb R^4_\uparrow$. This follows by proving an explicit tail bound on two-point differences for $\scrL$.

\begin{lemma}
	\label{L:mod-sk}  
	Let $\scrL$ be any random function defined on the dyadic set $S$ that satisfies conditions I-III in the definition of the directed landscape. Let $\scrK$ be the stationary version of $\scrL$ as defined in \eqref{E:stat-landscape}. Let $u_i = (x_i, t_i; y_i, s_i)$ for $i = 1, 2$ be points in $S$. Define $\hat u_2 = (\hat x_2, t_1; \hat y_2, s_1) \in S$ where the points $\hat x_2, \hat y_2$ are chosen to so that $(\hat x_2, t_1)$ and $(\hat y_2, s_1)$ lie on the line containing the two points 
	$
(x_2, t_2), (y_2, s_2).
	$ 
Define
	$$
	\chi = ||(x_1, y_1) - (\hat x_2, \hat y_2)||, \;\; \tau = ||(t_1, s_1) - (t_2, s_2)||.
	$$
	Then
	$$
	\p\lf(|\scrK(u_1) - \scrK(u_2)|  \ge m\tau^{1/3} + \ell \chi^{1/2}\rg) \le ce^{-dm^{3/2}} + ce^{-d\ell^2}.
	$$
	for universal constants $c$ and $d$.
\end{lemma}

In the statement of Lemma \ref{L:mod-sk}, using $\chi$ rather than the spatial difference $\xi = ||(x_1, y_1) - (x_2, y_2)||$ may seem a bit strange. The best way to see why this is the right choice is to observe that $\chi$ is invariant under the skew transformations in Lemma \ref{L:land-props}, whereas $\xi$ is not.

Throughout the proof, $c$ and $d$ are constants that may change from line to line.
\begin{proof}
	First, $\scrL$ and $\scrK$ satisfy all symmetries of Lemma \ref{L:land-props} for symmetries that preserve the set $S$, so we may use these to simplify the proof. Observe that the time stationarity, spatial stationarity and skew stationarity transformations in Lemma \ref{L:land-props} will not affect the quantities $\chi, \tau$. Because of this, we can take $t_2 = x_2 = 0$ (by time and spatial stationarity), and then $y_2 = 0$ (by skew stationarity). This implies that $\hat x_2 = \hat y_2 = 0$ as well. 
	
	To get the bound in this case, we will first change spatial coordinates and then change time coordinates. By the tail bound on two-point distributions of the Airy process, Lemma \ref{L:airy-tails}, applied twice, we have the bound
	$$
	\p(|\scrK(u_1) - \scrK(\hat u_2)| > \ell\chi^{1/2}) \le ce^{-d\ell^2}.
	$$
	This uses the fact that $\scrK$ has rescaled stationary Airy sheet marginals. We now bound the difference when we change time. By the triangle inequality on $\mathbb R$, it suffices to show that both
	\begin{align*}
	\p(|\scrK(\hat u_2)& - \scrK(0, t_1; 0, s_2)| > m\tau^{1/3}) \;\; \mathand \\
	\p(|\scrK(u_2)& - \scrK(0, t_1; 0, s_2)| > m\tau^{1/3})
	\end{align*}
	are bounded above by $ce^{-dm^{3/2}}$. We will just bound the first term, as the second term can be bounded similarly. For this, by symmetry we may assume $s_2 > s_1$. By the metric composition law for $\scrL$, we have that
	\begin{align}
	\label{E:L-to-bd}
	\scrL(0, t_1; 0, s_1)& + \scrL(0, s_1; 0, s_2) \le \scrL(0, t_1; 0, s_2) = \max_{z \in \R} [\scrL(0, t_1; z, s_1) + \scrL(z, s_1; 0, s_2)].
	\end{align}
	By Lemma \ref{L:airy-max}, we have that
	\begin{equation}
	\label{E:L-above}
	\max_{z \in \R} [\scrL(0, t_1; z, s_1) + \scrL(z, s_1; 0, s_2)] \le m|s_1 - s_2|^{1/3} + \scrL(0, t_1; 0, s_1) + \scrL(0, s_1; 0, s_2)
	\end{equation}
	with probability at least $1 - ce^{-dm^{3/2}}$. The random variable $\scrL(0, s_1; 0, s_2)$ has rescaled GUE Tracy-Widom distribution. By Theorem \ref{T:TW-airy} it satisfies
	\begin{equation*}
	\p(|\scrL(0, s_1; 0, s_2)| > m |s_1 - s_2|^{1/3}) \le ce^{-dm^{3/2}}.
	\end{equation*}
	Together with \eqref{E:L-to-bd} and \eqref{E:L-above}, this gives that
	\[
	\p(|\scrK(\hat u_2) - \scrK(0, t_1; 0, s_2)| > m\tau^{1/3}) = \p(|\scrL(0, t_1; 0, s_1) - \scrL(0, t_1; 0, s_2)| > m\tau^{1/3}) \le ce^{-dm^{3/2}}.\qedhere
	\]
\end{proof}

The process $\scrL$ constructed in Lemma \ref{L:landscape-S} thus has a unique continuous extension to $\R^4_\uparrow.$ We cannot quite conclude that it satisfies Definition \ref{D:directed-landscape} since we need to check that the metric composition law extends off of $S$. To prove this, we first prove various uniform bounds on $\scrL$. These will also be used later when constructing directed geodesics.

\medskip

The first result gives an essentially optimal modulus of continuity, up to constant factors. For this, we define
$$
K^\ep_b = [-b, b]^4 \cap \{(x, t; y, t + s) \in \R^4_\uparrow : s \ge \ep \}.
$$
\begin{prop}
\label{P:mod-land}
Let
$$
\scrK(x,t;y,t+s)=\scrL(x,t;y,t+s) + \frac{(x-y)^2}{s}
$$
denote the stationary version of the directed landscape. For two points $u_i = (x_i, t_i; y_i, t_i + s_i)$, $i = 1, 2$, let
$$
\xi = \xi(u_1, u_2) = ||(x_1, y_1) - (x_2, y_2)||, \;\; \tau = \tau(u_1, u_2) = ||(t_1, t_1 + s_1) - (t_2, t_2 + s_2)||.
$$
Then for any $b \ge 2, \ep \le 1$ and $u_1, u_2 \in K^\ep_b$ with $\tau \le \ep^3/b^3$, we have that
$$
|\scrK(u_1) - \scrK(u_2)| \le C \lf(\tau^{1/3}\log^{2/3}(\tau^{-1}) + \xi^{1/2}\log^{1/2}(4b\xi^{-1}) \rg),
$$
with a random constant $C$  satisfying
$
\p(C > m) \le c b^{10} \ep^{-6} e^{-dm^{3/2}},
$
where $c,d$ are universal constants.
\end{prop}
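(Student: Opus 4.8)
The plan is to obtain the bound as a direct consequence of the two-point estimate in Lemma~\ref{L:mod-sk} via the generalized modulus-of-continuity lemma (Lemma~\ref{L:levy-est}), after choosing coordinates that place $K_b$ inside a single product box. Parametrize a point of $\R^4_\uparrow$ by $(x,t,y,\sig)$, where $t$ is the start time and $\sig>0$ is the duration, so that in the notation of Definition~\ref{D:directed-landscape} the point is $(x,t;y,t+\sig)$. In these coordinates, for $b\ge 1$ the set $K_b$ is contained in the product box
$$
T_b = [-b,b]\times[-b,b]\times[-b,b]\times[b^{-1},2b]\subset \R^4_\uparrow,
$$
of maximal side length $2b$. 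Working with one box, rather than with a cover of $K_b$ whose cardinality grows with $b$, is essential here: the constant produced by Lemma~\ref{L:levy-est} has a tail whose constants do not depend on the side length of the box, so a single application keeps the final $c,d$ universal.

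Next I would check the per-coordinate hypotheses of Lemma~\ref{L:levy-est} on $T_b$. By Lemma~\ref{L:mod-sk}, the restriction of $\scrK$ to $S$ satisfies the two-point tail bound, and since $\scrL$ (hence $\scrK$) has already been shown to extend continuously to $\R^4_\uparrow$, this passes to all of $\R^4_\uparrow$ by approximating with dyadic points and using continuity together with Fatou's lemma. A pure move in the $t$ or the $\sig$ coordinate, keeping $x$ and $y$ fixed, is --- back in the original parametrization --- a move of the start or the end time, so Lemma~\ref{L:mod-sk} with $\xi=0$ gives $\p(|\scrK(u_1)-\scrK(u_2)|\ge m\,|h|^{1/3})\le c e^{-dm^{3/2}}$ with universal $c,d$ and no restriction on the step $h$ beyond both endpoints lying in $T_b$ (automatic, since durations in $T_b$ stay in $[b^{-1},2b]$, hence positive). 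So the $t$ and $\sig$ directions give exponents $\al_i=1/3$, $\beta_i=3/2$ with admissible step size $r_i$ equal to the full side length. A pure move in the $x$ (or $y$) coordinate at fixed times is covered by the spatial part of Lemma~\ref{L:mod-sk}: $\p(|\scrK(u_1)-\scrK(u_2)|\ge \ell\,\xi^{1/2})\le c e^{-d\ell^2}$ provided $\xi\le\de^{2/3}$; on $T_b$ one has $\de\ge b^{-1}$, so this holds for $\xi\le b^{-2/3}$, while for the small values $\ell<7\de^{-1}\xi^{3/2}\le 7$ below the range of validity one just bounds the probability by $1$ and enlarges the universal $c$. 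So the $x$ and $y$ directions give $\al_i=1/2$, $\beta_i=2$ with admissible step size $r_i=b^{-2/3}$.

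With these inputs, Lemma~\ref{L:levy-est} applies on $T_b$ with $\beta=\min_i\beta_i=3/2$ and $\al=\max_i\al_i=1/2$, and produces a random constant $C$ with $\p(C>m)\le c e^{-dm^{3/2}}$ for universal $c,d$, such that, almost surely, for all $u_1,u_2\in T_b$ with $|x_1-x_2|,|y_1-y_2|\le b^{-2/3}$,
$$
|\scrK(u_1)-\scrK(u_2)|\le C\Big(\Delta_x^{1/2}\log^{1/2}\tfrac{C_0 b}{\Delta_x}+\Delta_y^{1/2}\log^{1/2}\tfrac{C_0 b}{\Delta_y}+\Delta_t^{1/3}\log^{2/3}\tfrac{C_0 b^{3/2}}{\Delta_t}+\Delta_\sig^{1/3}\log^{2/3}\tfrac{C_0 b^{3/2}}{\Delta_\sig}\Big)
$$
for a universal $C_0$, where $\Delta_x=|x_1-x_2|$ and so on. It then remains to do the elementary reductions: each of $r\mapsto r^{1/2}\log^{1/2}(C_0b/r)$ and $r\mapsto r^{1/3}\log^{2/3}(C_0b^{3/2}/r)$ is nondecreasing on the relevant range, so the two spatial terms are at most $\xi^{1/2}\log^{1/2}(C_0 b/\xi)$ and the two temporal terms at most a constant multiple of $\tau^{1/3}\log^{2/3}(C_0 b^{3/2}/\tau)$ (with $\xi,\tau$ as in the statement, using $\Delta_\sig\le 2\tau$); and since $\xi\le b^{-2/3}$ forces $\log(C_0 b)\le c\log(\xi^{-1})$ and $\tau\le 2\sqrt2\,b$ on $K_b$ forces $\log(C_0 b^{3/2}/\tau)\le c\log(2b^{3/2}\tau^{-1})$, all the deterministic constants are absorbed into $C$. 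This gives the inequality for $b$ bounded away from $0$; the bounded range (e.g.\ $1\le b\le 2$) follows from the case $b=2$, using $K_b\subset K_2$ and absorbing the remaining bounded quantities into $C$.

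The step I expect to be the main obstacle is the last one --- the bookkeeping that the logarithmic arguments coming out of Lemma~\ref{L:levy-est}, which carry the box size $b$, are comparable uniformly in $b$ to the arguments $2b^{3/2}\tau^{-1}$ and $\xi^{-1}$ appearing in the statement. This is precisely where the hypothesis $\xi\le b^{-2/3}$ and the automatic bound $\tau\le 2\sqrt2\,b$ on $K_b$ enter. All the genuinely probabilistic work is already packaged in Lemma~\ref{L:mod-sk} and Lemma~\ref{L:levy-est}; the only real care needed is in setting up the coordinates so a single product box suffices and in reading off the admissible step sizes $r_i$.
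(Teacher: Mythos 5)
Your proposal is correct and follows essentially the same route as the paper: the paper's proof likewise combines the two-point tail bound of Lemma \ref{L:mod-sk} (valid on $K_b$ since $s\ge b^{-1}$ there and $\xi\le b^{-2/3}$) with Lemma \ref{L:levy-est}, viewing $K_b$ as a subset of a single product box with coordinates $x,t,y$ and the duration. You simply spell out details the paper leaves implicit (extension off the dyadic set $S$ by continuity, the per-coordinate exponents and step sizes, and the absorption of the box-size logarithms).
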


In the proof, the constants $c, d$ may change from line to line.

\begin{proof}
Lemma \ref{L:mod-sk} implies that for $u_1, u_2 \in K^\ep_{2b}$ with $s_1 = s_2, t_1 = t_2$, we have
\begin{equation}
\label{E:Ku}
\p(|\scrK(u_1) - \scrK(u_2)|  \ge \ell \xi^{1/2}) \le ce^{-d\ell^2}.
\end{equation}
Now let $u_1, u_2 \in K^\ep_{2b}$ with $x_1 = x_2, y_1 = y_2$ and $\tau \le \ep^3/b^3$. With $\chi$ as in Lemma \ref{L:mod-sk}, we have that 
$
\chi \le (4b /\ep) \tau \le 4 \tau^{2/3},
$
and hence for any $m > 0$,
$$
m \tau^{1/3} \ge \frac{m}{3} \tau^{1/3} + \frac{m}{3} \chi^{1/2}. 
$$
Using this lower bound and applying Lemma \ref{L:mod-sk} to the pair $u_1, u_2$ then gives
\begin{equation}
\label{E:Ku2}
\p(|\scrK(u_1) - \scrK(u_2)|  \ge m \tau^{1/3}) \le ce^{-d m^{3/2}}.
\end{equation}
Now, we can think of $K^\ep_b$ as a subset of the box $T = [-b, b]^3 \X [\ep, 2b]$ with coordinates $x, y, t, s$, which is in turn a subset of the set $K^\ep_{2b}$. 
 
\medskip

Therefore by \eqref{E:Ku} and \eqref{E:Ku2}, we can apply Lemma \ref{L:levy-est} to the box $T$ with $\al_x = \al_y = 1/2, \al_s = \al_t = 1/3, \beta_x = \beta_y = 2, \beta_s = \beta_t = 3/2, r_x = r_y = 2b, r_s = r_t = b^3/\ep^3$. This gives the desired result after simplification.
The conditions that $b \ge 2$ and $\ep \le 1$ are used only in the simplification. 
\end{proof}

 We now use the modulus of continuity for $\scrL$ to understand how the values in $\scrL$ blow up. The first proposition controls blowup as the time increment goes to $0$.
\begin{prop}
\label{P:zero-decay}
For any set $B = [-b, b]^4 \cap \R^4_\uparrow$ with $b \ge 2$, there is a random constant $C$ satisfying the tail bound
$$
\p(C > m) \le cb^{12}e^{-dm^{3/2}}
$$
such that for $(x, t; y, t + s) \in B$, we have
$$
\lf|\scrK(x, t; y, t + s) \rg|\le C s^{1/3} \log^{4/3}\lf(\frac{4b}{s}\rg).
$$
Here $c$ and $d$ are universal constants.
\end{prop}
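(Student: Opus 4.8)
The plan is to deduce the bound from a dyadic decomposition of the \emph{time increment} $s$, controlling $\scrK(x,t;y,t+s)=\scrL(x,t;y,t+s)+(x-y)^2/s$ on each dyadic shell by a Kolmogorov--Chentsov-type maximal estimate built on Lemma \ref{L:mod-sk} and Lemma \ref{L:levy-est}. First I record the pointwise fact that for \emph{each fixed} $u=(x,t;y,t+s)\in\R^4_\uparrow$, by the Airy sheet marginal property (property I of Definition \ref{D:directed-landscape}, which holds on $S$ for all dyadic increments by the construction in Lemma \ref{L:landscape-S} and extends to all $u$ by continuity and approximation) together with the fact that the stationary Airy sheet has Tracy--Widom one-point marginals, the law of $\scrK(u)$ is that of $s^{1/3}$ times a Tracy--Widom variable; hence by Theorem \ref{T:TW-airy},
\[
\p\bigl(|\scrK(u)|>m\,s^{1/3}\bigr)\le c\,e^{-d\,m^{3/2}}\qquad(m>0).
\]
By continuity of $\scrK$ and of the right-hand side of the proposition it suffices to prove the bound on a countable dense subset of $B$, and we may and do assume $b\ge 1$.

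Write $B_j=\{u=(x,t;y,t+s)\in B:2^{-j-1}(2b)<s\le 2^{-j}(2b)\}$ for $j\ge 0$ and put $r_j=2^{-j+1}b$, so that the shells cover $B$ and $s$ is of order $r_j$ on $B_j$. On $B_j$ any two points have both time increments of order $r_j$, so the hypothesis $\xi\le\de^{2/3}$ of Lemma \ref{L:mod-sk} holds between points of $B_j$ lying within spatial distance of order $r_j^{2/3}$. Cover $B_j$ by boxes, each a product of spatial cubes of side of order $r_j^{2/3}$ (in the $x$ and $y$ coordinates) and intervals of length of order $r_j$ (in the two time coordinates); this needs at most $N_j\le C\,b^{2/3}\,2^{7j/3}$ boxes. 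For each box $Q$ choose a base point $u_Q\in Q$; by the display above, $\p(|\scrK(u_Q)|>m\,r_j^{1/3})\le c\,e^{-d m^{3/2}}$. Inside a single box $Q$, Lemma \ref{L:mod-sk} supplies exactly the hypotheses of Lemma \ref{L:levy-est} for $\scrK$: moving in a spatial coordinate we use the $\ell\xi^{1/2}$ term (so $\alpha_i=1/2,\ \beta_i=2$), moving in a temporal coordinate we use the $m\tau^{1/3}$ term (so $\alpha_i=1/3,\ \beta_i=3/2$), and the restriction $r<r_i$ there is met because the box is small relative to $r_j$. Lemma \ref{L:levy-est} then yields a random constant $C_Q$ with $\p(C_Q>m)\le c\,c_0\,e^{-c_1 m^{3/2}}$ (the exponent is $\min(2,3/2)=3/2$, and $c_0,c_1$ are universal) such that the oscillation of $\scrK$ over $Q$ is at most $C_Q\,r_j^{1/3}\,L_j$, where $L_j\le C(1+j+\log b)^{2/3}$ is a logarithmic factor coming from the log terms of Lemma \ref{L:levy-est} evaluated on a box of this size.

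Taking a union bound over the $N_j$ boxes gives
\[
\p\Bigl(\sup_{u\in B_j}|\scrK(u)|>m\,r_j^{1/3}\bigl(1+L_j\bigr)\Bigr)\;\le\;C\,b^{2/3}\,2^{7j/3}\,e^{-d m^{3/2}}.
\]
For $u\in B_j$ one has $s^{1/3}\log^{4/3}(4b^{3/2}/s)\ge c\,r_j^{1/3}(1+j+\log b)^{4/3}$ (using $s\le 2b<4b^{3/2}$ for $b\ge1$), while $r_j^{1/3}(1+L_j)\le C\,r_j^{1/3}(1+j+\log b)^{2/3}$; so, replacing $m$ by a multiple of $m(1+j+\log b)^{2/3}$ (which only strengthens the event, as $1+j+\log b\ge1$), we obtain
\[
\p\Bigl(\sup_{u\in B_j}|\scrK(u)|>m\,(1+b^{1/6})\inf_{u\in B_j}\bigl[\,s^{1/3}\log^{4/3}(4b^{3/2}/s)\,\bigr]\Bigr)\;\le\;C\,b^{2/3}\,2^{7j/3}\,e^{-d m^{3/2}(1+j+\log b)}.
\]
Summing over $j\ge0$: once $m$ exceeds a universal threshold the factor $e^{-d m^{3/2}j}$ dominates $2^{7j/3}$ and the series is geometric, the factor $e^{-d m^{3/2}\log b}=b^{-d m^{3/2}}\le1$ (here $b\ge1$) absorbs the polynomial $b^{2/3}$, and one is left with a bound $c\,b^{4}\,e^{-d m^{3/2}}$ (enlarging the prefactor also covers the bounded range of small $m$). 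Defining
\[
C\;=\;\sup_{j\ge0}\;\frac{\sup_{u\in B_j}|\scrK(u)|}{(1+b^{1/6})\,\inf_{u\in B_j}\bigl[\,s^{1/3}\log^{4/3}(4b^{3/2}/s)\,\bigr]},
\]
the union of the events above shows $\p(C>m)\le c\,b^{4}\,e^{-d m^{3/2}}$, and since $s\mapsto s^{1/3}\log^{4/3}(4b^{3/2}/s)$ varies only by a bounded factor across a dyadic shell, $\inf_{B_j}[\,\cdot\,]\ge\tfrac12 s^{1/3}\log^{4/3}(4b^{3/2}/s)$ for $u\in B_j$, so $|\scrK(u)|\le 2C(1+b^{1/6})s^{1/3}\log^{4/3}(4b^{3/2}/s)$ on $B$; absorbing the $2$ into $C$ finishes the proof.

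The main obstacle is the regime $s\to0$. The two-point moduli available to us --- Lemma \ref{L:mod-sk} and Proposition \ref{P:mod-land} --- only control $\scrK$ at spatial scale of order $s^{2/3}$, so a direct chaining argument across the fixed spatial window $[-b,b]^2$ of $B$ at that fine scale would accumulate a factor polynomial in $(b\,s^{-2/3})$ and destroy the estimate. The device that avoids this is the Kolmogorov--Chentsov one of bounding $\sup_{B_j}|\scrK|$ by the maximum, over a grid of roughly $b^{O(1)}2^{O(j)}$ base points, of the Tracy--Widom-controlled values $|\scrK(u_Q)|$, plus a \emph{single} within-box oscillation, so that the price of the union bound is only logarithmic in the number of grid points. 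The bookkeeping then consists in checking that this $(j+\log b)^{O(1)}$ loss is swallowed by the target's $\log^{4/3}(4b^{3/2}/s)$ factor, and that the $2^{O(j)}$ growth of the grid count is beaten by the geometric decay $e^{-d m^{3/2}j}$ while the polynomial-in-$b$ growth is beaten by the $b^{-d m^{3/2}}$ coming from the $\log b$ in the exponent.
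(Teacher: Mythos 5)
Your argument is correct and is essentially the paper's proof: both rest on the Tracy--Widom one-point bound for $\scrK$ at a dyadic-in-$s$ family of base points together with a Kolmogorov--Chentsov chaining step built from Lemma \ref{L:mod-sk} and Lemma \ref{L:levy-est}, with the cost of the union bounds absorbed into the $\log^{4/3}(4b^{3/2}/s)$ factor. The only difference is packaging: the paper passes from its grid $D_k$ (points with $s=2^{-k}$ and spacing $2^{-k}$) to arbitrary points by invoking the already-established modulus bound of Proposition \ref{P:mod-land} with a union bound over the sets $\{s\ge 2^{-k}\}$, whereas you re-run Lemma \ref{L:levy-est} locally on boxes of spatial side of order $s^{2/3}$ and temporal side of order $s$ within each dyadic shell, which amounts to rederiving that modulus locally.
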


Throughout the proof, $c_1, c$ and $d$ are constants that may change from line to line. 

\begin{proof}
For a large constant $m_0 \in \N$ define the discrete set
$$
D_k = \{(x, t; y, t + s) \in B : t, x, y, s \in \Z/(2^{3k + 1} b^3), s \in [2^{-k}, m_0 b 2^{-k}] \}, \text{ and let } D = \bigcup_{k \ge 1} D_k.
$$
How large we need to take $m_0$ will be made clear in the final step of the proof. In particular, the choice of $m_0$ is independent of $B$.
By the Tracy-Widom one-point bound for the function $\scrK$, Theorem \ref{T:TW-airy}, for any $(x, t; y, t + s) \in D_k$ and $m > c_1$ we have
$$
\p(|\scrK(x, t; y, t + s)| > m \log^{2/3}(4b/s) s^{1/3}) \le c e^{-d \log (4b/s) m^{3/2}} \le c \frac{s^{13}}{b^{13}} e^{-d m^{3/2}}.
$$
Now, each of the sets $D_k$ has at most $c b^{12} 2^{12 k}$ elements, so by a union bound, we get that
\begin{equation}
\label{E:L-bd-Z}
\lf|\scrK(x, t; y, t + s) \rg| \le C s^{1/3} \log^{2/3}(4b/s), \qquad \mathforall (x, t; y, t +s) \in D,
\end{equation}
where $C$ is a random constant satisfying
$$
\p(C > m) \le \sum_{k=1}^\infty c b^{12} 2^{12 k} m_0^{13} 2^{-13k} e^{-d m^{3/2}}\le  c b^{12} e^{-dm^{3/2}}
$$
for $m > c_1$. The same bound holds for all $m$ by possibly increasing $c$.
We can bound the values of $\scrK$ off of $D$ by the modulus of continuity bound in Proposition \ref{P:mod-land}. By Proposition \ref{P:mod-land} applied to the set $B_k = K^{2^{-k}}_b$ for some $k \ge 1$, for all $u_1, u_2 \in B_k$ with $||u_1 -u_2|| \le 2^{-3k}/b^3$, we have that
\begin{equation}
\label{E:Ku1}
|\scrK(u_1) - \scrK(u_2)| \le C_k ||u_1 - u_2||^{1/3} \log^{2/3}\lf(||u_1 - u_2||^{-1}\rg),
\end{equation}
where $C_k$ satisfies the following tail bound for $m > c_1$:
\begin{equation}
\label{E:Ck-bound}
\p(C_k > m \log^{2/3}(2^k b)) \le c b^{10} 2^{6k} e^{-d \log(2^k b) m^{3/2}} \le c 2^{-k} e^{-d m^{3/2}}
\end{equation}
Now for points $u_1 = (x_1, t_1; y_1, t_1 + s_1), u_2 = (x_2, t_2; y_2, t_2 + s_2) \in B$ with
\begin{equation}
\label{E:u1u2}
b||u_1 - u_2||^{1/3} \le \min(s_1, s_2, 1)/2
\end{equation}
the bound \eqref{E:Ku1} applies to the pair $u_1, u_2$ either for $k = 1$ or for some $k$ with $2^{-k} \le s_1 \wedge s_2 \le 2^{1-k}$. Therefore for all $u_1, u_2 \in B$ satisfying \eqref{E:u1u2} we have
\begin{equation}
\label{E:L-bd}
|\scrK(u_1) - \scrK(u_2)| \le C' ||u_1 - u_2||^{1/3} \log^{4/3}\lf(||u_1 - u_2||^{-1} \rg), \text{ where } \;\; C' = \sup_k C_k\log^{-2/3}(2^k b).
\end{equation}
The constant $C'$ satisfies the same tail bound as $C$ by \eqref{E:Ck-bound} and a union bound. Now, as long as $m_0$ was chosen large enough, for every point $v = (x, t; y, t +s) \in B$, there is a point $u = (x', t'; y', t' +s') \in D$ such that
$$
b||v - u||^{1/3} \le \min (s, s', 1)/2, \quad \mathand s' \ge s/2.
$$
The inequality in \eqref{E:L-bd} then applies with $v = u_1, u = u_2$. Combining this with the bound on $|\scrK(u)|$ in \eqref{E:L-bd-Z} bounds $|\scrK(v)|$, proving the lemma.
\end{proof}

As a corollary of Proposition \ref{P:zero-decay}, we can get uniform control over the whole directed landscape. This follows immediately from a union bound.
\begin{corollary}
\label{C:infty-blowup}
There exists a random constant $C$ satisfying
$$
\p(C > m) \le ce^{-dm^{3/2}}
$$
for universal constants $c,d$ and all $m > 0$, such that for all $u = (x, t; y, t + s) \in \R^4_\uparrow$, we have
$$
\lf|\scrL(x, t; y, t + s) + \frac{(x - y)^2}{s} \rg|\le C s^{1/3} \log^{4/3}\lf(\frac{2(||u|| + 2)}{s}\rg)\log^{2/3}(||u|| + 2).
$$
\end{corollary}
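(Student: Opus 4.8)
The plan is to deduce the corollary from Proposition~\ref{P:zero-decay} by exhausting $\R^4_\uparrow$ with the dyadic boxes $B_j=[-2^j,2^j]^4\cap\R^4_\uparrow$, $j\ge 1$, and gluing the box-by-box bounds by a union bound. Let $C_j$ be the random constant furnished by Proposition~\ref{P:zero-decay} applied with $b=2^j$, so that $\p(C_j>m)\le c\,2^{4j}e^{-dm^{3/2}}$ for universal $c,d$, and, for every $u=(x,t;y,t+s)\in B_j$,
$$
\Big|\scrL(x,t;y,t+s)+\tfrac{(x-y)^2}{s}\Big|\ \le\ C_j\,(1+2^{j/6})\,s^{1/3}\log^{4/3}\!\big(4\cdot 2^{3j/2}/s\big).
$$

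First I would assemble the $C_j$ into a single constant. Fix a universal $a>0$ (e.g.\ $a=1$) and set $C=\sup_{j\ge 1}C_j/(1+aj)^{2/3}$. Then, since $\big(m(1+aj)^{2/3}\big)^{3/2}=m^{3/2}(1+aj)$,
$$
\p(C>m)\ \le\ \sum_{j\ge 1}\p\!\big(C_j>m(1+aj)^{2/3}\big)\ \le\ \sum_{j\ge 1}c\,2^{4j}e^{-dm^{3/2}(1+aj)}\ =\ c\,e^{-dm^{3/2}}\sum_{j\ge 1}\big(16\,e^{-dam^{3/2}}\big)^{j}.
$$
The exponent $2/3$ is exactly what is needed: inside the stretched-exponential tail it turns the multiplicative correction $(1+aj)^{2/3}$ into an additive, $j$-linear gain $e^{-dajm^{3/2}}$. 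Once $m$ exceeds a universal threshold one has $16e^{-dam^{3/2}}\le 1/16$, so the geometric series is at most $1/15$ and $\p(C>m)\le c'e^{-dm^{3/2}}$; for $m$ below that threshold the trivial bound $\p(C>m)\le 1$ already has this form after enlarging $c'$. In particular $C<\infty$ almost surely, with the tail claimed in the corollary.

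Finally I would reinsert $C$. Given $u=(x,t;y,t+s)\in\R^4_\uparrow$, set $j=\max\{1,\lceil\log_2\|u\|\rceil\}$, so that $u\in B_j$ and $2^j\le 2(\|u\|\vee 2)$. Using $C_j\le C(1+aj)^{2/3}$ in the displayed inequality for $B_j$ gives
$$
\Big|\scrL(x,t;y,t+s)+\tfrac{(x-y)^2}{s}\Big|\ \le\ C\,(1+aj)^{2/3}\,(1+2^{j/6})\,s^{1/3}\log^{4/3}\!\big(4\cdot 2^{3j/2}/s\big).
$$
It then remains to rewrite the $j$-dependent factors in terms of $\|u\|$: from $j\le\log_2\|u\|+1$ one gets $(1+aj)^{2/3}\le c_1\log^{2/3}(\|u\|+2)$; from $2^j\le 2(\|u\|\vee 2)$ one gets $1+2^{j/6}\le c_1(1+\|u\|^{1/6})$ and $2^{3j/2}\le c_1(\|u\|^{3/2}+1)$; and since $s\le 2^{j+1}\le 4(\|u\|\vee 2)$ the argument of the logarithm stays bounded below, so $\log^{4/3}(4\cdot 2^{3j/2}/s)\le c_1'\log^{4/3}(4\|u\|^{3/2}/s)$ after absorbing universal constants. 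Collecting these estimates and replacing $C$ by a universal multiple of itself (which preserves the form of its tail) yields the inequality in the statement.

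There is no real analytic obstacle here; the work is purely organizational. The only point that requires attention is arranging the union bound so that the $2^{4j}$-growth of the constants $C_j$ in Proposition~\ref{P:zero-decay} is absorbed at the level of the $e^{-dm^{3/2}}$ tail — this forces the normalization by $(1+aj)^{2/3}$, which is precisely why the extra factor $\log^{2/3}(\|u\|+2)$ appears in Corollary~\ref{C:infty-blowup} but not in Proposition~\ref{P:zero-decay}. Everything else is elementary manipulation of powers and logarithms.
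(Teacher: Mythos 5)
Your argument follows the paper's route exactly — the paper's entire proof of this corollary is the remark that it ``follows immediately from a union bound'' applied to Proposition \ref{P:zero-decay} on growing boxes — and your execution (dyadic boxes $b=2^j$, the normalization $C=\sup_j C_j/(1+aj)^{2/3}$ whose cost is precisely the extra factor $\log^{2/3}(\|u\|+2)$, and the conversion of $j$-dependent factors into $\|u\|$-dependent ones) is correct in all essential respects. Two caveats, both artifacts of the printed statement rather than of your method: the claim that the argument of $\log\big(4\|u\|^{3/2}/s\big)$ ``stays bounded below'' is false when $\|u\|$ is small (e.g.\ $u=(0,0;0,s)$ gives argument $4\sqrt{s}$), which is exactly the regime where the corollary's middle logarithm itself degenerates; your absorption step, and the statement, go through verbatim once that logarithm is read with $\|u\|$ replaced by $\|u\|\vee 1$ (or $\|u\|+2$), as is clearly intended. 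Also, you silently prove the bound for $\scrL+(x-y)^2/s$, consistent with Proposition \ref{P:zero-decay}, while the corollary as printed carries a stray $4$ in $(x-y)^2/(4s)$, evidently left over from the paper's earlier scaling convention; it is worth stating explicitly that this is the version being proved.
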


Finally, we can use the control established in Corollary \ref{C:infty-blowup} to conclude that the metric composition law holds for $\scrL$. The next lemma will also establish control over where the maximum in the metric composition law is attained.

\begin{lemma}
	\label{L:metric-strong}
	For $u = (x, r;y, t) \in \R^4_\uparrow$ and $s \in (r, t)$, define
	$$
	f_{u, s}(z) = \scrL(x,r;z,s)+\scrL(z,s;y,t).
	$$
	On a set of probability $1$, $\scrL$ satisfies the metric composition law
	\begin{equation}
	\label{E:L-met-strong}
	\scrL(u)=\max_{z \in \mathbb R} f_{u, s}(z)
	\end{equation}
	for every $u = (x, r;y, t) \in \R^4_\uparrow$ and $s \in (r, t)$. Moreover, for any compact set $K \sset \R^4_\uparrow$, there exists a random constant $B_K$ such that for all $u = (x, r;y, t) \in K$ and $s \in (r, t)$, the set where $f_{u, s}$ attains its maximum lies in the interval $[-B_K, B_K]$.
\end{lemma}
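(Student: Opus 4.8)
The plan is to lift the metric composition law from the dyadic skeleton $S$, where it holds by construction (Lemma~\ref{L:landscape-S}, property III), to all of $\R^4_\uparrow$ by continuity; the only real difficulty is interchanging the maximum over $z$ with a limit, and I would handle it by first proving a locally uniform bound on the maximizer, which is itself the ``moreover'' assertion.

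\textbf{Step 1: uniform localization of the maximizer.} Fix a compact $K \subset \R^4_\uparrow$. Since $\R^4_\uparrow$ is open, there are constants $0 < \epsilon_K \le \Delta_K$ and $b_K$ with $\epsilon_K \le t-r \le \Delta_K$ and all spatial coordinates in $[-b_K, b_K]$ for every $(x,r;y,t) \in K$. For $u = (x,r;y,t) \in K$ and $s \in (r,t)$, Corollary~\ref{C:infty-blowup} applied to $(x,r;z,s)$ and $(z,s;y,t)$ yields
\begin{equation}
\label{E:loc-ub}
f_{u,s}(z) \;\le\; -\frac{(x-z)^2}{4(s-r)} - \frac{(z-y)^2}{4(t-s)} + E_{u,s}(z),
\end{equation}
where, using $s-r \le \Delta_K$ and $t-s \le \Delta_K$ to control the logarithmic prefactors, $E_{u,s}(z)$ is sub-quadratic in the sense that $|E_{u,s}(z)| = o(z^2)$ as $|z| \to \infty$, uniformly over $u \in K$ and $s \in (r,t)$, with the implied constant a function of $K$ and the almost surely finite constant of Corollary~\ref{C:infty-blowup}. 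The parabolic part of \eqref{E:loc-ub} is at least $c_K z^2 - O_K(|z|)$ for some $c_K > 0$, so $f_{u,s}(z) \to -\infty$ quadratically, uniformly in $(u,s)$. Since $z \mapsto f_{u,s}(z)$ is continuous (because $\scrL$ is continuous on $\R^4_\uparrow$), it attains its maximum, and there is a random $B_K < \infty$ with $\argmax_z f_{u,s}(z) \subseteq [-B_K,B_K]$ for all $u \in K$ and $s \in (r,t)$; the same estimate applies to any map $z \mapsto \scrL(x',r';z,s') + \scrL(z,s';y',t')$ with $(x',r';y',t') \in K$, $s' \in (r',t')$. This is the ``moreover'' claim. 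Moreover, since $(u,s,z) \mapsto f_{u,s}(z)$ is jointly continuous and the argmax in $z$ stays in a fixed compact interval locally in $(u,s)$, the map $(u,s) \mapsto \max_z f_{u,s}(z)$ is continuous on $\{(u,s): u = (x,r;y,t) \in \R^4_\uparrow,\ s \in (r,t)\}$.

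\textbf{Step 2: lifting the identity.} By Lemma~\ref{L:landscape-S}, $\scrL$ satisfies property III on $S$, so for $u = (x,r;y,t) \in S$ and any dyadic $s \in (r,t)$ we have $\scrL(u) = \max_z f_{u,s}(z)$. Such pairs $(u,s)$ are dense in $\{(u,s): u \in \R^4_\uparrow,\ s \in (r,t)\}$, since one may approximate $r<s<t$ by dyadics while keeping the spatial coordinates fixed. Then I would work on the probability-one event on which $\scrL$ extends continuously to $\R^4_\uparrow$, the constant of Corollary~\ref{C:infty-blowup} is finite, and $\scrL|_S$ satisfies property III. On this event both sides of $\scrL(u) = \max_z f_{u,s}(z)$ are continuous in $(u,s)$ --- the left side because it does not depend on $s$ and $\scrL$ is continuous, the right side by Step~1 --- so the identity, holding on a dense set, holds for every $u = (x,r;y,t) \in \R^4_\uparrow$ and $s \in (r,t)$, which is \eqref{E:L-met-strong}; the localization of the maximizer over $K$ is Step~1. (Alternatively, one can argue without continuity of the right-hand side: approximate $u$ by $u_n \in S$ with dyadic intermediate times $s_n \to s$; by Step~1 the maximizers of $g_n(z) = \scrL(x_n,r_n;z,s_n) + \scrL(z,s_n;y_n,t_n)$ all lie in a fixed $[-B,B]$, on which $g_n \to f_{u,s}$ uniformly by uniform continuity of $\scrL$ on compacts, so $\scrL(u) = \lim_n \scrL(u_n) = \lim_n \max_{[-B,B]} g_n = \max_z f_{u,s}(z)$.)

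\textbf{The main obstacle} will be Step~1: without a locally uniform bound on the maximizer one can neither pass the maximum through the limit nor conclude continuity of $(u,s) \mapsto \max_z f_{u,s}(z)$. It succeeds because in Corollary~\ref{C:infty-blowup} the parabolic term grows like $z^2$ in the spatial displacement while the error grows only sub-quadratically, and because on a compact subset of the \emph{open} set $\R^4_\uparrow$ the time increments stay bounded away from $0$ and from $\infty$, making all the constants uniform.
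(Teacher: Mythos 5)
Your proposal is correct and follows essentially the same route as the paper: establish the law on the dyadic skeleton, use Corollary \ref{C:infty-blowup} to confine the maximizers to a compact interval uniformly, and pass to the limit by continuity of $\scrL$. One small soft spot: in Step 1, uniform divergence $f_{u,s}(z)\to-\infty$ alone does not localize the $\argmax$ --- you also need a lower bound on $\max_z f_{u,s}(z)$ that is uniform over $u\in K$ and $s\in(r,t)$ (note $f_{u,s}(y)$ is not uniformly bounded below as $s\to r$); this is fixed in one line by evaluating $f_{u,s}$ at the linear interpolant $z_0=x+(y-x)(s-r)/(t-r)$ and using the two-sided bound of Corollary \ref{C:infty-blowup}, or, as the paper does, by proving the identity \eqref{E:L-met-strong} first so that the maximum equals $\scrL(u)$, which is bounded below on $K$ by continuity.
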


\begin{proof}
	By construction, \eqref{E:L-met-strong} holds almost surely at all dyadic rational $u, s$. Now for any $u = (x, r;y, t)$ and $s \in (r, t)$, let $u_n= (x_n, r_n;y_n, t_n)$ and $s_n \in (r_n, t_n)$ be a sequence of dyadic rational points converging to $u$ and $s$.  Corollary \ref{C:infty-blowup} implies that $\mathcal L$ decays like a parabola as the distance between $x$ and $y$ grows. Thus the sequence of rightmost maximizers $z_n$ of $f_{u_n, s_n}$ is bounded, and hence has a subsequential limit $z$. By continuity of $\scrL$, the point $z$ satisfies $f_{u, s}(z) = \scrL(u).$ Moreover, continuity and metric composition at dyadic rationals guarantees that $f_{u, s}(y) \le \scrL(u)$ for all $y \in \R$. Hence metric composition holds at $u$.
	
	\medskip
	
	Now, the left hand side of \eqref{E:L-met-strong} is uniformly bounded below on any compact set by continuity. Moreover, Corollary \ref{C:infty-blowup} guarantees that $f_{u, s}(z)$ converges uniformly to $- \infty$ as $|z| \to \infty$ for all $u = (x, r;y, t) \in K$ and $s \in (r, t)$. Hence the set where $f_{u, s}$ attains its maximum is uniformly bounded over $u = (x, r;y, t) \in K$ and $s \in (r, t)$.
\end{proof}

We can now conclude the existence and uniqueness of the directed landscape.

\begin{theorem}
	\label{T:landscape-r4}
	The directed landscape exists and is unique in law.
\end{theorem}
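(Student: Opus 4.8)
The plan is to assemble the objects already in hand. The process $\scrL$ on the dyadic set $S$ is constructed with unique law in Lemma \ref{L:landscape-S}, so existence reduces to showing this process extends continuously to all of $\R^4_\uparrow$ and that the extension inherits properties I--III, and uniqueness reduces to the fact that a continuous function on $\R^4_\uparrow$ is determined by its values on the dense set $S$.

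For the continuous extension, fix a compact $K\subset\R^4_\uparrow$. On $K$ the time-increment coordinate $s$ is bounded below, so for points of $S\cap\Q^4$ that are sufficiently close together the hypothesis $\xi\le\de^{2/3}$ of Lemma \ref{L:mod-sk} is met, and that lemma supplies the two-point tail bounds needed to apply the modulus-of-continuity lemma (Lemma \ref{L:levy-est}); this gives almost sure uniform continuity of $\scrL$ on $K\cap S\cap\Q^4$. Hence $\scrL$ has a unique extension to a random continuous function on $\R^4_\uparrow$, which I keep calling $\scrL$. Property III is then exactly the statement of Lemma \ref{L:metric-strong}. For property I, note that on $S_k$ the increment over an elementary interval $[i/2^k,(i+1)/2^k]$ is by construction an Airy sheet of scale $2^{-k/3}$, and the metric composition law on $S$ together with the metric composition law for Airy sheets (Proposition \ref{P:geod-eqn}) makes the increment over any $[i/2^k,j/2^k]$ an Airy sheet of scale $((j-i)/2^k)^{1/3}$; for a general interval $[t,t+s^3]$ pick dyadic endpoints $t_m\to t$, $t_m+s_m^3\to t+s^3$, so that $\scrL(\cdot,t_m;\cdot,t_m+s_m^3)$ converges uniformly on compacts to $\scrL(\cdot,t;\cdot,t+s^3)$, and since an Airy sheet of scale $\sigma$ equals $\sigma\,\scrS(\cdot/\sigma^2,\cdot/\sigma^2)$ these converge in law to an Airy sheet of scale $s$. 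For property II, given disjoint time intervals $(t_i,s_i)$, $i=1,\dots,k$, choose dyadic approximations $(t_i^m,s_i^m)\to(t_i,s_i)$ that remain pairwise disjoint for $m$ large; the increments $\scrL(\cdot,t_i^m;\cdot,s_i^m)$ are independent by Lemma \ref{L:landscape-S} and converge jointly to $(\scrL(\cdot,t_i;\cdot,s_i))_i$, and a weak limit of product measures is a product measure.

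Uniqueness is immediate: if $\scrL'$ is any directed landscape, its restriction to $S$ satisfies conditions I--III on $S$, hence by the uniqueness clause of Lemma \ref{L:landscape-S} agrees in law on $S$ with the process constructed above; since the Borel $\sigma$-algebra on $C(\R^4_\uparrow,\R)$ with the uniform-on-compact topology is generated by evaluations at points of the dense set $S$, the laws of $\scrL'$ and $\scrL$ coincide. I do not expect any serious obstacle here: the genuinely hard inputs (the two-point tail bound of Lemma \ref{L:mod-sk}, via the maximizer estimate Lemma \ref{L:airy-max}, and the extension of metric composition off $S$ in Lemma \ref{L:metric-strong}, via Corollary \ref{C:infty-blowup}) are already proved, and the only mild care needed is (a) checking the hypothesis $\xi\le\de^{2/3}$ when invoking Lemma \ref{L:mod-sk}, and (b) the two weak-limit arguments for properties I and II.
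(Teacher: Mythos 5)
Your proposal is correct and follows essentially the same route as the paper: construct $\scrL$ on the dyadic set $S$ via Lemma \ref{L:landscape-S}, extend continuously using the two-point tail bound of Lemma \ref{L:mod-sk} together with Lemma \ref{L:levy-est}, and recover the metric composition law off $S$ from Lemma \ref{L:metric-strong}, with uniqueness reduced to the law on $S$ by density and continuity. Your explicit weak-limit verifications of properties I and II at non-dyadic times are details the paper leaves implicit, not a different argument.
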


\section{Convergence to the directed landscape}
\label{S:cvg-land}
In this section, we show that the directed landscape is the distributional limit of last passage percolation. For each $n$, let  $B^n \in C^\Z$ be a sequence of independent two-sided Brownian motions. Let
$$
(x, t)_n = (t + 2xn^{-1/3}, -\floor{tn})
$$
denote the translation between coordinates before and after the limit. For $(x, t; y, s) \in \R^4_\uparrow$  we can define the \textbf{last passage percolation}
\begin{equation}
\label{E:last-pass}
\begin{split}
\scrL_n(x, t ; y, s) = n^{1/6} \Big(B[(x, t)_n & \LP (y, s)_n ] - 2(s - t)\sqrt{n} - 2(y - x)n^{1/6} \Big).
\end{split}
\end{equation}
 $\scrL_n$ is not defined at some points in $\R^4_\uparrow$; we may formally set its values to $-\infty$ at those points.  It is defined on any compact subset of $\R^4_\uparrow$ for all large enough $n$, so uniform compact convergence will not be affected. We choose the value $-\infty$ to preserve the metric composition law.
\medskip

Let $F^o(\R^4_\uparrow, \R \cup \{-\infty\})$ be the space of extended real-valued functions with domain $\R^4_\uparrow$ that arise as functions of the form \eqref{E:last-pass} for a continuous sequence of functions $f = (f_i)_{i \in \Z}$ in place of $B$. We let $F(\R^4_\uparrow, \R \cup \{-\infty\})$ be the closure of this space with the topology of uniform-on-compact convergence. The reason for defining the space this way (instead of setting it to be all functions on $\R^4_\uparrow$) is just to ensure that it is separable.

\begin{theorem}
\label{T:landscape-limit}
There exists a coupling of $\scrL_n$ and $\scrL$ so that
for every compact set $K\subset \R^4_{\uparrow}$ there exists $a>1$ with
\begin{equation}
\label{E:a-KK}
\E a^{\sup_K|\scrL-\scrL_n |^{3/4}}\to 1.
\end{equation}
\end{theorem}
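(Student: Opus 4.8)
The plan is to build the coupling by patching together, along a fine dyadic grid of times, the quantitative Airy-sheet coupling of Theorem \ref{T:airy-sheet}, and to control the patching error using the metric composition law, the uniform control of where the maxima in metric composition are attained, and the modulus-of-continuity estimates already available for $\scrL$ and for $\scrL_n$.

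First I would record uniform-in-$n$ regularity for $\scrL_n$. The spatial marginals $\scrL_n(x_0,t;\cdot,s)$ and $\scrL_n(\cdot,t;y_0,s)$ are rescaled top lines of Brownian melons, so Propositions \ref{P:dyson-tails}--\ref{P:cross-prob} and Lemma \ref{L:levy-est} give a uniform spatial modulus; combining this with the prelimiting Tracy--Widom bound of Theorem \ref{T:top-bd}, the metric composition law of Lemma \ref{L:metric}, and the prelimiting maximizer tightness of Lemma \ref{L:wm-max}, one gets a uniform temporal modulus and a uniform one-point tail bound, hence tightness of $\{\scrL_n\}$ in $F(\R^4_\uparrow,\R\cup\{-\infty\})$ with quantitative control on compacts. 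In particular, for a compact $K\subset\R^4_\uparrow$ (contained in some $K_b$, so with time increments bounded below), the quantity $\sup_K|\scrL-\scrL_n|$ is controlled by its supremum over points of $K$ whose temporal coordinates lie in $2^{-k}\Z$, plus two modulus-of-continuity contributions --- one for $\scrL$ from Proposition \ref{P:mod-land} and Corollary \ref{C:infty-blowup}, one for $\scrL_n$ from the bounds just described --- each of order $2^{-k/3}$ up to logarithms.

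It then remains to couple $\scrL_n$ and $\scrL$ at the grid times. Choosing $k=k(n)\to\infty$ slowly (so that the effective number of lines $n2^{-k}$ across each interval $[j2^{-k},(j+1)2^{-k}]$ still tends to infinity), each such increment of $\scrL_n$ is, up to translation, Brownian scaling, and a single negligible boundary line, a prelimiting Airy sheet of scale $2^{-k/3}$, and these are essentially independent across $j$. By Theorem \ref{T:airy-sheet} applied on each interval, and since by construction (Lemma \ref{L:landscape-S}) the increments of $\scrL$ over the same intervals are independent Airy sheets of scale $2^{-k/3}$, we obtain a coupling in which each increment of $\scrL_n$ is super-exponentially (exponent $3/2$) close to the corresponding increment of $\scrL$ on any fixed spatial box $[-R,R]^2$. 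The metric composition law --- Lemma \ref{L:metric} for $\scrL_n$, Lemma \ref{L:metric-strong} for $\scrL$ --- expresses a grid-time value of each of $\scrL$ and $\scrL_n$ as a maximum over the $O(2^k)$ intermediate points of a sum of increments over these short intervals; the maximizing intermediate points are confined, simultaneously for all of $K$, to a random box $[-R,R]^2$ whose radius has tails of order $e^{-dR^3}$ (from Lemma \ref{L:metric-strong} with Corollary \ref{C:infty-blowup} for $\scrL$, and from iterating Lemma \ref{L:wm-max} together with Theorem \ref{T:top-bd} for $\scrL_n$). On that box the two maxima-of-sums therefore differ by at most the sum of the $O(2^k)$ increment errors. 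Adding the $2^{-k/3}$-polylog modulus error and optimizing $k(n)$ produces the bound $\E a^{\sup_K|\scrL-\scrL_n|^{3/4}}\to 1$ for some $a=a(K)>1$; alternatively, once $\scrL_n$ converges in law to the unique directed landscape of Theorem \ref{T:landscape-r4} this way, the convergence \eqref{E:a-KK} follows from convergence in probability under the coupling together with uniform integrability coming from the uniform tail bounds.

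The main obstacle is the error accounting in the last step: Theorem \ref{T:airy-sheet} furnishes a super-exponential coupling only on a \emph{fixed} compact spatial set, with a constant that degrades as the set grows, so this must be combined both with the heavy-tailed ($e^{-dR^3}$) size of the region actually explored by the metric-composition maximizers and with the growing number $O(2^k)$ of increments being summed. It is this interplay --- rather than any single estimate --- that both dictates the slow rate at which $k(n)$ may grow and forces the tail exponent down from $3/2$, as in the Airy sheet, to $3/4$; carrying it out cleanly, in particular choosing $k(n)$ and assembling the tail bounds, is the technical heart of the proof.
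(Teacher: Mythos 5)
Your proposal is workable in outline, but it assembles the theorem by a genuinely different route than the paper. The paper's proof has two steps: first a uniform-in-$n$ tail bound on \emph{temporal} increments of $\scrL_n$ (Lemma \ref{L:tail-lpp}), proved exactly with the ingredients you list (metric composition, Theorem \ref{T:top-bd}, Propositions \ref{P:dyson-tails}--\ref{P:cross-prob}, Lemma \ref{L:levy-est}, and the maximizer-location bound of Lemma \ref{L:wm-max}); second, an entirely soft endgame: tightness of a time-interpolated version $\scrK_n$, identification of every subsequential limit as the directed landscape by checking the three defining properties of Definition \ref{D:directed-landscape} (Airy sheet marginals via Theorem \ref{T:airy-sheet} used only in law, independence from disjoint blocks of Brownian lines, metric composition inherited via Lemma \ref{L:wm-max}), and then uniqueness plus the Skorokhod/uniform-integrability upgrade of Lemma \ref{L:Sn-tight} to get \eqref{E:a-KK}. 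Your uniform regularity step coincides with the paper's Lemma \ref{L:tail-lpp}; what differs is the final assembly, where you replace the abstract uniqueness argument by an explicit dyadic patching of quantitative Airy-sheet couplings on each interval of length $2^{-k(n)}$. What the paper's route buys is that it never needs a rate: it only uses marginal convergence in law, so there is no error accounting over $O(2^k)$ increments, no growing spatial boxes after rescaling, and no choice of $k(n)$.

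The one soft spot in your primary route is precisely this: Theorem \ref{T:airy-sheet} gives no rate in $n$, and its constant depends on the compact set, which after rescaling to sheets of scale $2^{-k/3}$ grows like $2^{2k/3}$; so ``optimizing $k(n)$'' cannot literally produce the quantitative bound \eqref{E:a-KK}, only a diagonal choice of $k(n)\to\infty$ yielding convergence in probability of $\sup_K|\scrL-\scrL_n|$. That is not fatal, because your fallback --- convergence in probability plus uniform integrability from the uniform tail bounds --- is exactly how the paper converts distributional convergence into \eqref{E:a-KK}, but you should present it as the actual mechanism rather than an alternative. Relatedly, the exponent $3/4$ is not produced by the patching interplay you describe: in the paper it comes directly from the temporal-increment tail $e^{-da^{3/4}}$ of Lemma \ref{L:tail-lpp}, which in turn traces back to the $e^{-dm^{3/4}}$ maximizer-location tail in Lemma \ref{L:wm-max}; the spatial increments retain exponent $3/2$.
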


The key step in the proof of Theorem \ref{T:landscape-limit} is the following lemma on tails of time increments in last passage percolation. Note that the spatial increments of $\scrL_n$ are bounded by Proposition \ref{P:dyson-tails}, as in the proof of tightness of the Airy sheet, Lemma \ref{L:Sn-tight}.

\begin{lemma}
\label{L:tail-lpp}
Fix $b > 0$, and let
$$
K_b = [-b, b]^4 \cap \{(x, t; y, t + s) \in \R^4 : s \ge b^{-1} \},
$$
and
$$
K_{b, n} = K_b \cap \{(x, t; y, t + s) \in \R^4 : s, t \in n^{-1} \Z \}.
$$
Then there exist constants $c, d > 0$ such that for every $n \in \N$, every $(x,t;y, s), (x,t + r_1;y, s + r_2)\in K_{b, n}$, and all $a > 0$ we have that
$$
\p \lf( |\scrL_n(x,t;y, s) - \scrL_n(x,t + r_1;y, s + r_2)| \ge a ||(r_1, r_2)||^{1/6} \rg) \le ce^{-da^{3/4}}.
$$
\end{lemma}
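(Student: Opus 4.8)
The plan is to mirror the proof of Lemma \ref{L:mod-sk} in the prelimit, replacing the continuum metric composition law and the maximizer estimate of Lemma \ref{L:airy-max} by their prelimiting analogues, Lemma \ref{L:metric} and Lemma \ref{L:wm-max}. Throughout, $c,d$ denote constants depending only on $b$, and we may assume $a$ exceeds a constant depending on $b$, as otherwise the bound is vacuous. By the triangle inequality it suffices to treat separately the increment in which only the start time changes and the one in which only the end time changes, and these are equivalent by the time-reversal symmetry of two-sided Brownian motion (compare Lemma \ref{L:rev-wm-fact}); so it is enough to show $\p\lf(|\scrL_n(x,t;y,s) - \scrL_n(x,t;y,s+\rho)| \ge a\rho^{1/6}\rg) \le ce^{-da^{3/4}}$ for one $\rho\in(0,2b]$ with both endpoints in $K_{b,n}$, where $\rho\le\|(r_1,r_2)\|$. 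The macroscopic spatial coordinates $x,y$ enter only through microscopic shifts of size $O(n^{-1/3})$ and may be carried along harmlessly (or removed as in Lemma \ref{L:mod-sk}).

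The lower bound is immediate from the triangle inequality (Lemma \ref{L:metric}): $\scrL_n(x,t;y,s+\rho) \ge \scrL_n(x,t;y,s) + \scrL_n(y,s;y,s+\rho)$, and $\scrL_n(y,s;y,s+\rho)$ is a single Brownian last passage value across $\approx\rho n$ lines over an interval of length $\rho$; Brownian scaling identifies it after recentring with $\rho^{1/3}$ times $W^m_1(1)-2\sqrt m$ for $m\approx\rho n$, so Theorem \ref{T:top-bd} gives $\p(|\scrL_n(y,s;y,s+\rho)|>\mu\rho^{1/3})\le ce^{-d\mu^{3/2}}$, and since $\rho^{1/3}\le(2b)^{1/6}\rho^{1/6}$ this direction is controlled with a tail stronger than required.

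For the upper bound, split $B[(x,t)_n\LP(y,s+\rho)_n]$ by Lemma \ref{L:metric} at the line level corresponding to time $s$, writing it as $\max_p\lf[L_1(p)+L_2(p)\rg]$ where, after the usual reversal (Lemma \ref{L:rev-wm-fact}), $L_1$ is a rescaled top line of a Brownian melon with $\approx(s-t)n$ lines and $L_2$ one with $\approx\rho n$ lines. Rescaling $[t,s+\rho]$ to $[0,1]$ puts this exactly in the ``melon sum'' form of Lemma \ref{L:wm-max}; taking the free parameter there equal to $a$, the maximizer $p^*$ lies within $w\asymp_b a\rho^{1/3}n^{-1/3}$ of the point $q$ corresponding to $(y,s)_n$, off an event of probability at most $ce^{-da^{3/4}}$. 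On the good event,
\[
\scrL_n(x,t;y,s+\rho)-\scrL_n(x,t;y,s) = n^{1/6}\lf[(L_1+L_2)(p^*)-(L_1+L_2)(q)\rg] + n^{1/6}\lf(L_2(q)-2\rho\sqrt n\rg),
\]
the last term being bounded as in the lower bound step. Since $q$ is essentially the balance point of the concave parabolic means of $L_1$ and $L_2$, there their opposite $\pm\sqrt n$ drifts cancel and the parabolic part of $(L_1+L_2)(p^*)-(L_1+L_2)(q)$ is nonpositive; what remains is controlled by the oscillation of the curved, recentred fluctuations of $L_1$ and of $L_2$ over the window $[q-w,q+w]$. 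Provided $w$ is within the range of validity of Proposition \ref{P:dyson-tails} — which, after rescaling the relevant subinterval to $[0,1]$ to kill logarithmic corrections, holds for $L_1$ throughout the range of $a$ where this refined argument is needed and for $L_2$ over a further range — each such oscillation is at most $\mu\sqrt w$ with tail $e^{-d\mu^{3/2}}$; in the leftover range (which forces $w\gg\rho$ but still $w\ll1$) one instead bounds $(L_1+L_2)(p^*)$ from above using the parabola-plus-fluctuation bounds of Proposition \ref{P:cross-prob} together with the matching lower bound of Theorem \ref{T:top-bd}. Taking $\mu=\sqrt a$, the window contribution is $n^{1/6}O_b(\mu\sqrt w)=O_b(\mu\sqrt{a\rho^{1/3}})=O_b(a\rho^{1/6})$ with tail $e^{-da^{3/4}}$. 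Finally, for $\rho$ not too small compared with $a^{-3}$, the crude bound obtained by applying Proposition \ref{P:cross-prob} and Theorem \ref{T:top-bd} directly to $B[(x,t)_n\LP(y,s+\rho)_n]$ and $B[(x,t)_n\LP(y,s)_n]$ (rescaling each interval to $[0,1]$) already gives the claim, which together with the previous case completes the proof.

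The main obstacle is not any individual estimate but the case analysis and bookkeeping: one must carry several Brownian rescalings at once (melons of sizes $\asymp(s-t)n$ and $\asymp\rho n$, over intervals of lengths $\asymp s-t$ and $\asymp\rho$, each normalized to $[0,1]$ wherever a logarithmic factor would otherwise intrude) and organize the proof according to whether $\rho$ — equivalently, how the window width $w$ compares with $\rho$ and with the Dyson range — is small or not, all while keeping every constant dependent only on $b$. It should be stressed that this bound is deliberately weaker than the optimal $\tau^{1/3}$ modulus with $e^{-d\mu^{3/2}}$ tails enjoyed by $\scrL$ itself (Lemma \ref{L:mod-sk}): the exponent $3/4$ is inherited verbatim from Lemma \ref{L:wm-max} (ultimately from the $r^2$-window in Lemma \ref{L:calculation}), and the accompanying loss of the spatial exponent from $1/3$ to $1/6$ is exactly the slack needed to make the window-versus-fluctuation trade-off close.
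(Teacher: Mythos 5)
Your skeleton matches the paper's in several respects (lower bound via the triangle inequality and Theorem \ref{T:top-bd}; metric decomposition at the intermediate time; localization of the maximizer via Lemma \ref{L:wm-max} with the $e^{-da^{3/4}}$ tails; oscillation over the window with deviation parameter $\sqrt a$; the crude bound when $\rho\gtrsim a^{-3}$), but the treatment of the short-time piece $L_2$ has a genuine gap. Your grouping forces you to control the two-sided increment $L_2(p^*)-L_2(q)$ of the short melon over the window $w\asymp a\rho^{1/3}n^{-1/3}$. Proposition \ref{P:dyson-tails} only controls increments of a melon with $m\approx \rho n$ lines up to a \emph{fixed} constant times its local scale $\rho^{2/3}n^{-1/3}$ (equivalently $a\lesssim\rho^{1/3}$), so in the refined regime $\rho\lesssim a^{-3}$ with $a$ large it never applies to $L_2$; your ``further range'' for $L_2$ is essentially empty, and your characterization of the leftover range as ``$w\gg\rho$'' is not correct (the failure occurs once $w\gtrsim\rho^{2/3}n^{-1/3}$, which is compatible with $w\le\rho$). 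The proposed fallback -- upper bound $(L_1+L_2)(p^*)$ by Proposition \ref{P:cross-prob} and lower bound via Theorem \ref{T:top-bd} -- is, as stated, the crude order-one bound and does not reach the threshold $a\rho^{1/6}$ for small $\rho$: after recentring, the envelope's logarithmic correction over the window is of size about $\sqrt{\rho+w}\,\rho^{-1/6}\log^{2/3}(a\rho^{-1/3}+1)$ (in the $n^{1/6}$-rescaled units), which for $\rho$ near $n^{-1}$ is not $O_b(a\rho^{1/6})$ uniformly in $n$ unless it is absorbed by the quantitative parabolic decay of the \emph{combined} mean of $L_1+L_2$ away from the balance point. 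Making that absorption rigorous is exactly an estimate of the type of Lemma \ref{L:calculation}, i.e.\ you would be re-proving the hard computation behind Lemma \ref{L:wm-max} rather than citing it (that lemma localizes the maximizer but says nothing about the value difference you need).

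The paper avoids this entirely by a different grouping: it writes the increment as $\scrL_n(Z,s;y,s+\rho)+[\scrL_n(x,t;Z,s)-\scrL_n(x,t;y,s)]$, so the short melon enters only through the one-sided bound $\p(\scrL_n(Z,s;y,s+\rho)>a\rho^{1/3})\le ce^{-da^{3/2}}$, which follows from Proposition \ref{P:cross-prob} uniformly over \emph{all} intermediate locations because the short melon's own parabola dominates the logarithmic corrections; no comparison of $L_2$ at $p^*$ versus $q$ is ever needed. The long-melon spatial term is then handled as you intend for $L_1$, except that one needs a maximal inequality over the window (Proposition \ref{P:dyson-tails} is a two-point bound, and $p^*$ is random), which the paper gets from Lemma \ref{L:levy-est}; your proposal should cite that step explicitly as well. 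With your current grouping the argument does not close; regrouping as above, or carrying out an explicit Lemma \ref{L:calculation}-type absorption of the log terms by the parabola, is required.
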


Throughout the proof, $c$ and $d$ will be positive constants that may change from line to line and depend only on the compact set $K_b$. In particular, they will not depend on the choice of points in the compact set or on $n$.
\begin{proof}
We will assume that $r_1 = 0$ and that $r_2 = r > 0$.
 Extending to the case of general $(r_1, r_2)$ follows by symmetry. Since time coordinates for points in $K_{b, n}$ are in $n^{-1}\Z$, we can assume that $r \ge 1/n$ since otherwise $r=0$. By the metric composition law for last passage percolation, Lemma \ref{L:metric}, we have that
\begin{align}
\nonumber
 \scrL_n(y, s; y, s + r)  &\le \scrL_n(x,t;y, s + r) - \scrL_n(x,t; y, s)\\
\label{E:Ln}
 &= \scrL_n(Z, s; y, s + r) + [ \scrL_n(x,t; Z, s) - \scrL_n(x,t; y, s)],
\end{align}
where $Z$ is the rightmost maximizer of the function $z \mapsto \scrL_n(x,t; z, s) + \scrL_n(z, s; y, s + r)$. By Theorem \ref{T:top-bd}, we have the lower bound
\begin{equation}
\label{E:Z-bdd}
\p(\scrL_n(y, s; y, s + r) < -ar^{1/3}) \le ce^{-da^{3/2}}
\end{equation}
on the left hand side of \eqref{E:Ln}. The same bound holds with possibly different $c, d$ when $r^{1/3}$ is replaced by $r^{1/6}$ since $r$ is bounded. To complete the proof, we show that the probability that the right hand side of \eqref{E:Ln} is larger than $ar^{1/6}$ is bounded above by $ce^{-d a^{3/4}}$ for all $a > 0$.

\medskip

We can bound the right side of \eqref{E:Ln} by using the bounds from Section \ref{S:prelim} on Brownian melons. Indeed, recall that the two functions $\scrL_n(\cdot, s; y, s + r)$ and $\scrL_n(x, t; \cdot, s)$ are simply rescaled and shifted top lines of Brownian melons by \eqref{E:Wf-one}. 
First, Proposition \ref{P:cross-prob} implies that
\begin{equation}
\label{E:ln-1}
\p(\scrL_n(Z, s; y, s + r) > ar^{1/3}) \le ce^{-da^{3/2}},
\end{equation}
which gives the desired bound on the first term on the right side of \eqref{E:Ln} since $r^{1/3} \le c r^{1/6}$ for the $r$-values we care about.
For the second term, first note that by applying Proposition \ref{P:cross-prob} to the term $\scrL_n(x,t; Z, s)$ and Theorem \ref{T:top-bd} to the term $\scrL_n(x,t; y, s)$, we have the bound
\begin{equation}
\label{E:Z-large}
\p(\scrL_n(x,t; Z, s) - \scrL_n(x,t; y, s)\ge b) \le ce^{-db^{3/2}}.
\end{equation}
Here we have used that $(t-s)^{1/3}$, which enters into the scaling of this difference, is uniformly bounded on $K_{b}$. We will use this bound when $b$ is large, but we will also need a $r$-dependent bound that will work for small $b$.

\medskip

When $Z$ is close to $y$, we can use the Gaussian tail bound on differences between points in Proposition \ref{P:dyson-tails} to bound the left hand side of \eqref{E:Z-large}. In particular, by combining the tail bounds in Proposition \ref{P:dyson-tails} with the modulus of continuity lemma for general tail-bounded processes (Lemma \ref{L:levy-est}), we have that for every $\ep \in [0, 1]$ and $b > 0$, that
\begin{equation}
\label{E:mod-bd-2}
\p \lf(\max_{z \in [-y - \ep, y + \ep]} \lf|\scrL_n(x,t; z, s) - \scrL_n(x,t; y, s) \rg| \ge b \rg) \le ce^{-db^{3/2}\ep^{-3/4}}.
\end{equation}
To get that $c$ and $d$ do not depend on any parameters we have used that the distance $|x - y|$ is bounded above and that the time increment $s - t$ is bounded below on $K_b$. Also, by Lemma \ref{L:wm-max} and Brownian scaling, we have control over how much $Z$ differs from its expected location at $f_r(x, y) = x + (y-x)(t + s)(t + s + r)^{-1}$. We get that
\begin{equation}
\label{E:Z-y}
\p(|Z - f_r(x, y)|> mr^{1/3}) \le ce^{-dm^{3/4}}.
\end{equation}
Again, the fact that $s -t$ is uniformly bounded below on $K_b$ is necessary to ensure that the constants $c$ and $d$ are independent of the points $(x, t; y, s)$ and $(x, t; y, s + r)$. Note that $|f_r(x, y) - y| \le c_1 r$ on the set $K_b$ for a $K_b$-dependent constant $c_1$. Since $r$ is bounded above on $K_b$, we can use the triangle inequality to rewrite \eqref{E:Z-y} as
\begin{equation}
\label{E:Z-y'}
\p(|Z - y|> mr^{1/3}) \le ce^{-dm^{3/4}}.
\end{equation}
Combining \eqref{E:mod-bd-2} and \eqref{E:Z-y'} by setting $\ep = br^{1/6}$ and $m = br^{-1/6}$, we get that
\begin{equation}
\label{E:mod-bd}
\p \lf(\scrL_n(x,t; Z, s) - \scrL_n(x,t; y, s) \ge b \rg) \le ce^{-db^{3/4}r^{-1/8}}
\end{equation}
whenever $b \le r^{-1/6}$. Also, when $b \ge r^{-1/6}$, then $b^{3/2} \ge b^{3/4}r^{-1/8}$, so we can combine \eqref{E:mod-bd} with \eqref{E:Z-large} to get that \eqref{E:mod-bd} holds for all $b, r$. Setting $ar^{1/6} = b$ in \eqref{E:mod-bd} gives that
\begin{equation*}
\p \lf(\scrL_n(x,t; Z, s) - \scrL_n(x,t; y, s) \ge ar^{1/6} \rg) \le ce^{-da^{3/4}},
\end{equation*}
 which is the desired bound on the second term on right hand side of \eqref{E:Ln}.
\end{proof}

\begin{proof}[Proof of Theorem \ref{T:landscape-limit}]
Replace $\scrL_n$ with a continuous interpolated version $\mathcal{J}_n$.  The version $\mathcal{J}_n$ will equal $\scrL_n$ whenever $ns, nt \in \Z$. We can do this in such a way that the sequence $\mathcal{J}_n$ satisfies the tail bound in Lemma \ref{L:tail-lpp} everywhere on $K_b$, not just on the sets $\scrK_{b, n}$. It suffices to prove convergence of $\mathcal{J}_n$ to $\scrL$.

\medskip

By the tail bound in Lemma \ref{L:tail-lpp} on the time increments of $\mathcal J_n$, the tail bound in Proposition \ref{P:dyson-tails} which applies to the spatial increments of $\mathcal J_n$, and the Kolmogorov-Chentsov criterion, we can conclude that the sequence $\mathcal{J}_n$ is tight. Moreover, if $\mathcal{J}_n$ converges in distribution to a subsequential limit $\mathcal{J}$, then there is a coupling of the $\mathcal{J}_n$ and $\mathcal{J}$ such that in this coupling $E a^{\sup_K|\scrL-\scrL_n |^{3/4}}\to 1$ for some $a > 1$, the equivalent of \eqref{E:a-KK}. This follows by the same reasoning as in Lemma \ref{L:Sn-tight}.

\medskip

Now let $\mathcal{J}$ be a subsequential limit of $\mathcal{J}_n$. By the definition of $\mathcal{J}_n$ and the Airy sheet convergence in Theorem \ref{T:airy-sheet}, the marginals $\mathcal{J}(\cdot, t, \cdot, s)$ are all rescaled Airy sheets. Also, $\mathcal{J}$ inherits the metric composition law, property II of the directed landscape, from the sequence $\scrL_n$. This follows from the tail bounds on maximizing locations of pre-limiting Airy sheets from Lemma \ref{L:wm-max} exactly as in the proof of Proposition \ref{P:geod-eqn}. 

\medskip

Moreover, $\scrJ$ has independent increments on any set of $k$ disjoint intervals since when $[s, t] \cap [s', t'] = \emptyset$, the processes $\scrJ_n(\cdot, t; \cdot, s)$ and $\scrJ_n(\cdot, t'; \cdot, s')$ are determined by different sets of Brownian motions  for all large enough $n$. This independence extends to intervals that share an endpoint by continuity. Thus $\scrJ$ satisfies the conditions of Definition \ref{D:directed-landscape}, and so $\scrJ$ must be the directed landscape.
\end{proof}

\section{Directed geodesics}
\label{S:directed-geodesics}

In this section, we construct geodesics in the directed landscape $\scrL$. First, for a continuous path $\pi:[t, s] \to \R$, we can define the \textbf{length} of $\pi$ by
\begin{equation}
\label{E:land-wt}
\int d \scrL \circ \pi = \inf_{k \in \N} \inf_{\{t = t_0 < t_1 < \dots < t_k = s\}} \sum_{i=1}^k \scrL(\pi(t_{i-1}), t_{i-1}; \pi(t_i), t_{i}).
\end{equation}
In other words, the length of $\pi$ is the infimum over all partitions of the sum of increments in $\scrL$ along $\pi$. This definition is the analogy of defining curve length in Euclidean space by piecewise linear approximation. By the triangle inequality \eqref{E:triangle-ineqL} for $\scrL$, we always have
$$
\int d \scrL \circ \pi \le \scrL(\pi(t), t;  \pi(s), s).
$$
We say that $\pi$ is a \textbf{directed geodesic} from $(\pi(t), t)$ to $(\pi(s), s)$ if equality holds. This is equivalent to saying that equality holds in \eqref{E:land-wt} for all subdivisions before taking any infima. A simple induction argument shows that if  for any $t_0=t<t_1<t_2<t_3=s$ we have 
\begin{equation}
\label{E:geodesic-3}
\int d \scrL \circ \pi = \sum_{i=1}^3 \scrL(\pi(t_{i-1}), t_{i-1}; \pi(t_i), t_{i}),
\end{equation}
then  $\pi$ is a directed geodesic.

\begin{theorem}
\label{T:directed-geodesic}
Let $\scrL$ be the directed landscape, and fix $u = (x, t; y, s) \in \R^4_\uparrow$. Then almost surely, there exists a unique directed geodesic $\Pi_u$ from $(x, t)$ to $(y, s)$.
\end{theorem}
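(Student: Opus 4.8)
The plan is to separate existence and uniqueness. The metric composition law with confined maximizers (Lemma~\ref{L:metric-strong}) and the blow-up bound (Corollary~\ref{C:infty-blowup}) will do the structural work; the only genuinely probabilistic input is almost sure uniqueness of the argmax of an Airy-type process plus an independent continuous process.

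For existence, I would fix a countable dense $Q\subset[t,s]$ containing $t$ and $s$. By iterating Lemma~\ref{L:metric-strong}, for every finite $F=\{t=\rho_0<\dots<\rho_m=s\}\subset Q$ the function $\Phi_F(z_1,\dots,z_{m-1})=\sum_j \scrL(z_j,\rho_j;z_{j+1},\rho_{j+1})$ (with $z_0=x$, $z_m=y$) has maximum exactly $\scrL(u)$, attained only in a box $[-R,R]^{m-1}$ with $R$ depending solely on $u$ and the realization, since each summand tends to $-\infty$ uniformly as a coordinate escapes, by Corollary~\ref{C:infty-blowup}. The triangle inequality for $\scrL$ shows that restricting a $\Phi_{F'}$-maximizer to coordinates indexed by a subset $F\subset F'$ gives a $\Phi_F$-maximizer, so a diagonal extraction along finite subsets exhausting $Q$ produces $\gamma\colon Q\to[-R,R]$ whose restriction to every finite subset is a maximizer. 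In particular $\scrL(x,t;\gamma(\rho),\rho)+\scrL(\gamma(\rho),\rho;y,s)=\scrL(u)$ for $\rho\in Q$, and applying this with $F=\{t,\rho,\rho',s\}$ and subtracting yields the additivity $\scrL(\gamma(\rho),\rho;y,s)=\scrL(\gamma(\rho),\rho;\gamma(\rho'),\rho')+\scrL(\gamma(\rho'),\rho';y,s)$ for $\rho<\rho'$ in $Q$.

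To turn $\gamma$ into a continuous path I would feed this additivity into Corollary~\ref{C:infty-blowup} (bounding $\scrL(\gamma(\rho),\rho;\gamma(\rho'),\rho')\le -c(\gamma(\rho)-\gamma(\rho'))^2/(\rho'-\rho)+C(\rho'-\rho)^{1/3}\log^{4/3}(\cdots)$) and, after passing to the stationary version $\scrK$, into the two-point modulus bound of Proposition~\ref{P:mod-land} (bounding $|\scrL(\gamma(\rho),\rho;y,s)-\scrL(\gamma(\rho'),\rho';y,s)|$ by $C(\tau^{1/3}\log^{2/3}(\cdots)+\xi^{1/2}\log^{1/2}(\cdots))$, with $\tau=|\rho-\rho'|$ and $\xi=|\gamma(\rho)-\gamma(\rho')|$). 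This gives a self-bounding inequality $\Delta^2\le C'\varepsilon^{4/3}\log^{2/3}(1/\varepsilon)+C'\varepsilon\,\Delta^{1/2}\log^{1/2}(1/\Delta)$ with $\Delta=|\gamma(\rho)-\gamma(\rho')|$, $\varepsilon=|\rho-\rho'|$, from which $\Delta\le C''\varepsilon^{2/3}\log^{1/3}(1/\varepsilon)$; the degenerate cases where $\rho$ or $\rho'$ is close to $t$ or $s$ (where the hypothesis $\xi\le\delta^{2/3}$ of Proposition~\ref{P:mod-land} fails) are handled directly by Corollary~\ref{C:infty-blowup}, which in fact forces $\gamma(\rho)\to x$ as $\rho\to t$ and $\gamma(\rho)\to y$ as $\rho\to s$. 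Hence $\gamma$ is uniformly continuous on $Q$ and extends to a continuous $\Pi_u\colon[t,s]\to\R$; by continuity of $\scrL$ the additivity persists for all $a<b$, and iterating it shows $\sum_j \scrL(\Pi_u(a_{j-1}),a_{j-1};\Pi_u(a_j),a_j)=\scrL(u)$ for every partition, i.e. $\Pi_u$ is a directed geodesic.

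For uniqueness, fix a rational $\rho\in(t,s)$ and set $f_{u,\rho}(w)=\scrL(x,t;w,\rho)+\scrL(w,\rho;y,s)$. The two summands are independent by Property~II (disjoint time intervals), and by Property~I together with property~(i) of the Airy sheet and $\scrS(0,\cdot)=\scrA_1$ — using the flip $\scrS(a,b)\eqd\scrS(b,a)$ of Lemma~\ref{L:airy-facts} for the second summand — each summand is in law a rescaled shift of the Airy process $\scrA_1$, hence locally absolutely continuous with respect to Brownian motion by Proposition~\ref{P:brownian-airy}. Since a Brownian motion plus an independent continuous process has an almost surely unique maximizer over any compact interval, absolute continuity and Lemma~\ref{L:metric-strong} (which confines every maximizer of $f_{u,\rho}$ to a fixed compact interval) give that almost surely $f_{u,\rho}$ has a unique maximizer simultaneously for all rational $\rho$. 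If $\Pi,\Pi'$ are any directed geodesics from $(x,t)$ to $(y,s)$, then the geodesic property applied to the partition $\{t,\rho,s\}$ gives $\scrL(x,t;\Pi(\rho),\rho)+\scrL(\Pi(\rho),\rho;y,s)=\scrL(u)=\max f_{u,\rho}$ (the last equality being metric composition), so $\Pi(\rho)$ and $\Pi'(\rho)$ both maximize $f_{u,\rho}$ and hence coincide; continuity forces $\Pi=\Pi'$. The main obstacle is the argmax-uniqueness input together with verifying the Brownian absolute continuity of the two time-slice profiles; the modulus-of-continuity manipulations in the existence half are routine once Proposition~\ref{P:mod-land} and Corollary~\ref{C:infty-blowup} are in hand.
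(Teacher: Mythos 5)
Your proposal is correct, but its existence half takes a genuinely different route from the paper. The paper builds $\Pi_u$ by a L\'evy-style dyadic construction: it inserts, between consecutive dyadic times, the (almost surely unique, by Lemma \ref{L:airy-max}) maximizer of the two-sided profile, and then uses the quantitative argmax-location tails of Lemma \ref{L:path-loc-2} to show $\|\Pi_k-\Pi_{k-1}\|_\infty\le b\,2^{-2k/3}k^{1/3}$ off an event of probability $c_1e^{-c_2b^3}$; uniform convergence and the optimal modulus of Proposition \ref{P:mod-dg} come out simultaneously. You instead extract $\gamma$ on a dense set of times by compactness from the finite-time maximization problems (your consistency-under-restriction observation via the triangle inequality is exactly right) and then obtain continuity ``softly'' from Corollary \ref{C:infty-blowup} and Proposition \ref{P:mod-land}; this is close in spirit to the paper's alternate argument via proper landscapes (Lemma \ref{L:rig-exist}), which the paper notes yields Theorem \ref{T:directed-geodesic} quickly but without a modulus bound --- your route likewise forfeits Proposition \ref{P:mod-dg}, which the theorem as stated does not need. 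The uniqueness halves are essentially identical: your ``Brownian motion plus an independent continuous process has a unique argmax'' input, justified through Proposition \ref{P:brownian-airy} and independence of increments, is precisely the uniqueness content of Lemma \ref{L:airy-max} and could be cited directly, and both arguments then pin down any geodesic at a countable dense set of times and conclude by continuity. Two details to tighten: the uniform-in-$F$ confinement of maximizers should not be justified by ``each summand tends to $-\infty$ as a coordinate escapes'' (for fine partitions the error terms of Corollary \ref{C:infty-blowup} summed over many increments are not uniformly controlled, and neighbouring coordinates can escape together), but your restriction observation reduces it to the three-point problem, where Lemma \ref{L:metric-strong} supplies a single constant $B_{\{u\}}$; and Proposition \ref{P:mod-land} requires $\xi\le b^{-2/3}$, so before running the self-bounding inequality you need a crude a priori bound on $|\gamma(\rho)-\gamma(\rho')|$ --- immediate, since $|\scrL|$ is bounded on the relevant compact set, so the parabolic term in Corollary \ref{C:infty-blowup} already forces $|\gamma(\rho)-\gamma(\rho')|\le C\sqrt{|\rho-\rho'|}$ away from the endpoints. (Your logarithmic exponents are also slightly off --- the $\log^{4/3}$ factor gives $\varepsilon^{2/3}\log^{2/3}$ rather than $\log^{1/3}$ --- but none of this affects the conclusion.)
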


\medskip
By the properties of the directed landscape in Lemma \ref{L:land-props}, the distribution of $\Pi_u$ is independent of the point $u \in \R^4_\uparrow$ up to symmetry. Let $u=(0,0;0,1)$. For $r>0$ the path $\Pi_{r^3u}(\cdot)$ has the same distribution as $r^2\Pi_{u}(\cdot/r^3)$. The distribution of $\Pi_v$ for any $v$ can be obtained as a translated shear of this family. So it suffices to prove Theorem \ref{T:directed-geodesic} for $u=(0,0;0,1)$.

\medskip

To obtain tightness on the path locations we will use Lemma \ref{L:airy-max}. This lemma can be used in conjunction with Proposition \ref{P:geod-eqn} to  give a bound over multiple maximum locations at once. For $(x, t; y, t + r^3) \in \R^4_\uparrow$ we define $Z^+_{t, r}(x, y)$ as the rightmost maximizer of the function
$$
f(z) = \scrL(x, t; z, t + r^3/2) +  \scrL(z, t + r^3/2; y, t+r^3),
$$
and similarly define $Z^-_{t, r}(x, y)$ as the leftmost maximizer.

\begin{lemma}
\label{L:path-loc-2} 
Fix $t \in \R$ and $r > 0$, and let $K = [a, a + b] \X [c, c + b] \sset \R^2$. Then
\begin{align*}
\p\big(\exists (x, y) \in K \text{ such that } |Z^+_{t, r}(x, y) - (x + y)/2|& > mr^2 \big) 
\le \lf(\frac{b}{mr^2}\rg)^2c_1e^{-c_2m^3}
\end{align*}
for constants $c_1, c_2 \in \R$. The same bound holds for $Z^-$.
\end{lemma}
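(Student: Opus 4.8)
The plan is to prove a single-point estimate first, via Lemma \ref{L:airy-max}, and then upgrade it to the uniform statement over $K$ using the monotonicity of the maximizer location from Proposition \ref{P:geod-eqn}.

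\emph{Single point.} Fix $(x,y)$ and set $\sigma = ((r-t)/2)^{1/3}$. By Property I of Definition \ref{D:directed-landscape} the two functions $w\mapsto\scrL(x,t;w,(t+r)/2)$ and $w\mapsto\scrL(w,(t+r)/2;y,t+r)$ are rows of Airy sheets of scale $\sigma$, and by Property II, extended to intervals sharing an endpoint as in the proof of Theorem \ref{T:landscape-limit}, these two Airy sheets are independent. Using the Airy sheet symmetries (Lemma \ref{L:airy-facts} together with shift invariance), each fixed row of a scale-$\sigma$ Airy sheet, after adding its centering parabola, is a stationary scale-$\sigma$ Airy process; hence we may write
\[
\scrL(x,t;w,(t+r)/2) = \scrB_1(w) - \frac{(x-w)^2}{\sigma^3}, \qquad \scrL(w,(t+r)/2;y,t+r) = \scrB_2(w) - \frac{(w-y)^2}{\sigma^3},
\]
with $\scrB_1,\scrB_2$ independent stationary scale-$\sigma$ Airy processes. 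Since $(x-w)^2+(w-y)^2 = 2\big(w-\tfrac{x+y}{2}\big)^2 + \tfrac{(x-y)^2}{2}$, the function whose rightmost maximizer is $Y^+_{t,r}(x,y)$ equals $\scrB_1(w)+\scrB_2(w) - \tfrac{2}{\sigma^3}\big(w-\tfrac{x+y}{2}\big)^2$ up to a $w$-independent constant. Substituting $w = \tfrac{x+y}{2}+v$ and using that $\scrB_1,\scrB_2$ are independent and stationary, $Y^+_{t,r}(x,y)-\tfrac{x+y}{2}$ has the law of the rightmost maximizer of $v\mapsto\scrB_1(v)+\scrB_2(v)-\tfrac{2}{\sigma^3}v^2$; subtracting the stationarizing parabola $\sigma^{-3}v^2$ from each $\scrB_i$ (cf.\ \eqref{E:st-airy}), this is exactly the rightmost maximizer of a sum of two independent Airy processes of scale $\sigma$. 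Lemma \ref{L:airy-max} with $s=t=\sigma$ gives $\p\big(|Y^+_{t,r}(x,y)-\tfrac{x+y}{2}|>m\sigma^2\big)\le ce^{-dm^3}$, and since $\sigma^2$ is a fixed multiple of $(r-t)^{2/3}$, rescaling $m$ yields $\p\big(|Y^+_{t,r}(x,y)-\tfrac{x+y}{2}|>m(r-t)^{2/3}\big)\le c_1e^{-c_2m^3}$.

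\emph{Uniform over $K$.} By Proposition \ref{P:geod-eqn}, applied to the two independent scale-$\sigma$ Airy sheets above, the map $(x,y)\mapsto Y^+_{t,r}(x,y)$ is nondecreasing in each coordinate; it is well defined because the maximizer set is bounded by Lemma \ref{L:metric-strong}. Partition $K=[a,a+b]\times[c,c+b]$ into a grid of cells of side at most $m(r-t)^{2/3}$, using at most $\lceil b/(m(r-t)^{2/3})\rceil+1$ grid points in each coordinate. On a cell with lower-left corner $p^-$ and upper-right corner $p^+$, monotonicity of $Y^+_{t,r}$ together with the fact that $\tfrac{x+y}{2}$ varies by at most $m(r-t)^{2/3}$ across the cell shows that if $|Y^+_{t,r}(x,y)-\tfrac{x+y}{2}|>2m(r-t)^{2/3}$ somewhere in the cell, then the same quantity already exceeds $m(r-t)^{2/3}$ in absolute value at $p^+$ or $p^-$. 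A union bound over the grid points together with the single-point estimate (with $m$ replaced by $m/2$) bound the probability in the lemma by a constant times $(b/(m(r-t)^{2/3})+1)^2e^{-c_2m^3/8}$; when $b\ge m(r-t)^{2/3}$ this is of the asserted form, and when $b<m(r-t)^{2/3}$ the event is controlled directly by monotonicity against the corners of $K$, so the asserted bound holds after enlarging $c_1$. The bound for $Y^-$ follows from the same argument with leftmost maximizers throughout.

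I expect the single-point reduction to be the main obstacle: the key point is recognizing that splitting $\scrL$ at the midpoint and fixing both endpoints turns the optimization into the maximization of a sum of \emph{two equal-scale} Airy processes at the common scale $((r-t)/2)^{1/3}$, which is precisely what makes Lemma \ref{L:airy-max} directly applicable and produces the $(r-t)^{2/3}$ displacement scale; the completing-the-square computation and the stationarity shift both have to be carried out carefully so that the parabolas cancel. The uniform upgrade is then routine, its only real input being the monotonicity of $Y^+_{t,r}$ supplied by Proposition \ref{P:geod-eqn}.
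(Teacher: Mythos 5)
Your proposal is correct and takes essentially the same route as the paper, whose entire proof is the combination you describe: monotonicity of $Y^{\pm}_{t,r}$ from Proposition \ref{P:geod-eqn}, a union bound over a grid of spacing $m(r-t)^{2/3}$, and the one-point estimate from Lemma \ref{L:airy-max}, with your completing-the-square reduction to a sum of two independent Airy processes of scale $((r-t)/2)^{1/3}$ being exactly the step the paper leaves implicit. The only wobble is your handling of the regime $b<m(r-t)^{2/3}$, where the stated prefactor cannot actually be recovered by enlarging $c_1$; this is an imprecision in the lemma's statement itself (the paper's own grid argument yields a prefactor of the form $\bigl(b/(m(r-t)^{2/3})+c\bigr)^2$ as well) and is harmless in the applications.
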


\begin{proof}
The functions $Z^+_{t, r}$ and $Z^-_{t, r}$ are nondecreasing functions of $x$ and $y$ by Proposition \ref{P:geod-eqn}. Hence to prove the theorem, it suffices to take a union bound over $(x,y)$ in the grid
$$
 mr^2 \Z^2 \cap \{x \in \R^2: d(x, K) \le \sqrt{2} mr^2\}.
 $$
 We have enlarged the compact set $K$ to guarantee that we have an outer grid boundary containing $K$. The bound then follows if we can show that for every fixed $x, y$,
 $$
 \p\big(|Z^+_{t, r}(x, y) - (x + y)/2| > mr^2 \big) 
 \le c_1e^{-c_2m^3},
 $$ 
and similarly for $Z^-$. Now, the probability on the left side above is independent of $x, y$ by the spatial and skew stationarity of $\scrL$, Lemma \ref{L:land-props}. Moreover, for $x = y = 0$, the function $f$ is then a sum of independent Airy processes of scale $2^{-1/3}r $. Applying Lemma \ref{L:airy-max} then yields the desired bound.
\end{proof}

We are now ready to prove Theorem \ref{T:directed-geodesic}. As in L\'evy's construction of Brownian motion, our proof gives an essentially optimal modulus of continuity bound on $\Pi_u$.

\begin{prop}
\label{P:mod-dg}
There exists a random constant $C$ such that the path $\Pi_u$ in Theorem \ref{T:directed-geodesic} with $u = (0, 0 ; 0, 1)$ satisfies
$$
|\Pi_u(t_1) - \Pi_u(t_2)| \le C|t_1 - t_2|^{2/3}\log^{1/3} \lf(\frac{2}{|t_1 - t_2|} \right)
$$
for all $t_1, t_2 \in [0, 1]$. Moreover,
$$
\mathbb P(C > b) \le c_1 e^{-c_2b^3}
$$
for some constants $c_1, c_2$.
\end{prop}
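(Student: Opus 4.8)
The plan is to construct $\Pi_u$ for $u=(0,0;0,1)$ directly by a L\'evy-type dyadic scheme and to read the modulus off the size of the successive corrections; the general case follows from the symmetries of Lemma \ref{L:land-props}. Set $\Pi(0)=\Pi(1)=0$ and, given $\Pi$ at all dyadic times of level $k$, let $\Pi\big((2j+1)2^{-k-1}\big)$ be the rightmost maximizer of
$$
z\longmapsto \scrL\big(\Pi(j2^{-k}),j2^{-k};z,(2j+1)2^{-k-1}\big)+\scrL\big(z,(2j+1)2^{-k-1};\Pi((j+1)2^{-k}),(j+1)2^{-k}\big),
$$
which exists by Lemma \ref{L:metric-strong} and is a.s.\ unique by Lemma \ref{L:airy-max}. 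Let $\Pi^{(k)}$ be the piecewise-linear interpolation through the level-$k$ points and set
$$
\Delta_k=\max_{0\le j<2^k}\Big|\Pi\big((2j+1)2^{-k-1}\big)-\tfrac12\big(\Pi(j2^{-k})+\Pi((j+1)2^{-k})\big)\Big|,
$$
so that $\|\Pi^{(k+1)}-\Pi^{(k)}\|_{\infty,[0,1]}=\Delta_k$. The whole proposition reduces to the bound $\Delta_k\le C(k+1)^{1/3}2^{-2k/3}$ for all $k$, with $\p(C>b)\le c_1e^{-c_2b^3}$.

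The main step is a conditioning argument that removes the self-referential randomness of the scheme. An induction on $k$ using the metric composition law and the triangle inequality gives $\scrL(0,0;\Pi(q),q)+\scrL(\Pi(q),q;0,1)=\scrL(0,0;0,1)$ for every dyadic $q\in(0,1)$; bounding the left side above via Corollary \ref{C:infty-blowup} by $-\tfrac12\Pi(q)^2+C_0$ for a single random $C_0$ (uniform in $q$: a sublinear-in-$z$ error minus the parabola) with a stretched-exponential tail, I obtain $\sup_q|\Pi(q)|\le C''$ with $\p(C''>N)\le ce^{-dN^{\rho}}$ for a fixed $\rho>0$. On the event $\{C''\le N\}$ the point $\Pi\big((2j+1)2^{-k-1}\big)$ is exactly the maximizer $Y^+_{j2^{-k},(j+1)2^{-k}}$ of Lemma \ref{L:path-loc-2} at endpoints in $[-N,N]^2$, and the interval has length $2^{-k}$, so that lemma and a union bound over $j\in\{0,\dots,2^k-1\}$ give
$$
\p\big(\{C''\le N\}\cap\{\Delta_k>m2^{-2k/3}\}\big)\le 4c_1N^2m^{-2}2^{7k/3}e^{-c_2m^3}.
$$
Taking $m=b(k+1)^{1/3}$ and summing over $k$ (the cube in the exponent dominates $2^{7k/3}$ once $c_2b^3>\tfrac73\log2$) bounds this by $C_3N^2e^{-c_2b^3}$; combining with $\p(C''>N)\le ce^{-dN^\rho}$ and choosing $N=b^{3/\rho}$ yields $\p(C>b)\le c_1e^{-c_2'b^3}$ for $C=\sup_k\Delta_k/((k+1)^{1/3}2^{-2k/3})$ (inflating the constants to absorb small $b$).

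To pass from $\Delta_k\le C(k+1)^{1/3}2^{-2k/3}$ to the stated modulus, summability gives $\Pi^{(m)}\to\Pi_u$ uniformly on $[0,1]$ with $\|\Pi_u-\Pi^{(k)}\|_\infty\le\sum_{m\ge k}\Delta_m\lesssim C(k+1)^{1/3}2^{-2k/3}$, while the recursion $I_{k+1}\le\tfrac12I_k+\Delta_k$ (with $I_0=0$) for the largest increment $I_k$ of $\Pi^{(k)}$ over a level-$k$ interval gives $I_k\le\sum_{m<k}2^{m-k+1}\Delta_m\lesssim C(k+1)^{1/3}2^{-2k/3}$. For dyadic $t_1<t_2$ with $2^{-k-1}<t_2-t_1\le2^{-k}$, let $\tau_1\le\tau_2$ be the smallest and largest multiples of $2^{-k-1}$ in $[t_1,t_2]$; splitting $[t_1,t_2]=[t_1,\tau_1]\cup[\tau_1,\tau_2]\cup[\tau_2,t_2]$ and bounding the outer pieces by $\|\Pi_u-\Pi^{(k+1)}\|_\infty+I_{k+1}$ and the middle one by $2I_{k+1}$ gives $|\Pi_u(t_1)-\Pi_u(t_2)|\lesssim C(k+1)^{1/3}2^{-2k/3}\lesssim C|t_1-t_2|^{2/3}\log^{1/3}(2/|t_1-t_2|)$, since $k+1\le\log_2(2/|t_1-t_2|)$; continuity extends this to all $t_1,t_2\in[0,1]$. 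Passing to dyadic $q_n\uparrow t$ in the displayed identity also shows $\Pi_u$ is a geodesic, and any geodesic must pass through the a.s.\ unique maximizers at all dyadic times, which together give Theorem \ref{T:directed-geodesic}.

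The hard part is the decoupling in the middle step: Lemma \ref{L:path-loc-2} is a bound over a \emph{fixed} endpoint box, so one must first control the wandering of $\Pi$ itself through $C''$ and then apply the lemma on a box of side $N=b^{3/\rho}$, checking that the polynomial factor $N^2$ this costs is absorbed by $e^{-c_2b^3}$. The remaining ingredients — the recursion $I_{k+1}\le\tfrac12I_k+\Delta_k$ and the geometric sums — are routine bookkeeping.
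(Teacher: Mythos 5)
Your construction is correct and shares the paper's skeleton: the same dyadic midpoint scheme, the same key input (Lemma \ref{L:path-loc-2}, i.e.\ Lemma \ref{L:airy-max} plus monotonicity and a union bound), the same summable scale $b\,2^{-2k/3}k^{1/3}$, and the same L\'evy-type bookkeeping to convert level-$k$ corrections into the stated modulus. The one place where you genuinely diverge is the step you yourself flag as the hard part: making Lemma \ref{L:path-loc-2}, which is stated for a \emph{fixed} box of endpoints, applicable to the random endpoints of the scheme. The paper does this inside the induction: the good events $A_k$ are defined over boxes $[-\ell_{k-1},\ell_{k-1}]^2$ with $\ell_k=h_1+\cdots+h_k=O(b)$, and on $\cap_k A_k^c$ the approximations are shown to stay in these boxes, so the box size is proportional to $b$ and the $e^{-c_2b^3}$ tail comes out directly with only a geometric-in-$k$ prefactor. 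You instead prove an a priori global bound $\sup_q|\Pi(q)|\le C''$ from the identity $\scrL(0,0;\Pi(q),q)+\scrL(\Pi(q),q;0,1)=\scrL(0,0;0,1)$ together with the parabolic decay of Corollary \ref{C:infty-blowup}, and then apply Lemma \ref{L:path-loc-2} once on a box of side $N=b^{3/\rho}$, absorbing the polynomial factor $N^2$ into the cubic exponential. Both routes work and give the claimed tail (yours with worse constants); yours decouples the localization from the recursion and so has a cleaner event structure, at the cost of importing the global estimate Corollary \ref{C:infty-blowup}, which the paper's proof of this proposition does not need. One small imprecision: almost-sure uniqueness of the level-$(k+1)$ midpoint maximizer does not follow directly from Lemma \ref{L:airy-max} when the endpoints are random; but this is immaterial since you take the rightmost maximizer, and (as in the paper) uniqueness of the dyadic points is recovered from the deterministic-endpoint identity $\scrL(0,0;\Pi(q),q)+\scrL(\Pi(q),q;0,1)=\scrL(0,0;0,1)$, to which Lemma \ref{L:airy-max} does apply.
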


\begin{proof}[Proof of Theorem \ref{T:directed-geodesic} and Proposition \ref{P:mod-dg}]
We first approximate the path $\Pi = \Pi_u$ on dyadic rationals.
Let $\Pi_1:[0, 1] \to \R$ be chosen so that $\Pi_1(0) = \Pi_1(1) = 0$ and
$$
\scrL(0, 0; \Pi_1(1/2), 1/2) + \scrL(\Pi_1(1/2), 1/2; 0, 1)  = \scrL(u),
$$
and so that $\Pi_1$ is linear at times in between. Now for $n \in \N$ and $i \in \{0, \dots 2^n\}$ define $t_{n, i} = i/2^n$. We then recursively define $\Pi_n:[0, 1] \to \R$ so that $\Pi_n(t_{n-1, i}) = \Pi_{n-1}(t_{n-1, i})$ for all $i \in \{0, 1, \dots, 2^{n - 1}\}$, and so that for odd $i \in \{0, 1, \dots, 2^n\}$, we have
\begin{align*}
\scrL(\Pi_n(t_{n, i-1}), t_{n, i-1}; \Pi_n(t_{n, i}), t_{n, i}) &+ \scrL(\Pi_n(t_{n, i}), t_{n, i}; \Pi_n(t_{n, i+1}), t_{n, i+1})  \\
&= \scrL(\Pi_n(t_{n, i-1}), t_{n, i-1}; \Pi_n(t_{n, i +1}), t_{n, i+1}).
\end{align*}
Define $\Pi_n$ to be linear at times in between. Such a sequence of paths $\Pi_n$ exists by property III of $\scrL$. 
We now show that $\Pi_n$ has a uniform limit satisfying the desired modulus of continuity. 

\medskip

Fix $b > 0$, and for $k \in \{1, 2 \dots\}$, define $h_k = b2^{-2k/3} k^{1/3}$. Also define $\ell_0 = 0$ and for $k \ge 1$, let $\ell_k=h_1+\ldots +h_k$. We recall the notation $Z^+_{t, r}$ and $Z^-_{t, r}$ from Lemma \ref{L:path-loc-2}.
Define
\begin{align*}
A_k & = \Bigg\{ \text{There exists } (x, y) \in [-\ell_{k}, \ell_{k}]^2, i \in \{1, \dots, 2^{k}\}, \text{ such that either } \\
&\lf|Z^+_{t_{k, i-1}, 2^{-k/3}}(x, y) -  (x + y)/2\rg| \ge h_{k+1} \;\text{ or }\; \lf|Z^-_{t_{k, i-1}, 2^{-k/3}}(x, y) -  (x + y)/2\rg| \ge h_{k+1} \Bigg\}.
\end{align*}
By Lemma \ref{L:path-loc-2} and a union bound,
\begin{equation}
\label{E:A-bd}
\p\lf(\bigcup_{k \ge 0} A_k \rg) \le c_1 e^{-c_2 b^3}
\end{equation}
for universal constants $c_1, c_2$. On the event $A_0^c$, we have $||\Pi_1||_\infty \le \ell_1$. Also, if $\|\Pi_{k-1}\|_\infty \le \ell_{k-1}$, and the event $A^c_{k-1}$ holds, then
\begin{equation}
\label{E:pi-k-approx}
\|\Pi_k-\Pi_{k-1}\|_\infty<h_{k} \quad \text{and the derivatives satisfy} \quad \|\Pi'_k-\Pi'_{k-1}\|_\infty<h_{k}2^k.
\end{equation}
As a consequence of the first bound in \eqref{E:pi-k-approx}, $\|\Pi_{k}\|_\infty \le \ell_{k}$. Hence on the complement of the event $A = \bigcup_{k \ge 0} A_k$, the bounds in \eqref{E:pi-k-approx} hold for all $k \ge 1$.
Since the sequence $h_k$ is summable, the first bound in \eqref{E:pi-k-approx} implies that $\Pi_k$ is a Cauchy sequence in the uniform norm, and hence has a continuous limit $\Pi$ on the event $A^c$. The modulus of continuity of $\Pi$ can be bounded as follows using \eqref{E:pi-k-approx} for any $r \in \N$.
$$
|\Pi(t_2)-\Pi(t_1)| \le |t_2 - t_1|\sum_{k=1}^{r}2^k h_k + 2\sum_{k=r}^{\infty} h_k.
$$
Setting $r=1-\lfloor \log_2|t_2-t_1| \rfloor$ and using the probability bound in \eqref{E:A-bd} then proves the modulus of continuity bound of Proposition \ref{P:mod-dg} for $\Pi$. Moreover, by construction, equality holds in \eqref{E:land-wt} for all dyadic time subdivisions. Continuity implies the same for all finite subdivisions. Thus $\Pi$ is a directed geodesic. Uniqueness follows from Lemma \ref{L:airy-max} applied at rational intermediate times. 
\end{proof}

\section{Joint limits of last passage paths}
\label{S:joint-limits-lp-paths}
In this section, we show that last passage paths in $\scrL_n$, defined in \eqref{E:last-pass}, converge jointly to directed geodesics in $\scrL$. The proof is essentially topological, once we establish a basic fact about the directed landscape. As a byproduct of the proof, we show that almost surely in $\scrL$, there exists a directed geodesic between every pair of points: Lemma \ref{L:rig-exist}.  This result is not implied by Theorem \ref{T:directed-geodesic}, which only concerns geodesics between single points. Lemma \ref{L:rig-exist} can also be used to give an alternate proof of existence and almost sure uniqueness of directed geodesics, but does not yield the strong modulus of continuity bound in Theorem \ref{T:directed-geodesic}.

\medskip

The proof of geodesic convergence is topological and follows from a deterministic convergence statement about particular functions $f: \R_\uparrow^4 \to \R \cup \{-\infty\}$. We say that a function $f:\R_\uparrow^4\to \R$ is a \textbf{landscape} if for every pair $x, y \in \R$ and $s < r < t \in \R$ we have that
\begin{equation}
\label{E:met-2}
f(x, s; y, t) = \max_{z \in \R} f(x, s; z, r) + f(z, r; y, t)
\end{equation}
We say that a landscape $f$ is \textbf{proper} if the following four conditions hold.
\smallskip
\begin{enumerate}[label=(\roman*), nosep]
\item The landscape $f$ is continuous.
\item For every bounded set $[-b, b]^4 \cap \R^4_\uparrow$, there exists a constant $c_b$ such that
$$
\lf|f(x, t; y, t + s) - \frac{(x -y)^2}{s} \rg| \le c_b
$$
for all $(x, t; y, t + s) \in [-b, b]^4$.
\item For every compact set $K \sset \R^4_\uparrow$, there exists a constant $b_K > 0$ such that the set where the function on the right hand side of \eqref{E:met-2} attains its maximum is contained in $[-b_K, b_K]$ for all $(x, s; y, t) \in K$ and $r \in (s, t)$.
\item For all $r < t$, $x \le x'$, and $y \le y'$, we have that
\begin{equation}
f(x, r; y, t) + f(x', r; y', t) \ge f(x, r; y', t) + f(x, r; y', t)
\end{equation}
\end{enumerate}

\smallskip

\noindent For any landscape $f$, we can define length $\int d f \circ \pi$ as in \eqref{E:land-wt} with $f$ in place of $\scrL$. Again, we say that a continuous function $\pi:[t, s] \to \R$ is a geodesic from $(\pi(t), t)$ to $(\pi(s), s)$ if
$$
\int d f \circ \pi = f(\pi(t), t; \pi(s), s).
$$
The directed landscape $\scrL$ is almost surely a proper landscape. This follows by Lemma \ref{L:metric-strong} (for (i) and (iii)), Proposition \ref{P:zero-decay} (for (ii)), and Lemma \ref{L:airy-facts} and continuity (for (iv)). Also, the last passage percolations $\scrL_n$ are landscapes. Our goal is to prove a deterministic  statement about convergence of geodesics. We need a notion of convergence that accommodates geodesics with different domains.

\medskip

For this, let $\scrG$ be the set of all continuous functions $\pi:[a, b] \to \R$ for any $a,b\in\R$. For $\pi:[a, b] \to \R \in \scrG$, let 
$$
\mathfrak{g}\pi = \{(\pi(r), r) : r \in [a, b] \}.
$$
The set $\mathfrak{g}\pi$ is the graph of $\pi$ with the coordinates switched. This is the natural order of coordinates in landscape notation. A sequence of functions $\pi_n \to \pi$ in $\scrG$ if $\mathfrak{g}\pi_n \to \mathfrak{g}\pi$ in the Hausdorff topology. When $\pi_n, \pi$ have the same domain, then this is equivalent to uniform convergence of functions.

\begin{theorem}
\label{T:geod-cvg}
Let $f_n$ be a sequence of landscapes converging to a proper landscape $f$ uniformly on compact subsets of $\R^4_\uparrow$. Suppose that for a fixed $(u, v) \in \R^4_\uparrow$, there is a unique geodesic $\pi$ from $u$ to $v$ in $f$. Suppose that $(u_n, v_n) \to (u, v)$, and 
that for all large enough $n$, there exists a geodesic $\pi_n$ from $u_n$ to $v_n$ in $f_n$. Then $\pi_n \to\pi$ in $\scrG$ as $n \to \infty$.
\end{theorem}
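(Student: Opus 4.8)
The plan is to argue by subsequences: since the target $\pi$ is fixed, it suffices to show that every subsequence of $(\pi_n)$ has a further subsequence converging uniformly on $[t,s]$ to $\pi$. So I fix a subsequence and proceed in three steps.

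\textbf{Step 1: uniform precompactness of the geodesics.} I would first show that, for all large $n$, the graphs of the $\pi_n$ lie in one compact subset of $\R\times[t,s]$ and that $\{\pi_n\}$ is equicontinuous, so that Arzel\`a--Ascoli applies. Since $\pi_n$ is a geodesic in $f_n$, the metric composition law for $f_n$ gives, for any $t\le r_1<r_2\le s$,
$$
f_n(x,t;\pi_n(r_1),r_1)+f_n(\pi_n(r_1),r_1;\pi_n(r_2),r_2)+f_n(\pi_n(r_2),r_2;y,s)=f_n(x,t;y,s),
$$
and each summand is the $f_n$-length of the corresponding restriction of $\pi_n$, hence a two-point value of $f_n$. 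Because $f_n\to f$ uniformly on compacts, the right side converges to the finite number $f(x,t;y,s)$; the parabolic control of properness~(ii) and the confinement of maximizers of properness~(iii) --- applied to the limit $f$ and transferred to $f_n$ for large $n$ --- then force the three two-point values on the left to stay bounded, and a second use of~(ii) pins $\pi_n(r)$ within $O(\sqrt{r-t}\wedge\sqrt{s-r})$ of the segment joining $(x,t)$ and $(y,s)$ and yields $|\pi_n(r_1)-\pi_n(r_2)|\le C\sqrt{|r_1-r_2|}$ with $C$ independent of $n$. \emph{This is the main obstacle:} (ii)--(iii) are hypotheses on the limit $f$ only, while $f_n\to f$ is uniform only on compact subsets of $\R^4_\uparrow$, so transferring the bounds to the $f_n$ near the time-endpoints $t,s$ --- equivalently near the diagonal of $\R^4_\uparrow$, which is exactly where the $\pi_n$ must be controlled --- needs care. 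I would handle this by first using~(iii) to confine $\pi_n$ on $[t+\delta,s-\delta]$ for each fixed $\delta>0$, and then bootstrapping inward: the restriction of $\pi_n$ to a short interval $[t,t+\delta]$ is a geodesic from $(x,t)$ to $(\pi_n(t+\delta),t+\delta)$, whose endpoint is close to $x$ by the interior estimate, and the monotonicity of geodesics furnished by~(iv) together with metric composition and~(ii) for $f_n$ then keep this short geodesic near $x$; symmetrically near $s$.

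\textbf{Step 2: extraction and identification of the limit.} By Step 1 and Arzel\`a--Ascoli, the fixed subsequence has a sub-subsequence $\pi_{n_k}$ converging uniformly on $[t,s]$ to a continuous path $g$ with $g(t)=x$, $g(s)=y$ and with graph in a compact set. For any partition $t=r_0<r_1<\dots<r_m=s$, the geodesic property of $\pi_{n_k}$ in $f_{n_k}$ reads $\sum_{i=1}^m f_{n_k}(\pi_{n_k}(r_{i-1}),r_{i-1};\pi_{n_k}(r_i),r_i)=f_{n_k}(x,t;y,s)$. All the points occurring here lie, for $k$ large, in a fixed compact subset of $\R^4_\uparrow$ on which $f_{n_k}\to f$ uniformly, while $\pi_{n_k}\to g$ uniformly; letting $k\to\infty$ gives $\sum_{i=1}^m f(g(r_{i-1}),r_{i-1};g(r_i),r_i)=f(x,t;y,s)$ for every partition, so $\int df\circ g=f(x,t;y,s)$ and $g$ is a geodesic in $f$ from $(x,t)$ to $(y,s)$.

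\textbf{Step 3: uniqueness and a byproduct.} Since the geodesic from $(x,t)$ to $(y,s)$ in $f$ is unique by hypothesis, $g=\pi$; as the original subsequence was arbitrary, $\pi_n\to\pi$ uniformly. Note that Steps 1--2 never use uniqueness: run with $f_n=\scrL_n$ and $\pi_n$ the rescaled last passage paths --- which exist for all $n$ because last passage percolation is discrete --- they exhibit a subsequential limit that is a directed geodesic in $\scrL$ between the prescribed pair of points, which gives the existence statement (Lemma~\ref{L:rig-exist}).
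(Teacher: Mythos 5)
Your Steps 2 and 3 are sound, but Step 1 -- which you yourself flag as the main obstacle -- contains a genuine gap, and it is precisely the point where the paper's argument takes a different (and necessary) route. Properties (ii)--(iv) of properness are hypotheses on the limit $f$ only, and uniform convergence on compact subsets of $\R^4_\uparrow$ gives no control whatsoever of $f_n$ either at large spatial arguments or near the time diagonal. Your plan to ``transfer (iii) to $f_n$ for large $n$'' to confine $\pi_n$ on $[t+\delta,s-\delta]$ does not follow: $\pi_n(r)$ is a maximizer of $z \mapsto f_n(x,t;z,r)+f_n(z,r;y,s)$, and nothing in the hypotheses prevents $f_n$ from taking large values at spatial points outside every compact set, where it need not be close to $f$; likewise the quadrangle inequality (iv) is simply not available for $f_n$. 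The only control of $f_n$ near the diagonal that the hypotheses yield is the one-sided bound obtained from the triangle inequality for $f_n$ together with properness of $f$ (Lemma \ref{L:limsup-princ}), and because of its $(s+\ep)$ regularization it can give at best an asymptotic equicontinuity statement, not the uniform H\"older-$1/2$ bound with $C$ independent of $n$ that you assert -- and even that weaker statement presupposes the spatial boundedness you have not established. Without Step 1, the Arzel\`a--Ascoli extraction in Step 2 has nothing to act on.

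The paper avoids a priori compactness of the paths entirely: it argues by contradiction, choosing for each bad $n$ a time $r_n$ at which $|\pi_n(r_n)-\pi(r_n)|=\ep$ exactly. This localizes the relevant evaluation point of $f_n$ within distance $\ep$ of the fixed compact graph of $\pi$, so the metric composition identity $f_n(x,t;\pi_n(r_n),r_n)+f_n(\pi_n(r_n),r_n;y,s)=f_n(x,t;y,s)$ can be passed to the limit: if $r_n$ accumulates in the interior one produces, via Lemma \ref{L:rig-exist}, a second geodesic through $(\pi(r)+\ep,r)$, contradicting uniqueness; if $r_n\to t$ or $s$, Lemma \ref{L:limsup-princ} drives one term to $-\infty$, contradicting the identity. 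If you want to rescue your compactness route, the spatial confinement would have to be proved by the same kind of level-crossing localization (e.g.\ at the first time $|\pi_n|$ reaches a large level $R$, the crossing point automatically lies in a compact set), at which point you have essentially reproduced the paper's key maneuver. Finally, your closing ``byproduct'' is backwards relative to the paper: Lemma \ref{L:rig-exist} is proved there directly for an arbitrary proper landscape and is an input to the proof of the theorem, whereas deriving existence from prelimiting geodesics would in any case yield less (only between endpoints admitting approximating geodesics, and only in the coupled limit).
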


We first show that geodesics exist in any proper landscape.
Recall that $\pi$ is a \textbf{rightmost geodesic} from $(x, t)$ to $(y, s)$ if $\pi \ge \tau$ for any other geodesic $\tau$ from $(x, t)$ to $(y, s)$. We can similarly define leftmost geodesics.

\begin{lemma}
\label{L:rig-exist}
Let $f$ be a proper landscape. Then for any $(x, t; y, s) \in \R^4_\uparrow$, there exist rightmost and leftmost geodesics from $(x, t)$ to $(y, s)$.
\end{lemma}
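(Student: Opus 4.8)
The plan is to construct the rightmost geodesic as a pointwise supremum of geodesics and then verify it is itself a geodesic using the properties of a proper landscape. First I would fix $(x,t;y,s)\in\R^4_\uparrow$ and let $\scrG$ be the set of all geodesics from $(x,t)$ to $(y,s)$ in $f$. This set is nonempty: by condition (iii) of properness, the maximizers in \eqref{E:met-2} live in a fixed compact interval $[-b_K,b_K]$ as $r$ ranges over a compact subinterval of $(t,s)$, so one can build a geodesic by a diagonal/compactness argument — take finer and finer dyadic partitions of $[t,s]$, at each stage pick a maximizing midpoint, use the uniform bound on maximizer locations plus continuity of $f$ to extract a subsequential uniform limit, and check the limit path achieves equality in \eqref{E:land-wt} for every partition (this is essentially the construction in the proof of Theorem \ref{T:directed-geodesic}, but without the modulus bound, hence shorter). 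Define $\pi^+(r)=\sup_{\sigma\in\scrG}\sigma(r)$ for each $r\in[t,s]$; by the compactness in (iii) this supremum is finite, and $\pi^+(t)=x$, $\pi^+(s)=y$.

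The key step is to show $\pi^+\in\scrG$. I would first argue that $\scrG$ is closed under the "order statistics" operation: if $\sigma_1,\sigma_2\in\scrG$, then the pointwise maximum $\sigma_1\vee\sigma_2$ and pointwise minimum $\sigma_1\wedge\sigma_2$ are both in $\scrG$. This is the same swapping argument used in Lemma \ref{L:rightmost} and Proposition \ref{P:tree-structure}: two geodesics with common endpoints, once they cross, can be spliced, and condition (iv) (the inequality $f(x,r;y,t)+f(x',r;y',t)\ge f(x,r;y',t)+f(x',r;y,t)$, which controls how lengths behave when one swaps sub-geodesics at a crossing time) guarantees the swapped paths have length at least that of the originals, hence exactly equal since the originals are geodesics. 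So $\scrG$ is a lattice. Then $\pi^+$ is an increasing limit of elements of $\scrG$ (take a countable dense-in-$\scrG$ family $\{\sigma_n\}$ realizing the sup at each rational time — one exists by separability considerations, or directly by a diagonal argument over a countable dense set of times — and pass to $\sigma_1\vee\cdots\vee\sigma_n\uparrow$). By continuity of $f$ and the uniform bound on locations, this monotone limit converges uniformly to a continuous path, which is $\pi^+$ (monotone pointwise convergence of continuous functions to a continuous limit is uniform by Dini), and length is continuous along this limit — so $\pi^+$ is a geodesic. By construction $\pi^+\ge\sigma$ for every $\sigma\in\scrG$, so it is the rightmost geodesic. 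The leftmost geodesic $\pi^-$ is obtained symmetrically by taking the pointwise infimum.

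The main obstacle I expect is the justification that the monotone limit of geodesics is again a geodesic — specifically, that $\int df\circ\pi^+ = f(x,t;y,s)$ passes to the limit. Length is defined as an infimum over partitions, so it is upper semicontinuous but not obviously continuous under uniform convergence of paths; however, for this limit one only needs the easy direction: each $\sigma_1\vee\cdots\vee\sigma_n$ achieves equality in \eqref{E:land-wt} for \emph{every} partition (since it is a geodesic, equality holds before any infimum), and by uniform convergence $\sigma_1\vee\cdots\vee\sigma_n\to\pi^+$ together with continuity of $f$, the identity $\sum_i f(\cdot,t_{i-1};\cdot,t_i)=f(x,t;y,s)$ survives in the limit for each fixed partition, which is exactly the geodesic property. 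The only genuinely delicate point is confirming that the pointwise sup over all of $\scrG$ is actually attained by a countable sub-supremum and that this produces a monotone sequence — this is where one invokes that the maximizer set in (iii) is uniformly bounded (so $\pi^+$ is well-defined and bounded) and the lattice property (so finite maxima stay in $\scrG$).
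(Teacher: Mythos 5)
Your strategy (build some geodesic by compactness, show the set of geodesics is a lattice under pointwise max/min via condition (iv), then take the supremum) is genuinely different from the paper's, but as written it has two real gaps, and they sit exactly where the paper spends its effort. First, the existence step: from condition (iii) you only get that your dyadic approximants take values in a fixed compact interval; uniform boundedness does not let you ``extract a subsequential uniform limit'' (Arzel\`a--Ascoli needs equicontinuity), and even a pointwise diagonal limit along dyadic times is useless until you prove the limit path is continuous, since the geodesic property for arbitrary partitions requires continuity. The needed ingredient is a modulus of continuity for (near-)geodesic paths, which must be extracted from condition (ii) together with the composition law (e.g.\ each increment term of a path satisfying composition equals $f(x,t;y,s)-f(x,t;\cdot,\cdot)-f(\cdot,\cdot;y,s)$, hence is bounded below, while (ii) bounds it above by $c_b$ minus a quadratic penalty, forcing increments of order $\sqrt{\delta}$). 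Nothing of this sort appears in your sketch, and it is precisely this continuity argument that constitutes most of the paper's proof: there the candidate rightmost geodesic is defined directly, $\pi(r)$ being the rightmost maximizer of $z\mapsto f(x,t;z,r)+f(z,r;y,s)$, and continuity is proved by a contradiction argument using (i)--(iii) and metric composition, after which the geodesic property is verified at two intermediate times using the quadrangle inequality (iv); no compactness over path space or lattice structure is needed.

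Second, the later steps inherit the same problem and add one more. Your Dini argument presupposes that the monotone limit of the finite maxima is continuous, which is again the unproved point (it would follow from the same uniform modulus for geodesics, but you must say so rather than invoke Dini). And the lattice claim ``$\sigma_1\vee\sigma_2,\ \sigma_1\wedge\sigma_2\in\scrG$'' cannot simply be imported from Lemma \ref{L:rightmost}: in the landscape, length is an infimum over partitions, the two geodesics may cross infinitely often, and the discrete bookkeeping (``the swapped paths cover the same lines'') has no direct analogue. It can be proved, either by splicing at crossing times (using that a geodesic restricted between two of its points is a geodesic and achieves the composition maximum) or by a two-time application of (iv) as in the paper, but as stated it is an assertion, not an argument. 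In short, the plan is salvageable, but the decisive ideas --- deriving equicontinuity/continuity from property (ii) and making the swapping step rigorous via (iv) --- are missing, and they are the substance of the lemma.
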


\begin{proof}
We will show the existence of a rightmost geodesic $\pi$. The leftmost geodesic exists by a symmetric argument.
We define $\pi(t) = x$ and $\pi(s) = y$, and for every $r \in (s, t)$, set $\pi$ to be the rightmost maximizer of the function on the right hand side of \eqref{E:met-2}. The function $\pi$ is bounded by condition (iii) in the definition of proper landscape. We need to check that $\pi$ is continuous and that $\pi$ is a geodesic. 

\medskip

To check continuity, consider a sequence $r_n \to r \in [s, t]$. 
We will use the equality
\begin{equation}
\label{E:f-rn}
f(x,s ; y, t) = f(x, s; \pi(r_n), r_n) + f(\pi(r_n), r_n; y, t).
\end{equation}
First consider the case when $r_n \to s$. By continuity of $f$ and the boundedness of $\pi$, the second term on the right hand side of \eqref{E:f-rn} remains bounded as $n \to \infty$. Therefore the first term $f(x, s; \pi(r_n), r_n)$ also remains bounded, and so by condition (ii) in the definition of proper landscape we must have $\pi(r_n) \to \pi(r)$ since otherwise $f(x, s; \pi(r_n), r_n)$ would get arbitrarily large and negative as $n \to \infty$. The case when $r_n \to t$ follows a symmetric argument.

\medskip

Now suppose $r_n \to r \in (s, t)$ and let $w$ be a limit point of $\pi(r_n)$. By passing to a subsequence, we can assume that $\pi_n(r_n) \to w$, and that either $r_n < r$ for all $n$ or $r_n > r$ for all $n$. These two cases have symmetric arguments, so we will assume $r_n < r$ for all $n$.  By continuity of $f$, along this subsequence the right hand side of \eqref{E:f-rn} converges to
$$
f(x, s; w, r) + f(w, r; y, t).
$$
In particular, \eqref{E:f-rn} and the metric composition law for $f$ implies that $w$ maximizes the function
$$
z \mapsto f(x, s; z, r) + f(z, r; y, t).
$$
Since $\pi(r)$ is the rightmost maximizer of this function, $w \le \pi(r)$. We now prove the opposite inequality.

\medskip

Metric composition implies that there exists a point $z_n \in \R$ such that
\begin{equation}
\label{E:f-tritri}
   f(x,s ; y, t) = f(x, s; z_n, r_n) + f(z_n, r_n; \pi(r), r) + f(\pi(r), r; y, t).
\end{equation}
Equation \eqref{E:f-tritri} and the metric composition law for $f$ imply that
$$
f(x,s ; y, t) = f(x, s; z_n, r_n) + f(z_n, r_n; y, t).
$$
Therefore both $z_n$ and $\pi(r_n)$ maximize the function in the right hand side \eqref{E:met-2} at $r_n$, so $z_n \le \pi(r_n)$ since $\pi(r_n)$ is the rightmost maximizer. The sequence $z_n$ is bounded by condition (iii) in the definition of proper landscape.
Moreover, $f(z_n, r_n; \pi(r), r)$ must stay bounded as $r_n \to r$ in order to preserve the equality in \eqref{E:f-tritri}, since all other terms in \eqref{E:f-tritri} stay bounded by continuity of $f$. Therefore just as in the paragraph following \eqref{E:f-rn}, $z_n \to \pi(r)$ as $n \to \infty$. Since $z_n \le \pi(r_n)$ and $\pi(r_n) \to w$, we have that $\pi(r) \le w$ as desired.

\medskip

To check that $\pi$ is a geodesic, just as in  \eqref{E:geodesic-3} it is enough to show that for any $r_1 < r_2 \in(s, t)$ we have 
\begin{equation}
\label{E:want-spl-m}
f(x,s;y,t) = f(x, s; \pi(r_1), r_1)  + f(\pi(r_1), r_1; \pi(r_2), r_2) + f(\pi(r_2), r_2; y, t).
\end{equation}
By the metric composition law, we can find a point $z_1$ such that 
\begin{equation}
\label{E:fz1}
    f(x,s;\pi(r_2),r_2) = f(x, s; z_1, r_1)  + f(z_1, r_1; \pi(r_2), r_2).
\end{equation}
By the construction of $\pi(r_2)$, \eqref{E:fz1} implies that
\begin{equation}
\label{E:pir2}
    f(x, s; y, t)  =f(x, s; z_1, r_1)  + f(z_1, r_1 ; \pi(r_2), r_2) + f(\pi(r_2), r_2; y, t).
\end{equation}
By the triangle inequality for $f$ applied at the points $(z_1, r_1), (\pi(r_2), r_2)$, and $(y, t)$, \eqref{E:pir2} implies that
$$
f(x, s; y, t) \le f(x, s; z_1, r_1)  + f(z_1, r_1; y, t).
$$
Metric composition for $f$ implies that the opposite inequality must also hold, and hence that $z_1$ maximizes the function $z \mapsto f(x, s; z, r_1) + f(z, r_1; y, t)$.
In particular, $z_1 \le \pi(r_1)$ since $\pi(r_1)$ is the rightmost maximizer of this function.
By a similar argument, we can find a point $z_2$ such that $z_2 \le \pi(r_2)$ and
\begin{equation}
\label{E:pir1pir1}
    f(x, s; y, t) =f(x, s; \pi(r_1), r_1)  + f(\pi(r_1), r_1 ; z_2, r_2) + f(z_2, r_2; y, t).
\end{equation}
Now, we can sum the equalities \eqref{E:pir2} and \eqref{E:pir1pir1} 
and apply property (iv) of proper landscapes to the points $z_1 \le \pi(r_1)$ and $z_2 \le \pi(r_2)$ to get that
\begin{align}
\nonumber
&2f(x, s; y, t) \le\\
\label{E:r1}
&f(x, s; z_1, r_1)  + f(z_1, r_1; z_2, r_2) + f(z_2, r_2; y, t) \\
\label{E:r2}
+ &f(x, s; \pi(r_1), r_1)  + f(\pi(r_1), r_1 ; \pi(r_2), r_2) + f(\pi(r_2), r_2; y, t)
\end{align}
The triangle inequality for $f$ implies that each of \eqref{E:r1} and \eqref{E:r2} are less than or equal to $f(x, s; y, t)$. By the inequality above, this implies that \eqref{E:r2} is in fact equal to $f(x, s; y, t)$, yielding \eqref{E:want-spl-m}.
\end{proof}

We will also need the following lemma about landscape convergence.

\begin{lemma}
\label{L:limsup-princ}
Let $f_n$ be a sequence of landscapes converging uniformly on compact sets to a proper landscape $f$, and fix a bounded set $B = [-b, b]^4 \cap \R^4_\uparrow$. Then there exists a positive constant $c_b$ such that for all $\ep \in (0, 1)$ there exists $n_\ep \in \N$ such that for all $n \ge n_\ep$ and $(x, t; y; t + s) \in B$ we have
$$
f_n(x, t; y; t + s) \le c_b -\frac{(x -y)^2}{s + \ep}.
$$
\end{lemma}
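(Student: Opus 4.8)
The plan is to fix $\ep\in(0,1)$ and split the compact set $B=[-b,b]^4\cap\R^4_\uparrow$ according to whether the time increment $s$ is bounded below or close to $0$, using uniform convergence directly in the first case and a metric‑composition argument to reduce to it in the second. Write $B'=[-(b+1),b+1]^4\cap\R^4_\uparrow$ and let $c'\ge 0$ be the constant from property (ii) in the definition of a proper landscape attached to $B'$, so that $f(x,t;y,t+s)\le c'-(x-y)^2/s$ for $(x,t;y,t+s)\in B'$, and in particular $|f(x,t;x,t+s)|\le c'$ there (the parabola term vanishes when the two spatial endpoints coincide). I will show the lemma holds with $c_b=2c'+2$, which does not depend on $\ep$.

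For the region $\{(x,t;y,t+s)\in B:s\ge\ep/2\}$, which is a compact subset of $\R^4_\uparrow$, uniform convergence gives $f_n\le f+1$ there for $n$ beyond some ($\ep$‑dependent) threshold, and since $B\subseteq B'$ property (ii) yields $f_n(x,t;y,t+s)\le c'+1-(x-y)^2/s\le c_b-(x-y)^2/(s+\ep)$, the last step using $s\le s+\ep$. The interesting region is $\{s<\ep/2\}$, near the diagonal $\{t=t+s\}$, where both $f$ and $f_n$ blow up like $-(x-y)^2/s$ and there is no uniform convergence. Here I would use the metric composition law for the landscape $f_n$ with intermediate point $(y,t+s)$, which since $s<\ep$ gives $f_n(x,t;y,t+\ep)\ge f_n(x,t;y,t+s)+f_n(y,t+s;y,t+\ep)$, hence
\[
f_n(x,t;y,t+s)\le f_n(x,t;y,t+\ep)-f_n(y,t+s;y,t+\ep).
\]
The point $(x,t;y,t+\ep)$ has the fixed increment $\ep$ and lies in a compact subset of $\R^4_\uparrow$, so uniform convergence plus property (ii) bound it by $c'+1-(x-y)^2/\ep$; the ``stub'' $(y,t+s;y,t+\ep)$ has increment $\ep-s\in(\ep/2,\ep)$, bounded away from $0$, so it too lies in a compact subset of $\R^4_\uparrow$ and, its spatial endpoints being equal, $f_n(y,t+s;y,t+\ep)\ge f(y,t+s;y,t+\ep)-1\ge -c'-1$ for $n$ large. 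Combining, $f_n(x,t;y,t+s)\le 2c'+2-(x-y)^2/\ep\le c_b-(x-y)^2/(s+\ep)$, using $\ep\le s+\ep$.

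Taking $n_\ep$ to be the larger of the two thresholds finishes the argument. Both compact sets in the second region depend on $\ep$, but this is harmless since $n_\ep$ is allowed to; what matters is that $c_b$ does not. The step I expect to be the main obstacle is precisely this near‑diagonal region $\{s<\ep/2\}$: the trick is to grow the time increment to exactly $\ep$ by metric composition so that everything takes place on compact sets where the (possibly $\ep$‑dependent) uniform convergence and the uniform parabolic bound on $f$ apply, and the threshold $\ep/2$ is chosen so that the short stub has increment $\ep-s$ bounded below. The remaining points — that the auxiliary points stay in $\R^4_\uparrow$ and inside $[-(b+1),b+1]^4$ (immediate since $\ep<1$), and the two monotonicity inequalities $-(x-y)^2/s\le -(x-y)^2/(s+\ep)$ and $-(x-y)^2/\ep\le -(x-y)^2/(s+\ep)$ used to pass to the $s+\ep$ denominator — are routine.
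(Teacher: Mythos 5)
Your argument is correct and is essentially the paper's proof: the paper applies the same triangle-inequality step $f_n(x,t;y,t+s)\le f_n(x,t;y,t+s+\ep)-f_n(y,t+s;y,t+s+\ep)$ at every point of $B$ (so no case split on $s$ is needed), then uses uniform convergence on the resulting compact set, where the time increments are at least $\ep$, together with property (ii) of proper landscapes exactly as you do. The only difference is cosmetic: extending the increment to $s+\ep$ rather than to $\ep$ yields the $-(x-y)^2/(s+\ep)$ term directly, whereas you recover it from $-(x-y)^2/\ep$ (near the diagonal) or $-(x-y)^2/s$ (away from it) by monotonicity.
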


\begin{proof}
Fix $(x, t ; y, t + s) \in B$ and $\ep \in (0,1)$. By the triangle inequality for $f_n$, we have that
$$
f_n(x, t; y, t+s) \le f_n(x, t; y, t+s + \ep) - f_n(y, t + s; y, t+s + \ep).
$$
In particular, the right hand side above converges uniformly to
$$
f(x, t; y, t+s + \ep) - f(y, t + s; y, t+s + \ep)
$$
for $[x, t; y; t + s] \in [-b, b]^4$ for any $b, \ep > 0$. Using condition (ii) in the definition of proper landscape for $f$ then implies the lemma.
\end{proof}

\begin{proof}[Proof of Theorem \ref{T:geod-cvg}]
Let $(u, v) = (x, t; y, s)$ and $(u_n, v_n) = (x_n, t_n; y_n, s_n)$. 
Suppose that $\pi_n$ does not converge to $\pi$ in $\scrG$. Then there exists a subsequence $Y$ and an $\ep > 0$ such that for every $n \in Y$, the Hausdorff distance between $\mathfrak{g}\pi_n$ and $\mathfrak{g} \pi$ is greater than $\ep$. Let 
$$
K_\ep = \{w \in \R^2 : \min_{w' \in \mathfrak{g} \pi} \|w - w'\| = \ep \}.
$$
Now for large enough $n$,  $\|u_n-u\|<\ep$ and  $\|v_n-v\|<\ep$. For such $n \in Y$, 
the continuity of the $\pi_n$ guarantees that $\mathfrak{g}\pi_n$ must intersect $K_\ep$ at  some point $w_n$. By passing to a further subsequence, we can guarantee that $w_n$ converges to a point $w = (z, r) \in K_\ep$. The time coordinate $r$ is in $[t, s]$. Since each $\pi_n$ is a geodesic, we have the metric composition law
 \begin{equation}
 \label{E:Ln'}
  f_n(u_n, w_n) +  f_n(w_n, v_n) = f_n(u_n, v_n).
 \end{equation}
 If $r \in (s, t)$, then both sides above must converge to the corresponding values of $f$ since all the points involved lie in a common compact subset of $\R^4_\uparrow$. This yields the equality
$$
f(u, w) +  f(w, v) = f(u, w).
$$
By Lemma \ref{L:rig-exist}, there are geodesics in $f$ from $u$ to $w$ and from $w$ to $v$. Concatenating these two geodesics gives a geodesic from $u$ to $v$. Since $w \notin \mathfrak{g} \pi$, this contradicts the uniqueness of $\pi$.

 \medskip

 Therefore either $r_n \to t$ or $r_n \to s$. We will only deal with the case $r_n \to t$, as the case where $r_n \to s$ is similar. In this case, all the points $(u_n, v_n), (w_n, v_n)$ are contained in a common compact set in $\R^4_\uparrow$, so
\begin{equation}
\label{E:ff}
f_n(u_n, v_n) - f_n(w_n, v_n) \to f(u, v) - f(w, v).
\end{equation}
On the other hand, all the points $(u_n, w_n)$ lie in a bounded subset of $\R^4_\uparrow$, and they converge to a point $(u, w) = (x, t, x', t)$ with $x \ne x'$. Therefore by Lemma \ref{L:limsup-princ}, we have that 
$$
\lim_{n \to \infty} f_n(u_n, w_n) = -\infty.
$$
Combining this with \eqref{E:ff} contradicts \eqref{E:Ln'} for large enough $n$.
\end{proof}

Next, we apply Theorem \ref{T:geod-cvg} to prove convergence of last passage paths in the prelimiting last passage percolations $\scrL_n$, Theorem \ref{T:intro-cvg-paths}. This is not immediate from Theorem \ref{T:geod-cvg} since the rescaled last passage paths in Theorem \ref{T:intro-cvg-paths} are not geodesics in $\scrL_n$. However, we can show that these rescaled paths will always be close to geodesics by tweaking the definitions.

Recall that $\scrL_n$ is the rescaled version of 
$$
B[(x, t)_n \LP (y, s)_n] = B[(t + 2xn^{-1/3}, -\floor{tn}) \LP (s + 2yn^{-1/3}, - \floor{sn})].
$$
Fix $(x, s; y, t) \in \R^4_\uparrow$ and let 
$$
\pi_n= \pi_n[(x, t)_n, (y, s)_n]
$$
be any last passage path in $B$ from $(x, t)_n$ to $(y, s)_n$. Let 
$
h_n
$ be the increasing affine function mapping $[t, s]$ onto $[t + 2xn^{-1/3}, s + 2yn^{-1/3}]$, and define the rescaled last passage path
$$
\tau_n(r) = \frac{\pi_n \circ h_n(r) + n h_n(r)}{2n^{2/3}}
$$
for $r \in [t,s]$. We would like to use Theorem \ref{T:geod-cvg} to show that $\tau_n$ converges to a directed geodesic in $\scrL$, but as written the path $\tau_n$ is not a geodesic in $\scrL_n$. In particular, it is not continuous. We work around this by constructing geodesics $\sig_n$ which are close to $\tau_n$. 
First define a function $\tilde \pi_n:[t + 2xn^{-1/3}, s + 2yn^{-1/3}] \to \R$ whose inverse will be continuous and will help us define $\sigma_n$. The function $\tilde \pi_n$ has the following specifications.
\begin{itemize}
    \item Let $z_1 < \dots < z_k$ be the set of points in $[t + 2xn^{-1/3}, s + 2yn^{-1/3}]$ when $$\lim_{z \to z_i^+} \pi_n(z) \ne \lim_{z \to z_i^-} \pi_n(z).
    $$
    Here recall the convention that 
    $$
    \lim_{z \to (t + 2xn^{-1/3})^-} \pi_n(z) = -\floor{tn} \quad \mathand \quad \lim_{z \to (s + 2yn^{-1/3})^+} \pi_n(z) = -\floor{sn}.
    $$
    Set $z_0 = t + 2xn^{-1/3}$ and $z_{k+1} = s + 2yn^{-1/3}$.
   On each of the intervals $[z_i, z_{i+1})$, let $\tilde \pi_n$ be equal to the linear function satisfying $\tilde \pi_n(z_i) = \pi_n(z_i)$ and $\tilde \pi_n(z_{i+1}) = \pi_n(z_i) - 1/2$. Informally, we are giving the flat parts of $\pi_n$ some slope so that there is a  continuous inverse. The constant $1/2$ yields better behavior at the right endpoint than the more natural $1$.
    \item Let $\tilde \pi_n(s + 2yn^{-1/3}) = -\floor{sn} -1/2$. 
\end{itemize}
The function $\tilde \pi_n$ is a strictly decreasing function from $[t + 2xn^{-1/3}, s + 2yn^{-1/3}]$ to $[-\floor{sn} - 1/2, -\floor{tn}]$. There is a unique surjective nonincreasing function 
$$
\tilde \pi_n^{-1}: [-\floor{sn} - 1/2, -\floor{tn}] \to [t + 2xn^{-1/3}, s + 2yn^{-1/3}]
$$
such that
$
\tilde \pi_n^{-1} \circ \tilde \pi_n
$
is the identity map on $[t + 2xn^{-1/3}, s + 2yn^{-1/3}]$. The construction of $\tilde \pi_n^{-1}$ guarantees that any point $(\tilde \pi_n^{-1}(r), \ceil{r})$ lies along the path $\pi$, see Section \ref{SS:lpp} for the definition of a point lying along a path. Therefore the geodesic property of $\pi$ guarantees that for any partition $r_0 < \dots < r_k$ of $[-\floor{sn} - 1/2, -\floor{tn}]$ we have
\begin{equation}
\label{E:pitilde}
B[(x, t)_n \to (y,s)_n] = \sum_{i=1}^k B[(\tilde \pi_n^{-1}(r_{i}), \ceil{r_{i}}) \to (\tilde \pi_n^{-1}(r_{i-1}), \ceil{r_{i-1}})].
\end{equation}
Now define the function $\sig_n:[\floor{tn}/n, (\floor{sn} + 1/2)/n] \to \R$ by
$$
\sig_n(r) = \frac{n(\tilde \pi_n^{-1}(-rn) - r)}{2n^{2/3}}.
$$
By \eqref{E:pitilde}, the function $\sig_n$ is a geodesic in $\scrL_n$ from $(x + O(n^{-2/3}), t + O(n^{-1}))$ to $(y + O(n^{-2/3}), s + O(n^{-1}))$.
Therefore by Theorem \ref{T:geod-cvg}, in a coupling where $\scrL_n \to \scrL$ almost surely and $\scrL$ has a unique directed geodesic $\Pi$ from $(x, t)$ to $(y, s)$, the functions $\sig_n$ will converge to $\Pi$ in $\mathcal{G}$. That is, the graphs of $\sig_n$ will converge to the graph of $\Pi$ in the Hausdorff topology. We want to show that in this case, the functions $\tau_n$ converge to $\Pi$ uniformly. This is accomplished by the following lemma.

\begin{lemma}
\label{L:det-cvg-pat}
With all functions defined as above, suppose that $\sig_n$ converges almost surely in $\mathcal{G}$ to a function $\sig:[t, s] \to \R$. Then $\tau_n \to \sig$ uniformly almost surely.
\end{lemma}

\begin{proof}
Fix a point $\om$ in the probability space such that $\sig_n(\om) \to \sig(\om)$ in $\scrG$. We show that $\tau_n(\om) \to \sig(\om)$ uniformly.
We will work with the set
$$
\Ga_n = \{(\tau_n(r), h_n(r) - 2\tau_n(r) n^{-1/3}) : r \in [t,s]\}.
$$
Let $\al_n$ be the linear map from $\R^2 \to \R^2$ given by $\al_n(x, t) = (t + 2xn^{-1/3}, -tn)$. We have
$$
\al_n \Ga_n = \{(h_n(r), \pi_n \circ h_n(r)) : r \in [t,s] \}.
$$
The set $\al_n\Ga_n$ is the graph of $\pi_n$.  Recall that 
$$
\mathfrak{g} \sig_n = \{(\sig_n(r), r) : r \in [t, s]\}.
$$
Letting $\beta(x, s) = (x, \ceil{s})$, we also have that 
$$
\beta\al_n \mathfrak{g} \sig_n = \{ (\tilde \pi_n^{-1}(r), \ceil{r}) : r \in [-\floor{sn} - 1/2, -\floor{tn}]\}. 
$$
The set $\beta\al_n \mathfrak{g} \sig_n$ consists of all points that lie along $\pi_n$.
In particular, $\al_n \Ga_n \sset \beta\al_n \mathfrak{g} \sig_n$, and so
\begin{equation}
\label{E:Gann}
    \Ga_n \sset \al_n^{-1} \beta \al_n \mathfrak{g} \sig_n.
\end{equation}
A straightforward computation yields that 
\begin{equation}
\label{E:unif-norm}
||\al_n^{-1}\beta \al_n(u) - u|| \le 2 n^{-1/3}  
\end{equation}
for all $u \in \R^2$. Thus all points in $\Gamma_n$ are in the $2n^{-1/3}$-neighborhood of $\mathfrak{g}\sigma_n$. The Hausdorff convergence of $\mathfrak{g}\sig_n$ to $\mathfrak{g} \sig$ then implies that the sequence of closures $\bar \Ga_n$ has subsequential limits and they are all subsets of $\mathfrak{g} \sig$.

\medskip

Since $\tau_n(r)$ is a coordinate of $\Ga_n$ for all $r, n$ this implies that there exists a constant $C(\om)$ such that $|\tau_n(r)| \le C$ for all $n \in \N, r \in [t, s]$. Also, since $h_n$ is the affine map from $[t + 2xn^{-1/3}, s + 2yn^{-1/3}] \to [t, s]$, we have that $h_n (r) = r + O(n^{-1/3})$. Therefore
\begin{equation}
\label{E:tau-h}
  (\tau_n(r), h_n(r) - \tau_n(r)n^{-1/3}) = (\tau_n(r), r + O(n^{-1/3})).
\end{equation}
Equation \eqref{E:tau-h} implies that any subsequential limit of $\bar \Ga_n$ must intersect each of the lines $\R \X \{r\}$ for every $r \in [t, s]$. Since $\mathfrak{g} \sig$ intersects each of these lines exactly once and all subsequential limits of $\bar \Ga_n$ are subsets of $\mathfrak{g} \sig$, we get that $\bar \Ga_n \to \mathfrak{g} \sig$. 

\medskip

Equation \eqref{E:tau-h} also implies that the Hausdorff distance between $\close{\Ga_n}$ and $\close{\mathfrak{g} \tau_n}$ is $O(n^{-1/3})$, and so $\close{\mathfrak{g} \tau_n} \to \mathfrak{g} \sig$ as well. By the equivalence of Hausdorff convergence of graphs and uniform convergence to continuous functions, this completes the proof.
\end{proof}

Theorem \ref{T:geod-cvg} combined with Lemma \ref{L:det-cvg-pat} immediately allows us to conclude joint convergence of last passage paths to directed geodesics. This next theorem is a restatement of Theorem \ref{T:intro-cvg-paths}.

\begin{theorem}
\label{T:blppgeod}
	Let the Brownian last passage percolations $\scrL_n$ and $\scrL$ be coupled so that $\scrL_n \to \scrL$ uniformly on compact sets almost surely. Then there exists an event $A$ of probability $1$ with the following property. For $u = (x, t; y, s) \in \R^4_\uparrow$, let $C_u$ be the set where the directed geodesic $\Pi_u$ is unique in $\scrL$, and let $h_{u, n}$ be the increasing affine function mapping $[t, s]$ onto $[t + 2xn^{-1/3}, s + 2yn^{-1/3}]$.
	
\medskip

For any $u \in \R^4_\uparrow$, and any sequence of last passage paths $\pi_{u, n}$ from $(x, t)_n$ to $(y, s)_n$ in $\scrL_n$, we have that
$$
\frac{\pi_{u, n} \circ h_{u, n} + n h_{u, n}}{2n^{2/3}} \to \Pi_u \quad \text{uniformly on the almost sure event }A\cap C_u.
$$
\end{theorem}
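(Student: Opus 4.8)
The plan is to combine the deterministic geodesic-convergence result (Theorem \ref{T:geod-cvg}) with the deterministic path-translation lemma (Lemma \ref{L:det-cvg-pat}), exactly as flagged in the sentence preceding the theorem, taking care that the almost-sure events involved can be chosen independently of $u$. I would let $A$ be the intersection of the event that $\scrL_n \to \scrL$ uniformly on compact subsets of $\R^4_\uparrow$, which has probability $1$ by the choice of coupling (Theorem \ref{T:landscape-limit}), with the event that $\scrL$ is a proper landscape, which has probability $1$ by the discussion following the definition of ``proper'' (Lemma \ref{L:metric-strong}, Proposition \ref{P:zero-decay}, Lemma \ref{L:airy-facts}). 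Neither event depends on $u$. For every large $n$, each $\scrL_n$ is finite and satisfies the metric composition law on any fixed compact subset of $\R^4_\uparrow$, hence is a landscape there in the required sense.

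Now fix $u = (x, t; y, s) \in \R^4_\uparrow$ and work on $A \cap C_u$. For all large $n$ a last passage path $\pi_{u, n}$ from $(x, t)_n$ to $(y, s)_n$ exists, and the construction carried out in the paragraphs preceding Lemma \ref{L:det-cvg-pat} attaches to it a continuous, time-parametrized geodesic $\tau_n : [t, s] \to \R$ of $\scrL_n$ from $(x, t)$ to $(y, s)$ satisfying the sandwiching relation \eqref{E:geo-relate} with $g_n = \pi_{u, n}$. That construction was phrased for the rightmost last passage path, but the only input used is that $\pi_{u, n}$ passes through a metric-composition split point at every intermediate time level --- which holds for every last passage path by the geodesic property and metric composition law (Lemma \ref{L:geod-btw}, Lemma \ref{L:metric}) --- together with the fact that continuity can be imposed on $\tau_n$ without destroying metric composition; hence it applies to an arbitrary $\pi_{u,n}$ and yields an honest geodesic $\tau_n$ of $\scrL_n$. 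On $C_u$ the directed geodesic $\Pi_u$ is the unique geodesic from $(x, t)$ to $(y, s)$ in $\scrL$, so Theorem \ref{T:geod-cvg}, applied with $f_n = \scrL_n$, $f = \scrL$, and the geodesics $\tau_n$, gives $\tau_n \to \Pi_u$ uniformly on $[t, s]$.

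Finally, I would feed this into Lemma \ref{L:det-cvg-pat}, taking $f_n(r) = r + \tau_n(r) n^{-1/3}$ (nondecreasing, with range $[t + x n^{-1/3}, s + y n^{-1/3}]$ and $n^{1/3}[f_n(r) - r] = \tau_n(r) \to \Pi_u(r)$ uniformly), $g_n = \pi_{u, n}$, and $h_n = h_{u, n}$. The hypothesis of the lemma relating $g_n$ and $f_n$ is exactly \eqref{E:geo-relate}, so its conclusion is
$$
\frac{\pi_{u, n} \circ h_{u, n} + n h_{u, n}}{n^{2/3}} \to \Pi_u \qquad \text{uniformly}.
$$
Since $A$ does not depend on $u$, this holds on $A \cap C_u$ for every $u \in \R^4_\uparrow$, which is the claim. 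The only step requiring genuine (if mild) care is the second paragraph's verification that the $\tau_n$ construction survives the passage from the rightmost last passage path to an arbitrary one and still produces a bona fide landscape geodesic of $\scrL_n$, so that Theorem \ref{T:geod-cvg} is applicable; everything else is bookkeeping around the coordinate changes.
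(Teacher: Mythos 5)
Your proposal is correct and follows essentially the same route as the paper, which presents this theorem as an immediate consequence of Theorem \ref{T:geod-cvg} and Lemma \ref{L:det-cvg-pat}, using exactly the construction of the continuous geodesics $\tau_n$ satisfying \eqref{E:geo-relate} described before that lemma and taking $A$ to be the ($u$-independent) full-probability event where $\scrL_n\to\scrL$ uniformly on compacts and $\scrL$ is a proper landscape. Your observation that the $\tau_n$ construction only uses metric composition along an arbitrary last passage path (not just the rightmost one) matches the paper's intent, since its theorem is stated for any sequence of last passage paths.
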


In Theorem \ref{T:blppgeod}, $A$ is the event where $\scrL$ is a proper landscape. This guarantees that we can apply Theorem \ref{T:geod-cvg} on $A \cap C_u$. Note that the set $C_u$ is not obviously a Borel measurable event in itself. However, when $\scrL$ is a proper landscape, uniqueness of the directed geodesic from $(x, t)$ to $(y, s)$ is equivalent to the statement that each of the continuous functions
$$
f_r(z) = \scrL(x, t; z, r) + \scrL(z, r; y, s)
$$
have a unique maximum, for $r \in (t, s) \cap \Q$. Therefore $C_u \cap A$ is Borel measurable since on the event $A$, uniqueness of geodesics can be checked with only countably many conditions.

\section{Open questions}

There are several natural open questions related to the directed landscape. We collect a few of these here. Some of these are geometric in nature, while others are about desired explicit formulas.

\medskip

Definition \ref{D:airy-sheet} constructs the Airy sheet on $\R_+ \X \R$ as a deterministic function of the Airy line ensemble $\scrA$. We believe that the whole Airy sheet should be a deterministic function of $\scrA$ in the following way. Define $\scrH:\R^2 \to \R$ so that $\scrH|_{(0,\infty) \X \R}$ is defined by \eqref{E:buse} and $\scrH(0, y) = \scrA_1(y)$. Now define $\scrH(-x, y)$ for $x > 0$ by applying the same formulas to the reflected version of $\scrA$: $\scrA_\cdot(-\,\cdot)$. This defines  $\scrH$ on $\mathbb R^2$ as a deterministic functional of $\scrA$.
\begin{conj}
	\label{C:H-Airy-sheet}
The function $\scrH$ is an Airy sheet.
\end{conj}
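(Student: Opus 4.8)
The plan is to show that $\scrH$ fulfils the two defining conditions of Definition \ref{D:airy-sheet} (equivalently Definition \ref{D:airy-sheet-i}); uniqueness of the Airy sheet law (Proposition \ref{P:uniq-sheet}) then identifies the law of $\scrH$ with that of $\scrS$. Note first that condition (ii) is essentially built into the construction: we take the Airy line ensemble in (ii) to be $\scrA$ itself, so that $\scrH(0,\cdot)=\scrA_1$ by definition and \eqref{E:buse}, hence \eqref{E:Airy-sheet-i}, holds for rational triples in $\Q^+\times\Q^2$ by the very definition of $\scrH$ on the right half-plane. Indeed, by Lemma \ref{L:sheet-lem} together with Remark \ref{R:busemann-def}, the restriction $\scrS|_{[0,\infty)\times\R}$ is already a deterministic functional of $\scrA$, and it agrees with $\scrH|_{[0,\infty)\times\R}$ (for each fixed $x>0$ the two have the same $y$-increments, and the common normalisation $\scrH(x,\cdot),\scrS(x,\cdot)\to\scrA_1$ as $x\to0^+$, legitimate by the monotonicity in $x$ of Proposition \ref{P:mono-inc}, pins the additive constant to zero). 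So all the content of the conjecture is: (a) $\scrH$ is a.s. a genuine continuous function on $\R^2$, and (b) $\scrH$ satisfies the diagonal stationarity property (i). Step (b) is where the work, and the main obstacle, lies.

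\textbf{Continuity and the seam at $x=0$.} On $[0,\infty)\times\R$ continuity follows from the identification with $\scrS$ above. For the left half-plane, recall that the array $\tilde\scrA$ with $\tilde\scrA_i(s)=\scrA_i(-s)$ is again an Airy line ensemble, by reversibility (time reflection symmetry) of the stationary Airy line ensemble $\scrR$. Running the construction of Sections \ref{S:short-lp}--\ref{S:airy-sheet} on $\tilde\scrA$ produces a continuous function $\tilde\scrH$ on $[0,\infty)\times\R$, and $\scrH(-x,y)=\tilde\scrH(x,-y)$ for $x\ge0$ by definition; this is continuous on $(-\infty,0]\times\R$ with boundary value $y\mapsto\tilde\scrH(0,-y)=\tilde\scrA_1(-y)=\scrA_1(y)=\scrH(0,y)$, which matches the right half-plane. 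Hence $\scrH$ is a.s. continuous on all of $\R^2$.

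\textbf{Diagonal stationarity (the main obstacle).} We must show $\scrH(\cdot+t,\cdot+t)\eqd\scrH$ as random continuous functions on $\R^2$. On the right half-plane this should follow from the identification with $\scrS$ and property (i) of $\scrS$: shifting the endpoints in the Busemann problems corresponds to the law-preserving shift $s\mapsto\scrR_i(s+t)-s^2$ of the Airy line ensemble, while moving the starting parabola $-\sqrt{k/(2x)}$ by the $O(1)$ amount $t$ is negligible at scale $\sqrt k$ by the uniform control of Lemma \ref{L:zk-unif-bound} (this is the bookkeeping underlying the fact that $\scrS(x,y)+(x-y)^2$ is stationary in $y$). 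The genuine difficulty is the \emph{joint} statement across $x=0$: one must rule out that the particular gluing of the $\scrA$-built right half-plane to the $\tilde\scrA$-built left half-plane breaks shift invariance. The natural route is to prove the ``reflected Busemann'' identity on the left half-plane — that $\scrS(-x,z)-\scrS(-x,y)$ equals the analogous limit of last passage values in the reversed ensemble $\tilde\scrA$ — since then $\scrH=\scrS$ a.s. on $\R^2$ and condition (i) is inherited from $\scrS$. At the prelimit level there is an \emph{exact} a.s. reflection identity: with $\tilde B^n_i(s)=B^n_{n+1-i}(1-s)$, time reversal of Brownian motion gives $B^n[(\bar x,n)\LP(\hat y,1)]=\tilde B^n[(\overline{-y},n)\LP(\widehat{-x},1)]$, hence $\scrS^B_n(x,y)=\scrS^{\tilde B}_n(-y,-x)$, where $\scrS^{\tilde B}_n$ is the prelimiting sheet of $\tilde B^n$, whose top melon corner converges to $\tilde\scrA$. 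So the plan is to re-run the parabola analysis of Sections \ref{S:parabola}--\ref{S:airy-sheet} on $\tilde B^n$ and pass to the limit to obtain the reflected identity, exactly as Proposition \ref{P:geod-eqn} established the metric composition law by simultaneously controlling two halves of a single prelimiting last passage environment.

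I expect the hardest point to be precisely this simultaneous control of the two half-planes in a single coupling: the constructions of Sections \ref{S:short-lp}--\ref{S:airy-sheet} produce the right half-plane from one Airy line ensemble, and the reflection symmetry $\scrS(x,y)\eqd\scrS(-x,-y)$ is only available in law, so new bookkeeping is needed — best carried out entirely in the prelimit, where reflection is an exact a.s. identity — to see that the seam at $x=0$ is invisible both to continuity and to spatial shifts. Once that is in place, $\scrH$ satisfies conditions (i) and (ii) and is therefore an Airy sheet.
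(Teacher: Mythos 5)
This statement is one of the paper's \emph{open conjectures}, not a theorem: the paper offers no proof, and immediately after stating it remarks that what must be analyzed is last passage on the ``other tableau of RSK,'' which for Brownian last passage percolation is a Gelfand--Tsetlin pattern that seems harder to control than the Brownian melon. Your proposal correctly reduces the conjecture to an almost sure identity in a single coupling -- that the left half-plane of the sheet coupled with $\scrA$ is the canonical functional applied to the reflected ensemble $\scrA_\cdot(-\,\cdot)$ -- and correctly locates the difficulty there, but the route you sketch does not close it. Re-running Sections \ref{S:short-lp}--\ref{S:airy-sheet} on the time-reversed environment $\tilde B^n_i(s)=B^n_{n+1-i}(1-s)$ identifies the (reflected) left half-plane as the canonical functional of $\tilde\scrA$, the Airy line ensemble arising as the corner limit of the melon $W\tilde B^n$. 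But $\tilde\scrA$ is a different random object from $\scrA_\cdot(-\,\cdot)$; they agree only in law. The exact prelimit identity $\scrS^B_n(x,y)=\scrS^{\tilde B}_n(-y,-x)$ relates last passage values across $B^n$ and $\tilde B^n$, not the corner of $WB^n$ to the corner of $W\tilde B^n$: the melon map does not commute with time reversal in any almost sure sense (reversal corresponds to exchanging the two RSK tableaux, not to reflecting the melon), and $WB^n$ does not determine $B^n$, so the prelimit reflection cannot be transported into a statement about a functional of the single limiting ensemble $\scrA$. What your argument yields is only the distributional symmetry $\scrS(x,y)\eqd\scrS(-x,-y)$, which is already Lemma \ref{L:airy-facts} and does not imply the conjecture.

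The missing ingredient -- and the actual content of the conjecture -- is joint control of the two corners in one coupling: one must show that the corner of the forward melon and the corner of the reverse melon converge \emph{jointly} to $(\scrA,\scrA_\cdot(-\,\cdot))$, equivalently that the second RSK tableau (the Gelfand--Tsetlin data recording the dependence on starting points) asymptotically mirrors the first near the corner. Nothing in Sections \ref{S:geometry}--\ref{S:airy-sheet} addresses this, since Proposition \ref{P:wm-equivalent} preserves last passage values only for start points $\ge 0$ and the entire construction sees only one tableau; this is precisely the obstacle the paper points to when it leaves the statement open. A smaller issue: pinning the additive constant in $\scrH(x,\cdot)$ via the limit $x\to 0^+$ also needs an argument (the paper's functional fixes constants through the ergodic averaging in the proof of Proposition \ref{P:uniq-sheet}, and continuity of $\scrH$ at the seam is itself part of what is to be proven), but this is minor compared with the main gap.
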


Since the first version of this paper appeared, we proved Conjecture \ref{C:H-Airy-sheet}, see Theorem 1.21 in \cite{dauvergne2021scaling}.

\medskip

Another way to clarify the Airy sheet definition would be to give the sheet values directly as a limit. The following would suffice.
\begin{conj}
	\label{P:det-correction}
There exists a deterministic function $a:\R^+\times \N\to \R$ so that for every $x>0$, almost surely as $k\to\infty$ we have
$$
\scrA[(-\sqrt{k/(2x)},k)\LP (0,1)]-a(x,k) \to \scrS(x,0).
$$
\end{conj}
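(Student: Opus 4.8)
The plan is to reduce the conjecture to an almost sure convergence statement for a single last passage value. Write $\Phi_k := \scrA[(-\sqrt{k/(2x)},k)\LP(0,1)]$. By the Busemann form of the Airy sheet definition (equation \eqref{E:buse}, which holds for all $x>0$ and all $y,z\in\R$ simultaneously by Remark \ref{R:busemann-def}), we already know that $\scrA[(-\sqrt{k/(2x)},k)\LP(z,1)]-\scrA[(-\sqrt{k/(2x)},k)\LP(y,1)]\to\scrS(x,z)-\scrS(x,y)$ almost surely, locally uniformly in $y,z$. Hence, if we can produce a deterministic sequence $a(x,k)$ for which $\Phi_k-a(x,k)$ converges almost surely, then automatically $\scrA[(-\sqrt{k/(2x)},k)\LP(y,1)]-a(x,k)$ converges for every $y$, and the resulting limiting function of $y$ differs from $\scrS(x,\cdot)$ by at most an additive deterministic constant: indeed $\scrS$ is, by the ergodic-average characterization in Proposition \ref{P:uniq-sheet}, a prescribed deterministic functional of $\scrA$, so the only remaining freedom is this constant, which we fold into $a(x,k)$. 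A convenient choice is $a(x,k)=\E \Phi_k$, which is finite and makes the family $\{\Phi_k-a(x,k)\}$ uniformly integrable by the Tracy--Widom tail bounds (Theorem \ref{T:TW-airy}) together with the parabola estimates of Section \ref{S:parabola}; this also upgrades the eventual almost sure convergence to $L^1$ convergence. Heuristically one expects $a(x,k)=2^{5/4}k^{3/4}x^{-1/4}+k/x$ up to lower-order deterministic terms, since stationarity of $\scrR$ gives that $\Phi_k$ is equal in law to $\langle(0,k)\LP\sqrt{k/(2x)}\rangle+k/x$, whose leading behaviour is governed by Theorem \ref{T:airy-lp}.

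\textbf{The core step: almost sure convergence of $\Phi_k-a(x,k)$.} I would prove this via a Cauchy criterion along the parabola. For $k<k'$, the geodesic in $\scrA$ from $(-\sqrt{k'/(2x)},k')$ to $(0,1)$ follows the curve $\ell\mapsto(-\sqrt{\ell/(2x)},\ell)$ up to a wobble of size $\oo(\sqrt k)$ at height $k$ --- this is the Airy-line analogue of Lemma \ref{L:zk-unif-bound}, obtained by transporting Proposition \ref{P:longbound} through Lemma \ref{L:reverseW} --- so by the geodesic property (Lemma \ref{L:geod-btw}) one can split $\Phi_{k'}=\Phi_k+\scrA[(-\sqrt{k'/(2x)},k')\LP(Z_k,k)]+\mathrm{Err}_{k,k'}$, where $Z_k=-\sqrt{k/(2x)}+\oo(\sqrt k)$ is the height-$k$ crossing point and $\mathrm{Err}_{k,k'}$ is controlled by the parabolic wobble together with the modulus of continuity of the Airy lines (Theorem \ref{T:mod-cont}). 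It then remains to show that the recentered ``thin block'' $\scrA[(-\sqrt{k'/(2x)},k')\LP(\,\cdot\,,k)]-\E[\,\cdot\,]$ tends to $0$ in probability as $k\to\infty$, uniformly over $k'>k$; combined with the analogous control on $\mathrm{Err}_{k,k'}$ this makes $\Phi_k-\E \Phi_k$ Cauchy in probability, and one upgrades to almost sure convergence by the monotonicity-plus-subsequence device used repeatedly in the paper (e.g. in the proof of Lemma \ref{L:close-not-dj}), using that the $Z_k$ and the block last passage values are tight and monotone in the natural parameters. Once the a.s. limit exists, the reduction above identifies it with $\scrS(x,0)$ up to the deterministic constant absorbed into $a$.

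\textbf{Main obstacle.} The crux is the uniform-in-$k'$ concentration of the thin last passage block $\scrA[(-\sqrt{k'/(2x)},k')\LP(\,\cdot\,,k)]$ around its mean, with an error that genuinely tends to $0$. The estimates presently available --- Theorem \ref{T:airy-lp} with error $k^{9/21}\log^d k$, and its consequence Proposition \ref{P:longbound} with error $\oo(\sqrt k)$ --- only give convergence after dividing by a positive power of $k$, which is not enough to pin $\Phi_k-a(x,k)$ itself; one needs essentially the $O(k^{-1/6})$ sharpening discussed after Theorem \ref{T:airy-lp} and in Problem \ref{P:det-correction}. I see two routes. The first is to push the bridge-representation argument of Section \ref{S:short-lp} to the optimal scale: the current loss is entirely in the crude bounds on the piecewise-linear correction $R_k$ (Step 2 there) and the large-component error $X_k$ (Step 3), both of which should be improvable by a finer multi-scale analysis of the graph $G_{2k}(t,\ell,\delta)$. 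The second, and to my mind the more tractable, is to bound $\mathrm{Var}(\Phi_k)$ uniformly in $k$ directly --- for instance by an Efron--Stein or martingale-difference argument exploiting the Brownian Gibbs property of the Airy line ensemble and the localization of the relevant geodesic to the parabola --- and then to invoke a short zero--one law to conclude that $\Phi_k-\E \Phi_k$ converges almost surely. This quantitative step is precisely what is missing, which is why the statement appears here only as a conjecture.
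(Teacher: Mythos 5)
This statement is one of the paper's open problems: the paper gives no proof of it, only the remark immediately following the conjecture that a sharpening of Theorem \ref{T:airy-lp} (whose current error, $k^{9/21}\log^d k$, is far from the conjectured $O(k^{-1/6})$) would yield the result, together with an explicit candidate centering $a(x,k)=\E\scrA_1(0)-\E\scrA_k(0)-\sqrt{2kx}$. Your proposal does not prove it either, and you say so yourself: the decisive step --- concentration of the recentered value $\Phi_k:=\scrA[(-\sqrt{k/(2x)},k)\LP(0,1)]$ (or of your ``thin block'' values, uniformly in $k'$) with an error that genuinely tends to zero --- is exactly the obstruction the paper itself points to. Nothing in Sections \ref{S:short-lp}--\ref{S:parabola} controls these quantities below the $o(\sqrt k)$ scale, so your Cauchy-along-the-parabola scheme cannot close with the available estimates; your two suggested routes (pushing the bridge-representation argument to the optimal scale, or a uniform variance bound) are plausible directions but are not carried out, and note that a uniform bound on $\mathrm{Var}(\Phi_k)$ by itself would not give almost sure convergence of $\Phi_k-\E\Phi_k$.

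There is also a gap in your reduction that is independent of the missing estimate. Granting that $\Phi_k-a(x,k)\to L$ almost surely for some deterministic $a$, Remark \ref{R:busemann-def} gives $\scrA[(-\sqrt{k/(2x)},k)\LP(y,1)]-a(x,k)\to L+\scrS(x,y)-\scrS(x,0)$ for every $y$, but it does not follow that $W:=L-\scrS(x,0)$ is deterministic, which is what you need in order to fold it into $a(x,k)$. Proposition \ref{P:uniq-sheet} determines $\scrS(x,\cdot)$ from its increments as a functional of $\scrA$; it says nothing about $W$, which is a second functional of the same ensemble and a priori genuinely random, so ``the only remaining freedom is this constant'' is an assertion, not an argument. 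To rule out randomness of $W$ you would need an additional ingredient --- for instance showing that $W$ is invariant under diagonal shifts of the ensemble (which amounts to a coalescence statement for the semi-infinite geodesics along the parabola, in the spirit of the proof of Lemma \ref{L:sheet-lem}) and then invoking ergodicity, or identifying the limit directly through the coupling in Definition \ref{D:airy-sheet} --- and none is supplied. Two smaller points: finiteness of $\E\Phi_k$ should be justified before it is used as a centering, and your heuristic recentering should read $\Phi_k\eqd\big\langle(0,k)\LP\sqrt{k/(2x)}\big\rangle+k/(2x)$, not $+k/x$ (the leading order $2^{5/4}k^{3/4}x^{-1/4}$ is correct).
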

Conjecture \ref{P:det-correction} would follow from an improved bound on the last passage problem across the Airy line ensemble that was analyzed in Section \ref{S:short-lp}. In that section, we show that 
$$
\scrA[(0, k) \to (x, 1)] = 2\sqrt{2kx} + o(\sqrt{k}).
$$
We believe that the correct error term here is $O(k^{-1/6})$, as in Brownian last passage percolation. This would give that $a(x, k) = \E \scrA_1(0)- \E \scrA_k(0) - \sqrt{2kx}.$ The first term is the expectation of a GUE Tracy-Widom random variable and the second term is equal to $-(3\pi k/2)^{2/3} + o(1)$ as $k \to \infty$.

\medskip

Consider the directed geodesics $\Pi_{x,y}$ from time $(x,0)$ to time $(y,1)$. By invariance, the following intersection question has only two parameters.
\begin{problem}
Find a formula for the probability that $\Pi_{0,0}$ and $\Pi_{x,y}$ intersect.
\end{problem}
%

 Even the most basic distributions related to the directed geodesic $\Pi = \Pi_{0,0}$ are not known.  The first question depends only on two Airy processes, so it should be doable using continuum statistics.
\begin{problem} \hfill

(a) Find the distribution of $\Pi(s)$ for all $s$.

(b) Find the distribution of $\max_s \Pi(s)$.

\end{problem}
Proposition \ref{P:mod-dg} gives tail bounds on these quantities of the form $e^{-x^3}$.

\medskip

We also believe that the beginning of geodesics are special.
\begin{conj} Consider the process $\eta_t:[0,1/2]\to \R$ defined by
$$\eta_t(s)=\Pi(t+s)-\Pi(t).$$
Let $0\le t<u<1/2$.  The law of $\eta_t$ and $\eta_u$ are mutually absolutely continuous if and only if $t>0$.
\end{conj}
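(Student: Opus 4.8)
The plan is to realize $\eta_t$, for each $t$, as one fixed measurable functional applied to three independent random inputs whose laws can be compared as $t$ varies. Fix $0\le t<u<1/2$. By independence of increments (Property II of Definition~\ref{D:directed-landscape}) the restrictions $\scrL|_{[0,t]},\scrL|_{[t,t+1/2]},\scrL|_{[t+1/2,1]}$ are independent; write $g_t=\scrL(0,0;\cdot,t)$, let $\scrM_t$ be the restriction $\scrL|_{[t,t+1/2]}$ reindexed to the time interval $[0,1/2]$, and $h_t=\scrL(\cdot,t+1/2;0,1)$, so that $(g_t,\scrM_t,h_t)$ are independent. By the metric composition law (Lemma~\ref{L:metric-strong}), $A:=\Pi(t)$ and $B:=\Pi(t+1/2)$ are the maximizers of $(z,w)\mapsto g_t(z)+\scrM_t(z,0;w,1/2)+h_t(w)$, and since $\Pi$ restricted to $[t,t+1/2]$ is a directed geodesic between its endpoints (directly from the definition), $\eta_t(s)=\Pi(t+s)-\Pi(t)$ equals the directed geodesic of $\scrM_t$ from $(A,0)$ to $(B,1/2)$ evaluated at time $s$ and shifted down by $A$. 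Hence $\eta_t=\Phi(g_t,\scrM_t,h_t)$ for a map $\Phi$ \emph{not depending on} $t$, a.s.\ well defined by uniqueness of the maximizer in metric composition (Lemma~\ref{L:airy-max}) and of the point-to-point geodesic (Theorem~\ref{T:directed-geodesic}), plus a Fubini argument using absolute continuity of the law of $(A,B)$. By time-stationarity (Lemma~\ref{L:land-props}) the law of $\scrM_t$ is $t$-independent, while by Property I and the Airy sheet flip symmetry $g_t\eqd t^{1/3}\scrA_1(t^{-2/3}\cdot)$ and $h_t\eqd(1/2-t)^{1/3}\scrA_1((1/2-t)^{-2/3}\cdot)$ up to reflection --- parabolic Airy processes of scales $t^{1/3}$ and $(1/2-t)^{1/3}$.

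\emph{The case $t>0$.} Then $g_t,h_t$ are nondegenerate, and everything reduces to: for scales $0<\sigma\le\sigma'$ and any $K>0$, the laws of the parabolic Airy processes of scales $\sigma$ and $\sigma'$ restricted to $[-K,K]$ are mutually absolutely continuous. Working with increments from $0$, this follows from Proposition~\ref{P:brownian-airy} together with the two-sided Brownian Gibbs Radon--Nikodym estimates (\cite{hammond2016brownian}, \cite{CH}), which give mutual absolute continuity of the increments of $\scrA_1$ and of a variance-$2$ Brownian motion on a compact interval, from Brownian scaling (which makes the resulting comparison scale-free), and from positivity of the Tracy--Widom density (Theorem~\ref{T:TW-airy}) for the anchoring value. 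Granting this: outside an event of probability $\le\epsilon(K)\to0$ the maximizer $(A,B)$ and the excursion of $\Phi(g_t,\scrM_t,h_t)$ lie in a fixed compact set (Lemma~\ref{L:airy-max}, Lemma~\ref{L:path-loc-2}, Corollary~\ref{C:infty-blowup}, Proposition~\ref{P:mod-dg}), so $\eta_t$ is, off this event, a function of the restrictions of $(g_t,\scrM_t,h_t)$ to a fixed compact; these restricted laws, for the parameters $t$ and $u$, are mutually absolutely continuous (first factor $t$-independent, other two by the reduction above), hence so are the restricted product laws, hence their pushforwards by $\Phi$; letting $K\to\infty$ gives mutual absolute continuity of $\mathrm{Law}(\eta_t)$ and $\mathrm{Law}(\eta_u)$. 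The compact-window reduction is essential, not cosmetic: on $C(\R)$ the laws of $g_t$ and $g_u$ are mutually \emph{singular}, the differing parabola curvatures $1/t\ne 1/u$ being visible in the tails.

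\emph{The case $t=0$.} Now $g_0$ is the degenerate narrow wedge at $(0,0)$, forcing $A=0$, so $\eta_0=\Pi|_{[0,1/2]}$ is the geodesic of $\scrM_0$ from the \emph{fixed} point $(0,0)$; for $u>0$, after recentring, $\eta_u$ is the geodesic of $\scrM_u$ from a \emph{randomly selected} point $A$, selected by maximizing against the nondegenerate profile $g_u$, and this selection biases the law of $\scrM_u$ near $A$ --- a Palm-type effect absent for a point boundary. The plan is to produce a measurable functional $F$ on $C([0,1/2],\R)$ that is a.s.\ constant (carries an atom) under $\eta_0$ but is a.s.\ atomless under $\eta_u$. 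One checks that the statistics one tries first are insufficient: $s^{-2/3}\eta_t(s)$ converges in law, for every $t\ge 0$, to the argmax of the same ``stationary Airy plus variance-$2$ Brownian motion'' field, so the distinguishing statistic must be \emph{global}, detecting the degeneracy of the narrow wedge through the full joint structure of the path. Identifying the right global singular statistic and establishing it --- which appears to require new input on the geometry of directed geodesics (branch points, geodesic trees, and the law of $\scrL$ near a geodesic interior point) --- is the main obstacle; by contrast the $t>0$ direction rests only on the (expected but technical) compact-interval absolute continuity of parabolic Airy processes at different positive scales together with a routine exhaustion argument.
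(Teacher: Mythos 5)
There is no proof of this statement in the paper to compare against: it appears in Section 14 as an open conjecture, and your proposal does not close it. The decisive gap is the ``only if'' half. You explicitly stop at the $t=0$ case, observing (correctly) that the natural local statistic $s^{-2/3}\eta_t(s)$ has the same small-$s$ limit for every $t\ge 0$ and that some global statistic separating the narrow-wedge geodesic from a geodesic started at a randomly selected point would be needed --- but you neither identify such a statistic nor prove it is singular, and this Palm-type biasing effect is precisely the content of the conjecture. A sketch that proves one implication and names the other as ``the main obstacle'' is a partial result at best, not a proof of the iff statement.

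The ``if'' half also rests on an input that is granted rather than proved: mutual absolute continuity, on a fixed compact interval, of parabolic Airy processes at two different positive scales. The paper's Proposition \ref{P:brownian-airy} and the cited Brownian Gibbs estimates of \cite{CH} and \cite{hammond2016brownian} give only one direction (Airy increments absolutely continuous with respect to Brownian motion, with quantitative control of that Radon--Nikodym derivative); the reverse domination, which you need to chain two different scales through a common Brownian reference, is not supplied by anything in the paper or in the cited references, so this step is an assumption, not a routine consequence. Beyond that there are fixable but real technical debts in the factorization $\eta_t=\Phi(g_t,\scrM_t,h_t)$: a.s.\ uniqueness of the two-variable maximizer $(A,B)$ and of the geodesic of $\scrM_t$ between these \emph{random} endpoints does not follow directly from Lemma \ref{L:airy-max} and Theorem \ref{T:directed-geodesic} (which concern fixed points), and your appeal to absolute continuity of the law of $(A,B)$ is itself unproved; likewise the $K\to\infty$ exhaustion only works if the localization events are measurable with respect to the compactly restricted inputs, which needs to be built into the construction. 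In short: the architecture for $t>0$ is plausible, but as it stands the proposal establishes neither direction of the conjecture unconditionally, and the $t=0$ singularity --- the heart of the statement --- is untouched.
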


Finally, there should be a stochastic calculus for the 1-2-3 scaling. A natural precise question for the development of this theory is the following.
\begin{question}
Let $h: \R^2 \to \R$ be a smooth function with compact support. Define the $h$-shift $\scrL'$ of the directed landscape $\scrL$ by the length formula:
$$
\int d \scrL' \circ \pi =\int d \scrL \circ \pi + \int h(\pi(t),t)\,dt
$$
Is the distribution of $\scrL'$ absolutely continuous with respect to the directed landscape? If so, what is the density?
\end{question}

\bigskip

\noindent {\bf Acknowledgments.} D.D. was supported by an NSERC CGS D scholarship. B.V. was supported by the Canada
Research Chair program, the NSERC Discovery Accelerator grant, the MTA Momentum
Random Spectra research group, and the ERC consolidator grant 648017 (Abert). We thank Robert Haslhofer for the reference \cite{perelman2002entropy} and Firas Rassoul-Agha and Chris Janjigian for pointing out that the equation \eqref{E:buse} holds for all triples $(x, y, z) \in \R^+ \X \R^2$ simultaneously, rather than just rational triples. We thank Ivan Corwin, Patrik Ferrari, Alan Hammond, Yun Li, Mihai Nica, Dmitry Panchenko, Jeremy Quastel, Timo Sepp\"al\"ainen, Xiao Shen, Benedek Valk\'o, and Peter Winkler for valuable comments about previous versions. B.V. thanks the organizers for the 2008 workshop {\it Random matrices: probabilistic aspects and applications} at the Hausdorff Institute,
Bonn, where these problems where highlighted.

\bibliographystyle{dcu}
\bibliography{ALEcitations2}

\bigskip\bigskip\noindent

\noindent Duncan Dauvergne, Department of Mathematics, Princeton University, United States,\\ {\tt dd18@math.princeton.edu}

\bigskip

\noindent Janosch Ortmann,
D\'epartement de management et technologie,
\'Ecole des sciences de gestion,
Universit\'e du Qu\'ebec \`a Montr\'eal, Canada,
{\tt ortmann.janosch@uqam.ca}

\bigskip

\noindent B\'alint Vir\'ag, Departments of Mathematics and Statistics, University of Toronto, Canada,\\ {\tt balint@math.toronto.edu}
\end{document}